\newtheorem{Theorem}{Theorem}[section]
\newtheorem{Proposition}[Theorem]{Proposition}
\newtheorem{Lemma}[Theorem]{Lemma}
\newtheorem{Corollary}[Theorem]{Corollary}
\theoremstyle{definition}
\newtheorem{Definition}[Theorem]{Definition}
\newtheorem{Question}[Theorem]{Question}
\newcommand{\rca}{\mathsf{RCA}_0}
\newcommand{\wwklz}{\mathsf{WWKL}_0}
\newcommand{\wklz}{\mathsf{WKL}_0}
\newcommand{\aca}{\mathsf{ACA}_0}
\newcommand{\wkl}{\mathsf{WKL}}
\newcommand{\rt}{\mathsf{RT}}
\newcommand{\crt}{\mathsf{cRT}}
\newcommand{\srt}{\mathsf{SRT}}
\newcommand{\Dd}{\mathsf{D}}
\newcommand{\ads}{\mathsf{ADS}}
\newcommand{\adc}{\mathsf{ADC}}
\newcommand{\coh}{\mathsf{COH}}
\newcommand{\ocoh}{\mathsf{oCOH}}
\newcommand{\ocohd}{\mathsf{oCOH\mbox{-}in\mbox{-}}\mathbf{\Delta}^0_2}
\newcommand{\infone}{\mathsf{INForONE}}
\newcommand{\infoned}{\mathsf{INForONE\mbox{-}in\mbox{-}}\mathbf{\Delta}^0_2}
\newcommand{\infonesd}{\mathsf{INForONE\mbox{-}in}^*\mathsf{\mbox{-}}\mathbf{\Delta}^0_2}
\newcommand{\tdnr}{\mathrm{DNR}}
\newcommand{\pdnr}{\mathsf{DNR}}
\newcommand{\npdnr}[1]{#1\mbox{-}\mathsf{DNR}}
\newcommand{\rsg}{\mathsf{RSg}}
\newcommand{\rsgr}{\mathsf{RSgr}}
\newcommand{\wrsg}{\mathsf{wRSg}}
\newcommand{\wrsgr}{\mathsf{wRSgr}}
\newcommand{\id}{\mathsf{id}}
\newcommand{\sads}{\mathsf{SADS}}
\newcommand{\sadc}{\mathsf{SADC}}
\newcommand{\cads}{\mathsf{CADS}}
\newcommand{\cadsst}{\mathsf{CADS}_\mathrm{strict}}
\newcommand{\cac}{\mathsf{CAC}}
\newcommand{\scac}{\mathsf{SCAC}}
\newcommand{\pa}{\mathrm{PA}}
\newcommand{\lpo}{\mathsf{LPO}}
\newcommand{\iso}{\mathsf{I}\Sigma^0_1}
\newcommand{\bpo}{\mathsf{B}\Pi^0_1}
\newcommand{\bst}{\mathsf{B}\Sigma^0_2}
\DeclareMathOperator{\dom}{\mathrm{dom}}
\DeclareMathOperator{\ran}{\mathrm{ran}}
\DeclareMathOperator{\set}{\mathrm{set}}
\newcommand{\andd}{\wedge}
\newcommand{\la}{\langle}
\newcommand{\ra}{\rangle}
\newcommand{\da}{{\downarrow}}
\newcommand{\ua}{{\uparrow}}
\newcommand{\imp}{\rightarrow}
\newcommand{\biimp}{\leftrightarrow}
\newcommand{\Nb}{\mathbb{N}}
\newcommand{\smf}{\smallfrown}
\newcommand{\rst}{{\restriction}}
\newcommand{\rra}{\rightrightarrows}
\newcommand{\wsubseteq}{\!\!\subseteq\!}
\newcommand{\mc}[1]{\mathcal{#1}}
\newcommand{\pb}[1]{\mathsf{#1}}
\newcommand{\leqT}{\leq_\mathrm{T}}
\newcommand{\equivT}{\equiv_\mathrm{T}}
\newcommand{\leqW}{\leq_\mathrm{W}}
\newcommand{\nleqW}{\nleq_\mathrm{W}}
\newcommand{\ltW}{<_\mathrm{W}}
\newcommand{\equivW}{\equiv_\mathrm{W}}
\newcommand{\leqsW}{\leq_\mathrm{sW}}
\newcommand{\ltsW}{<_\mathrm{sW}}
\newcommand{\nleqsW}{\nleq_\mathrm{sW}}
\newcommand{\equivsW}{\equiv_\mathrm{sW}}
\newcommand{\nequivsW}{\not\equiv_\mathrm{sW}}
\newcommand{\leqc}{\leq_\mathrm{c}}
\newcommand{\lstc}{<_\mathrm{c}}
\newcommand{\nleqc}{\nleq_\mathrm{c}}
\newcommand{\equivc}{\equiv_\mathrm{c}}
\newcommand{\ltlex}{<_\mathrm{lex}}
\newcommand{\ol}[1]{\overline{#1}}
\newcommand{\code}[1]{\ulcorner #1 \urcorner}
\newcommand{\mbf}[1]{\bm{#1}}
\newcommand{\wh}[1]{\widehat{#1}}
\newcommand{\Pf}{\mathcal{P}_\mathrm{f}}
\newcommand{\asc}{\mathrm{asc}}
\newcommand{\dec}{\mathrm{dec}}
\newcommand{\last}{\mathsf{last}}
   \def\MR#1{}
\title{An inside/outside Ramsey theorem and recursion theory}
\author{Marta Fiori-Carones}
\address{Institute of Mathematics, University of Warsaw, Banacha 2, 02-097 Warszawa, Poland}
\email{marta.fioricarones@outlook.it}
\urladdr{https://martafioricarones.github.io}
\author{Paul Shafer}
\address{School of Mathematics\\
University of Leeds\\
Leeds\\
LS2 9JT\\
United Kingdom}
\email{p.e.shafer@leeds.ac.uk}
\urladdr{http://www1.maths.leeds.ac.uk/~matpsh/}
\author{Giovanni Sold\`a}
\address{School of Mathematics\\
University of Leeds\\
Leeds\\
LS2 9JT\\
United Kingdom}
\email{mmgs@leeds.ac.uk}
\date{\today}
\begin{document}

\begin{abstract}
Inspired by Ramsey's theorem for pairs, Rival and Sands proved what we refer to as an \emph{inside/outside Ramsey theorem}:  every infinite graph $G$ contains an infinite subset $H$ such that every vertex of $G$ is adjacent to precisely none, one, or infinitely many vertices of $H$.  We analyze the Rival--Sands theorem from the perspective of reverse mathematics and the Weihrauch degrees.  In reverse mathematics, we find that the Rival--Sands theorem is equivalent to arithmetical comprehension and hence is stronger than Ramsey's theorem for pairs.  We also identify a weak form of the Rival--Sands theorem that is equivalent to Ramsey's theorem for pairs.  We turn to the Weihrauch degrees to give a finer analysis of the Rival--Sands theorem's computational strength.  We find that the Rival--Sands theorem is Weihrauch equivalent to the double jump of weak K\"{o}nig's lemma.  We believe that the Rival--Sands theorem is the first natural theorem shown to exhibit exactly this strength.  Furthermore, by combining our result with a result of Brattka and Rakotoniaina, we obtain that solving one instance of the Rival--Sands theorem exactly corresponds to simultaneously solving countably many instances of Ramsey's theorem for pairs.  Finally, we show that the uniform computational strength of the weak Rival--Sands theorem is weaker than that of Ramsey's theorem for pairs by showing that a number of well-known consequences of Ramsey's theorem for pairs do not Weihrauch reduce to the weak Rival--Sands theorem.  We also address an apparent gap in the literature concerning the relationship between Weihrauch degrees corresponding to the ascending/descending sequence principle and the infinite pigeonhole principle.
\end{abstract}

\maketitle

\section{Introduction}

The logical analysis of Ramsey-theoretic statements is a long-standing program in recursion theory, going back to Specker's example of a recursive graph on the natural numbers with no infinite recursive homogeneous set~\cite{Specker}.  In this tradition, we analyze the result of Rival and Sands stated in Theorem~\ref{thm-RSg} below, which we refer to as an \emph{inside/outside Ramsey theorem}.  Our title is an homage to Jockusch's classic and ever-inspiring work \emph{Ramsey's theorem and recursion theory}~\cite{JockuschRamsey}.

\begin{Theorem}[Rival and Sands~\cite{RivalSands}]\label{thm-RSg}
Every infinite graph $G$ contains an infinite subset $H$ such that every vertex of $G$ is adjacent to precisely none, one, or infinitely many vertices of $H$.
\end{Theorem}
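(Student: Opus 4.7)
My plan is to prove the theorem by constructing $H$ via an inductive construction that uses arithmetical comprehension --- equivalently, access to the Turing jump of $G$ --- to decide neighborhood cardinalities along the way. I would enumerate $V = \{v_0, v_1, v_2, \ldots\}$ and build $H = \{h_0 < h_1 < h_2 < \cdots\}$ alongside a decreasing sequence of infinite candidate sets $V = A_0 \supseteq A_1 \supseteq A_2 \supseteq \cdots$, with $h_n \in A_n$ and $A_{n+1} \subseteq A_n \setminus \{h_n\}$.

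At each stage $n$, I process $v_n$ by using the jump of $G$ to determine whether $|N(v_n) \cap A_n|$ is finite or infinite. In the infinite case, I commit $v_n$ to having infinitely many neighbors in $H$ by setting $A_n' := A_n \cap N(v_n)$, so every later $h_m$ (with $m \geq n$) lies in $N(v_n)$. In the finite case, I commit $v_n$ to receiving no further neighbors in $H$ by setting $A_n' := A_n \setminus N(v_n)$, which is cofinite in $A_n$ and hence infinite. I then pick $h_n \in A_n'$ subject to a safety condition and set $A_{n+1} := A_n' \setminus \{h_n\}$. This already guarantees that $|N(v) \cap H|$ is either infinite or bounded by some finite number for every $v \in V$.

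The main obstacle is ensuring that whenever $|N(v) \cap H|$ is finite, it is in fact at most $1$ and not some larger finite value --- a vertex $v_k$ committed in the finite case could naively have accumulated up to $k$ neighbors in $\{h_0, \ldots, h_{k-1}\}$ before being processed. To address this, I would maintain the invariant that every uncommitted $v$ satisfies either $|N(v) \cap A_n| = \infty$ or $|N(v) \cap H_n| \leq 1$, so that committing later in the ``finite'' case automatically yields at most one neighbor. Preserving the invariant requires restricting each $h_n$ to a ``safe'' subset of $A_n'$: those $h$ such that no uncommitted $v$ with $|N(v) \cap H_{n-1}| = 1$ has $h \in N(v)$ together with $|N(v) \cap A_{n+1}|$ finite. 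The key technical lemma --- provable with the help of the jump, which is where the strength of $\aca$ is essential --- is that this safe set is infinite at every stage; an auxiliary thinning of $A_n$ ensuring that the $2$-neighborhoods of the current $h_i$'s do not cover the candidate set may also be needed, and is the hardest point to establish. Granting this lemma, the construction produces an $H$ satisfying $|N(v) \cap H| \in \{0, 1, \infty\}$ for every $v \in V$.
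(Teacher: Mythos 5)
Your proposal takes a genuinely different route from the paper: you dynamically maintain a shrinking candidate set $A_n$ and commit each vertex one at a time, whereas the paper first splits on whether $F = \{x \in V : |N(x)| < \omega\}$ is finite or infinite, handles the finite case directly by deleting the finitely many neighborhoods of members of $F$, and in the infinite case builds a cohesive set $C \subseteq F$ for the sequence $(N(x) : x \in V)$ and then constructs $H \subseteq C$ using the resulting $\{0,1\}$-valued function $f$ recording for each $v$ whether $C \subseteq^* N(v)$ or $C \subseteq^* \ol{N(v)}$. However, there is a genuine gap in your proposal at precisely the point you flag as hardest, and I do not think your invariant can be maintained as you describe it. The difficulty is the dynamics of demotion: when you infinite-commit $v_n$ and replace $A_n$ by $A_n \cap N(v_n)$, an uncommitted vertex $v$ with $|N(v) \cap A_n| = \omega$ but $|N(v) \cap A_n \cap N(v_n)| < \omega$ is silently demoted from the first disjunct of your invariant to the second. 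Such a $v$ may already have two or more neighbors among $\{h_0, \ldots, h_{n-1}\}$, because your safety condition places no bound on $|N(v) \cap H|$ while $v$ sits in the ``infinite'' bucket; once the demotion occurs, the invariant is irreparably violated and no choice of $h_n$ can repair it. Note also that your safety condition only guards a vertex with exactly one current $H$-neighbor against simultaneously receiving a second neighbor and being demoted, and in any case the demotion above is caused by the shrinking $A_n \mapsto A_n \cap N(v_n)$ (a consequence of committing $v_n$), not by the subsequent choice of $h_n$ at all.

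The paper avoids this by separating the decision phase from the construction of $H$. Cohesiveness of $C$ yields a static, final verdict for every vertex at once: each $v$ is either ``eventually in'' or ``eventually out'' of $C$, and this verdict never changes mid-construction, so there is no analogue of demotion. Restricting $H$ to $C \subseteq F$ then ensures that when picking $x_{n+1}$, one only has to respect the verdicts $f(y)$ of the finitely many vertices $y \in \bigcup_{i \leq n}N(x_i)$ (finite because each $x_i \in F$), and $\bst$ guarantees a valid $x_{n+1}$ exists. If you want to rescue a construction in your style, you would essentially need to precompute the cohesive decomposition before picking any element of $H$, rather than interleaving the commitment of vertices with the construction of $H$.
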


In Theorem~\ref{thm-RSg} and throughout this work, a \emph{graph} is a simple graph, that is, an undirected graph without loops or multiple edges.  Furthermore, we consider only countable graphs whose vertices are sets of natural numbers.

Compare Theorem~\ref{thm-RSg} to Ramsey's theorem for pairs and two colors, which in terms of graphs is phrased as follows.

\begin{Theorem}
Every countably infinite graph $G$ contains an infinite subset $H$ that is \emph{homogeneous} for $G$, i.e., is either a \emph{clique} (every pair of distinct vertices from $H$ is adjacent) or an independent set (no pair of distinct vertices from $H$ is adjacent).
\end{Theorem}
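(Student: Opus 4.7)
The plan is to use the standard nested-selection proof. Starting from $V_0 = V(G)$, I would build by recursion on $i \in \Nb$ a decreasing chain of infinite vertex sets $V_0 \supseteq V_1 \supseteq V_2 \supseteq \cdots$, together with chosen vertices $v_i \in V_i$ and colors $c_i \in \{0,1\}$ (thinking of $1$ as ``adjacent'' and $0$ as ``not adjacent''). At stage $i$, given the infinite set $V_i$, pick any $v_i \in V_i$. The remaining infinite set $V_i \setminus \{v_i\}$ splits into the neighbors of $v_i$ and the non-neighbors of $v_i$; at least one of the two pieces must be infinite. Let $c_i$ record which piece is infinite (choosing arbitrarily if both are), and let $V_{i+1}$ be that infinite piece.

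The key structural property is that for every $i < j$ we have $v_j \in V_{i+1}$, and therefore $v_i$ and $v_j$ are adjacent in $G$ iff $c_i = 1$. Hence, once we restrict attention to any set of indices on which the $c_i$ are constant, the corresponding $v_i$'s form a homogeneous subset of $G$ (a clique if the constant value is $1$, an independent set if it is $0$).

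To finish, I would invoke the infinite pigeonhole principle to obtain a color $c \in \{0,1\}$ and an infinite set $I \subseteq \Nb$ with $c_i = c$ for all $i \in I$, and then set $H = \{v_i : i \in I\}$. Since the map $i \mapsto v_i$ is injective by construction (as $v_j \in V_{j} \subseteq V_{i+1} \subseteq V_i \setminus \{v_i\}$ for $j > i$), $H$ is infinite, and by the previous paragraph $H$ is homogeneous.

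There is no real obstacle here; this is the textbook argument, and I have included it only to set the stage for the comparison with Theorem~\ref{thm-RSg}. The only subtle point, which is of course central to the reverse-mathematical theme of the paper, is that deciding at each stage which of the two pieces of $V_i \setminus \{v_i\}$ is infinite is not effective; formalizing the construction inside a weak base theory requires arithmetical comprehension (or a suitable weaker principle together with $\bst$), but at the level of the statement itself no such care is needed.
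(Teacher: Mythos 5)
Your proof is correct; this is the standard nested-selection argument for Ramsey's theorem for pairs (build a pre-homogeneous sequence $(v_i, c_i)$ by repeatedly shrinking to an infinite side, then apply the pigeonhole principle to the colors $c_i$). The paper states this result without proof, as it is the classical theorem being set up for comparison with the Rival--Sands theorem, so there is nothing to compare against; your added remark that deciding which side is infinite is non-effective, so the argument needs arithmetical comprehension (or $\rt^2_2$ itself) to formalize, is accurate and consistent with the paper's reverse-mathematical framing.
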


The \emph{Rival--Sands theorem} is thus an analog of Ramsey's theorem for pairs which prescribes the relation between \emph{every} vertex of the graph and the distinguished set $H$.  In other words, the Rival--Sands theorem provides information about vertices both \emph{inside} and \emph{outside} $H$.  This is why we describe the Rival--Sands theorem as an inside/outside Ramsey theorem.  In the words of Rival and Sands:
\begin{quote}
While Ramsey's result completely describes the adjacency structure of the distinguished subgraph, it provides no information about those edges which join vertices inside the subgraph to vertices external to it.

Our main result is, in a sense, a trade-off:  it provides information about the adjacency structure of an arbitrary vertex, with respect to the vertices of a distinguished infinite subgraph; on the other hand, the information that it provides about the adjacency structure of vertices internal to the subgraph is incomplete.~\cite{RivalSands}
\end{quote}
The trade-off is necessary.  In~\cite{RivalSands}*{Section~3}, Rival and Sands give an example of a graph in which no infinite homogeneous set satisfies the conclusion of their theorem.

We analyze the axiomatic and computational strength of the Rival--Sands theorem from the perspective of \emph{reverse mathematics} and the \emph{Weihrauch degrees}.  Ramsey's theorem for pairs is studied extensively in both of these contexts, which we introduce further in Sections~\ref{sec-RMPrelim} and~\ref{sec-WPrelim}.  The Rival--Sands theorem was inspired by Ramsey's theorem for pairs, and the two statements look very similar, so it is natural to ask how their strengths compare.

We analyze the axiomatic strength of the Rival--Sands theorem from the perspective of reverse mathematics in Section~\ref{sec-RMRS}.  \emph{Reverse mathematics} is a foundational program initiated by H.\ Friedman~\cite{Friedman} designed to answer questions of the sort \emph{What axioms of second-order arithmetic are required to prove a given classical theorem of countable mathematics?}  Reasoning takes place over a base theory called $\rca$, which corresponds to recursive mathematics.  Theorems are classified according to what extra axioms must be added to $\rca$ in order to prove them.  See Section~\ref{sec-RMPrelim} for further introduction.

Theorem~\ref{thm-ACA_RS_eq} shows that the Rival--Sands theorem is equivalent to the axiom system $\aca$ over $\rca$.  On the other hand, Ramsey's theorem for pairs is strictly weaker than $\aca$ by a classic result of Seetapun and Slaman~\cite{SeetapunSlaman}.  Therefore the axiomatic strength of the Rival--Sands theorem is strictly greater than that of Ramsey's theorem for pairs.

With input from Jeffry Hirst and Steffen Lempp, we also consider a weakened \emph{inside only} form of the Rival--Sands theorem, in which the conclusion need only hold for the vertices of $H$, not all vertices of $G$:  every infinite graph $G$ contains an infinite subset $H$ such that every vertex of $H$ is adjacent to precisely none, one, or infinitely many vertices of $H$.  This so-called \emph{weak Rival--Sands theorem} is a trivial consequence of Ramsey's theorem for pairs because homogeneous sets satisfy the conclusion of the weak Rival--Sands theorem.  Degree-theoretically, the weak Rival--Sands theorem is weaker than Ramsey's theorem for pairs.  The Turing jump of an infinite graph $G$ suffices to produce an $H$ as in the conclusion of the weak Rival--Sands theorem (see Proposition~\ref{prop-wRSgCompLoc}), but it does not necessarily suffice to produce a homogeneous set as in the conclusion of Ramsey's theorem for pairs.  Axiomatically, however, Theorem~\ref{thm-RT_WRSG_eq} shows that the weak Rival--Sands theorem and Ramsey's theorem for pairs are equivalent over $\rca$.  We may therefore think of the weak Rival--Sands theorem as being a weaker formalization of Ramsey's theorem for pairs.  We hope that this equivalence will prove useful to future studies of Ramsey's theorem for pairs.

The original proof of Theorem~\ref{thm-RSg} by Rival and Sands consists of intricate yet elementary reasoning that can be formalized in $\aca$ with a little engineering.  Instead of proceeding along these lines, we give a short, satisfying new proof using cohesive sets.  The reversal of the Rival--Sands theorem is unsatisfying, however, because $\aca$ is too coarse to see the difference between the computational strength used to implement the proof and the computational strength extracted by the reversal.  To produce the set $H$, our proof of the Rival--Sands theorem uses (after some refinement) a function of $\pa$ degree relative to the double Turing jump of $G$, whereas the reversal need only code a single Turing jump of $G$ into $H$.  Moreover, the reversal can be implemented using only \emph{locally finite} graphs, that is, graphs in which each vertex has only finitely many neighbors.

We therefore turn to the \emph{Weihrauch degrees} to give a precise characterization of the Rival--Sands theorem's computational strength.  What has come to be known as \emph{Weihrauch reducibility} was introduced by Weihrauch in~\cites{WeihrauchDegrees, WeihrauchTTE} and is used to compare mathematical theorems according to their uniform computational strength.  In this setting we think of theorems in terms of \emph{inputs and outputs} or \emph{instances and solutions}.  For example, for the Rival--Sands theorem, an input or instance is any countable graph $G$, and an output or solution to $G$ is any $H$ as in the conclusion of the theorem.  Theorems that can be phrased in terms of instances and solutions in this way are sometimes referred to as \emph{problems}.  Roughly, problem $\pb{P}$ \emph{Weihrauch reduces} to problem $\pb{Q}$ if there are uniform computational procedures converting instances of $\pb{P}$ into instances of $\pb{Q}$ and then converting solutions of the converted $\pb{Q}$-instance back into solutions to the original $\pb{P}$-instance.  Two problems are \emph{Weihrauch equivalent} if they Weihrauch reduce to each other.  Weihrauch equivalent problems are also said to have the same \emph{Weihrauch degree}.  Precise definitions and background are given in Section~\ref{sec-WPrelim}.

Section~\ref{sec-ADSvsRT1Weihrauch} addresses an apparent gap in the literature concerning Weihrauch degrees related to the ascending/descending sequence principle and the infinite pigeonhole principle (i.e., Ramsey's theorem for singletons).  Specifically, Theorem~\ref{thm-RT13vsADC} shows that the infinite pigeonhole principle for three colors Weihrauch reduces to the ascending/descending chain principle (a weaker version of the ascending/descending sequence principle studied in~\cite{AstorDzhafarovSolomonSuggs}); whereas Theorem~\ref{thm-RT15vsADS} shows that the infinite pigeonhole principle for five colors does not Weihrauch reduce to the ascending/descending sequence principle.  The question of the infinite pigeonhole principle for four colors remains open.  The ascending/descending sequence principle and the infinite pigeonhole principle are considered in relation to the Weihrauch degree of the weak Rival--Sands theorem in Section~\ref{sec-wRSgWeihrauch}.

We analyze the Rival--Sands theorem in the Weihrauch degrees in Section~\ref{sec-WRS}.  Theorem~\ref{thm-RSGequivWKLjj} shows that the Rival--Sands theorem is Weihrauch equivalent to the double jump of weak K\"{o}nig's lemma, which is the problem of finding an infinite path through an infinite binary tree described via a $\mbf{\Delta}^0_3$-approximation.  To our knowledge, the Rival--Sands theorem is the first ordinary mathematical theorem shown to exhibit exactly this strength.  Furthermore, by~\cite{BrattkaRakotoniaina}*{Corollary~4.18}, the double jump of weak K\"{o}nig's lemma is also Weihrauch equivalent to the parallelization of Ramsey's theorem for pairs, which is the problem of solving a countable sequence of Ramsey's theorem instances.  Thus, in terms of uniform computational strength, solving one instance of the Rival--Sands theorem exactly corresponds to solving countably many instances of Ramsey's theorem for pairs simultaneously in parallel.

We position the weak Rival--Sands theorem in the Weihrauch degrees in Section~\ref{sec-wRSgWeihrauch}.  We find that the uniform computational strength of the weak Rival--Sands theorem is much less than that of Ramsey's theorem for pairs, despite the two theorems being equivalent over $\rca$.  For example, the ascending/descending sequence principle, and in fact the weaker stable ascending/descending chain principle of~\cite{AstorDzhafarovSolomonSuggs}, does not Weihrauch reduce to the weak Rival--Sands theorem.  The diagonally non-recursive principle does not Weihrauch reduce to the weak Rival--Sands theorem either.  This gives a further sense in which the weak Rival--Sands theorem is a weaker formalization of Ramsey's theorem for pairs.  Nonetheless, the weak Rival--Sands theorem does exhibit non-trivial uniform computational strength.  The infinite pigeonhole principle for any finite number of colors and the cohesiveness principle both Weihrauch reduce to the weak Rival--Sands theorem.  Theorem~\ref{thm-wRSgWeiLoc} summarizes the location of the weak Rival--Sands theorem in the Weihrauch degrees.

The main results of this work are the following.
\begin{itemize}
\item The Rival--Sands theorem is equivalent to $\aca$ over $\rca$ (Theorem~\ref{thm-ACA_RS_eq}).  The forward direction of this result gives a new proof of the Rival--Sands theorem for countable graphs.

\medskip

\item The weak Rival--Sands theorem is equivalent to Ramsey's theorem for pairs over $\rca$ (Theorem~\ref{thm-RT_WRSG_eq}, with Hirst and Lempp).

\medskip

\item The infinite pigeonhole principle for three colors Weihrauch reduces to the ascending/descending chain principle (Theorem~\ref{thm-RT13vsADC}), but the infinite pigeonhole principle for five colors does not Weihrauch reduce to the ascending/descending sequence principle (Theorem~\ref{thm-RT15vsADS}).

\medskip

\item The Rival--Sands theorem is Weihrauch equivalent to the double jump of weak K\"{o}nig's lemma (Theorem~\ref{thm-RSGequivWKLjj}) and is therefore also Weihrauch equivalent to the parallelization of Ramsey's theorem for pairs.  To our knowledge, the Rival--Sands theorem is the first example of a natural theorem being equivalent to the double jump of weak K\"{o}nig's lemma.

\medskip

\item The diagonally non-recursive principle and the stable ascending/descending chain principle do not Weihrauch reduce to the weak Rival--Sands theorem, but the infinite pigeonhole principle and the cohesiveness principle do (Theorem~\ref{thm-wRSgWeiLoc}).  The weak Rival--Sands theorem may therefore be thought of as a weaker formalization of Ramsey's theorem for pairs.
\end{itemize}

We make a few final remarks concerning the Rival--Sands theorem.  First, Rival and Sands prove their theorem as stated above for graphs of all infinite cardinalities.  However, their proof produces a countable $H$ regardless of the cardinality of $G$.  Gavalec and Vojt\'{a}\v{s}~\cite{GavalecVojtas} examine possible generalizations of the Rival--Sands theorem to uncountable graphs $G$, where $|H| = |G|$ is also required in the conclusion.

Second, Rival and Sands observe that by restricting the class of graphs under consideration, it is possible to improve the conclusion of their theorem and obtain statements of the following form.  Every infinite graph $G$ from a certain class contains an infinite subset $H$ such that every vertex of $G$ is adjacent to none, or infinitely many, vertices of $H$.  The class they consider is the class of comparability graphs of partial orders of finite width, and the theorem they obtain is the following.

\begin{Theorem}[Rival and Sands~\cite{RivalSands}]\label{thm-RSpo}
Every infinite partially ordered set $P$ of finite width contains an infinite chain $C$ such that every element of $P$ is comparable with none, or infinitely many, of the elements of $C$.
\end{Theorem}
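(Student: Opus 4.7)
My plan is to proceed by induction on the width $k$ of $P$. The base case $k=1$ is trivial: $P$ is itself a chain. For the inductive step with $k \geq 2$, Dilworth's theorem and pigeonhole produce an infinite chain in $P$, and Ramsey's theorem for linear orders allows us to pass to an infinite sub-chain $E = \{c_0 < c_1 < c_2 < \cdots\}$ of order type $\omega$. For each $x \in P$, transitivity forces the set $\{c \in E : x > c\}$ to be empty, a finite initial segment of $E$, or all of $E$, and the set $\{c \in E : x < c\}$ to be empty or cofinite. Hence the only problematic pattern is when $x$ is comparable with positively but only finitely many elements of $E$, meaning $x > c_0, \ldots, c_{n_x}$ for some $n_x$ and $x$ is incomparable with $c_j$ for every $j > n_x$; let $V$ denote the set of these problematic elements.

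The key structural observation is that $V$ has width at most $k-1$: any antichain $S \subseteq V$ together with $c_n$ for $n > \max_{x \in S} n_x$ remains an antichain in $P$, since each $x \in S$ is incomparable with $c_n$. If $V$ is finite, then $\bigcup_{x \in V} \{c_0, \ldots, c_{n_x}\}$ is a finite subset of $E$, and removing these elements from $E$ yields the desired chain $C$ directly. Otherwise, apply the induction hypothesis to $V$, viewed as a poset of width at most $k-1$, to obtain an infinite chain $C_V \subseteq V$ on which the conclusion of Theorem~\ref{thm-RSpo} holds for every element of $V$.

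It remains to verify that $C_V$ satisfies the required condition for every $x \in P$, not merely for $x \in V$. A case analysis by the type of $x$ with respect to $E$, combined with the observation that any chain $v_0 < v_1 < v_2 < \cdots$ in $V$ has non-decreasing levels $n_{v_0} \leq n_{v_1} \leq \cdots$ in $E$, handles the various sub-cases automatically: if the levels along $C_V$ are unbounded then an element $x = c_j \in E$ is automatically below cofinitely many $v_i$, and analogous transitivity arguments dispose of elements below all of $E$, incomparable with all of $E$, or sitting between two consecutive elements of $E$. The main obstacle will be the case of an element $x$ lying above every element of $E$: for such $x$, the set $\{v \in C_V : x > v\}$ is an initial segment of $C_V$, and could be a finite non-empty initial segment, giving a finite positive comparability count with $C_V$. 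This problematic subset is itself of width at most $k-1$ (by the same antichain-plus-$v_N$ argument that bounded $V$), so a further finite modification of $C_V$, or a second application of the induction hypothesis together with a compatibility argument, should close the case.
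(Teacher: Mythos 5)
The paper does not prove Theorem~\ref{thm-RSpo}; it is quoted from Rival and Sands, and its logical analysis is deferred to~\cite{Fiori-CaronesMarconeShaferSolda}, so I can only assess your argument on its own terms. The skeleton of your induction on width is reasonable, and the pieces up through the inductive call on $V$ are correct: the set $V$ really has width at most $k-1$, the finite-$V$ case is handled correctly, and your analysis that, after obtaining $C_V$, the only remaining bad elements of $P$ are those lying above every element of $E$ is also right (elements incomparable with $E$, below cofinitely many of $E$, or in $V$ all come out comparable with none or infinitely many of $C_V$, exactly as you say). But the step you flag as \emph{the main obstacle} is a genuine gap, and neither of the two remedies you sketch closes it.

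If the set $W$ of elements above all of $E$ that are problematic for $C_V$ is infinite, removing finitely many vertices from $C_V$ cannot help, and applying the induction hypothesis to $W$ merely relocates the problem: the resulting chain $C_W$ has its own problematic elements, again above all of $E$ but now outside $W$, and this recursion need not terminate. Consider the width-$2$ poset built from pairwise-disjoint $\omega$-chains $E = \{c_0 < c_1 < \cdots\}$, $V = \{v_0 < v_1 < \cdots\}$, and $W_i = \{w^i_0 < w^i_1 < \cdots\}$ for $i \geq 1$, where $v_m$ lies above exactly $c_0, \dots, c_m$ and is incomparable with $c_j$ for $j > m$; $w^1_m$ lies above every element of $E$, above exactly $v_0, \dots, v_m$, and is incomparable with $v_j$ for $j > m$; and inductively, for $i \geq 2$, $w^i_m$ lies above every element of $E$, of $V$, and of $W_1, \dots, W_{i-2}$, lies above exactly $w^{i-1}_0, \dots, w^{i-1}_m$, and is incomparable with $w^{i-1}_j$ for $j > m$. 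This poset has width $2$, and since each level is itself a chain, your recursion produces $V$, then $W_1$, then $W_2$, and so on forever, each chain defeated by the next level. The theorem is nonetheless true here: the diagonal chain $\{c_0, v_0, w^1_0, w^2_0, \dots\}$ works, since every element of the poset is incomparable with at most one of its members. Your recursion is not set up to find a chain of that interleaved shape. Some genuinely terminating mechanism is needed, for example choosing $E$ at the outset so that nothing in $P$ lies above every element of $E$ (a cofinal $\omega$-subchain of a maximal chain achieves this when such a subchain exists), together with a treatment of the remaining configurations; as written, the proposed second application of the induction hypothesis simply loops.
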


In work with Alberto Marcone~\cite{Fiori-CaronesMarconeShaferSolda}, we analyze the reverse mathematics of this Rival--Sands theorem for partial orders.

\section{Reverse mathematics preliminaries}\label{sec-RMPrelim}

We remind the reader of the essential details of mathematics in second-order arithmetic, the systems $\rca$ and $\aca$, and combinatorial principles related to Ramsey's theorem for pairs.  For further information, the standard reference for reverse mathematics is Simpson's~\cite{SimpsonSOSOA}.  A robust account of the reverse mathematics of Ramsey's theorem for pairs can be found in~\cite{HirschfeldtBook}.

The language of second-order arithmetic is two-sorted.  Objects of the first sort are thought of as natural numbers, and objects of the second sort are thought of as sets of natural numbers.  Lower-case letters $a$, $b$, $c$, $x$, $y$, $z$, etc.\ usually denote first-order variables ranging over the first sort, and capital letters $A$, $B$, $C$, $X$, $Y$, $Z$, etc.\ usually denote second-order variables ranging over the second sort.  The constant, function, and relation symbols are $0$, $1$, $<$, $+$, $\times$, and $\in$.  As is customary, we use $\Nb$ as an abbreviation to denote the first-order part of whatever structure is under consideration.

The axioms of $\rca$ (for \emph{recursive comprehension axiom}) are
\begin{itemize}
\item a first-order sentence expressing that the natural numbers form a discretely ordered commutative semi-ring with identity; 

\medskip

\item the \emph{$\Sigma^0_1$ induction scheme} (denoted $\iso$), which consists of the universal closures (by both first- and second-order quantifiers) of all formulas of the form
\begin{align*}
[\varphi(0) \andd \forall n(\varphi(n) \imp \varphi(n+1))] \imp \forall n \varphi(n),
\end{align*}
where $\varphi$ is $\Sigma^0_1$; and 

\medskip

\item the \emph{$\Delta^0_1$ comprehension scheme}, which consists of the universal closures (by both first- and second-order quantifiers) of all formulas of the form
\begin{align*}
\forall n (\varphi(n) \biimp \psi(n)) \imp \exists X \forall n(n \in X \biimp \varphi(n)),
\end{align*}
where $\varphi$ is $\Sigma^0_1$, $\psi$ is $\Pi^0_1$, and $X$ is not free in $\varphi$.
\end{itemize}
The `$0$' in `$\rca$' refers to the restriction of the induction scheme to $\Sigma^0_1$ formulas.  $\rca$ is able to implement the typical codings of finite sets and sequences of natural numbers as single natural numbers.  See~\cite{SimpsonSOSOA}*{Section~II.2} for details on how this is done.

The axioms of $\aca$ (for \emph{arithmetical comprehension axiom}) are obtained by adding the \emph{arithmetical comprehension scheme} to $\rca$.  The arithmetical comprehension scheme consists of the universal closures of all formulas of the form
\begin{align*}
\exists X \forall n(n \in X \biimp \varphi(n)),
\end{align*}
where $\varphi$ is an arithmetical formula in which $X$ is not free.

When proving that some statement implies $\aca$ over $\rca$, a common strategy is to show that the range of an arbitrary injection exists and then invoke the following well-known lemma.

\begin{Lemma}[{\cite{SimpsonSOSOA}*{Lemma~III.1.3}}]\label{lem-ACAinjection}
The following are equivalent over $\rca$.
\begin{enumerate}
\item $\aca$.

\smallskip

\item If $f \colon \Nb \imp \Nb$ is an injection, then there is a set $X$ such that $\forall n(n \in X \biimp \exists s(f(s) = n))$.
\end{enumerate}
\end{Lemma}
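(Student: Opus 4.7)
The direction (1) $\Rightarrow$ (2) is immediate: given an injection $f \colon \Nb \imp \Nb$, the formula $\exists s(f(s) = n)$ is $\Sigma^0_1$ with $f$ as a parameter, hence arithmetical, so the set $X = \{n : \exists s(f(s) = n)\}$ exists by the arithmetical comprehension scheme.

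For (2) $\Rightarrow$ (1), the plan is to first deduce $\Sigma^0_1$ comprehension (with set parameters) from (2), and then bootstrap to full arithmetical comprehension in the standard way, by induction on formula complexity using closure under complementation and iteratively applying $\Sigma^0_1$ comprehension with lower-level arithmetical sets as parameters. So I would fix a $\Sigma^0_1$ formula $\varphi(n) \equiv \exists s \, \theta(s, n)$ with $\theta$ bounded, and let $A = \{n : \varphi(n)\}$ be the set whose existence we want. The main obstacle will be that the range of an injection $f \colon \Nb \imp \Nb$ is necessarily infinite in $\rca$, while $A$ could be finite. To sidestep the finite-versus-infinite dichotomy, I would interleave $A$ with an obviously infinite, decidable set. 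Specifically, introduce the $\Sigma^0_1$ formula
\begin{align*}
\chi(m) \equiv (m \text{ is even}) \orr (m \text{ is odd} \andd \exists s \, \theta(s, (m-1)/2)),
\end{align*}
so that $C = \{m : \chi(m)\}$ contains every even number (hence is provably infinite in $\rca$) and satisfies $A = \{k : 2k+1 \in C\}$, from which $A$ is recoverable by $\Delta^0_1$ comprehension once $C$ exists.

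To produce $C$ via (2), I would pass to the $\Delta^0_0$ witness set
\begin{align*}
W = \{\la m, s \ra : (m \text{ is even} \andd s = 0) \orr (m \text{ is odd} \andd \theta(s, (m-1)/2) \andd \forall s' < s \, \neg\theta(s', (m-1)/2))\},
\end{align*}
which is in bijection with $C$ via the first coordinate (each $m \in C$ has exactly one witnessing $s$ in $W$) and provably infinite in $\rca$ because $\la 2j, 0 \ra \in W$ for every $j$. Enumerating $W$ in the canonical order of $\Nb^2$ by primitive recursion and projecting to the first coordinate then yields an injection $f \colon \Nb \imp \Nb$ whose range is exactly $C$. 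Applying (2) produces $C$, and the conceptual point of the construction is that the only real obstruction, namely that $A$ might be finite, is bypassed by padding with an evidently infinite decidable set, leaving behind an injection whose totality and infinite range are provable in $\rca$.
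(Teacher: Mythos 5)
Your proof is correct, though note that the paper gives no argument of its own here: it cites the lemma from Simpson's book, so there is no in-paper proof to compare against. For the hard direction, after reducing to $\Sigma^0_1$ comprehension, the standard textbook argument defines the injection in one shot rather than by enumerating a witness set: for a $\Sigma^0_1$ formula $\exists s\,\theta(s,n)$, set $f(\la n,s\ra) = 2n$ if $s$ is the least witness for $\theta(\cdot,n)$, and $f(\la n,s\ra) = 2\la n,s\ra + 1$ otherwise; this $f$ is total and injective by inspection, and $n \in A \biimp 2n \in \ran(f)$, with no padding or enumeration step required. Your version instead forms the $\Delta^0_0$ witness set $W$, pads it with even-indexed dummies so that $W$ is provably infinite, and then enumerates $W$ in increasing order to manufacture the injection. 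This works, but it silently invokes the (routine, though not completely free) fact that an infinite $\Delta^0_1$ set is the range of a strictly increasing total function in $\rca$, whose proof uses the infinitude of $W$ to bound the successive minimizations and $\Sigma^0_1$ induction to conclude totality; the direct construction sidesteps that lemma entirely. There is also a small imprecision in your closing sentence: an injection from $\Nb$ to $\Nb$ always has infinite range, so the genuine obstruction was never that the range might be finite, but rather that without padding the natural witness-indexing map would fail to have domain all of $\Nb$. Your padding fixes exactly this totality issue, and the construction is sound; the bootstrap from $\Sigma^0_1$ comprehension to full arithmetical comprehension by external induction on quantifier complexity, with each level's set serving as a parameter for the next, is the standard one and is correct.
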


For a set $X \subseteq \Nb$, $[X]^2$ denotes the set of two-element subsets of $X$.  It is often convenient to think of $[X]^2$ as $\{(x,y) : x,y \in X \andd x < y\}$.  A function $c \colon [\Nb]^2 \imp \{0,1\}$ is called a \emph{$2$-coloring of pairs}.  \emph{Ramsey's theorem for pairs and two colors} states that for every $2$-coloring of pairs, there is an infinite set $H$ such that all pairs made from the members $H$ have the same color.  Such an $H$ is called \emph{homogeneous} for $c$.  Ramsey's theorem for pairs and two colors famously decomposes into the conjunction of its \emph{stable version} and the \emph{cohesive principle}.  Stable Ramsey's theorem for pairs is in turn equivalent to the principle $\Dd^2_2$, which formalizes the assertion that for every set that can be approximated in a $\bm{\Delta}^0_2$ way, there is either an infinite subset of the set or an infinite subset of the set's compliment.

\begin{Definition}{\ }
\begin{itemize}
\item \emph{Ramsey's theorem for pairs and two colors} $(\rt^2_2)$ is the following statement.  For every $c \colon [\Nb]^2 \imp \{0,1\}$, there is an infinite $H \subseteq \Nb$ and an $i \in \{0,1\}$ such that for all $(x,y) \in [H]^2$, $c(x,y) = i$.

\medskip

\item A coloring $c \colon [\Nb]^2 \imp \{0,1\}$ is \emph{stable} if $\lim_s c(n,s)$ exists for every $n \in \Nb$.

\medskip

\item \emph{Stable Ramsey's theorem for pairs and two colors} $(\srt^2_2)$ is $\rt^2_2$ restricted to stable colorings.

\medskip

\item $\Dd^2_2$ is the following statement.  For every function $f \colon \Nb \times \Nb \imp \{0,1\}$ such that $\lim_s f(n,s)$ exists for every $n \in \Nb$, there are an infinite $H \subseteq \Nb$ and an $i \in \{0,1\}$ such that for all $n \in H$, $\lim_s f(n,s) = i$.
\end{itemize}
\end{Definition}

\begin{Definition}{\ }
\begin{itemize}
\item For sets $A, C \subseteq \Nb$, $C \subseteq^* A$ denotes that $C \setminus A$ is finite, and $A =^* C$ denotes that both $C \subseteq^* A$ and $A \subseteq^* C$.

\medskip

\item Let $\vec{A} = (A_i : i \in \Nb)$ be a sequence of subsets of $\Nb$.  A set $C \subseteq \Nb$ is called \emph{cohesive for $\vec{A}$} (or simply \emph{$\vec{A}$-cohesive}) if $C$ is infinite and for each $i \in \Nb$, either $C \subseteq^* A_i$ or $C \subseteq^* \ol{A_i}$.

\medskip

\item $\coh$ is the following statement.  For every sequence $\vec{A}$ of subsets of $\Nb$, there is a set $C \subseteq \Nb$ that is cohesive for $\vec{A}$.
\end{itemize}
\end{Definition}

\begin{Proposition}[\cite{CholakJockuschSlaman}*{Lemma~7.10 and Lemma~7.11}; see also~\cite{CholakJockuschSlamanCorrigendum}]\label{prop-rt22decomp}
{\ }
\begin{enumerate}
\item\label{it-SRT22D22Equiv} $\rca \vdash \srt^2_2 \biimp \Dd^2_2$.

\smallskip

\item $\rca \vdash \rt^2_2 \biimp (\coh \andd \srt^2_2)$.

\smallskip

\item\label{it-D22Decomp} Hence $\rca \vdash \rt^2_2 \biimp (\coh \andd \Dd^2_2)$.
\end{enumerate}
\end{Proposition}

For our purposes, it is helpful to notice that in $\coh$, the cohesive set $C$ can be taken to be a subset of any given infinite set $Z$.

\begin{Lemma}\label{lem-COH_helper}
$\rca + \coh$ proves the following statement.  For every sequence $\vec{A}$ of subsets of $\Nb$ and every infinite $Z \subseteq \Nb$, there is a set $C \subseteq Z$ that is cohesive for $\vec{A}$.
\end{Lemma}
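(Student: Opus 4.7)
The plan is to reduce the statement to ordinary $\coh$ by pulling back the sequence $\vec{A}$ along the enumeration of $Z$. Given an infinite $Z \subseteq \Nb$, in $\rca$ we can form the strictly increasing enumeration $z \colon \Nb \imp Z$ of the elements of $Z$. The function $z$ is $\Delta^0_1$ definable from $Z$, so it is available in $\rca$. Define a new sequence $\vec{B} = (B_i : i \in \Nb)$ by setting $B_i = \{n \in \Nb : z(n) \in A_i\}$. This sequence exists in $\rca$ because it is $\Delta^0_1$ uniformly in $\vec{A}$ and $z$.

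Next, I would apply $\coh$ to $\vec{B}$ to obtain an infinite set $D \subseteq \Nb$ that is cohesive for $\vec{B}$. Let $C = \{z(n) : n \in D\}$, which is a set available in $\rca$ and is an infinite subset of $Z$ since $z$ is a strictly increasing injection and $D$ is infinite.

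It then remains to verify that $C$ is $\vec{A}$-cohesive. Fix $i \in \Nb$. By $\vec{B}$-cohesiveness of $D$, either $D \subseteq^* B_i$ or $D \subseteq^* \ol{B_i}$. In the first case, for all but finitely many $n \in D$ we have $z(n) \in A_i$, so $C \subseteq^* A_i$. In the second case, for all but finitely many $n \in D$ we have $z(n) \notin A_i$, so $C \subseteq^* \ol{A_i}$. Hence $C$ is cohesive for $\vec{A}$ and contained in $Z$, as required.

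There is no substantive obstacle here; the only care needed is to check that each step — forming the enumeration $z$, the pullback sequence $\vec{B}$, and the image set $C$ — is available in $\rca$, which is immediate since all three constructions are uniformly $\Delta^0_1$.
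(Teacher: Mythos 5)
Your proof is correct and is essentially the paper's own argument: pull $\vec{A}$ back along the increasing enumeration of $Z$, apply $\coh$, and push the resulting cohesive set forward (with the pushforward available by $\Delta^0_1$ comprehension because the enumeration is increasing). The only cosmetic difference is the naming of the intermediate sets.
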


\begin{proof}
Let $Z \subseteq \Nb$ be infinite, and let $f \colon \Nb \imp Z$ be an increasing bijection.  Let $\vec{A} = (A_i : i \in \Nb)$ be a sequence of subsets of $\Nb$, and define another sequence of sets $\vec{B} = (B_i : i \in \Nb)$ by $B_i = f^{-1}(A_i)$ for each $i$.  By $\coh$, let $C$ be cohesive for $\vec{B}$.  Then $D = f(C)$ is cohesive for $\vec{A}$, and in this case $f(C) = \{n : (\exists m \leq n)(m \in C \andd f(m) = n)\}$ exists by $\Delta^0_1$ comprehension because $f$ is increasing.
\end{proof}

A useful consequence of $\rt^2_2$ is the \emph{ascending/descending sequence principle}, which states that in every infinite linear order there is either an infinite ascending sequence or an infinite descending sequence.  The principle $\ads$ in turn proves both $\coh$ and the \emph{$\Sigma^0_2$ bounding scheme} $(\bst)$.

\begin{Definition}\label{def-ADS}
Let $(L, <_L)$ be a linear order.
\begin{itemize}
\item A set $S \subseteq L$ is an \emph{ascending sequence} in $L$ if $(\forall x, y \in S)(x < y \imp x <_L y)$.

\smallskip

\item A set $S \subseteq L$ is a \emph{descending sequence} in $L$ if $(\forall x, y \in S)(x < y \imp y <_L x)$.

\smallskip

\item The \emph{ascending/descending sequence principle} ($\ads$) is the following statement.  For every infinite linear order $(L, <_L)$, there is an infinite $S \subseteq L$ that is either an ascending sequence or a descending sequence.
\end{itemize}
\end{Definition}

In the above definition, $\ads$ is stated as it is in~\cite{HirschfeldtShore}.  In practice, $\rca$ proves that any sequence $x_0, x_1, x_2, \dots$ of distinct numbers can be thinned to an increasing subsequence $x_{i_0} < x_{i_1} < x_{i_2} < \cdots$, in which case $\{x_{i_n} : n \in \Nb\}$ exists as a set.  Therefore, $\ads$ is equivalent to the following statement over $\rca$.  For every infinite linear order $(L, <_L)$, there is an infinite sequence $x_0, x_1, x_2 \dots$ of elements of $L$ such that either $x_0 <_L x_1 <_L x_2 <_L \cdots$ or $x_0 >_L x_1 >_L x_2 >_L \cdots$.

\begin{Definition}
The \emph{$\Sigma^0_2$ bounding scheme} $(\bst)$ consists of the universal closures of all formulas of the form
\begin{align*}
\forall a[(\forall n < a)(\exists m)\varphi(n,m) \imp \exists b(\forall n < a)(\exists m < b)\varphi(n,m)],
\end{align*}
where $\varphi$ is $\Sigma^0_2$ and $a$ and $b$ are not free in $\varphi$.
\end{Definition}

Over $\rca$, $\bst$ is equivalent to the \emph{$\Pi^0_1$ bounding scheme} $(\bpo)$, which is the analogous scheme in which $\varphi$ is required to be $\Pi^0_1$ instead of $\Sigma^0_2$ (see~\cite{HajekPudlak}*{Section~I.2}).  $\bst$ is closely related to the infinite pigeonhole principle (i.e., Ramsey's theorem for singletons).

\begin{Definition}\label{def-RT1SOA}
{\ }
\begin{itemize}
\item For every $k > 0$, \emph{Ramsey's theorem for singletons and $k$ colors} $(\rt^1_k)$ is the following statement.  For every $c \colon \Nb \imp \{0, 1, \dots, k-1\}$, there are an infinite $H \subseteq \Nb$ and an $i \in \{0, 1, \dots, k-1\}$ such that for all $x \in H$, $c(x) = i$.

\medskip

\item \emph{Ramsey's theorem for singletons} $(\rt^1_{<\infty})$ is the following statement.  For every $k > 0$ and every $c \colon \Nb \imp \{0, 1, \dots, k-1\}$, there are an infinite $H \subseteq \Nb$ and an $i \in \{0, 1, \dots, k-1\}$ such that for all $x \in H$, $c(x) = i$.
\end{itemize}
\end{Definition}

The system $\rca$ proves $\rt^1_k$ for each fixed, standard $k > 0$.  However, $\rt^1_{<\infty}$ is equivalent to $\bst$ over $\rca$ by a well-known theorem of Hirst~\cite{HirstThesis}.  Via the correspondence with $\rt^1_{<\infty}$, one also shows that $\bst$ is equivalent to the statement ``every finite union of finite sets is finite,'' in the sense that if $X_0, X_1, \dots, X_{k-1}$ are $k$ subsets of $\Nb$ for some $k$ and $X_i$ has an upper bound for each $i < k$, then the union $\bigcup_{i<k}X_i$ also has an upper bound.

As mentioned above, $\rt^2_2$ proves $\ads$, which proves both $\coh$ and $\bst$.

\begin{Proposition}[{\cite{HirschfeldtShore}*{Proposition~2.10 and Proposition~4.5}}]\label{prop-rt22andADS}
{\ }
\begin{enumerate}
\item $\rca + \rt^2_2 \vdash \ads$.

\smallskip

\item\label{it-ADSimpCOH} $\rca + \ads \vdash \coh$.

\smallskip

\item\label{it-ADSimpBST} $\rca + \ads \vdash \bst$.
\end{enumerate}
\end{Proposition}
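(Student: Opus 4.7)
The plan is to prove the three implications separately, with (3) being the most delicate.

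For (1), given an infinite linear order $(L, <_L)$ with $L \subseteq \Nb$, I would define a $2$-coloring $c : [L]^2 \to \{0,1\}$ by $c(x,y) = 0$ when the natural order of $\Nb$ agrees with $<_L$ on $\{x,y\}$, and $c(x,y) = 1$ otherwise. An infinite $c$-homogeneous $H \subseteq L$ produced by $\rt^2_2$ is then an ascending sequence in $(L, <_L)$ when $0$-homogeneous and a descending sequence when $1$-homogeneous.

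For (2), given $\vec{A} = (A_i : i \in \Nb)$, I would construct a linear order on $\Nb$ whose monotone sequences are $\vec{A}$-cohesive. Assign to each $n \in \Nb$ the type $\tau_n = \la \chi_{A_0}(n), \ldots, \chi_{A_{n-1}}(n) \ra \in 2^n$, and for distinct $x$ and $y$ declare $x <_L y$ by lexicographic comparison of $\tau_x$ and $\tau_y$ on their common prefix $[0, \min(x,y))$, breaking ties by the natural order of $\Nb$. A case analysis on whether each pairwise comparison is decided by a discrepancy in types or by the tiebreak verifies that $<_L$ is a linear order. Applying $\ads$ yields an infinite monotone sequence $S$: for every fixed length $I$, the restrictions $\tau_s \rst I$ (for $s \in S$ with $s > I$) are monotone in the lex order on $2^I$ and hence eventually constant, so $\tau_s(i)$ stabilizes at every coordinate $i$, which is precisely $S \subseteq^* A_i$ or $S \subseteq^* \ol{A_i}$ for each $i$. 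The set $S$ exists by the $\rca$-reformulation of $\ads$ noted after Definition~\ref{def-ADS}.

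For (3), I would derive from $\ads$ the infinite pigeonhole principle for arbitrarily many colors, which is equivalent to $\bst$ over $\rca$ by a theorem of Hirst. Given $k \in \Nb$ and $f : \Nb \to k$, define a linear order on $\Nb$ by $x <_L y$ iff $f(x) < f(y)$, or $f(x) = f(y)$ and $x < y$ in the natural order. Applying $\ads$, an infinite descending sequence would force the $f$-values to be non-increasing; once they stabilize at some $j$, the tail becomes an infinite descending sequence of naturals in $f^{-1}(j)$, contradicting the well-foundedness of $\Nb$. An infinite ascending sequence forces $f$-values to be non-decreasing and bounded by $k$, so they stabilize at some $j < k$, and the corresponding tail gives an infinite ascending sequence in $f^{-1}(j)$ under the natural order, witnessing that $f^{-1}(j)$ is infinite. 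The main obstacle is to carry out the stabilization arguments in $\rca$ without circularly invoking the very pigeonhole principle being proved; this is handled by iteratively extracting candidate witnesses via $\iso$, using that the values lie in the bounded set $\{0, \ldots, k-1\}$ to terminate the extraction after at most $k$ steps.
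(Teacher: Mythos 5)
Since the paper only cites Hirschfeldt--Shore for this proposition (repeating one piece of the argument in a Weihrauch setting in Proposition~\ref{prop-CADSCOH}), I am comparing your proposal against the cited source and against that proposition. Your three constructions are exactly the standard ones: the coloring recording agreement between $<$ and $<_L$ for (1); the lexicographic order on type vectors $\tau_n = \la \chi_{A_0}(n), \dots, \chi_{A_{n-1}}(n) \ra$ (with natural-order tiebreak, i.e.\ the prefix-is-smaller convention) for (2), which is the same order used in Proposition~\ref{prop-CADSCOH}; and the order $x <_L y \Leftrightarrow f(x) < f(y) \lor (f(x)=f(y) \land x < y)$ for (3), after which you pass through $\rt^1_{<\infty}$ and Hirst's theorem as the paper itself describes. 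All the stabilization arguments you flag (a monotone $\{0,\dots,k-1\}$- or $2^I$-valued sequence is eventually constant) are ordinary $\iso$ arguments and go through in $\rca$; the ``circularity'' worry you raise in (3) is not actually an issue. The one genuine difference worth noting is in (2): Hirschfeldt and Shore prove $\cads + \bst \vdash \coh$ and then combine it with $\ads \vdash \cads$ and $\ads \vdash \bst$, whereas you apply $\ads$ directly to the same linear order and exploit that a genuine monotone sequence (rather than merely a stable suborder) makes the cohesiveness verification immediate, with no need for $\bst$. That is a small streamlining, not a different method, but it is a clean way to present the implication.
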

\noindent
On the other hand, $\rca + \ads \nvdash \rt^2_2$;  indeed, $\rca + \ads \nvdash \srt^2_2$~\cite{HirschfeldtShore}.

Much of the work in the reverse mathematics of combinatorics has been inspired by major open, and now answered, questions concerning the strength of $\rt^2_2$.  The first breakthrough was by Seetapun and Slaman~\cite{SeetapunSlaman}, who showed that $\rca + \rt^2_2 \nvdash \aca$.  More recently, Liu~\cite{LiuRT22vsWKL} showed that $\rca + \rt^2_2 \nvdash \wklz$ and that $\rca + \rt^2_2 \nvdash \wwklz$~\cite{LiuRT22vsWWKL}.  We now also know that $\rca + \srt^2_2 \nvdash \rt^2_2$, which was first shown by Chong, Slaman, and Yang via a non-$\omega$-model~\cite{ChongSlamanYang}, and again shown by Monin and Patey~\cite{MoninPatey} via an $\omega$-model.  Furthermore, Patey and Yokoyama showed that $\rt^2_2$ is $\Pi^0_3$-conservative over $\rca$~\cite{PateyYokoyama}.

\section{The reverse mathematics of the Rival--Sands theorem}\label{sec-RMRS}

We give our reverse mathematics analysis of the Rival--Sands theorem.  In their original formulation of Theorem~\ref{thm-RSg},  Rival and Sands add that we may also require every vertex of $H$ to be adjacent to none or to infinitely many other vertices of $H$.  We call the version of the  theorem with this additional requirement the \emph{refined} version of the Rival--Sands theorem.  We show that the Rival--Sands theorem and its refined version are equivalent to $\aca$ over $\rca$ and that these equivalences remain valid when the theorem is restricted to locally finite graphs.  We then analyze the inside-only weak Rival--Sands theorem and its refined version, where the conclusion need only hold for the vertices of $H$.  Although the weak Rival--Sands theorem and its refined version look potentially weaker than $\rt^2_2$,  we show that they are indeed equivalent to $\rt^2_2$ over $\rca$.

We consider only simple graphs.  Thus we represent a graph $G = (V, E)$ by sets $V \subseteq \Nb$ and $E \subseteq [V]^2$, where we think of $[V]^2$ as encoded as $\{(x,y) : x, y \in V \andd x < y\}$. We slightly abuse notation by writing $(x,y) \in E$ as an abbreviation for $(\min\{x,y\}, \max\{x,y\}) \in E$, meaning that $\{x,y\}$ is an edge of the graph.  For a graph $G = (V,E)$ and an $x \in V$, $N(x) = \{y \in V : (x,y) \in E\}$ denotes the set of \emph{neighbors} of $x$.  Recall that a set $H \subseteq V$ is a \emph{clique} if $(x,y) \in E$ for every pair of distinct $x, y \in H$ and that $H$ is an \emph{independent set} if $(x, y) \notin E$ for every $x, y \in H$.

The precise formulations of the Rival--Sands theorem and its weak version that we consider are given below.  Our terminology and notation emphasize that these are formulations of the Rival--Sands theorem \emph{for graphs} to distinguish Theorem~\ref{thm-RSg} from Theorem~\ref{thm-RSpo}.  The reverse mathematics of the Rival--Sands theorem for partial orders (i.e., Theorem~\ref{thm-RSpo}) is studied in~\cite{Fiori-CaronesMarconeShaferSolda}.
\begin{Definition}{\ }
\begin{itemize}
\item \emph{The Rival--Sands theorem for graphs} $(\rsg)$ is the following statement.  For every infinite graph $G = (V,E)$, there is an infinite $H \subseteq V$ such that for every $x \in V$, either $H \cap N(x)$ is infinite or $|H \cap N(x)| \leq 1$.

\medskip

\item \emph{The Rival--Sands theorem for graphs, refined} $(\rsgr)$ is the following statement.  For every infinite graph $G = (V,E)$, there is an infinite $H \subseteq V$ such that
\begin{itemize}
\item for every $x \in V$, either $H \cap N(x)$ is infinite or $|H \cap N(x)| \leq 1$; and moreover

\smallskip

\item for every $x \in H$, either $H \cap N(x)$ is infinite or $H \cap N(x) = \emptyset$.
\end{itemize}

\medskip

\item \emph{The weak Rival--Sands theorem for graphs} $(\wrsg)$ is the following statement.  For every infinite graph $G = (V,E)$, there is an infinite $H \subseteq V$ such that for every $x \in H$, either $H \cap N(x)$ is infinite or $|H \cap N(x)| \leq 1$.

\medskip

\item \emph{The weak Rival--Sands theorem for graphs, refined} $(\wrsgr)$ is the following statement.  For every infinite graph $G = (V,E)$, there is an infinite $H \subseteq V$ such that for every $x \in H$, either $H \cap N(x)$ is infinite or $H \cap N(x) = \emptyset$.
\end{itemize}
\end{Definition}

As mentioned in the introduction, the original proof of $\rsg$ by Rival and Sands involves detailed elementary reasoning that can be formalized in $\aca$ with some engineering.  We give a quick new proof using cohesive sets.

\begin{Theorem}\label{thm-RSgInACA}
$\aca \vdash \text{The Rival--Sands theorem for graphs, refined $(\rsgr)$}$.
\end{Theorem}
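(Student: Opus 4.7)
The plan is to work in $\aca$, to apply the cohesive principle $\coh$ (a consequence of $\aca$) to reduce the refined Rival--Sands problem to a stable combinatorial problem, and then to extract the desired $H$ by combining $\rt^2_2$ (also a consequence of $\aca$) with arithmetical post-processing.

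Given an infinite graph $G=(V,E)$, I would apply $\coh$ to the sequence $\vec{A}=(N(v):v\in V)$ to obtain an infinite cohesive $C\subseteq V$, so that for every $v\in V$ either $C\subseteq^{*}N(v)$ or $C\subseteq^{*}\overline{N(v)}$. By arithmetical comprehension, partition $V$ into $V_1=\{v:C\subseteq^{*}N(v)\}$ and $V_0=\{v:C\cap N(v)\text{ is finite}\}$. For any infinite $H\subseteq C$, $|H\cap N(v)|$ is infinite for every $v\in V_1$, so the refined Rival--Sands conditions hold automatically at every $v\in V_1$. It therefore suffices to find an infinite $H$ satisfying (a) $|H\cap N(v)|\le 1$ for every $v\in V_0$ and (b) $H\cap N(v)=\emptyset$ for every $v\in H\cap V_0$.

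To produce such an $H$, I would form the arithmetically definable coloring $f:[C]^2\to\{0,1\}$ with $f(\{a,b\})=1$ iff some $v\in V_0$ is adjacent to both $a$ and $b$, and apply $\rt^2_2$ to obtain an infinite $f$-homogeneous $H_0\subseteq C$. In the color-$0$ subcase, (a) is immediate for $H_0$, and moreover the induced graph $G[H_0\cap V_0]$ has maximum degree at most $1$ (since each of its vertices has at most one $H_0$-neighbor), so it decomposes as a matching together with isolated vertices. From this, extract an infinite independent set $H\subseteq H_0\cap V_0$ by picking one vertex from each matched pair along with every isolated vertex, or take $H=H_0\cap V_1$ if $H_0\cap V_0$ happens to be finite; the resulting $H$ then satisfies both (a) and (b).

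The main obstacle is the color-$1$ subcase, in which every pair of $H_0$-elements shares a $V_0$-neighbor, so the desired $H$ may not lie inside $H_0$. I would handle it by passing to the arithmetically definable infinite set $W\subseteq V_0$ given as the range of a choice function $\psi:[H_0]^2\to V_0$ selecting a common neighbor for each pair; $W$ is infinite because $\psi^{-1}(w)\subseteq[C\cap N(w)]^2$ is finite for every $w\in V_0$. Repeating the whole construction with $W$ in place of $C$---a fresh application of $\coh$ within $W$, a recomputed partition of $V$ into $V_0$-/$V_1$-analogues, and a rerun of the $\rt^2_2$-analysis---reduces to the color-$0$ subcase, yielding the required $H$. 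Formalizing this iteration inside $\aca$ and verifying that the resulting $H$ satisfies the refined Rival--Sands conditions with respect to the original graph $G$ is the combinatorial heart of the argument.
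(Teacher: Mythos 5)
Your first three paragraphs---applying $\coh$ to $(N(v):v\in V)$, splitting $V$ into $V_0$ and $V_1$, and using $\rt^2_2$ on the ``shared $V_0$-neighbour'' colouring together with the extraction of an independent set in the colour-$0$ subcase---are sound, and they offer a genuine alternative to the paper's proof, which instead builds $H$ directly by an inductive construction using $\bst$ rather than invoking $\rt^2_2$. But the colour-$1$ subcase is a genuine gap: the claim that re-running the construction with $W$ in place of $C$ ``reduces to the colour-$0$ subcase'' is false, and the missing ingredient is exactly the paper's case split on $F=\{x\in V:N(x)\text{ is finite}\}$. To see the iteration fail, take the layered graph $V=\bigsqcup_{k\ge 0}L_k$ with $L_k=\{v^k_n:n\in\omega\}$ and $v^k_n$ adjacent to $v^{k-1}_m$ exactly when $m\le n$. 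Every vertex has infinite degree, so $F=\emptyset$. Taking $C=L_0$ is cohesive, with $V_0=V$, and every pair in $[L_0]^2$ shares a neighbour in $L_1\subseteq V_0$, so the colouring is constantly $1$ and $W\subseteq L_1$. In the next round $V_1'=L_0$, $V_0'\supseteq L_1\cup L_2\cup\cdots$, and every pair in $[C']^2$ shares a neighbour in $L_2\subseteq V_0'$, so colour $1$ recurs and $W'\subseteq L_2$; the iteration just climbs forever through the layers. Meanwhile the intended solution on this graph is trivially $H=V$, exactly what the paper's ``$F$ finite'' case produces.

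The fix is to mirror the paper's opening move. First check whether $F$ is finite. If so, take $H=V\setminus\bigcup_{x\in F}N(x)$ and you are done. If $F$ is infinite, use Lemma~\ref{lem-COH_helper} to obtain $C\subseteq F$. With this restriction, the colour-$1$ case can never arise: fix any $a_0\in H_0\subseteq C\subseteq F$; then $N(a_0)$ is finite, so $N(a_0)\cap V_0$ is finite, and if every $b\in H_0$ shared a $V_0$-neighbour with $a_0$ then $H_0\setminus\{a_0\}\subseteq\bigcup_{v\in N(a_0)\cap V_0}\bigl(H_0\cap N(v)\bigr)$, a finite union of finite sets (each $H_0\cap N(v)\subseteq C\cap N(v)$ is finite because $v\in V_0$), contradicting that $H_0$ is infinite. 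So once $C\subseteq F$, $\rt^2_2$ necessarily delivers colour~$0$, the iteration is unnecessary, and your colour-$0$ argument completes the proof.
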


\begin{proof}
Let $G = (V,E)$ be an infinite graph.  Let $F = \{x \in V : \text{$N(x)$ is finite}\}$, which may be defined in $\aca$.  There are two cases, depending on whether or not $F$ is finite.  If $F$ is finite, simply take
\begin{align*}
H = V \setminus \bigcup_{x \in F}N(x).
\end{align*}
In this case, the union $\bigcup_{x \in F}N(x)$ is finite because we work in $\aca$, $\aca$ proves $\bst$, and $\bst$ proves that a finite union of finite sets is finite as discussed following Definition~\ref{def-RT1SOA}.  Thus $H$ contains almost every member of $V$.  Consider an $x \in V$.  If $x \in F$, then $H \cap N(x) = \emptyset$.  If $x \notin F$, then $N(x)$ is infinite, so $H \cap N(x)$ is also infinite.  So in this case, for every $x \in V$, either $H \cap N(x)$ is infinite or $H \cap N(x) = \emptyset$.

Suppose instead that $F$ is infinite.  $\aca$ proves $\coh$, so by Lemma~\ref{lem-COH_helper}, let $C \subseteq F$ be cohesive for the sequence $(N(x) : x \in V)$.  As we work in $\aca$, we may also define a function $f \colon V \imp \{0,1\}$ by
\begin{align*}
f(x) = 
\begin{cases}
0 & \text{if $C \subseteq^* \ol{N(x)}$}\\
1 & \text{if $C \subseteq^* N(x)$}.
\end{cases}
\end{align*}
Define $H = \{x_0, x_1, \dots\} \subseteq C \subseteq F$ by the following procedure.  Let $x_0$ be the first element of $C$.  Suppose that $x_0 < x_1 < \cdots < x_n$ have been defined.  Let $Y = \bigcup_{i \leq n}N(x_i)$, which is finite by $\bst$ because each $x_i$ is in $F$.  For each $y \in Y$, if $f(y) = 0$, then $C \subseteq^* \ol{N(y)}$; and if $f(y) = 1$, then $C \subseteq^* N(y)$.  Thus by $\bst$, there is a bound $b$ such that for all $y \in Y$ and all $z \in C$ with $z > b$, if $f(y) = 0$ then $z \in \ol{N(y)}$ and if $f(y) = 1$, then $z \in N(y)$.  Choose $x_{n+1}$ to be the first member of $C \setminus Y$ with $x_n < x_{n+1}$ and such that, for every $y \in Y$, if $f(y) = 0$, then $x_{n+1} \in \ol{N(y)}$; and if $f(y) = 1$, then $x_{n+1} \in N(y)$.  This completes the construction.

To verify that $H$ satisfies the conclusion of $\rsgr$, consider a $v \in V$.  If $H \cap N(v) \neq \emptyset$, let $m$ be least such that $x_m \in N(v)$ (and hence also least such that $v \in N(x_m)$).  If $f(v) = 0$, then every $x_n$ with $n > m$ is chosen from $\ol{N(v)}$, so $|H \cap N(v)| = 1$.  If $f(v) = 1$, then every $x_n$ with $n > m$ is chosen from $N(v)$, so $H \cap N(v)$ is infinite.  Thus for every $v \in V$, either $H \cap N(v)$ is infinite or $|H \cap N(v)| \leq 1$.  Furthermore, if $v \in H$, then $H \cap N(v) = \emptyset$ because if $m < n$, then $x_n$ is chosen from $\ol{N(x_m)}$.
\end{proof}

Before giving the reversal for the Rival--Sands theorem, we observe that $\rca$ suffices to prove its refined version for highly recursive graphs.

\begin{Definition}
For a set $X \subseteq \Nb$, let $\Pf(X)$ denote the set of (coded) finite subsets of $X$.
\begin{itemize}
\item A graph $G = (V,E)$ is \emph{locally finite} if $N(x)$ is finite for each $x \in V$.

\smallskip

\item A graph $G = (V,E)$ is \emph{highly recursive} if it is locally finite, and additionally there is a function $b \colon V \imp \Pf(V)$ such that $b(x) = N(x)$ for each $x \in V$.
\end{itemize}
\end{Definition}

It would be more natural to use the word \emph{bounded} in place of \emph{highly recursive}, but there is already an unrelated notion of a \emph{bounded graph}.  Every highly recursive graph is locally finite by definition.  That every locally finite graph is highly recursive requires $\aca$ in general.

\begin{Proposition}\label{prop-RSgHighRec}
$\rca \vdash \text{The Rival--Sands theorem for highly recursive graphs, refined}$.
\end{Proposition}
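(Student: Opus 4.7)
Plan: Because $G$ is highly recursive, every $N(x)$ is finite and given explicitly by the supplied function $b$. Consequently $H \cap N(x)$ is finite for every $x \in V$ and every $H$, so the ``infinite'' alternative in the conclusion of $\rsgr$ never arises. The refined conclusion therefore collapses to two requirements: (i) $|H \cap N(x)| \leq 1$ for every $x \in V$, and (ii) $H \cap N(x) = \emptyset$ for every $x \in H$, i.e.\ $H$ is an independent set.

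Construction: I would build an increasing sequence $x_0 < x_1 < x_2 < \cdots$ in $V$ by primitive recursion inside $\rca$, taking $x_0$ to be the least element of $V$. Given $x_0, \ldots, x_n$, form the two coded finite sets $Y_n = \bigcup_{i \leq n} b(x_i)$ and $Z_n = \bigcup_{v \in Y_n} b(v)$, and let $x_{n+1}$ be the least $y \in V$ with $y > x_n$ and $y \notin Y_n \cup Z_n$. Such a $y$ exists because $V$ is infinite while $Y_n \cup Z_n$ is finite. The step is computable from $V$ and $b$, so $\rca$ supports the recursion, and the set $H = \{x_n : n \in \Nb\}$ exists by $\Delta^0_1$ comprehension since the generating function is strictly increasing.

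Verification: If $x_i, x_j \in H$ were adjacent with $i < j$, then $x_j \in N(x_i) \subseteq Y_{j-1}$, contradicting the choice $x_j \notin Y_{j-1}$, so $H$ is independent. If some $v \in V$ had two neighbors $x_i, x_j \in H$ with $i < j$, then $v \in N(x_i) \subseteq Y_{j-1}$, whence $x_j \in N(v) \subseteq Z_{j-1}$, again contradicting the choice of $x_j$. Thus $H$ witnesses $\rsgr$ for $G$.

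Main obstacle: There is no substantive difficulty beyond bookkeeping. The key point is that the explicit neighborhood function $b$ lets us evaluate the finite ``forbidden'' set $Y_n \cup Z_n$ at each stage directly as a coded element of $\Pf(V)$, so the greedy construction is primitive recursive and sidesteps both the cohesion step and the appeal to $\bst$ that the proof of Theorem~\ref{thm-RSgInACA} required.
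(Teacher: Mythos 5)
Your proof is correct and is essentially the same as the paper's: both greedily select $x_{n+1}$ to avoid all vertices at distance $\leq 2$ from the previously chosen $x_0, \dots, x_n$, using the explicit bound $b$ to make the forbidden set a computable finite set at each stage. The only cosmetic difference is that the paper's forbidden set also explicitly includes $\{x_i : i \leq n\}$, which in your version is rendered redundant by the requirement $x_{n+1} > x_n$.
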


\begin{proof}
Let $G = (V,E)$ be a highly recursive infinite graph, and let $b \colon V \imp \Pf(V)$ be such that $b(x) = N(x)$ for all $x \in V$.  Define an infinite $H = \{x_0, x_1, \cdots \} \subseteq V$ with $x_0 < x_1 < \cdots$ as follows.  Let $x_0$ be the first member of $V$.  Given $x_0 < x_1 < \cdots < x_n$, let $Y$ be the finite set
\begin{align*}
Y = \{x_i : i \leq n\} \cup \bigcup_{i \leq n}b(x_i) \cup \bigcup_{\substack{i \leq n\\y \in b(x_i)}}b(y)
\end{align*}
consisting of all vertices that are of distance $\leq\! 2$ from an $x_i$ with $i \leq n$.  Notice that $Y$ is a union of finitely many coded finite sets, so $\rca$ suffices to prove that $Y$ is finite (and in fact is a coded finite set itself).  Choose $x_{n+1}$ to be the first member of $V \setminus Y$ with $x_n < x_{n+1}$.  Then no two distinct members of $H$ are of distance $\leq\! 2$, so $H$ satisfies the conclusion of $\rsgr$.
\end{proof}

\begin{Theorem}\label{thm-ACA_RS_eq}
The following are equivalent over $\rca$.
\begin{enumerate}
\item\label{it-aca} $\aca$.

\smallskip

\item\label{it-rsg} The Rival--Sands theorem for graphs $(\rsg)$.

\smallskip

\item\label{it-rsgr} The Rival--Sands theorem for graphs, refined $(\rsgr)$.

\smallskip

\item\label{it-rsgLocFin} The Rival--Sands theorem for locally finite graphs.

\smallskip

\item\label{it-rsgrLocFin} The Rival--Sands theorem for locally finite graphs, refined.
\end{enumerate}
\end{Theorem}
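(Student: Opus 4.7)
The strategy is to close the cycle of implications. Theorem~\ref{thm-RSgInACA} already provides $(1) \Rightarrow (3)$, and the implications $(3) \Rightarrow (2)$, $(3) \Rightarrow (5)$, $(2) \Rightarrow (4)$, and $(5) \Rightarrow (4)$ are immediate from the definitions, since the refined conclusion is stronger than the unrefined one and the locally finite case is a special case of the general one. Thus the only nontrivial direction to prove is $(4) \Rightarrow (1)$: the Rival--Sands theorem restricted to locally finite graphs implies $\aca$ over $\rca$.

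By Lemma~\ref{lem-ACAinjection}, for $(4) \Rightarrow (1)$ it suffices to show in $\rca$ plus (4) that $\ran(f)$ exists for every injection $f \colon \Nb \to \Nb$. Given such an $f$, the plan is to construct, recursively from $f$, a locally finite graph $G_f$ (with $V$ and $E$ both $\Delta^0_1$ in $f$), apply (4) to obtain an infinite $H$ as in the Rival--Sands conclusion, and then decode $\ran(f)$ from $H$ via a $\Delta^0_1(H, f)$ formula, whence $\ran(f)$ exists by $\Delta^0_1$ comprehension. The key simplification driving the construction is that in a locally finite graph the alternative ``$H \cap N(x)$ infinite'' cannot occur, so the Rival--Sands conclusion collapses to ``$|H \cap N(x)| \leq 1$ for every $x \in V$''; equivalently, $H$ is at pairwise graph-distance at least $3$ in each connected component.

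The graph $G_f$ will be built from a spine (such as the infinite path on $\Nb$) together with, for each $s$, a small $f$-dependent gadget attached near the vertex $f(s)$, arranged so that the local structure near each $n$ looks qualitatively different depending on whether $n \in \ran(f)$. The main obstacle is the gadget design, because three requirements must be met simultaneously: the construction must be $\Delta^0_1$ in $f$ (we cannot appeal to $\Sigma^0_1$ sets such as $\ran(f)$); the graph must remain locally finite; and the encoding must be \emph{forced}, meaning that every $\rsg$-solution (not merely some favorable one) reveals the answer for every $n$. The last point is the most delicate, as a naive disjoint union of gadgets admits an $H$ concentrated on a single gadget that reveals nothing elsewhere; the spine must link the gadgets so that any infinite $H$ respecting the distance-$3$ constraint is forced to intersect infinitely many of them in a uniformly decodable pattern.
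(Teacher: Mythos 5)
Your reduction of the theorem to the single implication $(4) \Rightarrow (1)$, the appeal to Lemma~\ref{lem-ACAinjection}, and the observation that in a locally finite graph the Rival--Sands conclusion collapses to ``$|H \cap N(x)| \leq 1$ for every $x \in V$'' are all correct and match the paper's setup. However, you stop at precisely the point where the actual work begins: you describe a plan (a spine with attached gadgets encoding membership in $\ran(f)$) and then explicitly acknowledge that you have not solved the ``gadget design'' problem, identifying its difficulty without resolving it. That is a genuine gap; what you have is a correct framing of the problem, not a proof.

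The construction the paper uses dissolves the difficulty entirely and requires no gadgets or spine. Given an injection $f \colon \Nb \to \Nb$, form the \emph{inversion graph} $G = (\Nb, E)$ with $E = \{(v,s) : v < s \andd f(s) < f(v)\}$. This is $\Delta^0_1$ in $f$, and it is locally finite because $f$ is injective (for fixed $v$, only finitely many $s > v$ can have $f(s) < f(v)$). Applying $\rsg$ gives $H = \{x_0 < x_1 < \cdots\}$ with $|H \cap N(v)| \leq 1$ for all $v$. The decoding is a counting argument: if $f(s) = n$, then among the $s$ vertices $v < s$, the ones \emph{not} adjacent to $s$ are exactly those with $f(v) \leq f(s) = n$, and by injectivity there are at most $n$ such $v$. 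Since at most one neighbor of $s$ lies in $H$, at most $n+1$ elements of $H$ are $< s$, hence $x_{n+1} \geq s$. Therefore $n \in \ran(f)$ if and only if $(\exists s \leq x_{n+1})(f(s) = n)$, which is $\Delta^0_1$ in $f \oplus H$. The crucial point your sketch misses is that you do not need the solution $H$ to locally ``reveal'' membership near each $n$ via a forced pattern; instead, the sparseness of $H$ forced by the ``$\leq 1$'' condition, combined with the global density of edges in the inversion graph, yields a \emph{modulus of convergence} bound $x_{n+1}$ for the search for a preimage of $n$. Any attempt along your gadget route would have to replicate this bounding phenomenon, and it is far cleaner to build it in directly.
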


\begin{proof}
Notice that (\ref{it-rsgr}) trivially implies (\ref{it-rsg}), (\ref{it-rsgLocFin}), and (\ref{it-rsgrLocFin}).  Therefore (\ref{it-aca}) implies (\ref{it-rsg})--(\ref{it-rsgrLocFin}) by Theorem~\ref{thm-RSgInACA}.  Notice also that (\ref{it-rsg}), (\ref{it-rsgr}), and (\ref{it-rsgrLocFin}) each trivially imply (\ref{it-rsgLocFin}).  Thus to finish the proof, it suffices to show that (\ref{it-rsgLocFin}) implies (\ref{it-aca}) over $\rca$.

By Lemma~\ref{lem-ACAinjection}, it suffices to show that $\rsg$ for locally finite graphs implies that the ranges of injections exist.  Thus let $f \colon \Nb \imp \Nb$ be an injection.  Let $G  = (\Nb, E)$ be the graph where $E = \{(v,s) \in [\Nb]^2 : (v < s) \andd (f(s) < f(v))\}$, which exists by $\Delta^0_1$ comprehension.  To see that $G$ is locally finite, consider a $v \in \Nb$.  The function $f$ is injective, so there are only finitely many $s > v$ with $f(s) < f(v)$.  Therefore there are only finitely many $s > v$ that are adjacent to $v$.

Apply $\rsg$ for locally finite graphs to $G$ to get an infinite $H \subseteq \Nb$ such that $|H \cap N(x)| \leq 1$ for every $x \in \Nb$.  Enumerate $H$ in increasing order as $x_0 < x_1 < x_2 < \cdots$.  We show that, for any $n \in \Nb$, if $\exists s (f(s) = n)$, then $(\exists s \leq x_{n+1})(f(s) = n)$.  Suppose that $f(s) = n$.  Then $s$ is adjacent to all but at most $n$ of the vertices $v < s$.  This is because if $v < s$, then $(v, s) \notin E$ if and only if $f(v) \leq f(s)$.  The function $f$ is an injection, so there are at most $n = f(s)$ many vertices $v < s$ with $f(v) \leq f(s)$.  Thus there are at most $n$ vertices $v < s$ to which $s$ is not adjacent.  At most one neighbor of $s$ is in $H$, and therefore there are at most $n+1$ many vertices in $H$ that are $<\! s$.  Thus $x_{n+1} \geq s$.  Thus $n$ is in the range of $f$ if and only if $(\exists s \leq x_{n+1})(f(s) = n)$.  So the range of $f$ exists by $\Delta^0_1$ comprehension.
\end{proof}

We finish this section by showing that both the weak Rival--Sands theorem and its refined version are equivalent to $\rt^2_2$ over $\rca$.  This was proved in collaboration with Jeffry Hirst and Steffen Lempp.

\begin{Theorem}[Fiori-Carones, Hirst, Lempp, Shafer, Sold\`a]\label{thm-RT_WRSG_eq}
The following are equivalent over $\rca$.
\begin{enumerate}
\item\label{it-rt} $\rt^2_2$.

\smallskip

\item\label{it-wrsg} The weak Rival--Sands theorem $(\wrsg)$.

\smallskip

\item\label{it-wrsgr} The weak Rival--Sands theorem, refined $(\wrsgr)$.
\end{enumerate}
\end{Theorem}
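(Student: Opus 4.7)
The implications (1) $\Rightarrow$ (3) $\Rightarrow$ (2) are straightforward. For (1) $\Rightarrow$ (3), given a graph $G = (V, E)$, apply $\rt^2_2$ to the 2-coloring $c(x, y) = 1 \Leftrightarrow (x, y) \in E$; any infinite homogeneous $H$ is either a clique, in which case $H \cap N(x) = H \setminus \{x\}$ is infinite for every $x \in H$, or an independent set, in which case $H \cap N(x) = \emptyset$. Either way $H$ witnesses $\wrsgr$. The implication (3) $\Rightarrow$ (2) is immediate.

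The substantive direction is (2) $\Rightarrow$ (1). The plan is to prove both $\srt^2_2$ and $\coh$ from $\wrsg$ over $\rca$, and then invoke Proposition~\ref{prop-rt22decomp}. For $\wrsg \Rightarrow \srt^2_2$: given stable $c$, apply $\wrsg$ to $G = (\Nb, c^{-1}(1))$ to obtain an infinite $H$. Stability aligns cleanly with the $\wrsg$-dichotomy: a vertex $x \in H$ satisfies $|H \cap N(x)| \leq 1$ iff $\lim_s c(x, s) = 0$, and satisfies $|H \cap N(x)| = \infty$ iff $\lim_s c(x, s) = 1$, because a vertex with limit $0$ has only finitely many $1$-neighbors in all of $\Nb$, while a vertex with limit $1$ has cofinitely many $1$-neighbors in $H \cap (x, \infty)$. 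Consequently the set $H_1 = \{x \in H : (\exists y_1 < y_2 \in H)\, c(x, y_1) = c(x, y_2) = 1\}$ is $\Sigma^0_1$ in $H$ and picks out exactly the limit-$1$ vertices of $H$. If $H_1$ is infinite, $\rca$ thins it to a $\Delta^0_1$-definable infinite increasing subsequence $x_0 < x_1 < \cdots$, from which a greedy construction yields a $1$-homogeneous set, using the key fact that for each $x_i$ the set of $y$ with $c(x_i, y) = 1$ is cofinite. If $H_1$ is finite, then $H \setminus H_1$ is cofinite in $H$ and consists of limit-$0$ vertices, and a symmetric greedy construction produces a $0$-homogeneous set. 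In $\rca$ we run both constructions in parallel and prove at least one yields an infinite homogeneous set.

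For $\wrsg \Rightarrow \coh$: given a sequence $(A_n)_n$, define $G = (\Nb, E)$ with $(x, y) \in E$ iff there exists $n < \min(x, y)$ with $x \in A_n \not\Leftrightarrow y \in A_n$. Apply $\wrsg$ to obtain $H$. If $x \in H$ has $|H \cap N(x)| \leq 1$, then all but at most one $y \in H$ agrees with $x$ on every $A_n$ with $n < \min(x, y)$, so $H$ is eventually contained in $A_n$ or in $\overline{A_n}$ for every $n < x$. If the vertices with $|H \cap N(x)| \leq 1$ are cofinal in $H$, a thinning of $H$ is cohesive for $(A_n)_n$. To handle the case where only boundedly many such $x$ exist, we iterate by applying $\wrsg$ to the induced subgraph on the infinite-neighborhood vertices and recursing.

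The main obstacle throughout is that the $\wrsg$-dichotomy produces a $\Sigma^0_1 / \Pi^0_1$ partition of $H$ that $\rca$ cannot directly decide. The heart of the technical argument, especially in the $\srt^2_2$ step, is to run the two candidate greedy constructions (one for each color) in parallel within $\rca$ and use the $\wrsg$-structure to guarantee that at least one of them does not stall, without ever needing to identify in advance which side of the dichotomy is infinite.
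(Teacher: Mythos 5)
Your easy implications (1)$\Rightarrow$(3)$\Rightarrow$(2) are fine, and the overall plan of proving $\srt^2_2$ and $\coh$ from $\wrsg$ and then invoking $\rca \vdash \rt^2_2 \biimp (\srt^2_2 \andd \coh)$ is the right decomposition; the $\srt^2_2$ graph you use is also the same one the paper uses.  However, there are two genuine gaps.

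First, the color-$1$ greedy construction in your $\srt^2_2$ argument silently uses $\bst$.  To show the search for $x_{n+1}$ terminates, you need that $H \cap \bigcap_{i\leq n}N(x_i)$ is infinite, i.e.\ that the intersection of the $n+1$ cofinite sets $N(x_i)$ is cofinite; the witnessing formula ``$\forall y>b\,(c(x_i,y)=1)$'' is $\Pi^0_1$, so passing from ``for each $i\leq n$ there is a $b_i$'' to ``there is a common $b$'' is exactly an instance of $\bpo$, which over $\rca$ is $\bst$, and $\rca \nvdash \bst$.  Running the color-$0$ and color-$1$ constructions in parallel deals with not knowing which side of the dichotomy is infinite, but it does nothing for this termination issue, which lives entirely inside the color-$1$ branch.

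Second, your direct $\wrsg\Rightarrow\coh$ argument breaks in the case where only finitely many $x\in H$ have $|H\cap N(x)|\leq 1$.  In that case $H$ need not be cohesive (e.g.\ take $A_0=$ evens and every other $A_n=\emptyset$; then $H=\Nb$ is a legitimate $\wrsg$-solution to your graph but meets both $A_0$ and $\ol{A_0}$ infinitely), and the proposal to ``iterate by applying $\wrsg$ to the induced subgraph'' has no progress measure: the same non-cohesive $H$ can recur forever.  There is no termination argument, and $\rca$ cannot carry out an unbounded iteration of set applications of $\wrsg$ in any case.

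Both gaps are closed by the move you are missing, which is the heart of the paper's proof: show $\rca + \wrsg \vdash \ads$ directly.  Given a linear order $(L,<_L)$, apply $\wrsg$ to the graph with edge set $\{(x,y):x<y \andd x<_L y\}$ and do a three-way (exhaustive) case analysis on the resulting $H$; each case yields an ascending or a descending sequence, with no need for $\bst$ because the color-$0$-style cases bound $|H\cap\bigcup_{i\leq n}N(x_i)|$ by $n+1$ directly, and the color-$1$-style case exploits the extra structure coming from $<_L$.  Having $\ads$, you get $\coh$ and $\bst$ for free (Proposition~\ref{prop-rt22andADS}), and the $\bst$ now licenses exactly the termination step your $\srt^2_2$ construction needs.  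So the correct route is $\wrsg \Rightarrow \ads \Rightarrow (\coh \andd \bst)$, then $\wrsg + \bst \Rightarrow \srt^2_2$, then $\srt^2_2 + \coh \Rightarrow \rt^2_2$.
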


\begin{proof}
For an infinite graph $G$, every infinite homogeneous set satisfies the conclusion of $\wrsgr$, so (\ref{it-rt}) implies (\ref{it-wrsgr}).  Trivially (\ref{it-wrsgr}) implies (\ref{it-wrsg}).  It remains to show that (\ref{it-wrsg}) implies (\ref{it-rt}) over $\rca$.

We show that $\rca + \wrsg \vdash \ads \andd \Dd^2_2$, from which it follows that $\rca + \wrsg \vdash \rt^2_2$ by Proposition~\ref{prop-rt22decomp} item~(\ref{it-D22Decomp}) and Proposition~\ref{prop-rt22andADS} item~(\ref{it-ADSimpCOH}).  We start by showing that $\rca + \wrsg \vdash \ads$.

Let $L = (L, <_L)$ be an infinite linear order.  Let $G = (L, E)$ be the graph with $E = \{(x, y) \in [L]^2 : (x < y) \andd (x <_L y)\}$.  Let $H$ be as in the conclusion of $\wrsg$ for $G$.  Then for every $x \in H$, either $H \cap N(x)$ is infinite or $|H \cap N(x)| \leq 1$.

First suppose that $|H \cap N(x)| \leq 1$ for all $x \in H$.  Then $H$ has no $<_L$-minimum element.  Given $x \in H$, let $y_0, y_1 \in H$ be such that $x < y_0, y_1$.  Then at most one of $(x, y_0)$ and $(x, y_1)$ is in $E$, so either $y_0 <_L x$ or $y_1 <_L x$.  Define a descending sequence $x_0 >_L x_1 >_L x_2 >_L \cdots$ by choosing $x_0$ to be the first member of $H$ and by choosing each $x_{n+1}$ to be the first member of $H$ that is $<_L$-below $x_n$.

Now suppose that $H \cap N(x)$ is infinite for some $x \in H$, but further suppose that $|H \cap N(y)| \leq 1$ for all but finitely many $y \in H \cap N(x)$.  Let $b$ be a bound such that $|H \cap N(y)| \leq 1$ whenever $y \in H \cap N(x)$ and $y > b$.  Let $y_0 < y_1 < y_2 < \cdots$ enumerate in $<$-increasing order the elements of $H \cap N(x)$ that are $> b$.  Then $y_0 >_L y_1 >_L y_2 >_L \cdots$ is a descending sequence in $L$.  This is because if $y_n <_L y_{n+1}$ for some $n$, then $(y_n, y_{n+1}) \in E$, so both $x$ and $y_{n+1}$ are in $H \cap N(y_n)$, which is a contradiction.

Finally, suppose that there is an $x \in H$ with $H \cap N(x)$ infinite and furthermore that whenever $x \in H$ and $H \cap N(x)$ is infinite, then also $H \cap N(y)$ is infinite for infinitely many $y \in H \cap N(x)$.  We define an ascending sequence $x_0 <_L x_1 < x_2 <_L \cdots$ where $x_n \in H$ and $H \cap N(x_n)$ is infinite for each $n$.  Recall that for $x \in H$, $H \cap N(x)$ is infinite if and only if $|H \cap N(x)| \geq 2$ because $H$ satisfies the conclusion of $\wrsg$.  Let $x_0$ be any element of $H$ with $|H \cap N(x_0)| \geq 2$.  Given $x_n \in H$ with $|H \cap N(x_n)| \geq 2$, we know by assumption that there are infinitely many $y \in H \cap N(x_n)$ with $|H \cap N(y)| \geq 2$.  Let $\la y, w, z \ra$ be the first (code for a) triple with $y \in H \cap N(x_n)$, $x_n < y$, $w \neq z$, and $w, z \in H \cap N(y)$.  Then $x_n <_L y$ because $x_n < y$ and $(x_n,y) \in E$, so put $x_{n+1} = y$.  This completes the proof of $\ads$.

Now we show that $\rca + \wrsg \vdash \Dd^2_2$.  Let $f \colon \Nb \times \Nb \imp \{0,1\}$ be such that $\lim_s f(n,s)$ exists for every $n \in \Nb$, and let $G = (\Nb, E)$ be the graph with $E = \{(x,y) \in [\Nb]^2 : (x < y) \andd (f(x,y) = 1)\}$.  Let $H$ be as in the conclusion of $\wrsg$ for $G$.  Thus for every $x \in H$, $H \cap N(x)$ is infinite if and only if $|H \cap N(x)| \geq 2$.  Furthermore, for every $x \in \Nb$, either $\lim_s f(x,s) = 0$, in which case $N(x)$ is finite and hence $H \cap N(x)$ is finite; or $\lim_s f(x,s) = 1$, in which case $N(x)$ is co-finite and hence $H \cap N(x)$ is infinite.  Putting this together yields that for all $x \in H$, $\lim_s f(x,s) = 0$ if and only if $|H \cap N(x)| \leq 1$.

First suppose that there are only finitely many $x \in H$ with $|H \cap N(x)| \geq 2$.  Let $b$ be a bound such that $|H \cap N(x)| \leq 1$ whenever $x \in H$ and $x > b$, and let $K = \{x \in H : x > b\}$.  Then $K$ is infinite, and $\lim_s f(x,s) = 0$ for all $x \in K$.  Thus $K$ satisfies the conclusion of $\Dd^2_2$ for $f$.

Finally, suppose instead that there are infinitely many $x \in H$ with $|H \cap N(x)| \geq 2$.  Define an infinite set $K = \{x_0, x_1, \dots\}$, where $x_n \in H$ and $|H \cap N(x_n)| \geq 2$ for each $n$ as follows.  Let $x_0$ be the first element of $H$ with $|H \cap N(x_0)| \geq 2$.  Given $x_0 < x_1 < \cdots < x_n$, let $\la y, w, z \ra$ be the first (code for a) triple with $x_n < y$, $y \in H$, $w \neq z$, and $w, z \in H \cap N(y)$.  Then put $x_{n+1} = y$.  The set $K$ is infinite, and every $x \in K$ satisfies $x \in H$ and $|H \cap N(x)| \geq 2$.  Thus every $x \in K$ satisfies $\lim_s f(x,s) = 1$, so $K$ satisfies the conclusion of $\Dd^2_2$ for $f$.  This completes the proof of $\Dd^2_2$.
\end{proof}

\section{Weihrauch degrees preliminaries}\label{sec-WPrelim}

The Weihrauch degrees classify mathematical theorems in terms of their uniform computational content.  Many theorems from analysis, topology, and combinatorics have been studied in this setting.  We refer the reader to~\cite{BrattkaGherardiPaulySurvey} for an introduction to the subject.

We think in terms of multi-valued partial functions $\mc{F} \colon \wsubseteq \mc{X} \rra \mc{Y}$ between two sets (in this context called \emph{spaces}) $\mc{X}$ and $\mc{Y}$.  Here the `$\subseteq\! \mc{X}$' notation denotes that the domain of $\mc{F}$ is a subset of $\mc{X}$, and the `$\rra$' notation denotes that $\mc{F}$ is multi-valued.  Multi-valued partial functions $\mc{F}$ are thought of as representing mathematical problems.  Inputs $x \in \dom(\mc{F})$ are thought of as instances of the problem, and the possible outputs $y$ to $\mc{F}(x)$ (written $y \in \mc{F}(x)$) are thought of as the valid solutions to instance $x$.  Of course, a multi-valued partial function $\mc{F} \colon \wsubseteq \mc{X} \rra \mc{Y}$ is nothing more than a relation $\mc{F} \subseteq \mc{X} \times \mc{Y}$ (or a partial function $\mc{F} \colon \wsubseteq \mc{X} \imp \mc{P}(\mc{Y}) \setminus \{\emptyset\}$).  However, for working with computations it is helpful to think in terms of inputs and outputs in the way described above.  Henceforth we drop the word \emph{partial} for simplicity and refer to multi-valued partial functions as \emph{multi-valued functions}.

Any mathematical statement of the form $(\forall x \in \mc{X})(\varphi(x) \imp (\exists y \in \mc{Y})\psi(x,y))$ may be thought of as a multi-valued function $\mc{F} \colon \wsubseteq \mc{X} \rra \mc{Y}$ and hence as a mathematical problem.  The domain of $\mc{F}$ is $\{x \in \mc{X} : \varphi(x)\}$, and, for $x \in \dom(\mc{F})$, $\mc{F}(x)$ consists of all $y \in \mc{Y}$ such that $\psi(x,y)$.  For example, we may think of \emph{weak K\"{o}nig's lemma} as the multi-valued function $\wkl \colon \wsubseteq \mc{X} \rra \mc{Y}$ where $\mc{X}$ is the space of subtrees of $2^{<\omega}$, $\dom(\wkl)$ consists of the infinite trees in $\mc{X}$, and for $T \in \dom(\wkl)$, $\wkl(T)$ consists of all infinite paths through $T$.  Generally we present multi-valued functions in the style of the following definition.

\begin{Definition}
$\wkl$ is the following multi-valued function.
\begin{itemize}
\item Input/instance:  An infinite tree $T \subseteq 2^{<\omega}$.

\smallskip

\item Output/solution:  An $f \in 2^\omega$ that is a path through $T$.
\end{itemize}
\end{Definition}
Notice that we now use $\omega$ to denote the natural numbers.  This is because we consider only the standard natural numbers when working in the Weihrauch degrees.

For two multi-valued functions $\mc{F} \colon \wsubseteq \mc{X} \rra \mc{Y}$ and $\mc{G} \colon \wsubseteq \mc{W} \rra \mc{Z}$, $\mc{F}$ \emph{Weihrauch reduces} to $\mc{G}$ if there are uniform computational procedures for encoding any $\mc{F}$-instance $x \in \dom(\mc{F})$ into a $\mc{G}$-instance $w \in \dom(\mc{G})$ and decoding any $\mc{G}$-solution $z \in \mc{G}(w)$ along with the original $x$ into an $\mc{F}$-solution $y \in \mc{F}(x)$.  To make this notion precise, the points of spaces $\mc{X}$, $\mc{Y}$, $\mc{W}$, and $\mc{Z}$ must be represented in ways that Turing functionals can access, and $\mc{F}$ and $\mc{G}$ must be thought of in terms of the functions they induce on the representations of their inputs and outputs.  In its full glory, Weihrauch reducibility between multi-valued functions on represented spaces is defined as follows.

\begin{Definition}\label{def-WredFull}{\ }
\begin{itemize}
\item Let $\mc{X}$ be any space.  A \emph{representation} of $\mc{X}$ is a surjective partial function $\delta \colon \wsubseteq \omega^\omega \imp \mc{X}$.  A space together with a representation $(\mc{X}, \delta)$ is called a \emph{represented space}.

\medskip

\item Let $\mc{F} \colon \wsubseteq (\mc{X}, \delta_{\mc{X}}) \rra (\mc{Y}, \delta_{\mc{Y}})$ be a multi-valued function between represented spaces.  A partial function $F \colon \wsubseteq \omega^\omega \imp \omega^\omega$ \emph{realizes} $\mc{F}$ (or is a \emph{realizer} for $\mc{F}$) if $\delta_{\mc{Y}}(F(p)) \in \mc{F}(\delta_{\mc{X}}(p))$ for all $p \in \dom(\mc{F} \circ \delta_{\mc{X}})$.
\end{itemize}

\medskip

Now let $\mc{F}$ and $\mc{G}$ be two multi-valued functions on represented spaces.

\begin{itemize}
\item $\mc{F}$ \emph{Weihrauch reduces} to $\mc{G}$ (written $\mc{F} \leqW \mc{G}$) if there are Turing functionals $\Phi, \Psi \colon \wsubseteq \omega^\omega \imp \omega^\omega$ such that the functional $p \mapsto \Psi(\la p, G(\Phi(p)) \ra)$ is a realizer for $\mc{F}$ whenever $G$ is a realizer for $\mc{G}$.

\medskip

\item $\mc{F}$ and $\mc{G}$ are \emph{Weihrauch equivalent} (written $\mc{F} \equivW \mc{G}$) if $\mc{F} \leqW \mc{G}$ and $\mc{G} \leqW \mc{F}$.  In this case, $\mc{F}$ and $\mc{G}$ are said to have the same \emph{Weihrauch degree}.

\medskip

\item $\mc{F}$ \emph{strongly Weihrauch reduces} to $\mc{G}$ (written $\mc{F} \leqsW \mc{G}$) if there are Turing functionals $\Phi, \Psi \colon \wsubseteq \omega^\omega \imp \omega^\omega$ such that the functional $p \mapsto \Psi(G(\Phi(p)))$ is a realizer for $\mc{F}$ whenever $G$ is a realizer for $\mc{G}$.

\medskip

\item $\mc{F}$ and $\mc{G}$ are \emph{strongly Weihrauch equivalent} (written $\mc{F} \equivsW \mc{G}$) if $\mc{F} \leqsW \mc{G}$ and $\mc{G} \leqsW \mc{F}$.  In this case, $\mc{F}$ and $\mc{G}$ are said to have the same \emph{strong Weihrauch degree}.
\end{itemize}
\end{Definition}
In the above definition, $\la f, g \ra$ denotes the usual pairing of functions $f, g \in \omega^\omega$, where  $\la f, g \ra(2n) = f(n)$ and $\la f, g \ra(2n+1) = g(n)$ for each $n \in \omega$.  The paired function $\la f, g \ra$ is often also denoted by $f \oplus g$.

Both $\leqW$ and $\leqsW$ are quasi-orders, so both $\equivW$ and $\equivsW$ are equivalence relations.  We emphasize that the difference between Weihrauch reductions and strong Weihrauch reductions is that in strong Weihrauch reductions, the decoding functional $\Psi$ is not given explicit access to the representation $p$ of the original $\mc{F}$-input.  For multi-valued functions $\mc{F}$ and $\mc{G}$, if $\mc{F} \leqsW \mc{G}$, then also $\mc{F} \leqW \mc{G}$; and therefore if $\mc{F} \equivsW \mc{G}$, then also $\mc{F} \equivW \mc{G}$.

The mathematical problems we consider all involve spaces of countable combinatorial objects (such as subtrees of $2^{<\omega}$ in the case of $\wkl$) that can be coded as elements of $\omega^\omega$ in straightforward ways.  We therefore follow the style of~\cite{DoraisDzhafarovHirstMiletiShafer} by treating representations implicitly and restricting to multi-valued functions $\mc{F} \colon \wsubseteq \omega^\omega \rra \omega^\omega$, thereby dispensing with some of the notational encumbrances of Definition~\ref{def-WredFull}.  In this context, Weihrauch reducibility and strong Weihrauch reducibility may be defined as follows.

\begin{Definition}[see~\cite{DoraisDzhafarovHirstMiletiShafer}*{Definition~1.5} and~\cite{DoraisDzhafarovHirstMiletiShafer}*{Appendix~A}]\label{def-Wred}
Let $\mc{F}, \mc{G} \colon \wsubseteq \omega^\omega \rra \omega^\omega$ be multi-valued functions.  
\begin{itemize}
\item $\mc{F} \leqW \mc{G}$ if there are Turing functionals $\Phi, \Psi \colon \wsubseteq \omega^\omega \imp \omega^\omega$ such that $\Phi(f) \in \dom(\mc{G})$ for all $f \in \dom(\mc{F})$ and $\Psi(\la f, g \ra) \in \mc{F}(f)$ for all $f \in \dom(\mc{F})$ and $g \in \mc{G}(\Phi(f))$.

\medskip

\item $\mc{F} \leqsW \mc{G}$ if there are Turing functionals $\Phi, \Psi \colon \wsubseteq \omega^\omega \imp \omega^\omega$ such that $\Phi(f) \in \dom(\mc{G})$ for all $f \in \dom(\mc{F})$ and $\Psi(g) \in \mc{F}(f)$ for all $f \in \dom(\mc{F})$ and $g \in \mc{G}(\Phi(f))$.
\end{itemize}
\end{Definition}

Our analysis of the weak Rival--Sands theorem in Section~\ref{sec-wRSgWeihrauch} also considers the non-uniform analog of Weihrauch reducibility, which is called \emph{computable reducibility} and is defined as follows for multi-valued functions $\mc{F}, \mc{G} \colon \wsubseteq \omega^\omega \rra \omega^\omega$.

\begin{Definition}[see~\cite{DzhafarovStrongRed}*{Definition~1.1}]
Let $\mc{F}, \mc{G} \colon \wsubseteq \omega^\omega \rra \omega^\omega$ be multi-valued functions.
\begin{itemize}
\item $\mc{F}$ \emph{computably reduces} to $\mc{G}$ (written $\mc{F} \leqc \mc{G}$) if for every $f \in \dom(\mc{F})$ there is an $\wh{f} \leqT f$ with $\wh{f} \in \dom(\mc{G})$ such that for every $\wh{g} \in \mc{G}(\wh{f})$ there is a $g \leqT f \oplus \wh{g}$ with $g \in \mc{F}(f)$.

\medskip

\item $\mc{F}$ and $\mc{G}$ are \emph{computably equivalent} (written $\mc{F} \equivc \mc{G}$) if $\mc{F} \leqc \mc{G}$ and $\mc{G} \leqc \mc{F}$.  In this case, $\mc{F}$ and $\mc{G}$ are said to have the same \emph{computable degree}.
\end{itemize}
\end{Definition}

For certain multi-valued functions $\mc{F}$, it is possible to encode both a given $p \in \omega^\omega$ and a given $f \in \dom(\mc{F})$ into another $g \in \dom(\mc{F})$ in such a way that every element of $\mc{F}(g)$ encodes both the given $p$ and an element of $\mc{F}(f)$.  Such an $\mc{F}$ is called a \emph{cylinder} and has the property that every multi-valued function that Weihrauch reduces to $\mc{F}$ automatically strongly Weihrauch reduces to $\mc{F}$.

\begin{Definition}{\ }
\begin{itemize}
\item Let $\mc{F}$ and $\mc{G}$ be multi-valued functions.  Then $\mc{F} \times \mc{G}$ is the following multi-valued function.
\begin{itemize}
\item Input/instance:  A pair $\la f, g \ra \in \dom(\mc{F}) \times \dom(\mc{G})$.

\smallskip

\item Output/solution:  An element of $\mc{F}(f) \times \mc{G}(g)$.
\end{itemize}

\medskip

\item Let $\id \colon \omega^\omega \imp \omega^\omega$ denote the identity function on $\omega^\omega$.  A multi-valued function $\mc{F}$ is a \emph{cylinder} if $\id \times \mc{F} \leqsW \mc{F}$, in which case $\mc{F} \equivsW \id \times \mc{F}$.
\end{itemize}
\end{Definition}

It is straightforward to check that if $\mc{F}$ and $\mc{G}$ are multi-valued functions where $\mc{F}$ is a cylinder and $\mc{G} \equivsW \mc{F}$, then $\mc{G}$ is also a cylinder.

\begin{Proposition}[\cite{BrattkaGherardi}*{Corollary~3.6}]\label{prop-cylinder}
Let $\mc{F}$ be a multi-valued function that is a cylinder.  Then for every multi-valued function $\mc{H}$, $\mc{H} \leqW \mc{F}$ if and only if $\mc{H} \leqsW \mc{F}$.
\end{Proposition}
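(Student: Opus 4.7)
The forward direction, $\mc{H} \leqsW \mc{F} \Imp \mc{H} \leqW \mc{F}$, is immediate from Definition~\ref{def-Wred}: a backward functional $\Psi$ for a strong reduction may simply ignore its first coordinate to serve also as a backward functional for a Weihrauch reduction. The substance of the proposition thus lies in the converse, and my plan is to use the cylinder property to \emph{internalize} the original $\mc{H}$-instance into every $\mc{F}$-solution, so that the backward functional of the strong reduction never needs explicit access to it.

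More concretely, suppose $\mc{H} \leqW \mc{F}$ is witnessed by Turing functionals $\Phi, \Psi$, and let $\Phi_c, \Psi_c$ be Turing functionals witnessing $\id \times \mc{F} \leqsW \mc{F}$, which exist by the cylinder hypothesis. Given an $\mc{H}$-instance $h$, I first compute $\Phi(h)$ and then form the $\mc{F}$-instance
\[
\Phi^*(h) = \Phi_c(\la h, \Phi(h) \ra).
\]
For any $g \in \mc{F}(\Phi^*(h))$, the strong reduction $\id \times \mc{F} \leqsW \mc{F}$ guarantees $\Psi_c(g) = \la h, g' \ra$ for some $g' \in \mc{F}(\Phi(h))$, so from $g$ alone one can recover both the original instance $h$ and a valid $\mc{F}$-solution $g'$ for the inner reduction. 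Setting $\Psi^*(g) = \Psi(\Psi_c(g))$ then produces an element of $\mc{H}(h)$ using $g$ alone, so $\Phi^*, \Psi^*$ witness $\mc{H} \leqsW \mc{F}$.

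The only verifications required are that $\Phi^*(h) \in \dom(\mc{F})$ and that $\Psi_c(g)$ has the claimed pairing form, both of which follow at once from $\la h, \Phi(h) \ra \in \dom(\id \times \mc{F})$ together with the defining property of $\Phi_c, \Psi_c$. There is no serious obstacle here; the argument is purely a matter of threading the definitions, with the cylinder hypothesis doing all the real work by making the solution $g$ carry its own copy of the input $h$.
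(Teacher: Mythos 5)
Your proof is correct. The paper itself does not prove this proposition but cites it from Brattka and Gherardi, and your argument is precisely the standard one: use the cylinder reduction $\Phi_c, \Psi_c$ for $\id \times \mc{F} \leqsW \mc{F}$ to pack the original instance $h$ into the $\mc{F}$-instance so that $\Psi_c$ recovers $\la h, g' \ra$ from the solution alone, and then compose with the inner $\Psi$.
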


Our characterization of the Rival--Sands theorem in the Weihrauch degrees relies on the important notion of the \emph{jump} of a multi-valued function $\mc{F}$, which is obtained by replacing $\dom(\mc{F})$ by functions approximating elements of $\dom(\mc{F})$.  The jump of a multi-valued function was originally defined in~\cite{BrattkaGherardiMarcone}.  We present the definition using terminology that we find convenient.

\begin{Definition}{\ }
\begin{itemize}
\item A \emph{$\mbf{\Delta}^0_2$-approximation} to a function $g \colon \omega \imp \omega$ is a function $f \colon \omega \times \omega \imp \omega$ such that $\forall n(g(n) = \lim_s f(n, s))$.  Let $\lim f = g$ denote that $f$ is a $\mbf{\Delta}^0_2$-approximation to $g$.

\medskip

\item A \emph{$\mbf{\Delta}^0_2$-approximation} to a set $Z \subseteq \omega$ is a $\mbf{\Delta}^0_2$-approximation to the characteristic function of $Z$.
\end{itemize}
\end{Definition}

\begin{Definition}
Let $\mc{F} \colon \wsubseteq \omega^\omega \rra \omega^\omega$ be a multi-valued function.  The \emph{jump} of $\mc{F}$, denoted $\mc{F}'$, is the following multi-valued function.
\begin{itemize}
\item Input/instance:  A $\mbf{\Delta}^0_2$-approximation $f$ to a $g \in \dom(\mc{F})$.

\smallskip

\item Output/solution:  An element of $\mc{F}(g)$.
\end{itemize}
\end{Definition}

For example, $\wkl'$ is the multi-valued function whose inputs are $\mbf{\Delta}^0_2$-approximations to infinite trees $T \subseteq 2^{<\omega}$ and whose outputs are paths through the approximated input trees.  Brattka, Gherardi, and Marcone showed that $\wkl'$ is strongly Weihrauch equivalent to several versions of the Bolzano--Weierstra{\ss} theorem~\cite{BrattkaGherardiMarcone}.

In Section~\ref{sec-wRSgWeihrauch}, we consider the limit operation as a function in its own right.

\begin{Definition}
$\lim$ is the following function.
\begin{itemize}
\item Input/instance:  An $f \in \omega^{\omega \times \omega}$ such that $\lim f$ exists.

\smallskip

\item Output/solution: $\lim f$.
\end{itemize}
\end{Definition}

Jumps preserve strong Weihrauch reducibility and therefore preserve strong Weihrauch equivalence.

\begin{Proposition}[\cite{BrattkaGherardiMarcone}*{Proposition~5.6}]\label{prop-WJumpPres}
For any multi-valued functions $\mc{F}$ and $\mc{G}$, if $\mc{F} \leqsW \mc{G}$, then $\mc{F}' \leqsW \mc{G}'$.  Consequently, if $\mc{F} \equivsW \mc{G}$, then $\mc{F}' \equivsW \mc{G}'$.
\end{Proposition}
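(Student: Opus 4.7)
The plan is to lift witnesses of $\mc{F} \leqsW \mc{G}$ through the jump operation by (i) reusing the backward functional verbatim and (ii) wrapping the forward functional in the standard construction that propagates a $\mbf{\Delta}^0_2$-approximation of a function into a $\mbf{\Delta}^0_2$-approximation of its image under a Turing functional. So suppose $\mc{F} \leqsW \mc{G}$ is witnessed by Turing functionals $\Phi, \Psi$ as in Definition~\ref{def-Wred}.

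Given $f \in \dom(\mc{F}')$, meaning $f$ is a $\mbf{\Delta}^0_2$-approximation to some $g \in \dom(\mc{F})$, I would define the forward functional $\Phi'$ by
\begin{equation*}
\Phi'(f)(n, s) =
\begin{cases}
\Phi(f(\cdot, s))(n) & \text{if this converges within $s$ steps with use at most $s$,}\\
0 & \text{otherwise.}
\end{cases}
\end{equation*}
Since $\Phi(g) \in \dom(\mc{G}) \subseteq \omega^\omega$ is total, for each $n$ there are a finite use $u$ and a finite runtime $t$ with $\Phi(g \rst u)(n) = \Phi(g)(n)$ converging in $t$ steps. Because $f(k, \cdot) \to g(k)$ for each $k < u$, there is a stage $s_0 \geq \max(u, t)$ beyond which $f(\cdot, s) \rst u = g \rst u$, so $\Phi'(f)(n, s) = \Phi(g)(n)$ for all $s \geq s_0$. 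Thus $\lim \Phi'(f) = \Phi(g)$, certifying that $\Phi'(f)$ is a $\mbf{\Delta}^0_2$-approximation to $\Phi(g) \in \dom(\mc{G})$, and hence $\Phi'(f) \in \dom(\mc{G}')$.

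For the backward functional, I would simply take $\Psi' = \Psi$. Any $q \in \mc{G}'(\Phi'(f))$ is by definition an element of $\mc{G}(\lim \Phi'(f)) = \mc{G}(\Phi(g))$, so the original strong Weihrauch reduction gives $\Psi(q) \in \mc{F}(g) = \mc{F}'(f)$. This establishes $\mc{F}' \leqsW \mc{G}'$, and the $\equivsW$ clause follows immediately by applying the argument to each of the two reductions witnessing $\mc{F} \equivsW \mc{G}$. The only substantive point is the limit computation in the definition of $\Phi'$, and I expect no genuine obstacle there: the entire content is the standard uniform fact that a $\mbf{\Delta}^0_2$-approximation of an oracle pushes forward to a $\mbf{\Delta}^0_2$-approximation of any Turing-computable image, and crucially the backward map gets to ignore the approximation since we are working with strong reductions.
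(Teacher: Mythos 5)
Your proof is correct, and it is essentially the standard argument: push the $\mbf{\Delta}^0_2$-approximation forward through the Turing functional $\Phi$ using a stage-$s$, time-$s$, use-$s$ truncation, observe that for each $n$ the column limit stabilizes to $\Phi(g)(n)$ once $s$ exceeds the use and runtime of the halting computation $\Phi(g)(n)$ and the approximation has settled on that use, and reuse $\Psi$ unchanged (which is exactly where the strong reduction hypothesis is needed, since $\Psi$ receives only the $\mc{G}'$-solution and not the original approximation). Note that this proposition is cited in the paper from Brattka--Gherardi--Marcone rather than proved; your argument is a correct and complete proof of the cited fact, matching the approach in the source.
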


We find it convenient to rephrase the jumps of $\wkl$ in terms of the jumps of the problem of producing $\{0,1\}$-valued diagonally non-recursive functions.  For multi-valued functions $\mc{F} \colon \wsubseteq \omega^\omega \rra \omega^\omega$, $\mc{F}^{(n)}$ denotes the $n$\textsuperscript{th} jump of $\mc{F}$, with $\mc{F}^{(0)} = \mc{F}$, $\mc{F}^{(1)} = \mc{F}'$, $\mc{F}^{(2)} = \mc{F}''$, and so on.  For functions $f \in \omega^\omega$, $f'$ denotes the usual Turing jump of $f$, and $f^{(n)}$ denotes the $n$\textsuperscript{th} Turing jump of $f$.  Let $(\Phi_e : e \in \omega)$ denote the usual effective list of Turing functionals.

\begin{Definition}{\ }
\begin{itemize}
\item A function $f \in \omega^\omega$ is $\tdnr$ (for \emph{diagonally non-recursive}) relative to a function $p \in \omega^\omega$ if $\forall e(\Phi_e(p)(e)\da \imp f(e) \neq \Phi_e(p)(e))$.

\medskip

\item A function $f \in \omega^\omega$ is $\tdnr_2$ (for \emph{2-bounded diagonally non-recursive}) relative to a function $p \in \omega^\omega$ if $f$ is $\tdnr$ relative to $p$ and additionally $\ran(f) \subseteq \{0,1\}$.
\end{itemize}
\end{Definition}

\begin{Definition}{\ }
\begin{itemize}

\item $\pdnr$ is the following multi-valued function.
\begin{itemize}
\item Input/instance:  A function $p \in \omega^\omega$.

\smallskip

\item Output/solution:  A function $f \in \omega^\omega$ that is $\tdnr$ relative to $p$.
\end{itemize}

\medskip

\item For each $n \in \omega$, $\npdnr{(n+1)}_2$ is the following multi-valued function.
\begin{itemize}
\item Input/instance:  A function $p \in \omega^\omega$.

\smallskip

\item Output/solution:  A function $f \in 2^\omega$ that is $\tdnr_2$ relative to $p^{(n)}$.
\end{itemize}
When $n = 0$, we may write `$\pdnr_2$' instead of `$\npdnr{1}_2$'.
\end{itemize}
\end{Definition}

The $\npdnr{n}_2$ notation used here is intended to follow the $n$-$\mathsf{MLR}$ and $\npdnr{n}$ notations from the reverse mathematics literature.  The formal difference between $\pdnr_2^{(n)}$ and $\npdnr{(n+1)}_2$ is that $\pdnr_2^{(n)}$ takes as input a $\mbf{\Delta}^0_{n+1}$-approximation to a function $p$ (in the sense of $n$ iterated limits) and outputs a function that is $\tdnr_2$ relative to $p$, whereas $\npdnr{(n+1)}_2$ takes as input a function $q$ and outputs a function that is $\tdnr_2$ relative to $q^{(n)}$.

One may likewise define multi-valued functions $\npdnr{n}$ and $\npdnr{n}_k$ for each $n \geq 1$ and $k \geq 2$ (where the subscript $k$ denotes that the output function must be $k$-bounded).  Our analysis of the Rival--Sands theorem and its variations mostly makes use of $\pdnr$ and $\npdnr{3}_2$.

The basis of the next lemma is the simple fact that if $f$ is $\tdnr_2$ relative to $q$ and $p \leqT q$, then there is a $g \leqT f$ that is $\tdnr_2$ relative to $p$.  We give a proof emphasizing that this fact can be implemented by a strong Weihrauch reduction and therefore respects jumps.

\begin{Lemma}\label{lem-WKLequivDNR2}
For every $n \in \omega$, $\wkl^{(n)} \equivsW \pdnr_2^{(n)} \equivsW \npdnr{(n+1)}_2$.  In particular, $\wkl'' \equivsW \npdnr{3}_2$.
\end{Lemma}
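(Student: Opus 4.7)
The plan is to split the lemma into two pieces: first establish the base case $\wkl \equivsW \pdnr_2$ and iterate via Proposition~\ref{prop-WJumpPres} to obtain $\wkl^{(n)} \equivsW \pdnr_2^{(n)}$ for all $n$, and then separately prove $\pdnr_2^{(n)} \equivsW \npdnr{(n+1)}_2$ by translating between iterated $\mbf{\Delta}^0_{n+1}$-approximations and $n$-fold Turing jumps via the iterated limit lemma.

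For the base case, the direction $\pdnr_2 \leqsW \wkl$ is standard: given $p$, form the $p$-computable subtree of $2^{<\omega}$ consisting of those $\sigma$ with $\sigma(e) \neq \Phi_{e,|\sigma|}(p)(e)$ whenever this converges; this tree is infinite and its paths are precisely the total $\tdnr_2$ functions relative to $p$. The reverse direction $\wkl \leqsW \pdnr_2$ is the main subtlety, since the decoder must recover a path through $T$ from a $\tdnr_2(p)$ solution \emph{alone}. Given a tree $T$, I would simply take $p = T$ and use the s-m-n theorem to build a computable $s \colon 2^{<\omega} \to \omega$ with $\Phi_{s(\sigma)}(p)(s(\sigma))\da = 1 - \chi_T(\sigma)$ for every $\sigma$. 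Any $\tdnr_2(p)$ solution $h$ is then forced to satisfy $h(s(\sigma)) = \chi_T(\sigma)$, so $T$ is uniformly recoverable from $h$ via the fixed map $s$. A further fixed s-m-n reindexing $t$ (defined so that $\Phi_{t(e)}(p)(t(e)) = \Phi_e(T)(e)$) turns $h$ into a $\tdnr_2(T)$ function $h^* = h \circ t$, and the uniform version of Jockusch's conversion from $\{0,1\}$-valued DNR functions to paths through $T$-computable infinite subtrees of $2^{<\omega}$ yields a path from $h^* \oplus T$. Since both $s$ and $t$ and Jockusch's functional are fixed, nothing in the decoder references $T$ directly.

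For $\pdnr_2^{(n)} \equivsW \npdnr{(n+1)}_2$, I would proceed in both directions via the iterated limit lemma. To reduce $\npdnr{(n+1)}_2$ to $\pdnr_2^{(n)}$, compute from $p$ a $\mbf{\Delta}^0_{n+1}$-approximation to $p^{(n)}$ uniformly by unwinding $n$ applications of Shoenfield's limit lemma, and feed it as the input to $\pdnr_2^{(n)}$; any solution is automatically $\tdnr_2$ relative to $p^{(n)}$. For the reverse, encode a given $\mbf{\Delta}^0_{n+1}$-approximation $f$ to $p$ as a $q \in \omega^\omega$ such that $p \leqT q^{(n)}$ via a fixed functional (obtained by $n$-fold iteration of the limit lemma in the other direction), and convert a $\tdnr_2(q^{(n)})$ solution $h$ into a $\tdnr_2(p)$ solution by a fixed s-m-n reindexing analogous to the one used in the base case.

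The main obstacle is maintaining \emph{strong} (rather than plain) Weihrauch reducibility throughout, since the decoding functional may not consult the original input. This is what forces every ingredient — the extraction of $T$ from a DNR solution, the reindexings between parameters $p$, $T$, and $q^{(n)}$, and the iterated-limit manipulations — to be implemented by fixed, input-independent Turing functionals. The s-m-n theorem and the iterated Shoenfield limit lemma supply exactly this uniformity, and once the base case is in hand, Proposition~\ref{prop-WJumpPres} delivers the $n$-fold jumped equivalence without any additional work.
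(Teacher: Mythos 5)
Your proposal is correct and follows essentially the same route as the paper: both hinge on the base equivalence $\wkl \equivsW \pdnr_2$, the fact that jumps preserve strong Weihrauch equivalence (Proposition~\ref{prop-WJumpPres}), and the uniform translation between $\mbf{\Delta}^0_2$-approximations and Turing jumps via s-m-n-style reindexings and the effective limit lemma. The only organizational difference is that the paper packages the approximation-to-jump translation as a single-step lemma $\npdnr{(n+2)}_2 \equivsW (\npdnr{(n+1)}_2)'$ and feeds it into an induction, whereas you propose to show $\pdnr_2^{(n)} \equivsW \npdnr{(n+1)}_2$ directly by unwinding all $n$ levels at once -- in substance these amount to the same argument, and your observation that strong reducibility forces every conversion to be implemented by fixed, input-independent functionals is exactly the crux.
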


\begin{proof}
We start by showing that $\npdnr{(n+2)}_2 \equivsW (\npdnr{(n+1)}_2)'$ for every $n \in \omega$.

$\npdnr{(n+2)}_2 \leqsW (\npdnr{(n+1)}_2)'$:  Let $p \colon \omega \imp \omega$ be an input to $\npdnr{(n+2)}_2$.  Let $\Phi$ be the functional given by
\begin{align*}
\Phi(p)(e,s) =
\begin{cases}
0 & \text{if $\Phi_{e,s}(p)(e)\ua$}\\
1 & \text{if $\Phi_{e,s}(p)(e)\da$},
\end{cases}
\end{align*}
so that $\Phi(p)$ is a $\mbf{\Delta}^0_2$-approximation to $p'$.  Let $f \in (\npdnr{(n+1)}_2)'(\Phi(p))$.  Let $\Psi$ be the identity functional.  The function $f$ is $\tdnr_2$ relative to the $n$\textsuperscript{th} Turing jump of $p'$.  That is, $f$ is $\tdnr_2$ relative to $(p')^{(n)} = p^{(n+1)}$.  Thus $f \in \npdnr{(n+2)}_2(p)$, so $\Phi$ and $\Psi$ witness that $\npdnr{(n+2)}_2 \leqsW (\npdnr{(n+1)}_2)'$.

$(\npdnr{(n+1)}_2)' \leqsW \npdnr{(n+2)}_2$:  Fix recursive functions $c \colon \omega \times \omega \imp \omega$ and $j, d \colon \omega \imp \omega$ with the following properties.
\begin{itemize}
\item For every $p, q \in \omega^\omega$, if $p = \Phi_e(q)$, then $\Phi_{c(e,i)}(q)$ computes the same partial function as $\Phi_i(p)$.

\smallskip

\item For every $p, q \in \omega^\omega$, if $p = \Phi_e(q)$, then $p' = \Phi_{j(e)}(q')$.

\smallskip

\item For every $p, q \in \omega^\omega$ and every $f \in 2^\omega$, if $p = \Phi_e(q)$ and $f \in \pdnr_2(q)$, then $\Phi_{d(e)}(f) \in \pdnr_2(p)$.
\end{itemize}
Compute $c(e,i)$ by computing an index $k$ for a machine $\Phi_k$ that simulates $\Phi_i$, but uses $\Phi_e$ to answer $\Phi_i$'s oracle queries.  Compute $j(e)$ by computing an index $k$ for a machine $\Phi_k$ that behaves as follows when equipped with any oracle $g$.  Given input $\ell$, $\Phi_k(g)(\ell)$ computes an index $i$ such that $\forall h \forall m (\Phi_\ell(h)(\ell)\da \biimp \Phi_i(h)(m)\da)$, then it outputs $g(c(e,i))$.  If $p = \Phi_e(q)$, then
\begin{align*}
\ell \in p' \biimp \Phi_\ell(p)(\ell)\da \biimp \Phi_i(p)(c(e,i))\da \biimp \Phi_{c(e,i)}(q)(c(e,i))\da \biimp c(e,i) \in q',
\end{align*}
so $\Phi_{j(e)}(q')(\ell) = q'(c(e,i)) = p'(\ell)$ as desired.  The function $d$ can be computed by a similar strategy.

Fix an index $e$ for a machine $\Phi_e$ such that for every $h \colon \omega \times \omega \imp \omega$, if $h$ is a $\mbf{\Delta}^0_2$-approximation to a function $p$, then $\Phi_e(h') = p$.  Let $h \colon \omega \times \omega \imp \omega$ be an input to $(\npdnr{(n+1)}_2)'$, and let $p = \lim h$.  Let $\Phi$ be the identity functional, and let $f \in \npdnr{(n+2)}_2(h)$.  Let $\Psi$ be the functional given by $\Psi(f) = \Phi_{d(j^n(e))}(f)$.  We have that $p = \Phi_e(h')$, so $n$ applications of $j$ to $e$ yields an index $j^n(e)$ such that $p^{(n)} = \Phi_{j^n(e)}(h^{(n+1)})$.  Then $\Phi_{d(j^n(e))}(f)$ is $\tdnr_2$ relative to $p^{(n)}$ because $f$ is $\tdnr_2$ relative to $h^{(n+1)}$.  Hence $\Phi_{d(j^n(e))}(f) \in (\npdnr{(n+1)}_2)'(h)$, so $\Phi$ and $\Psi$ witness that $(\npdnr{(n+1)}_2)' \leqsW \npdnr{(n+2)}_2$.

We now prove the lemma by induction on $n$.  For the base case, $\pdnr_2$ and $\npdnr{1}_2$ are the same problem, and $\wkl \equivsW \pdnr_2$ by~\cite{BrattkaHendtlassKreuzer}*{Corollary~5.3}.  That $\wkl \equivsW \pdnr_2$ may also be observed by noting that the uniformities in the classic arguments of Jockusch and Soare~\cite{JockuschSoare} imply that $\wkl \equivW \pdnr_2$ and then by checking that $\wkl$ and $\pdnr_2$ are both cylinders.  Suppose by induction that $\wkl^{(n)} \equivsW \pdnr_2^{(n)} \equivsW \npdnr{(n+1)}_2$.  Then $\wkl^{(n+1)} \equivsW \pdnr_2^{(n+1)}$ by Proposition~\ref{prop-WJumpPres}.  Furthermore:
\begin{align*}
\pdnr_2^{(n+1)} = (\pdnr_2^{(n)})' \equivsW (\npdnr{(n+1)}_2)' \equivsW \npdnr{(n+2)}_2,
\end{align*}
where the first equivalence uses the induction hypothesis and Proposition~\ref{prop-WJumpPres}.
\end{proof}

Ramsey's theorem for pairs and its consequences have received considerable attention in the Weihrauch degrees.  See, for example,~\cites{DoraisDzhafarovHirstMiletiShafer, HirschfeldtJockusch, BrattkaRakotoniaina}.  We define the multi-valued functions relevant to our analysis of the Rival--Sands theorem and its weak version.

Multi-valued functions corresponding to $\rt^2_2$, $\srt^2_2$, and $\coh$ are defined as follows.
\begin{Definition}{\ }
\begin{itemize}
\item $\rt^2_2$ is the following multi-valued function.
\begin{itemize}
\item Input/instance:  A coloring $c \colon [\omega]^2 \imp \{0, 1\}$.

\smallskip

\item Output/solution:  An infinite $H \subseteq \omega$ that is homogeneous for $c$.
\end{itemize}

\medskip

\item $\srt^2_2$ is the following multi-valued function.
\begin{itemize}
\item Input/instance:  A stable coloring $c \colon [\omega]^2 \imp \{0, 1\}$.

\smallskip

\item Output/solution:  An infinite $H \subseteq \omega$ that is homogeneous for $c$.
\end{itemize}

\medskip

\item $\coh$ is the following multi-valued function.
\begin{itemize}
\item Input/instance:  A sequence $\vec{A} = (A_i : i \in \omega)$ of subsets of $\omega$.

\smallskip

\item Output/solution:  A set $C \subseteq \omega$ that is cohesive for $\vec{A}$.
\end{itemize}
\end{itemize}
\end{Definition}

Over $\rca$, the ascending/descending sequence principle $\ads$ decomposes into the conjunction of a stable version $\sads$ and a cohesive version $\cads$, similar to how $\rt^2_2$ decomposes into the conjunction of $\srt^2_2$ and $\coh$~\cite{HirschfeldtShore}*{Proposition~2.7}.  There are choices to be made concerning what one means by \emph{stable} and \emph{sequence} that are inessential when working in $\rca$ or in the computable degrees, but make a difference when working in the Weihrauch degrees.  These choices give rise to a number of multi-valued functions related to $\ads$ that are explored in~\cite{AstorDzhafarovSolomonSuggs}.

Officially, a linear order is \emph{stable} if it satisfies the following definition.
\begin{Definition}
A linear order $(L, <_L)$ is \emph{stable} if every element either has only finitely many $<_L$-predecessors or has only finitely many $<_L$-successors.
\end{Definition}
Thus the countable stable linear orders are those of type $\omega + k$ for a finite linear order $k$, of type $k + \omega^*$ for a finite linear order $k$, and of type $\omega + \omega^*$.  Over $\rca$, $\sads$ is defined to be $\ads$ restricted to stable linear orders, which is equivalent to $\ads$ restricted to linear orders of type $\omega + \omega^*$.  In the Weihrauch degrees, restricting to linear orders of type $\omega + \omega^*$ yields a weaker principle than restricting to all stable linear orders.

When working with linear orders in the Weihrauch degrees, there is an important distinction between an \emph{ascending sequence} (as defined in Definition~\ref{def-ADS}) and an \emph{ascending chain}.  Essentially, the difference is that we can uniformly distinguish ascending sequences from descending sequences, but we cannot uniformly distinguish ascending chains from descending chains.

\begin{Definition}[See~\cite{AstorDzhafarovSolomonSuggs}*{Definition~2.1}]
Let $(L, <_L)$ be a linear order.
\begin{itemize}
\item A set $C \subseteq L$ is an \emph{ascending chain} in $L$ if $\{x \in C : x <_L y\}$ is finite for every $y \in C$.

\smallskip

\item A set $C \subseteq L$ is an \emph{descending chain} in $L$ if $\{x \in C : y <_L x\}$ is finite for every $y \in C$.
\end{itemize}
\end{Definition}

Of the versions of $\ads$ and $\sads$ studied in~\cite{AstorDzhafarovSolomonSuggs}, we mostly consider $\ads$, which is $\leqW$-greatest, and $\sadc$, which is $\leqW$-least.  In Section~\ref{sec-wRSgWeihrauch}, we show that the weak Rival--Sands theorem is above $\ads$ in the computable degrees, but not above $\sadc$ in the Weihrauch degrees.

\begin{Definition}{\ }
\begin{itemize}
\item $\ads$ is the following multi-valued function.
\begin{itemize}
\item Input/instance:  An infinite linear order $L = (L, <_L)$.

\smallskip

\item Output/solution:  An infinite $S \subseteq L$ that is either an ascending sequence in $L$ or a descending sequence in $L$.
\end{itemize}

\medskip

\item $\adc$ (for the \emph{ascending/descending chain principle}) is the following multi-valued function.
\begin{itemize}
\item Input/instance:  An infinite linear order $L = (L, <_L)$.

\smallskip

\item Output/solution:  An infinite $S \subseteq L$ that is either an ascending chain in $L$ or a descending chain in $L$.
\end{itemize}

\medskip

\item $\sadc$ (for the \emph{stable ascending/descending chain principle}) is the following multi-valued function.
\begin{itemize}
\item Input/instance:  An infinite linear order $L = (L, <_L)$ of order-type $\omega + \omega^*$.

\smallskip

\item Output/solution:  An infinite $C \subseteq L$ that is either an ascending chain in $L$ or a descending chain in $L$.
\end{itemize}
\end{itemize}
\end{Definition}
We warn the reader that the definitions of $\ads$, $\adc$, and $\sadc$ in~\cite{AstorDzhafarovSolomonSuggs} require that the domain of the input linear order $L$ is $\omega$, whereas here we find it convenient to allow the domain of $L$ to be any infinite subset of $\omega$.  The difference is exactly the difference between $\leqW$ and $\leqsW$.  Let $\ads\rst_{\dom(L) = \omega}$ denote the restriction of $\ads$ to linear orders with domain $\omega$.  Then $\ads\rst_{\dom(L) = \omega} \equivW \ads$ but $\ads\rst_{\dom(L) = \omega} \ltsW \ads$.  In fact, $\ads$ is the \emph{cylindrification} of $\ads\rst_{\dom(L) = \omega}$, meaning that $\ads \equivsW \id \times \ads\rst_{\dom(L) = \omega}$.  This can be seen by an argument analogous to the proof of Proposition~\ref{prop-wRSgCyl} below.  Similarly, $\adc$ and $\sadc$ are the cylindrifications of their restrictions $\adc\rst_{\dom(L) = \omega}$ and $\sadc\rst_{\dom(L) = \omega}$ to linear orders with domain $\omega$.

We use the following formulation of $\cads$ as a multi-valued function.
\begin{Definition}
$\cads$ (\emph{for the cohesive ascending/descending sequence principle}) is the following multi-valued function.
\begin{itemize}
\item Input/instance:  An infinite linear order $L = (L, <_L)$.

\smallskip

\item Output/solution:  An infinite $H \subseteq L$ such that $(H, <_L)$ is stable.
\end{itemize}
\end{Definition}

The definition of $\cads$ is also sensitive to the precise formulation of \emph{stable}.  For the purposes of this discussion, call a linear order \emph{strictly stable} if it is of order-type $\omega$, $\omega^*$, or $\omega + \omega^*$.  Let $\cadsst$ denote the multi-valued function defined in the same way as $\cads$, except with \emph{strictly stable} in place of \emph{stable}.  Then $\cads \ltW \cadsst$.  The difference in uniformity between $\cads$ and $\cadsst$ is due to the fact that any finite suborder of an infinite linear order can be extended to an infinite stable suborder but not necessarily to an infinite strictly stable suborder.  The following proposition requires no great originality, but we include a proof in order to complete the discussion.

\begin{Proposition}
$\cadsst \nleqW \cads$.  Therefore $\cads \ltW \cadsst$. 
\end{Proposition}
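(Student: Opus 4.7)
My plan is to prove $\cadsst \nleqW \cads$; the strict inequality $\cads \ltW \cadsst$ then follows because $\cads \leqW \cadsst$ via the identity reduction (every strictly stable suborder is stable). I would argue by contradiction: suppose Turing functionals $\Phi, \Psi$ witness $\cadsst \leqW \cads$, and build, via a finite-extension construction, a computable linear order $L$ (an instance of $\cadsst$) together with a stable subset $H$ of $\Phi(L)$ (a valid $\cads$-solution) such that $\Psi(L \oplus H)$ is stable but not strictly stable in $L$, contradicting the reduction.

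I would design $L$ to have order type $\omega + \omega^* + \omega + \omega^*$, with parts $A$, $B$, $C$, $D$ of types $\omega, \omega^*, \omega, \omega^*$ respectively. A direct order-theoretic analysis of this $L$ shows that finite subsets $F \subseteq L$ containing elements from all four parts can be extended to an infinite stable suborder (for instance, of type $k + \omega^*$), but cannot be extended to an infinite \emph{strictly} stable suborder: an $\omega$- or $\omega^*$-type extension would need infinitely many elements above the top of $F$ (or below its bottom), yet only finitely many such exist in $L$ once $F$ meets $D$ (resp.\ $A$); and any $\omega + \omega^*$-type extension with an infinite $A$-part below and an infinite $D$-part above forces the middle elements of $F$ lying in $B$ and $C$ to have both infinitely many predecessors and infinitely many successors, destroying stability. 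This is exactly the phenomenon alluded to in the paragraph preceding the proposition. The adversarial goal is therefore to drive $\Psi$'s eventual output $K = \Psi(L \oplus H)$ to contain elements from each of $A$, $B$, $C$, $D$, thereby trapping it in a non-strictly-stable type.

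The construction proceeds by stages. At each stage $s$, I inspect the partial output of $\Psi(L_s \oplus H_s)$ and extend $L_s$ and $H_s$ adversarially. Extending $H$ is always possible thanks to the fact highlighted just before the proposition: any finite suborder of $\Phi(L)$ can be extended to an infinite stable suborder, so whatever $H_s$ we have can always be completed to a valid $\cads$-solution. The principal obstacle is forcing $\Psi$ into the trap: for each of the four parts $A, B, C, D$, we need to arrange that $\Psi$ eventually commits to an element there. Here one leverages the reduction guarantee that $\Psi$ must produce an \emph{infinite} strictly stable output from every stable $H$: by engineering $L$ so that the only infinite strictly stable subsets of $L$ compatible with $\Psi$'s already-committed output require further elements from the still-uncommitted parts, we eventually force commitments in all four parts. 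A requirement-based priority argument, with case analysis on $\Psi$'s behavior at each stage and systematic use of the stable-extension flexibility for $H$, completes the construction and yields the desired contradiction.
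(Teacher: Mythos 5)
Your construction rests on the order-theoretic claim that $L = \omega + \omega^* + \omega + \omega^*$ (with parts $A$, $B$, $C$, $D$) has no infinite strictly stable suborder meeting all four parts, but this claim is false. Write $A = \{a_0 <_L a_1 <_L \cdots\}$, $B = \{\cdots <_L b_1 <_L b_0\}$, and similarly for $C$, $D$. Then $S = A \cup B \cup \{c_0, d_0\}$ has order type $\omega + (\omega^* + 2) \cong \omega + \omega^*$, so it is strictly stable, and it meets all four parts. The gap is in your $\omega + \omega^*$ case: you only considered an extension ``with an infinite $A$-part below and an infinite $D$-part above,'' but the $\omega^*$-part of the extension may live almost entirely inside $B$ (with only finitely many elements from $C \cup D$ tacked on top), and then no middle element is forced to have infinitely many predecessors \emph{and} successors. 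Since the trap you are driving $\Psi$ into does not exist, the adversarial/priority construction cannot succeed as written. (Your $\omega$- and $\omega^*$-type cases are fine in substance, and the reduction $\cads \leqW \cadsst$ via the identity is correct.)

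For comparison, the paper avoids building any elaborate $L$ at all. It feeds the plain $\cadsst$-instance $L_0 = (\omega, <)$ into the putative reduction, picks an element $\ell$ appearing in the output $\Psi(\langle L_0, C_0\rangle)$ for some $\cads$-solution $C_0$ of $\Phi(L_0)$, and then, by the use principle, fixes finite prefixes $\sigma \subseteq L_0$ and $\tau \subseteq C_0$ supporting this computation. It then \emph{switches} to a different instance $L_1$ of type $\omega + k$, with $k \geq \max(|\sigma|, \ell+1)$, that agrees with $\sigma$ on the used portion and places $\ell$ inside the finite $k$-block, so no strictly stable suborder of $L_1$ contains $\ell$. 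Extending $\tau$ to a $\cads$-solution $C_1$ of $\Phi(L_1)$ (possible since any finite suborder of $R_1$ extends to an infinite stable one) gives $\ell \in \Psi(\langle L_1, C_1\rangle)$, a contradiction. This one-shot finite-extension argument sidesteps the need for any priority machinery. If you want to salvage your approach, you would at minimum need to replace $\omega + \omega^* + \omega + \omega^*$ with an order in which the forbidden region really is unavoidable once $\Psi$ commits — for instance the paper's $\omega + k$ — and at that point the priority scaffolding is unnecessary.
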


\begin{proof}
Suppose for a contradiction that functionals $\Phi$ and $\Psi$ witness that $\cadsst \leqW \cads$.  Let $L_0$ be the $\cadsst$-instance $L_0 = (\omega, <)$, let $R_0 = \Phi(L_0)$, and write $R_0 = (R_0, <_{R_0})$.  The linear order $R_0$ is a $\cads$-instance, so let $C_0 \subseteq R_0$ be a $\cads$-solution.  Then $\Psi(\la L_0, C_0 \ra)$ is a $\cadsst$-solution to $L_0$.  In particular, $\Psi(\la L_0, C_0 \ra)$ is infinite.  Fix any $\ell \in \Psi(\la L_0, C_0 \ra)$.  Let $\sigma \subseteq L_0$ and $\tau \subseteq C_0$ be initial segments of $L_0$ and $C_0$ long enough to guarantee that $\ell \in \Psi(\la \sigma, \tau \ra)$ and that if $n < |\tau|$ and $\tau(n) = 1$, then $n$ is in the domain of the partially-defined structure computed by $\Phi(\sigma)$.  The string $\sigma$ only encodes information about elements of $L_0$ that are $<$-less than $|\sigma|$.  Let $k$ be the $<$-maximum of $|\sigma|$ and $\ell+1$.  Then any linear order $L_1 = (\omega, <_{L_1})$ that defines $0 <_{L_1} 1 <_{L_1} \cdots <_{L_1} k-1$ is consistent with the information contained in $\sigma$ and therefore satisfies $\sigma \subseteq L_1$.  Thus let $L_1 = (\omega, <_{L_1})$ be the linear order of type $\omega + k$ in which $i <_{L_1} j$ if and only if $i < j < k$, $k \leq i < j$, or $j < k \leq i$.  Then $\sigma \subseteq L_1$.  Notice that $\ell$ is in the $k$-part of $L_1$ and therefore that no strictly stable suborder of $L_1$ contains $\ell$.

$L_1$ is a $\cadsst$-instance, so $R_1 = \Phi(L_1)$ is an infinite linear order.  Write $R_1 = (R_1, <_{R_1})$.  Notice that $(\forall n < |\tau|)(\tau(n) = 1 \imp n \in R_1)$ by the choices of $\sigma$ and $\tau$ and the fact that $\sigma \subseteq L_1$.  Let $S \subseteq R_1$ be an $\ads$-solution to $R_1$ with $n > |\tau|$ for every $n \in S$.  Let $C_1 = S \cup \{n < |\tau| : \tau(n) = 1\} \subseteq R_1$.  Then $\tau \subseteq C_1$.  Furthermore, $(C_1, <_{R_1})$ is stable because it is the union of the ascending or descending sequence $S$ and a finite set.  Thus $C_1$ is a $\cads$-solution to $R_1$.  However, $\ell \in \Psi(\la L_1, C_1 \ra)$ because $\ell \in \Psi(\la \sigma, \tau \ra)$, $\sigma \subseteq L_1$, and $\tau \subseteq C_1$.  Thus $\Psi(\la L_1, C_1 \ra)$ is not a $\cadsst$-solution to $L_1$ because no strictly stable suborder of $L_1$ contains $\ell$.  Therefore $\Phi$ and $\Psi$ do not witness that $\cadsst \leqW \cads$.  So $\cadsst \nleqW \cads$.
\end{proof}

The reason we consider $\cads$ is because $\cads \equivW \coh$.  Indeed, with the formulations of $\coh$ and $\cads$ given here, $\cads\rst_{\dom(L) = \omega} \equivsW \coh$, where $\cads\rst_{\dom(L) = \omega}$ is the restriction of $\cads$ to linear orders with domain $\omega$.  Thus $\cads$ is the cylindrification of both $\cads\rst_{\dom(L) = \omega}$ and $\coh$.  The proofs are those of~\cite{HirschfeldtShore}, which we repeat here for the sake of completeness.  The equivalence between $\cads$ and $\coh$ helps us show that $\coh$ Weihrauch reduces to the weak Rival--Sands theorem in Theorem~\ref{thm-COHredWRSG} below.

\begin{Proposition}[See~\cite{HirschfeldtShore}*{Propositions~2.9 and~4.4}]\label{prop-CADSCOH}
$\cads\rst_{\dom(L) = \omega} \equivsW \coh$.  Therefore $\cads \equivsW \id \times \cads\rst_{\dom(L) = \omega} \equivsW \id \times \coh$.
\end{Proposition}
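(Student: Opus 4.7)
The plan is to prove $\cads\rst_{\dom(L) = \omega} \equivsW \coh$ directly, from which the cylindrification chain $\cads \equivsW \id \times \cads\rst_{\dom(L) = \omega} \equivsW \id \times \coh$ will follow by combining with the observation, analogous to those already recorded for $\ads$, $\adc$, and $\sadc$, that $\cads$ is the cylindrification of $\cads\rst_{\dom(L) = \omega}$.

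For the direction $\cads\rst_{\dom(L) = \omega} \leqsW \coh$, the forward functional $\Phi$ sends an infinite linear order $L = (\omega, <_L)$ to the sequence $\vec{A} = (A_n)_{n \in \omega}$ defined by $A_n = \{k \in \omega : n <_L k\}$, and the backward functional $\Psi$ is the identity. If $C$ is cohesive for $\vec{A}$, then for every $n \in C$ either $C \subseteq^* A_n$, so only finitely many elements of $C$ are $\leq_L n$, or $C \subseteq^* \ol{A_n}$, so only finitely many elements of $C$ are $>_L n$; in either case $n$ has only finitely many $<_L$-predecessors in $C$ or only finitely many $<_L$-successors in $C$, which is exactly the stability of $(C, <_L)$.

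For the reverse direction $\coh \leqsW \cads\rst_{\dom(L) = \omega}$, I would build a linear order $<_L$ on $\omega$ uniformly from $\vec{A}$ using signatures: associate to each $n$ the partial signature $\sigma_n \in 2^n$ defined by $\sigma_n(i) = \chi_{A_i}(n)$ for $i < n$, and set $n <_L m$ iff the lexicographic comparison of $\sigma_n$ and $\sigma_m$ on the first $\min(n,m)$ coordinates places $\sigma_n$ strictly below $\sigma_m$, with ties broken by the natural order on $\omega$. Taking the decoder $\Psi$ to be the identity, the core claim is that any stable suborder of $(\omega, <_L)$ is cohesive for $\vec{A}$. This is shown by a coordinate-by-coordinate stabilization argument along the ascending piece: position $0$ cannot flip from $1$ to $0$ across two $<_L$-consecutive elements without producing a lex-decrease, so it stabilizes; and by induction, once positions $< i$ are stabilized, position $i$ must also stabilize. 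A symmetric argument applies to the descending piece, and the coordinatewise limits $\tau_+, \tau_- \in 2^\omega$ translate directly into cohesion for each $A_i$.

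Having established $\cads\rst_{\dom(L) = \omega} \equivsW \coh$, the operation $\id \times (\cdot)$ preserves $\equivsW$ and yields $\id \times \cads\rst_{\dom(L) = \omega} \equivsW \id \times \coh$. Combined with the cylindrification $\cads \equivsW \id \times \cads\rst_{\dom(L) = \omega}$---proved exactly as in the analogous statement for $\ads$, by encoding the increasing bijection $\omega \to \dom(L)$ into an identity parameter in the easy direction and folding the identity parameter into the domain of the $\cads$-instance in the reverse direction---this delivers the desired chain. The main delicacy I anticipate is handling stable suborders of type $\omega + \omega^*$ in the signature construction: one must verify that the two lex-limits $\tau_+$ and $\tau_-$ necessarily agree coordinatewise (otherwise cohesion would fail on some coordinate), which is where the Hirschfeldt--Shore proof does its main work.
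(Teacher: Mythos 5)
Your strategy coincides with the paper's in every part: the forward reduction via $A_n = \{k : n <_L k\}$ with identity decoding, the signature-based linear order for the reverse reduction (your lexicographic order with ties broken by the natural order is the paper's $\ltlex$ on the strings $\la A_i(n) : i \leq n\ra$, up to an irrelevant off-by-one), and the cylindrification argument giving $\cads \equivsW \id \times \cads\rst_{\dom(L)=\omega}$. The forward reduction and the cylindrification are fine as you sketch them.

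The delicacy you flag in the reverse direction is, however, not a verification that can be carried out: the claim that ``$\tau_+$ and $\tau_-$ necessarily agree coordinatewise'' is false for this encoding with identity decoding. Take $A_0$ to be the odd numbers and $A_k = \{1,3,\dots,2k-1\}$ for $k \geq 1$. Then every even $2m$ has signature $0^{2m+1}$, and $2m+1$ has signature $1\,0^m 1^{m+1}$; with ties broken by the natural order the evens form an ascending $<_L$-chain sitting $<_L$-below a descending chain of odds, so $(\omega, <_L)$ has type $\omega + \omega^*$, and $C=\omega$ is a legitimate $\cads\rst_{\dom(L)=\omega}$-solution. But $C$ meets both $A_0$ and $\ol{A_0}$ infinitely, so $C$ is not $\vec A$-cohesive. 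Here $\tau_+ = 0^\omega$ while $\tau_- = 1\,0^\omega$: your coordinate-by-coordinate stabilization is correct on each of the ascending and descending pieces separately, but the two limits disagree, which is exactly the failure mode you anticipated. The paper's own argument runs into the same spot: it asserts that the unique infinite $<_L$-interval cut out by the length-$(n+1)$ prefixes is almost entirely one prefix class, and in the example above that interval straddles two prefix classes. So you should not regard this as a routine lemma to import from Hirschfeldt--Shore; some extra idea is required (a different encoding, a decoder that post-processes $C$ using $\vec A$, or a restriction on the order type of the stable suborder) before the reverse reduction can be asserted, and the chain you build on top of it should be held pending that.
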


\begin{proof}
We have that $\cads \equivsW \id \times \cads\rst_{\dom(L) = \omega}$ by an argument analogous to the proof of Proposition~\ref{prop-wRSgCyl} below.  So it suffices to show that $\cads\rst_{\dom(L) = \omega} \equivsW \coh$.

For $\cads\rst_{\dom(L) = \omega} \leqsW \coh$, given a linear order $L = (\omega, <_L)$, apply $\coh$ to the sequence $\vec{A} = (A_i : i \in \omega)$ where $A_i = \{n \in \omega : i <_L n\}$.  Then any $\vec{A}$-cohesive set $C$ is also a $\cads$-solution to $L$.

The proof of~\cite{HirschfeldtShore}*{Proposition~4.4} showing that $\rca + \bst + \cads \vdash \coh$ describes a strong Weihrauch reduction $\coh \leqsW \cads\rst_{\dom(L) = \omega}$.  Let $\vec{A} = (A_i : i \in \omega)$ be a $\coh$-instance.  Define a functional $\Phi(\vec{A})$ computing a linear order $L = (\omega, <_L)$ as follows.  Given $x$ and $y$, define $x <_L y$ if and only if $\la A_i(x) : i \leq x \ra \ltlex \la A_i(y) : i \leq y \ra$, where $\ltlex$ denotes the lexicographic order on $2^{<\omega}$.  Let $C$ be a $\cads$-solution to $L$, and let $\Psi$ be the identity functional.  We claim that $C$ is $\vec{A}$-cohesive and hence that $\Phi$ and $\Psi$ witness that $\coh \leqsW \cads\rst_{\dom(L) = \omega}$.

To see that $C$ is $\vec{A}$-cohesive, fix $n$ and let $F_n = \{\sigma \in 2^{n+1} : (\exists x \in C)(\sigma \subseteq \la A_i(x) : i \leq x \ra)\}$.  Let $\sigma_0 \ltlex \cdots \ltlex \sigma_{k-1}$ list the elements of $F_n$ in $\ltlex$-increasing order.  For each $j < k$, let $x_{\sigma_j}$ be the $<$-least element of $C$ witnessing that $\sigma_j \in F_n$.  Then $x_{\sigma_0} <_L \cdots <_L x_{\sigma_{k-1}}$.  The order $(C, <_L)$ is stable, so in $C$ exactly one interval $(-\infty, x_{\sigma_0}), (x_{\sigma_0}, x_{\sigma_1}), \dots, (x_{\sigma_{k-2}}, x_{\sigma_{k-1}}), (x_{\sigma_{k-1}}, \infty)$ is infinite, where $(-\infty, a)$ and $(a, \infty)$ denote $\{x \in C : x <_L a\}$ and $\{x \in C : a <_L x\}$.  If $(x_{\sigma_j}, x_{\sigma_{j+1}})$ is infinite for some $j < k-1$, then almost every $y \in C$ satisfies $\sigma_j \subseteq \la A_i(y) : i \leq y \ra$.  In particular, $A_n(y) = \sigma_j(n)$ for almost every $y \in C$, so either $C \subseteq^* A_n$ or $C \subseteq^* \ol{A_n}$.  Similarly, if $(-\infty, x_{\sigma_0})$ is infinite then $A_n(y) = \sigma_0(n)$ for almost every $y \in C$; and if $(x_{\sigma_{k-1}}, \infty)$ is infinite, then $A_n(y) = \sigma_{k-1}(n)$ for almost every $y \in C$.  Thus $C$ is $\vec{A}$-cohesive.  
\end{proof}

Finally, we consider multi-valued functions corresponding to the various versions of the infinite pigeonhole principle presented in Definition~\ref{def-RT1SOA}.  In Section~\ref{sec-ADSvsRT1Weihrauch}, we narrow an apparent gap in the literature concerning the Weihrauch degrees of the ascending/descending sequence principle and the infinite pigeonhole principle.  In Section~\ref{sec-wRSgWeihrauch}, we show that the weak Rival--Sands theorem is above the infinite pigeonhole principle in the Weihrauch degrees.

\begin{Definition}\label{def-RT1WD}{\ }
\begin{itemize}
\item For each $k > 0$, $\rt^1_k$ is the following multi-valued function.
\begin{itemize}
\item Input/instance:  A coloring $c \colon \omega \imp \{0, 1, \dots, k-1\}$.

\smallskip

\item Output/solution:  An infinite $H \subseteq \omega$ that is monochromatic for $c$:  $(\forall x, y \in H)(c(x) = c(y))$.
\end{itemize}

\medskip

\item $\rt^1_{<\infty}$ is the following multi-valued function.
\begin{itemize}
\item Input/instance:  A coloring $c \colon \omega \imp \omega$ with finite range.

\smallskip

\item Output/solution:  An infinite $H \subseteq \omega$ that is monochromatic for $c$:  $(\forall x, y \in H)(c(x) = c(y))$.
\end{itemize}
\end{itemize}
\end{Definition}

If $0 < k \leq \ell$, then $\rt^1_k \leqsW \rt^1_\ell \leqsW \rt^1_{<\infty}$.  Furthermore, it is Weihrauch equivalent to specify that solutions to $\rt^1_k$- and $\rt^1_{<\infty}$-instances are the colors of the monochromatic sets instead of the monochromatic sets themselves.  For $k > 0$, let $\crt^1_k$ denote the problem whose instances are colorings $c \colon \omega \imp k$ and whose solutions are the numbers $i < k$ such that $c^{-1}(i)$ is infinite.  Define $\crt^1_{<\infty}$ similarly.    Then $\crt^1_k \equivW \rt^1_k$ for each $k > 0$, and $\crt^1_{<\infty} \equivW \rt^1_{<\infty}$.  However, $\crt^1_k \nequivsW \rt^1_k$ for $k > 2$, and $\crt^1_{<\infty} \nequivsW \rt^1_{<\infty}$.

\section{Ascending/descending sequence principles versus pigeonhole principles in the Weihrauch degrees}\label{sec-ADSvsRT1Weihrauch}

In reverse mathematics, $\rca + \ads \vdash \rt^1_{<\infty}$ by Proposition~\ref{prop-rt22andADS} and the equivalence between $\rt^1_{<\infty}$ and $\bst$.  Indeed, the proof that $\rca + \ads \vdash \bst$ in~\cite{HirschfeldtShore} establishes that $\rca + \ads \vdash \rt^1_{<\infty}$.  In the Weihrauch degrees, the analogous question of whether $\rt^1_{<\infty} \leqW \ads$ has not yet, to our knowledge, been considered in the literature.  We address this gap by showing that $\rt^1_5 \nleqW \ads$ and therefore that $\rt^1_{<\infty} \nleqW \ads$.  On the positive side, we show that $\rt^1_3 \leqsW \adc$.  Whether $\rt^1_4 \leqW \ads$ remains open.

\begin{Theorem}\label{thm-RT13vsADC}
$\rt^1_3 \leqsW \adc$.
\end{Theorem}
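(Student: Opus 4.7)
The plan is to construct, for a $\rt^1_3$-instance $c\colon \omega \to \{0,1,2\}$, an $\adc$-instance $L = L_c$ whose $\adc$-solutions uniformly compute infinite $c$-monochromatic sets. The key idea is to encode $c$ into the domain of $L$, so the chain's elements directly reveal color information, and then choose the order so the chain structure of $L$ aligns with the color classes, making the color extraction computable from the chain alone.

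Concretely, I will set $\dom(L_c) = \{\la n, c(n)\ra : n \in \omega\}$ via a standard pairing function, so that any element of $\dom(L_c)$ decodes to both $n$ and its color $c(n)$. The order $<_{L_c}$ is then chosen so that $L_c$ has order type $\omega + \omega^* + \omega$, with the color-$0$ elements forming the initial $\omega$-block in ascending natural order, the color-$1$ elements forming the middle $\omega^*$-block in descending natural order, and the color-$2$ elements forming the final $\omega$-block in ascending natural order. A direct analysis of chains in this order will show: (a) every infinite descending chain $C$ in $L_c$ satisfies $C \cap P_0 = \emptyset$ (a color-$0$ element has infinitely many $<_{L_c}$-successors, forcing the chain to be finite) and $|C \cap P_2| < \infty$ (a color-$2$ element has only finitely many $<_{L_c}$-successors in $L_c$), so $C$ is cofinitely in the color-$1$ block; and (b) every infinite ascending chain is either entirely in the color-$0$ block (when $C$ contains no color-$2$ element) or cofinitely in the color-$2$ block with only finitely many color-$0$ and color-$1$ elements (when $C$ contains a color-$2$ element, whose predecessors form an infinite set comprising all color-$0$ and color-$1$ elements).

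The decoder $\Psi$ then extracts an infinite monochromatic set by reading off colors of the decoded elements of $C$: in each of the three chain cases (descending, ascending pure color-$0$, ascending cofinitely color-$2$), exactly one of the three decoded color-subsets of $C$ is infinite while the other two are finite. By a threshold/priority argument---committing to output a decoded color-$i$ element only once the observed count of color-$i$ elements in $C$ exceeds a growing bound---$\Psi$ can uniformly produce an infinite $c$-monochromatic set. The main obstacle will be arranging $\Psi$ so that finite color-subsets contribute only finitely many (controlled) outputs while the infinite color-subset contributes all its elements, yielding a total computable characteristic function for the monochromatic $H$; this is the delicate part, and may require refining the order by further splitting color-$2$ into sub-blocks to make the case distinction more rigid and uniformly decidable.
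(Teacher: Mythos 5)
There is a genuine gap, which you yourself flag, and it cannot be repaired for the static order $\omega + \omega^* + \omega$: the decoder step fails against a diagonalization. The problem is that a descending chain cofinitely in the color-$1$ ($\omega^*$) block with a few color-$2$ elements at its $<_L$-top, and an ascending chain cofinitely in the color-$2$ ($\omega$) block with a few color-$1$ elements at its $<_L$-bottom, look the same from any finite initial segment: both consist of color-$1$ elements sitting $<_L$-below color-$2$ elements, and neither order type can be detected from finitely much of $c$ or $S$. Concretely, start with $c \equiv 1$, so $L_c$ has type $\omega^*$ and $S = \dom(L_c)$ is a descending chain; any decoder $\Psi$ must converge on $\la c, S\ra$ (or on $S$, for $\leqsW$) after reading only $c\rst s_0$ and $S\rst s_0$. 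If $\Psi$ outputs a color $\neq 1$, you are already defeated. If $\Psi$ commits to color $1$, redefine $c(n) = 2$ for $n \geq s_0$, so that $c^{-1}(1)$ is finite, and take $S' = \{m < s_0 : S(m) = 1\} \cup \{\la n, 2\ra : n \geq s_0\}$, which is an ascending chain in $L_{c'}$ with $S'\rst s_0 = S\rst s_0$; then $\Psi$ still outputs $1$ on $S'$, now wrongly. No threshold or priority scheme on the observed color counts escapes this, because the finite color class can stay ahead of the infinite one for arbitrarily long, and any committed output from a finite class ruins monochromaticity; splitting the color-$2$ block into sub-blocks does not change the fact that the two problematic cases are indistinguishable from finite data.

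The paper's proof avoids this by abandoning the static order entirely and constructing $L$ dynamically: it monitors the set of the last two colors seen and, each time this set changes, dumps the current ascending and descending blocks into an inert finite middle region and restarts them. The effect is that $L$ has at most \emph{two} unbounded ends ($\omega$-part and $\omega^*$-part), each populated only by numbers of a single color that appears infinitely often, so the $<$-least element of any infinite $\adc$-chain has a color occurring infinitely often, giving a trivial decoder. The paper also reduces through $\crt^1_3$ (output a single color rather than a set) and uses the fact that $\adc$ is a cylinder to upgrade $\leqW$ to $\leqsW$; you may want to adopt both devices. Your domain-encoding idea $\dom(L_c) = \{\la n, c(n)\ra\}$ is a nice way to make the output carry color information, but it must be combined with a dynamic two-block order in the style of the paper rather than a fixed three-block one.
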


\begin{proof}
$\rt^1_3 \equivW \crt^1_3$ and $\adc$ is a cylinder, so it suffices to show that $\crt^1_3 \leqW \adc$.  Let $c$ be a $\crt^1_3$-instance.  Define a functional $\Phi$, where $\Phi(c)$ computes a linear order $L = (\omega, <_L)$ as follows.  The computation of $L$ proceeds in stages, where at stage $s$ the order $<_L$ is determined on $\{0, 1, \dots, s\}$.  Throughout the computation, we maintain three sets $A_s, M_s, D_s \subseteq \{0, 1, \dots, s\}$, with $\max_{<_L}(A_s) <_L \min_{<_L}(M_s)$ and $\max_{<_L}(M_s) <_L \min_{<_L}(D_s)$, where $\min_{<_L}(X)$ and $\max_{<_L}(X)$ denote the minimum and maximum elements of the finite set $X$ with respect to $<_L$.  The sets $A_s$ and $D_s$ are used to build an ascending sequence and a descending sequence in $L$ in order to achieve the following.
\begin{itemize}
\item If only two colors $i < j < 3$ occur in the range of $c$ infinitely often, then $L$ has order-type $\omega + k + \omega^*$ for some finite linear order $k$, with the $\omega$-part of $L$ corresponding to color $i$ and the $\omega^*$-part of $L$ corresponding to color $j$.

\medskip

\item If only one color $i < 3$ occurs in the range of $c$ infinitely often, then $L$ has either order-type $\omega + k$ or order-type $k + \omega^*$ for some finite linear order $k$, with the $\omega$-part or the $\omega^*$-part of $L$ corresponding to color $i$.
\end{itemize}

To monitor the last two colors seen up to $s$ (or the only color seen so far, if $c$ is constant up to $s$), let $t < s$ be greatest such that $c(t) \neq c(s)$, let $\last_s = \{c(t), c(s)\}$ if there is such a $t$, and otherwise let $\last_s = \{c(s)\}$.  We assign the least color of $\last_s$ to $A_s$ and the other color (if it exists) to $D_s$.

At stage $0$, let $A_0 = \{0\}$, $M_0 = \emptyset$, and $D_0 = \emptyset$.  Assign $A_0$ color $c(0)$ and assign $D_0$ no color.  At stage $s+1$, first check if $\last_{s+1} = \last_s$.  If $\last_{s+1} = \last_s$, then color $c(s+1)$ is assigned to either $A_s$ or $D_s$.  If $c(s+1)$ is assigned to $A_s$, then set $A_{s+1} = A_s \cup \{s+1\}$, $M_{s+1} = M_s$, and $D_{s+1} = D_s$.  Extend $<_L$ so that $s+1$ is the $<_L$-maximum element of $A_{s+1}$ and $<_L$-below all elements of $M_{s+1}$ and $D_{s+1}$.  If $c(s+1)$ is assigned to $D_s$, then set $A_{s+1} = A_s$, $M_{s+1} = M_s$, and $D_{s+1} = D_s \cup \{s+1\}$.  Extend $<_L$ so that $s+1$ is the $<_L$-minimum element of $D_{s+1}$ and $<_L$-above all elements of $A_{s+1}$ and $M_{s+1}$.  Assign $A_{s+1}$ the same color as $A_s$, and assign $D_{s+1}$ the same color as $D_s$.  If $\last_{s+1} \neq \last_s$, then set $M_{s+1} = \{0, 1, \dots, s\}$.  If $c(s+1)$ is the least color of $\last_{s+1}$, then set $A_{s+1} = \{s+1\}$, set $D_{s+1} = \emptyset$, extend $<_L$ so that $s+1$ is the $<_L$-minimum element of $\{0, 1, \dots, s+1\}$, assign $A_{s+1}$ color $c(s+1)$, and assign $D_{s+1}$ the other color of $\last_{s+1}$.  If $c(s+1)$ is not the least color of $\last_{s+1}$, then set $A_{s+1} = \emptyset$, set $D_{s+1} = \{s+1\}$, extend $<_L$ so that $s+1$ is the $<_L$-maximum element of $\{0, 1, \dots, s+1\}$, assign $D_{s+1}$ color $c(s+1)$, and assign $A_{s+1}$ the other color of $\last_{s+1}$.  This completes the computation of $L$.

The linear order $L$ is a valid $\adc$-instance, so let $S$ be an $\adc$-solution to $L$.  Define a functional $\Psi(\la c, S \ra)$ by finding the $<$-least element $x_0$ of $S$ and outputting $\Psi(\la c, S \ra) = c(x_0)$.  We show that $c(x_0)$ appears in the range of $c$ infinitely often and therefore that $\Psi(\la c, S \ra)$ is a $\crt^1_3$-solution to $c$.  Thus $\Phi$ and $\Psi$ witness that $\crt^1_3 \leqW \adc$.

If every color $i < 3$ appears in the range of $c$ infinitely often, then $c(x_0)$ appears in the range of $c$ infinitely often.  Suppose that exactly two colors $i < j < 3$ appear in the range of $c$ infinitely often.  Then there is an $s_0$ such that $\last_s = \last_{s_0} = \{i,j\}$ for all $s \geq s_0$.  In this case, each $s \geq s_0$ with $c(s) = i$ is added to $A_s$, and each $s \geq s_0$ with $c(s) = j$ is added to $D_s$.  Thus $L$ is a linear order of type $\omega + k + \omega^*$ with $\omega$-part $A = \bigcup_{s \geq s_0}A_s$, $\omega^*$-part $D = \bigcup_{s \geq s_0}D_s$, and $k$-part $M_{s_0}$.  If $S$ is an ascending chain, then it must be that $S \subseteq A$.  We have that $c(x) = i$ for all $x \in A$.  In particular, $c(x_0) = i$, which occurs in the range of $c$ infinitely often.  If $S$ is a descending chain, then it must be that $S \subseteq D$.  We have that $c(x) = j$ for all $x \in D$.  Thus $c(x_0) = j$, which occurs in the range of $c$ infinitely often.

Finally, suppose that exactly one color $i < 3$ appears in the range of $c$ infinitely often.  Then there is an $s_0$ such that $c(s) = i$ for all $s \geq s_0$ and hence is also such that $\last_s = \last_{s_0}$ for all $s \geq s_0$.  If $i$ is the least color of $\last_{s_0}$, then $s$ is added to $A_s$ for all $s \geq s_0$, and $L$ is a linear order of type $\omega + k$ with $\omega$-part $A = \bigcup_{s \geq s_0} A_s$ and $k$-part $M_{s_0} \cup D_{s_0}$.  It must therefore be that $S \subseteq A$.  We have that $c(x) = i$ for all $x \in A$.  Thus $c(x_0) = i$, which occurs in the range of $c$ infinitely often.  If instead $i$ is not the least color of $\last_{s_0}$, then $s$ is added to $D_s$ for all $s \geq s_0$, and $L$ is a linear order of type $k + \omega^*$ with $\omega^*$-part $D = \bigcup_{s \geq s_0} D_s$ and $k$-part $A_{s_0} \cup M_{s_0}$.  It must therefore be that $S \subseteq D$.  We have that $c(x) = i$ for all $x \in D$.  Thus $c(x_0) = i$, which occurs in the range of $c$ infinitely often.
\end{proof}

\begin{Theorem}\label{thm-RT15vsADS}
$\rt^1_5 \nleqW \ads$.  Therefore $\rt^1_{<\infty} \nleqW \ads$.
\end{Theorem}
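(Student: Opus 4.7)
The plan is to assume for contradiction that Turing functionals $\Phi$ and $\Psi$ witness $\rt^1_5 \leqW \ads$, and then to construct a coloring $c \colon \omega \to \{0,1,2,3,4\}$ along with an $\ads$-solution $S$ to $L = \Phi(c)$ such that $\Psi(\la c, S \ra)$ is not a valid $\rt^1_5$-solution to $c$, i.e.\ is either finite, non-monochromatic, or monochromatic in a color that $c$ uses only finitely often.

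First, I would set up an adversary construction that builds $c$ and $S$ in finite pieces simultaneously. At each stage we have committed to a finite initial segment of $c$, which determines a finite portion of $L$ via $\Phi$, and a finite approximation to $S$ consistent with that portion of $L$. Running $\Psi$ on these approximations produces a growing list of elements together with their conjectured monochromatic color. The crucial observation to exploit is that an $\ads$-solution has only two qualitative types, ascending or descending in $L$, so that as long as we can keep both possibilities open for $S$, we have two branches along which to force $\Psi$ into commitments.

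The combinatorial heart of the argument should be a pigeonhole-style count: since we have five colors available for $c$ but only two $\ads$-branches along which $\Psi$ can commit, we can play $c$ to make the committed colors into those used only finitely often. Concretely, I would arrange that for some chosen branch type $t \in \{\asc, \dec\}$, every color that $\Psi$ enumerates along that branch is eventually retired from the range of $c$, while the remaining colors (at least three of the five) are still available to keep $c$ total and infinite in the correct domain. This requires a careful interleaving: when $\Psi$ newly enumerates an element $x$, we examine $c(x)$ if already defined, or we pick $c(x)$ cleverly if $x$ lies beyond the current initial segment of $c$, so as to preserve the invariant.

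The main obstacle I anticipate is coherence: extending $c$ one step changes $L = \Phi(c)$, which may in turn disrupt the partial $\ads$-solution $S$ we have been building, since $S$ must remain extendable to an actual ascending or descending sequence of the \emph{final} $L$. This forces a priority-style bookkeeping with finitely many branches tracking which types remain viable and which colors remain available, and the heart of the proof will be showing that sufficient flexibility survives all such revisions. Once the construction is complete, the resulting $(c, S)$ defeats $\Phi$ and $\Psi$, giving the contradiction. The statement $\rt^1_{<\infty} \nleqW \ads$ is then immediate from $\rt^1_5 \leqsW \rt^1_{<\infty}$.
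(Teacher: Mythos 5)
Your high-level strategy is the right one: an adversary construction that exploits the mismatch between five available colors and the two qualitative types of $\ads$-solutions.  But several concrete gaps remain that would block the construction as sketched.

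First, you should pass from $\rt^1_5 \leqW \ads$ to $\crt^1_5 \leqsW \ads$, which is legitimate because $\rt^1_5 \equivW \crt^1_5$ and $\ads$ is a cylinder.  This step is not cosmetic: it removes $\Psi$'s access to $c$ (a strong reduction), and it turns $\Psi$'s output from a monochromatic \emph{set} into a single \emph{color} $k < 5$.  Without this, your plan of picking $c(x)$ cleverly when $x$ lies beyond the current domain of $c$ is not guaranteed to be available, since $\Psi(\la c, S \ra)$ could enumerate only elements on which $c$ is already committed, and $\Psi$'s commitments may depend on the part of $c$ it has already read.

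Second, the pigeonhole count is too optimistic.  You write that after retiring the colors committed along two branches, "at least three of the five" colors remain; but $\Psi$ may commit a different color to each of many distinct candidate ascending prefixes, and you cannot retire them all.  The construction must organize the commitments so that only the \emph{most recent} one on each branch matters.  The paper does this by maintaining a nested family of ascending witnesses whose $<_L$-endpoints $u_0 >_L u_1 >_L \cdots$ strictly decrease (and dually for descending ones), together with a two-phase structure that "forks" at a point $m_*$ once an ascending witness $\sigma_*$ and a descending witness $\tau_*$ meeting at $m_*$ have both been pinned down with colors $k_\asc$, $k_\dec$.  In phase II the construction must avoid \emph{four} colors at once ($k_\asc$, $k_\dec$, plus the most recent phase-II commitment on each branch); the slack afforded by five colors is exactly one, not three, which is precisely why $\rt^1_4 \leqW \ads$ is left open.

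Third, the coherence worry you raise is less severe than you fear, because $L_s = \Phi(c_s) \rst s$ only ever gains order facts as $c$ is extended, so a prefix that is ascending in $L_s$ stays ascending in $L$.  The genuine difficulty is different: you must guarantee that the final linear order $L$ has some $\ads$-solution extending one of your committed prefixes, so that a retired color is actually forced.  That is what the nesting structure and the stabilization analysis (if no new witnesses appear after some stage, then the last commitment extends to an $\ads$-solution of $L$) provide; as written, your proposal has no mechanism to rule out that every actual $\ads$-solution of $L$ bypasses all committed prefixes and carries a color you never retired.

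The final observation, that $\rt^1_{<\infty} \nleqW \ads$ follows from $\rt^1_5 \leqsW \rt^1_{<\infty}$, is correct and matches the paper.
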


\begin{proof}
$\rt^1_5 \equivW \crt^1_5$ and $\ads$ is a cylinder, so it suffices to show that $\crt^1_5 \nleqsW \ads$.  Suppose for a contradiction that $\Phi$ and $\Psi$ witness that $\crt^1_5 \leqsW \ads$.  We compute a coloring $c \colon \omega \imp 5$ such that the $\ads$-instance $\Phi(c)$ has a solution $S$ for which $c^{-1}(\Psi(S))$ is finite, contradicting that $\Phi$ and $\Psi$ witness that $\crt^1_5 \leqsW \ads$.

The computation of $c$ proceeds in stages, where at stage $s+1$ we determine the value of $c(s)$.  Thus we compute a sequence of strings $(c_s : s \in \omega)$, where $c_s \in 5^s$ and $c_s \subseteq c_{s+1}$ for each $s$.  The final coloring $c$ is $c = \bigcup_{s \in \omega}c_s$.

For each $s$, let $L_s = \Phi(c_s) \rst s$ denote the partially-defined structure obtained by running $\Phi(c_s)(n)$ for $s$ steps for each $n < s$.  Write also $L_s = (L_s, <_{L_s})$.  $L_s$ is not necessarily a linear order, but it must be consistent with being a linear order because there are functions $c \colon \omega \imp 5$ extending $c_s$.

For $\sigma \in 2^{<\omega}$, let $\set(\sigma) = \{n < |\sigma| : \sigma(n) = 1\}$ denote the finite set for which $\sigma$ is a characteristic string.  

The computation of $c$ begins in phase~I, and it may or may not eventually progress to phase~II.  The goal of phase~I is to identify $s, m_*  \in \omega$, $k_\asc, k_\dec < 5$, and $\sigma_*, \tau_* \in 2^{<\omega}$ such that
\begin{itemize}
\item $\set(\sigma_*)$ is an ascending sequence in $L_s$ (in the sense of Definition~\ref{def-ADS}) with $k_\asc = \Psi(\sigma_*)\da$;

\smallskip

\item $\set(\tau_*)$ is a descending sequence in $L_s$ with $k_\dec = \Psi(\tau_*)\da$;

\smallskip

\item $m_*$ is both the $<_{L_s}$-maximum element of $\set(\sigma_*)$ and the $<_{L_s}$-minimum element of $\set(\tau_*)$.
\end{itemize}

Once $s$, $m_*$, $k_\asc$, $k_\dec$, $\sigma_*$, and $\tau_*$ are found, the computation enters phase~II and no longer uses colors $k_\asc$ and $k_\dec$.  The point is that, at the end of the construction, if $L = \Phi(c)$ has an infinite ascending sequence above $m_*$, then it has an infinite ascending sequence $S$ with $\sigma_* \subseteq S$ and hence with $\Psi(S) = k_\asc$.  Similarly, if $L$ has an infinite descending sequence below $m_*$, then it has an infinite descending sequence $S$ with $\tau_* \subseteq S$ and hence with $\Psi(S) = k_\dec$.  In both cases, $S$ is as desired because $c^{-1}(k_\asc)$ and $c^{-1}(k_\dec)$ are finite.  

Computation in phase~I proceeds as follows.  We maintain sequences $\vec\sigma = (\la \sigma_\ell, u_\ell, i_\ell \ra : \ell < a)$ and $\vec\tau = (\la \tau_\ell, d_\ell, j_\ell \ra : \ell < b)$ satisfying the following properties at each stage $s$.
\begin{enumerate}
\item\label{it-seq1} For each $\ell < a$, $\set(\sigma_\ell)$ is an ascending sequence in $L_s$, $u_\ell$ is the $<_{L_s}$-maximum element of $\set(\sigma_\ell)$, and $\Psi(\sigma_\ell) = i_\ell$.

\smallskip

\item\label{it-seq2} For each $\ell < b$, $\set(\tau_\ell)$ is an descending sequence in $L_s$, $d_\ell$ is the $<_{L_s}$-minimum element of $\set(\tau_\ell)$, and $\Psi(\tau_\ell) = j_\ell$.

\smallskip

\item\label{it-seq3} For each $\ell_0 < \ell_1 < a$, $u_{\ell_0} >_{L_s} u_{\ell_1}$.

\smallskip

\item\label{it-seq4} For each $\ell_0 < \ell_1 < b$, $d_{\ell_0} <_{L_s} d_{\ell_1}$.
\end{enumerate}

At stage $0$, begin with $c_0 = \emptyset$, $\vec\sigma = \emptyset$, and $\vec\tau = \emptyset$.  At stage $s+1$, let $c_{s+1}(s)$ be the least $i < 5$ that is neither $i_{a-1}$ (if $a > 0$) nor $j_{b-1}$ (if $b > 0$).  Next, search for an $\eta \in 2^{<s}$ such that $\Psi(\eta)\da$ and either
\begin{enumerate}[\indent(i)]
\item\label{it-FindAsc1} $\set(\eta)$ is an ascending sequence in $L_s$ with $<_{L_s}$-maximum element $u$, and $u <_{L_s} u_{a-1}$ if $a > 0$; or

\medskip

\item\label{it-FindDec1} $\set(\eta)$ is an descending sequence in $L_s$ with $<_{L_s}$-minimum element $d$, and $d >_{L_s} d_{b-1}$ if $b > 0$.
\end{enumerate}
If there is such an $\eta$, let $\eta$ be the first one found.  If $\eta$ satisfies~(\ref{it-FindAsc1}), let $\la \sigma_a, u_a, i_a \ra = \la \eta, u, \Psi(\eta)\ra$ and append this element to $\vec\sigma$.  If $\eta$ satisfies~(\ref{it-FindDec1}), let $\la \tau_b, d_b, j_b \ra = \la \eta, d, \Psi(\eta)\ra$ and append this element to $\vec\tau$.  If there is no such $\eta$, then do not update $\vec\sigma$ or $\vec\tau$.

Next, search for an $\theta \in 2^{<s}$ such that $\Psi(\theta)\da$ and either
\begin{enumerate}[\indent\indent(a)]
\item\label{it-FindDec2} $\set(\theta) \subseteq \{u_0, \dots, u_{a-1}\}$ is a descending sequence in $L_s$ or

\smallskip

\item\label{it-FindAsc2} $\set(\theta) \subseteq \{d_0, \dots, d_{b-1}\}$ is an ascending sequence in $L_s$.
\end{enumerate}
If there is such a $\theta$, let $\theta$ be the first one found.  If $\theta$ satisfies~(\ref{it-FindDec2}), let $u_\ell$ be the $<_{L_s}$-minimum element of $\set(\theta)$, which is also the $<_{L_s}$-maximum element of $\sigma_\ell$.  Set $\sigma_* = \sigma_\ell$, $\tau_* = \theta$, $m_* = u_\ell$, $k_\asc = i_\ell$, and $k_\dec = \Psi(\theta)$.  If $\theta$ satisfies~(\ref{it-FindAsc2}), let $d_\ell$ be the $<_{L_s}$-maximum element of $\set(\theta)$, which is also the $<_{L_s}$-minimum element of $\tau_\ell$.  Set $\sigma_* = \theta$, $\tau_* = \tau_\ell$, $m_* = d_\ell$, $k_\asc = \Psi(\theta)$, and $k_\dec = j_\ell$.  Go to stage $s+2$ and begin phase~II.  If there is no such $\theta$, go to stage $s+2$ and remain in phase~I.

The phase~II strategy is to reset $\vec\sigma$ and $\vec\tau$ to the $\sigma_*$, $\tau_*$, $m_*$, $k_\asc$ and $k_\dec$ found at the end of phase~I and then rerun a portion of the phase~I strategy.  Upon beginning phase~II, reset $\vec\sigma$ and $\vec\tau$ to $\vec\sigma = \la \sigma_0, u_0, i_0 \ra = \la \sigma_*, m_*, k_\asc \ra$ and $\vec\tau = \la \tau_0, d_0, j_0 \ra = \la \tau_*, m_*, k_\dec \ra$.  Throughout phase~II, $\vec\sigma$ and $\vec\tau$ satisfy the same items~(\ref{it-seq1})--(\ref{it-seq4}) from phase~I.  Computation in phase~II proceeds as follows.  At stage $s+1$, let $c_{s+1}(s)$ be the least $i < 5$ not in $\{k_\asc, k_\dec, i_{a-1}, j_{b-1}\}$.  Next, as in phase~I, search for an $\eta \in 2^{<s}$ with $\Psi(\eta)\da$ that satisfies either~(\ref{it-FindAsc1}) or~(\ref{it-FindDec1}).  If such an $\eta$ is found, then update either $\vec\sigma$ or $\vec\tau$ as in phase~I and go to stage $s+2$.  If no such $\eta$ is found, go to stage $s+2$ without updating $\vec\sigma$ or $\vec\tau$.  This completes the computation.

Let $L = \Phi(c)$ and write $L = (L, <_L)$.  We find an $\ads$-solution $S$ to $L$ such that $c^{-1}(\Psi(S))$ is finite, contradicting that $\Phi$ and $\Psi$ witness that $\crt^1_5 \leqsW \ads$.

First, suppose that the computation of $c$ never leaves phase~I.  Then there must be a stage after which no further elements are appended to either $\vec\sigma$ or $\vec\tau$.  This is because if, say, elements are appended to $\vec\sigma$ infinitely often, then $u_0 >_L u_1 >_L u_2 >_L \cdots$, which means that there is an infinite descending sequence $D \subseteq \{u_\ell : \ell \in \omega\}$.  This $D$ is an $\ads$-solution to $L$, so $\Psi(D)\da$.  Let $\theta \subseteq D$ be long enough so that $\Psi(\theta)\da$.  This $\theta$ eventually satisfies item~(\ref{it-FindDec2}) of phase~I, and the construction eventually finds $\theta$.  Thus the computation of $c$ eventually enters phase~II, contradicting the assumption that it never leaves phase~I.  So let $s_0$ be a stage after which no further elements are appended to $\vec\sigma$ or $\vec\tau$.  Then $a$, $b$, $i_{a-1}$ (if $a > 0$), and $j_{b-1}$ (if $b > 0$) do not change after stage $s_0$, and for every $s > s_0$, $c(s)$ is the least $i < 5$ that is neither $i_{a-1}$ (if $a > 0$) nor $j_{b-1}$ (if $b > 0$).  Let $A$ be an $\ads$-solution to $L$, and assume that $A$ is ascending (the descending case is symmetric).  If $a = 0$ or if $x <_L u_{a-1}$ for all $x \in A$, then let $\eta \subseteq A$ be long enough so that $\Psi(\eta)\da$.  This $\eta$ eventually satisfies item~(\ref{it-FindAsc1}) of phase~I, so the computation adds an element to $\vec\sigma$ at some stage after $s_0$, which is a contradiction.  Therefore it must be that $a > 0$ and that $x \geq_L u_{a-1}$ for some $x \in A$.  As $A$ is ascending, this means that $x >_L u_{a-1}$ for almost every $x \in A$.  Let $S = \set(\sigma_{a-1}) \cup \{x \in A : (x > u_{a-1}) \andd (x >_L u_{a-1})\}$.  Then $S$ is an infinite ascending sequence in $L$.  However, $\sigma_{a-1} \subseteq S$, so $\Psi(S) = i_{a-1}$.  We have that $c(s) \neq i_{a-1}$ for all $s > s_0$, so $S$ is as desired.

Now, suppose that the computation of $c$ eventually enters phase~II at some stage $s_0$.  Then $c(s)$ is neither $k_\asc$ nor $k_\dec$ for all $s > s_0$.  Recall that $\vec\sigma$ and $\vec\tau$ are reset at the beginning of phase~II.  Suppose that elements are appended to $\vec\sigma$ infinitely often in phase~II.  Then $m_* = u_0 >_L u_1 >_L u_2 >_L \cdots$, so there is an infinite descending sequence $D \subseteq \{u_\ell : \ell \in \omega\}$.  Recall that $\set(\tau_*)$ is a descending sequence with $\leq_L$-minimum element $m_*$ and $\Psi(\tau_*) = k_\dec$.  Let $S = \set(\tau_*) \cup \{x \in D : (x > m_*) \andd (x <_L m_*)\}$.  Then $S$ is an infinite descending sequence with $\tau_* \subseteq S$.  Therefore $\Psi(S) = k_\dec$.  However, $c(s) \neq k_\dec$ for all $s > s_0$, so $S$ is as desired.  If instead elements are appended to $\vec\tau$ infinitely often in phase~II, then a symmetric argument shows that there is an infinite ascending sequence $S$ with $\sigma_* \subseteq S$ and therefore with $\Psi(S) = k_\asc$.

Finally, suppose that there is a stage $s_1 > s_0$ after which no further elements are appended to either $\vec\sigma$ or $\vec\tau$.  We argue as in the case in which the computation of $c$ never leaves phase~I.  Notice that $a$, $b$, $i_{a-1}$, and $j_{b-1}$ do not change after stage $s_1$, and for every $s > s_1$, $c(s)$ is the least $i < 5$ that is not in $\{k_\asc, k_\dec, i_{a-1}, j_{b-1}\}$.  Let $A$ be an $\ads$-solution to $L$, and assume that $A$ is ascending (the descending case is symmetric).  If $x <_L u_{a-1}$ for all $x \in A$, then the computation must append an element to $\vec\sigma$ at some stage after $s_1$, which is a contradiction.  Otherwise, $x >_L u_{a-1}$ for almost every $x \in A$.  Let $S = \set(\sigma_{a-1}) \cup \{x \in A : (x > u_{a-1}) \andd (x >_L u_{a-1})\}$.  Then $S$ is an infinite ascending sequence in $L$ with $\Psi(S) = i_{a-1}$, but $c(s) \neq i_{a-1}$ for all $s > s_1$.  Thus $S$ is as desired.
\end{proof}

\begin{Question}
Does $\rt^1_4 \leqW \ads$ hold?
\end{Question}

\section{The Rival--Sands theorem in the Weihrauch degrees}\label{sec-WRS}

In this section, we show that the Rival--Sands theorem is Weihrauch equivalent to $\wkl''$ by showing that it is Weihrauch equivalent to $\npdnr{3}_2$.  To our knowledge, the Rival--Sands theorem gives the first example of an ordinary mathematical theorem that is Weihrauch equivalent to $\wkl''$.

Multi-valued functions corresponding to the Rival--Sands theorem and its refined version are defined as follows.
\begin{Definition}{\ }
\begin{itemize}
\item $\rsg$ is the following multi-valued function.
\begin{itemize}
\item Input/instance:  An infinite graph $G = (V,E)$.

\smallskip

\item Output/solution:  An infinite $H \subseteq V$ such that, for all $v \in V$, either $|H \cap N(v)| = \omega$ or $|H \cap N(v)| \leq 1$.
\end{itemize}

\medskip

\item $\rsgr$ is the following multi-valued function.
\begin{itemize}
\item Input:  An infinite graph $G = (V,E)$.

\smallskip

\item Output:  An infinite $H \subseteq V$ such that, for all $v \in V$, either $|H \cap N(v)| = \omega$ or $|H \cap N(v)| \leq 1$; and additionally, for all $v \in H$, either $|H \cap N(v)| = \omega$ or $|H \cap N(v)| = 0$.
\end{itemize}
\end{itemize}
\end{Definition}

Clearly $\rsg \leqsW \rsgr$ because for every infinite graph $G$, every $\rsgr$-solution to $G$ is also an $\rsg$-solution to $G$.

The proof that $\rsgr \leqsW \npdnr{3}_2$ is a refinement of the proof of Theorem~\ref{thm-RSgInACA}, which we outline before giving the full details.  Let $G = (V, E)$ be an infinite graph, and let $F = \{x \in V : \text{$N(x)$ is finite}\}$.  Let $f$ be $\tdnr_2$ relative to $G''$.  We need to uniformly compute an $\rsgr$-solution $H$ to $G$ from $f$.  To do this, we begin computing $H$ under the assumption that $F$ is infinite.  The function $f$ uniformly computes $G''$, $G'$, and $G$, so we may use these sets in the computation of $H$.  If $F$ really is infinite, then $G'$ computes an infinite $F_0 \subseteq F$.  Using $f$, we compute an infinite set $C \subseteq F_0$ that is cohesive for the sequence $(N(x) : x \in V)$ together with the function $p \colon V \imp \{0,1\}$ given by
\begin{align*}
p(x) = 
\begin{cases}
0 & \text{if $C \subseteq^* \ol{N(x)}$}\\
1 & \text{if $C \subseteq^* N(x)$}
\end{cases}
\end{align*}
(in fact, we need only the function $p$) using a strategy for producing cohesive sets similar to those of~\cite{JockuschStephan}.  The computation of $H$ then proceeds in the same manner as in the proof of Theorem~\ref{thm-RSgInACA}.  If $F$ is actually finite, then eventually $G''$ will detect this.  At the point when $G''$ detects that $F$ is finite, we will have committed to a finite set $\{x_0, \dots, x_{n-1}\} \subseteq F$ being in $H$.  Using $G''$, we may compute a bound $s$ such that $H = \{x_0, \dots, x_{n-1}\} \cup \{y \in V : y > s\}$ is an $\rsgr$-solution to $G$.

We introduce a bit of notation to help describe the proof.

\begin{Definition}
For $\vec{A} = (A_i : i \in \omega)$ a sequence of subsets of $\omega$ and $\sigma \in 2^{<\omega}$, let $A^\sigma \subseteq \omega$ denote the set
\begin{align*}
A^\sigma = \bigcap_{\substack{i < |\sigma|\\\sigma(i) = 0}}\ol{A_i} \cap \bigcap_{\substack{i < |\sigma|\\\sigma(i) = 1}}A_i,
\end{align*}
where also $A^\emptyset = \omega$.
\end{Definition}

\begin{Lemma}\label{lem-RSGRred3DNR2}
$\rsgr \leqsW \npdnr{3}_2$.
\end{Lemma}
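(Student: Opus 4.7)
The plan is to implement the informal outline given just above the lemma. By a standard recursion-theoretic padding construction, we design the Weihrauch input $\Phi(G)$ so that any $f$ which is $\mathrm{DNR}_2$ relative to $\Phi(G)''$ uniformly computes $G$; since $\Phi(G)$ codes $G$, $f$ also uniformly computes $G'$ and $G''$. Moreover, by Lemma~\ref{lem-WKLequivDNR2}, $\npdnr{3}_2 \equivsW \wkl''$, and this equivalence lets $f$ select a path through any $G''$-computable infinite binary tree. We may therefore assume $f$ provides $G$, $G'$, $G''$ as oracles together with the ability to extract such a path uniformly.

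Observe that $F = \{x \in V : N(x) \text{ is finite}\}$ is $\Pi^0_2$ in $G$, hence $G''$-computable. The next step is to define a $G''$-computable infinite binary tree $T$ whose paths encode the data needed to run the construction of Theorem~\ref{thm-RSgInACA}. The tree $T$ has two interleaved modes. In the ``$F$-infinite'' mode, a path codes a function $p : V \to \{0,1\}$ together with a cohesive set $C \subseteq F$ for $(N(x) : x \in V)$, where the $\Pi^0_1(G'')$ conditions force $p(x) = 1 \Leftrightarrow C \subseteq^* N(x)$. Given $(C, p)$, the set $H = \{x_0 < x_1 < \cdots\} \subseteq C$ is built as in Theorem~\ref{thm-RSgInACA}: at stage $n$, $x_{n+1}$ is the least $y \in C$ with $y > x_n$ such that, for every $z \in \bigcup_{i \leq n} N(x_i)$, $y \in N(z)$ iff $p(z) = 1$. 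In the ``$F$-finite'' mode, a path records a witness that $F \subseteq \{0, \ldots, k\}$, after which $H$ is fixed as $\{x_0, \ldots, x_{n-1}\} \cup \{y \in V : y > s \text{ and } y \notin \bigcup_{x \in F} N(x)\}$ for some sufficiently large $s$.

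Every path through $T$ then yields a valid $\rsgr$-solution: in the $F$-infinite case the verification matches that of Theorem~\ref{thm-RSgInACA}, and in the $F$-finite case every vertex outside $F$ has infinite neighborhood and hence meets the cofinite tail of $H$ infinitely often, while every vertex in $F$ is disjoint from that tail. The tree is non-empty in both scenarios: when $F$ is infinite, a genuine cohesive pair $(C, p)$ produces a path; when $F$ is finite, the $F$-finite mode becomes available once $k$ reaches $\max F$. Using $f$'s $\wkl''$-like power we pick such a path and decode $H$.

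The main obstacle is designing $T$ so that every path codes a valid $\rsgr$-solution and so that at least one path exists regardless of whether $F$ is infinite or finite, since ``$F$ is infinite'' is $\Pi^0_3(G)$ and not $G''$-decidable. This requires carefully interleaving the two modes: each node in the $F$-infinite mode is permitted to ``switch'' to an $F$-finite continuation at any stage where $G''$ provides evidence that $F \subseteq \{0, \ldots, k\}$. Verifying that these switches remain $\Pi^0_1(G'')$-definable, and that the resulting $H$ is always a valid $\rsgr$-solution, combines the argument of Theorem~\ref{thm-RSgInACA} with the straightforward check in the $F$-finite case.
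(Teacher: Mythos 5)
Your outline follows the same high-level strategy as the paper (recover $G$, $G'$, $G''$ from a $\tdnr_2$ function relative to $G''$, extract an ``omniscient'' cohesiveness function $p$ via a $G''$-computable tree, and run a $G''$-driven version of the Theorem~\ref{thm-RSgInACA} construction while watching for evidence that $F = \{x : N(x)\text{ finite}\}$ is finite), but it has two genuine gaps. The central one concerns your tree: for your reduction to be valid, the tree $T$ you build must have an infinite path for \emph{every} input graph $G$. When $F$ is finite, the ``$F$-infinite mode'' branches of $T$ --- which encode pairs $(C,p)$ with $C \subseteq F$ infinite and $p$ recording cohesiveness --- die out at some finite depth $d$ that $G''$ cannot bound in advance. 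Your proposed fix, allowing a node at depth $k$ to switch to ``$F$-finite mode'' once $G''$ certifies $F \subseteq \{0,\dots,k\}$, repairs the tree only if the $F$-infinite branches survive up to depth $\max F$, and nothing in your construction forces this; it is entirely possible that the $F$-infinite part of $T$ terminates before the $F$-finite switch becomes available. The paper sidesteps this entirely by \emph{not} requiring the tree to have a path: it fixes an index $e_2$ so that $\Phi_{e_2}(f)$ is total for every $f$ that is $\tdnr_2$ relative to $G''$, and is a path through $\Phi_{e_1}(G'')$ whenever that tree happens to be infinite (the paper's uniformity item~(4)). The decoding then runs the construction with this (possibly meaningless) $p$ while simultaneously $G''$-searching for a bound $s$ witnessing that $F$ and all its neighbors lie below $s$, switching to $\{x_0,\dots,x_{n-1}\} \cup \{y \in V : y > s\}$ once $s$ is found.

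The second gap is in your $F$-finite verification. You observe that every vertex of $F$ misses the cofinite tail of $H$, but you do not check that a vertex $v \in F$ has at most one neighbor among the finite part $\{x_0,\dots,x_{n-1}\}$. In the $F$-finite case the path-derived $p$ need not satisfy the implication $p(v)=1 \Rightarrow v \notin F$ (since no genuine cohesive $C \subseteq F$ exists to force it), so with your search condition ``$y \in N(z)$ iff $p(z)=1$'' the construction can repeatedly choose $x_i$ from $N(v)$ for some $v \in F$. The paper guards against this by strengthening the search clause to $x \in N(y) \biimp \bigl(p(y) = 1 \andd y \notin F\bigr)$, a condition $G''$ can check; the ``$y \notin F$'' conjunct is redundant when $F$ is infinite but is exactly what guarantees $|H \cap N(v)| \leq 1$ for $v \in F$ when $F$ is finite. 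Incorporating this conjunct, and dropping the insistence that $T$ always have a path in favour of the totality trick for $\Phi_{e_2}(f)$, would close both gaps and bring your argument in line with the paper's.
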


\begin{proof}
The proof takes advantage of several well-known uniformities concerning enumerations, Turing jumps, and $\tdnr_2$ functions that we list and briefly sketch for the reader's reference.
\begin{enumerate}
\item\label{it-ure} There is a recursive function $e \mapsto i$ such that, for every $g \in \omega^\omega$, if $\ran(\Phi_e(g))$ is infinite, then $\Phi_i(g)$ computes the characteristic function of an infinite subset of $\ran(\Phi_e(g))$.  If $\ran(\Phi_e(g))$ is finite, then $\Phi_i(g)$ is partial but still satisfies $\forall n [\Phi_i(g)(n) = 1 \imp n \in \ran(\Phi_e(g))]$.

\medskip

\item\label{it-ujump} There is an index $e$ such that $\Phi_e(g') = g$ for every $g \in \omega^\omega$.

\medskip

\item\label{it-udnrID} There is an index $e$ such that $\Phi_e(f) = g$ whenever $f,g \in \omega^\omega$ are such that $f$ is $\tdnr_2$ relative to $g$.

\medskip

\item\label{it-uDNRWKL} There is a recursive function $e \mapsto i$ such that whenever $f, g \in \omega^\omega$ are such that $f$ is $\tdnr_2$ relative to $g$, we have that $\Phi_i(f) \colon \omega \imp \{0, 1\}$ is total, and also if $\Phi_e(g)$ computes an infinite subtree of $2^{<\omega}$, then $\Phi_i(f)$ computes an infinite path through $\Phi_e(g)$.
\end{enumerate}
For~(\ref{it-ure}), observe that typical proofs that every infinite r.e.\ set contains an infinite recursive subset (see~\cite{SoareBook}*{Section~II.1}) relativize and are uniform.  For~(\ref{it-ujump}), observe that typical proofs that $g'$ computes $g$ are uniform (see~\cite{SoareBook}*{Section~III.2}).  For~(\ref{it-udnrID}), $\Phi_e(f)(n)$ generates indices $i_0, i_1, \dots$ such that for every $g \in \omega^\omega$ and every $y \in \omega$, $\Phi_{i_y}(g)(i_y) = 0$ if $g(n) = y$ and $\Phi_{i_y}(g)(i_y) = 1$ if $g(n) \neq y$.  Then $\Phi_e(f)(n)$ searches for a $y$ with $f(i_y) = 1$ and outputs the first $y$ found.  If $f$ is $\tdnr_2$ relative to $g$, then $\Phi_e(f)(n) = g(n)$.  For~(\ref{it-uDNRWKL}), observe that typical proofs that $\tdnr_2$ functions compute paths through infinite recursive subtrees of $2^{<\omega}$ (see~\cite{NiesBook}*{Theorem~4.3.2}) relativize and are uniform.  

Let $G = (V,E)$ be an $\rsgr$-instance.  Let $\Phi$ be the identity functional, and let $f \in \npdnr{3}_2(G)$.  That is, let $f$ be $\tdnr_2$ relative to $G''$.  Define a functional $\Psi(f)$ computing a set $H$ as follows.  By items~(\ref{it-ujump}) and~(\ref{it-udnrID}) above, $f$ uniformly computes $G''$, which uniformly computes $G'$, which uniformly computes $G$, so we may assume that $\Psi(f)$ has access to all these sets.  

Let $\vec{A} = (A_i : i \in \omega)$ be the sequence of sets where $A_x = N(x)$ if $x \in V$ and $A_x = \emptyset$ if $x \notin V$.  Let $F$ denote the set $\{x \in V : \text{$N(x)$ is finite}\}$.  The sequence $\vec{A}$ is uniformly computable from $G$, and the set $F$ is uniformly computable from $G''$.  Ahead of time, fix indices $e_0$, $e_1$, and $e_2$ for Turing functionals with the following properties.
\begin{itemize}
\item If $F$ is infinite, then $\Phi_{e_0}(G')$ is total and computes an infinite independent set $F_0 \subseteq F$.  If $F$ is finite, then $\{x : \Phi_{e_0}(G')(x) = 1\}$ is a finite independent subset of $F$.

\medskip

\item If $F$ is infinite, then $\Phi_{e_1}(G'')$ computes an infinite tree $T \subseteq 2^{<\omega}$ where $[T]$ consists of exactly all $p \in 2^\omega$ such that $F_0 \cap A^{p \rst n}$ is infinite for every $n$.

\medskip

\item The function $\Phi_{e_2}(f)$ is total, and if $F$ is infinite, then $\Phi_{e_2}(f)$ computes an infinite path through $T$.
\end{itemize}
For $e_0$, first fix an index $e$ such that $\Phi_e(G')$ enumerates an independent subset of $F$ in the following way.  On input $0$, $\Phi_e(G')(0)$ uses $G'$ to search for the first pair $\la x, s \ra \in V \times \omega$ where $(\forall y > s)(y \notin N(x))$ and then outputs $x$.  Likewise, $\Phi_e(G')(n+1)$ uses $G'$ to search for the first pair $\la x, s \ra \in V \times \omega$ where $x$ is not adjacent to any of $\Phi_e(G')(0)$, $\Phi_e(G')(1), \dots, \Phi_e(G')(n)$ and $(\forall y > s)(y \notin N(x))$, and then $\Phi_e(G')(n+1)$ outputs $x$.  If $F$ is infinite, then such $x$ are always found, and $\Phi_e(G')$ enumerates an infinite independent subset of $F$.  Otherwise $\Phi_e(G')$ is partial, but the elements that $\Phi_e(G')$ enumerates form a finite independent subset of $F$.  Index $e_0$ may be obtained from $e$ via item~(\ref{it-ure}) above.  Let $e_1$ be an index such that $\Phi_{e_1}(G'')$ computes the set of all $\sigma \in 2^{<\omega}$ such that $(\exists n > |\sigma|)(n \in \Phi_{e_0}(G') \cap A^\sigma)$.  Index $e_2$ may be obtained from index $e_1$ via item~(\ref{it-uDNRWKL}) above.

To compute $H$, follow the strategy from Theorem~\ref{thm-RSgInACA} while simultaneously using $G''$ to check if $F$ is finite.  Notice that by item~(\ref{it-uDNRWKL}), $\Phi_{e_2}(f)$ is total regardless of whether or not $F$ is infinite.  Let $p$ denote the function computed by $\Phi_{e_2}(f)$.  Suppose inductively that $x_0 < x_1 < \cdots < x_{n-1}$ have already been added to $H$ and that each $x_i$ is in $\Phi_{e_0}(G')$.  Note that the set $Y = \bigcup_{i < n}N(x_i)$ is finite because each $x_i$ is in $F$.  Compute $Y$ and its maximum using $G'$.  Use $p$ and $G''$ to simultaneously search for either an $x > x_{n-1}$ such that
\begin{align*}
\Phi_{e_0}(G')(x) = 1 \andd (\forall y \in Y)[x \in N(y) \biimp (p(y) = 1 \andd y \notin F)]\label{fmla-findx}\tag{$a$}
\end{align*}
or an $s$ that is large enough to bound $F$ and its neighbors:
\begin{align*}
(\forall y \in V)[(\exists b \forall z(z \in N(y) \imp z < b)) \imp (y < s \andd \forall z(z \in N(y) \imp z < s))].\label{fmla-finds}\tag{$b$}
\end{align*}
Prenexing~(\ref{fmla-finds}) yields a $\Pi^0_2$ formula relative to $G$, so $G''$ can check if a given $s$ satisfies~(\ref{fmla-finds}).  If $x$ is found, let $x_n = x$ and continue computing $H$ according to the foregoing procedure.  If $s$ is found, abandon this procedure and instead add $\{y \in V : y > s\}$ to the already-computed $\{x_0, x_1, \dots, x_{n-1}\}$ to compute the set $H = \{x_0, x_1, \dots, x_{n-1}\} \cup \{y \in V : y > s\}$.

We now verify that $H$ is an $\rsgr$-solution to $G$.  First suppose that $F$ is infinite.  The verification in this case is very similar to the analogous case in the proof of Theorem~\ref{thm-RSgInACA}.  In this case, each step of the computation of $H$ always finds an $x$ as in~(\ref{fmla-findx}), and there is no $s$ as in~(\ref{fmla-finds}).  $F$ is infinite, so $\Phi_{e_0}(G')$ is total, $\Phi_{e_0}(G')$ computes an infinite independent $F_0 \subseteq F$, and $F_0 \cap A^{p \rst n}$ is infinite for every $n$.  Notice that in this case, the `$y \notin F$' clause in~(\ref{fmla-findx}) is redundant because $p(y) = 1 \imp y \in V \setminus F$ under the current assumptions.  The `$y \notin F$' clause is useful in the next case.  Thus for every finite $Y \subseteq V$, the set $F_0 \cap A^{p \rst (\max{Y}+1)}$ is infinite, and therefore there are infinitely many $x \in F_0$ such that $(\forall y \in Y)(x \in N(y) \biimp p(y) = 1)$.  Thus an $x$ as in~(\ref{fmla-findx}) always exists, so the elements of $H$ are computed by finding such elements.

Consider a $v \in V$.  If $H \cap N(v) \neq \emptyset$, then let $m$ be least such that $x_m \in N(v)$ (and hence also least such that $v \in N(x_m)$).  If $p(v) = 0$, then every $x_n$ with $n > m$ is chosen from $\ol{N(v)}$, so $|H \cap N(v)| = 1$.  If $p(v) = 1$, then every $x_n$ with $n > m$ is chosen from $N(v)$, so $|H \cap N(v)| = \omega$.  Thus for every $v \in V$, either $|H \cap N(v)| = \omega$ or $|H \cap N(v)| \leq 1$.  Furthermore, $H \subseteq F_0$ and $F_0$ is an independent set, so if $v \in H$, then $|H \cap N(v)| = 0$.  Thus $H$ is an $\rsgr$-solution to $G$.

Now suppose that $F$ is finite.  In this case there are only finitely many $x$ with $\Phi_{e_0}(G')(x)=1$, and every sufficiently large $s$ witnesses~(\ref{fmla-finds}).  Thus the computation of $H$ reaches a point where it has computed some finite independent set $\{x_0, x_1, \dots, x_{n-1}\} \subseteq F$, has found an $s$ as in~(\ref{fmla-finds}), and has switched to computing the set $H = \{x_0, x_1, \dots, x_{n-1}\} \cup \{y \in V : y > s\}$.  This $H$ is an $\rsgr$-solution to $G$.  Consider a $v \in V$.  If $v \notin F$, then $N(v)$ is infinite and $H$ contains almost every element of $V$, so $|H \cap N(v)| = \omega$.  Suppose instead that $v \in F$.  Then $\{y \in V : y > s\} \cap N(v) = \emptyset$ because the elements of $F$ have no neighbors greater than $s$.  If $\{x_0, x_1, \dots, x_{n-1}\} \cap N(v) \neq \emptyset$, let $m$ be least such that $x_m \in N(v)$.  Then also $v \in N(x_m)$, so the remaining $x_i$ for $m < i < n$ chosen according to~(\ref{fmla-findx}) are chosen from $\ol{N(v)}$ because $v \in F$.  Thus $|\{x_0, x_1, \dots, x_{n-1}\} \cap N(v)| \leq 1$, so $|H \cap N(v)| \leq 1$.  Lastly, suppose that $v \in H$.  If $v > s$, then $|H \cap N(v)| = \omega$ because $v$ has infinitely many neighbors.  If $v \in \{x_0, x_1, \dots, x_{n-1}\}$, then $\{x_0, x_1, \dots, x_{n-1}\}  \cap N(v) = \emptyset$ because $\{x_0, x_1, \dots, x_{n-1}\}$ is independent, and $\{y \in V : y > s\} \cap N(v) = \emptyset$ because $\{x_0, x_1, \dots, x_{n-1}\} \subseteq F$ and the elements of $F$ have no neighbors greater than $s$.  Thus $|H \cap N(v)| = 0$.  So $H$ is an $\rsgr$-solution to $G$.
\end{proof}

The remainder of this section is dedicated to showing that $\npdnr{3}_2 \leqW \rsg$.  We then observe that $\leqW$ can be improved to $\leqsW$.  Indeed, it is straightforward to show that $\rsg$ is a cylinder by an argument analogous to that of Proposition~\ref{prop-wRSgCyl}.  The proof that $\npdnr{3}_2 \leqsW \rsg$ makes use of several intermediate $\coh$-inspired problems interpolating between $\npdnr{3}_2$ and $\rsg$.

\begin{Definition}{\ }
\begin{itemize}

\item $\ocoh$ (for \emph{omniscient $\coh$}) is the following multi-valued function.
\begin{itemize}
\item Input/instance:  A sequence $\vec{A} = (A_i : i \in \omega)$ of subsets of $\omega$.

\smallskip

\item Output/solution:  A function $p \in 2^\omega$ such that $A^{p \rst n}$ is infinite for every $n \in \omega$.
\end{itemize}

\medskip

\item $\ocohd$ (for \emph{omniscient $\coh$ in $\mbf{\Delta}^0_2$}) is the following multi-valued function.
\begin{itemize}
\item Input/instance:  A pair $\la f, \vec{A} \ra$, where $f$ is a $\mbf{\Delta}^0_2$-approximation to an infinite set $Z \subseteq \omega$, and $\vec{A} = (A_i : i \in \omega)$ is a sequence of subsets of $\omega$.

\smallskip

\item Output/solution:  A function $p \in 2^\omega$ such that $Z \cap A^{p \rst n}$ is infinite for every $n \in \omega$.
\end{itemize}
\end{itemize}
\end{Definition}

The intuition behind the name \emph{omniscient} $\coh$ is that an $\ocoh$-solution $p$ to an $\ocoh$-instance $\vec{A}$ can be used to compute an $\vec{A}$-cohesive set $C$ where, for each $i$, $C \subseteq^* \ol{A_i}$ if $p(i) = 0$ and $C \subseteq^* A_i$ if $p(i) = 1$.  Thus the function $p$ is omniscient regarding the $\vec{A}$-cohesiveness of $C$ because, for each $i$, $p$ knows whether $C \subseteq^* \ol{A_i}$ or $C \subseteq^* A_i$.

\begin{Definition}{\ }
\begin{itemize}
\item $\infone$ (for \emph{infinity or one}) is the following multi-valued function.
\begin{itemize}
\item Input/instance:  A sequence $\vec{A} = (A_i : i \in \omega)$ of subsets of $\omega$ such that each $x \in \omega$ is in only finitely many of the $A_i$:  $\forall x \exists n \forall i (i > n \imp x \notin A_i)$.

\smallskip

\item Output/solution:  An infinite $D \subseteq \omega$ such that for every $i \in \omega$, either $|D \cap A_i| = \omega$ or $|D \cap A_i| \leq 1$.
\end{itemize}

\medskip

\item $\infoned$ (for \emph{infinity or one in $\mbf{\Delta}^0_2$}) is the following multi-valued function.
\begin{itemize}
\item Input/instance:  A pair $\la f, \vec{A} \ra$, where $f$ is a $\mbf{\Delta}^0_2$-approximation to an infinite set $Z \subseteq \omega$, and $\vec{A} = (A_i : i \in \omega)$ is a sequence of subsets of $\omega$ such that each $x \in Z$ is in only finitely many of the $A_i$:  $(\forall x \in Z)(\exists n) (\forall i)(i > n \imp x \notin A_i)$.

\smallskip

\item Output/solution:  An infinite $D \subseteq Z$ such that for every $i \in \omega$, either $|D \cap A_i| = \omega$ or $|D \cap A_i| \leq 1$.
\end{itemize}

\medskip

\item $\infonesd$ (for \emph{infinity or one almost in $\mbf{\Delta}^0_2$}) is an alternate version of $\infoned$ where the output $D$ is only required to be an infinite $D \subseteq^* Z$, rather than an infinite $D \subseteq Z$.
\end{itemize}
\end{Definition}

We warn the reader that the `$*$' in the name $\infonesd$ refers to the almost inclusion $D \subseteq^* Z$ and is unrelated to the finite parallelization operation and the compositional product operation in the Weihrauch degrees.

Notice how the dichotomy between \emph{infinitely many} and \emph{at most one} in solutions to the $\infone$ problem and its variants reflects the analogous dichotomy in solutions to $\rsg$.

The strategy is to prove a sequence of lemmas showing that
\begin{itemize}
\item $\infoned \equivW \infonesd \equivW \ocohd$,

\smallskip

\item $\npdnr{3}_2 \leqsW \ocohd$, and

\smallskip

\item $\infonesd \leqsW \rsg$.
\end{itemize}
It then follows that $\npdnr{3}_2 \leqW \rsg$.  Out of independent interest, and for the sake of completeness, we first show that $\infone \equivW \ocoh$.

\begin{Proposition}\label{prop-InfOneEquivOCOH}
$\infone \equivW \ocoh$.
\end{Proposition}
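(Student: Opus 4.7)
The plan is to establish both $\infone \leqW \ocoh$ and $\ocoh \leqW \infone$. For $\infone \leqW \ocoh$, I will forward a sparse $\vec{A}$ unchanged to $\ocoh$, receive $p \in 2^\omega$ with $A^{p \rst n}$ infinite for every $n$, and then construct $D = \{d_0 < d_1 < \cdots\}$ in stages from the pair $\la \vec{A}, p \ra$. At stage $n$ I compute the set $T_n = \{i \geq n : p(i) = 0 \andd \exists j < n\,(d_j \in A_i)\}$, which is finite by sparsity since each $d_j$ lies in only finitely many $A_i$; then, setting $N_n = \max(T_n \cup \{n\}) + 1$, I pick $d_n > d_{n-1}$ inside $A^{p \rst N_n}$, which is infinite by the $\ocoh$-property. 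For each $i$ with $p(i) = 1$, every $d_n$ with $n > i$ lies in $A^{p \rst N_n} \subseteq A^{p \rst(i+1)} \subseteq A_i$, giving $|D \cap A_i| = \omega$. For each $i$ with $p(i) = 0$, the same inclusion gives $d_n \notin A_i$ whenever $n > i$; and if $d_j \in A_i$ for some (necessarily) $j \leq i$, then $i \in T_k$ for all $j < k \leq i$, which forces $d_k \notin A_i$ for such $k$ and yields $|D \cap A_i| \leq 1$.

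For $\ocoh \leqW \infone$, I will transform $\vec{A}$ into a sparse sequence $\vec{B}$ indexed by $2^{<\omega}$ (identified with $\omega$ via an effective bijection) by setting $B_\sigma = A^\sigma \cap \{x : x > |\sigma|\}$. The tail truncation is exactly what forces sparsity: an $x$ can belong to $B_\sigma$ only when $|\sigma| < x$ and $\sigma = \la \chi_{A_0}(x), \ldots, \chi_{A_{|\sigma|-1}}(x) \ra$, leaving at most $x$ admissible $\sigma$. I will then apply $\infone$ to $\vec{B}$ to obtain an infinite $D$ and compute $p \in 2^\omega$ recursively as follows: given $p \rst n$, enumerate $D$ and $\vec{A}$ until two distinct members of $D$ are located inside one of $B_{p \rst n \smf \la 0 \ra}$ or $B_{p \rst n \smf \la 1 \ra}$, and set $p(n)$ to the first side at which this happens. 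Since $B_\sigma = B_{\sigma \smf \la 0 \ra} \cup B_{\sigma \smf \la 1 \ra}$ holds modulo at most one element, an induction on $n$ will show $|D \cap B_{p \rst n}| = \omega$: the base case $B_\emptyset = \{x : x > 0\}$ is immediate, and the inductive step uses the infinity-or-one property to upgrade the two witnesses on the chosen side to a full $\omega$, which forces $A^{p \rst(n+1)}$ infinite.

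The main obstacle I anticipate is in the first direction: securing $|D \cap A_i| \leq 1$ rather than merely finite when $p(i) = 0$ is exactly the step that requires sparsity, since without it $T_n$ could be infinite and no admissible $d_n$ would exist in the prescribed slice. In the second direction the subtle calibration is the tail truncation $\{x : x > |\sigma|\}$: it is precisely the weakest restriction that makes $\vec{B}$ sparse while preserving the near-partition property $B_\sigma \approx B_{\sigma \smf \la 0 \ra} \cup B_{\sigma \smf \la 1 \ra}$, and that near-partition is what lets the infinity-or-one property propagate down the binary tree and be read off uniformly by a Turing functional from $D$ and $\vec{A}$.
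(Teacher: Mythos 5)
Your second direction, $\ocoh \leqW \infone$, is correct and follows essentially the paper's route: truncate $A^\sigma$ so that the resulting sequence $\vec B$ becomes a valid $\infone$-instance, then read off $p$ bit by bit by always following the child of the binary tree in which two members of $D$ appear (so, by the infinity-or-one dichotomy, infinitely many appear). The paper truncates by $\{x : x > \code{\sigma}\}$ rather than by $\{x : x > |\sigma|\}$, but your observation that at most one string of each given length can contain a fixed $x$ makes your variant equally valid, and the readback procedure is identical.

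Your first direction, $\infone \leqW \ocoh$, has a genuine gap. You propose to forward $\vec A$ to $\ocoh$ unchanged and then, at stage $n$, to ``compute the set $T_n = \{i \geq n : p(i) = 0 \wedge \exists j < n\,(d_j \in A_i)\}$.'' Membership in $T_n$ is decidable, and $T_n$ is indeed finite by the sparsity condition on $\vec A$, but finiteness alone gives you no stopping criterion: you know $T_n$ has a maximum element, but nothing in $\vec A$ or in the $\ocoh$-solution $p$ (which only certifies that the intersections $A^{p\rst n}$ are infinite) lets you compute that maximum. Without a computable bound you cannot determine $N_n$, so you cannot select $d_n$, and your proposed $\Psi$ is not a Turing functional. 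This is precisely the obstacle the paper's proof is engineered to avoid: rather than forwarding $\vec A$ unchanged, it passes to $\ocoh$ the augmented sequence $\vec B$ with $B_{2i} = A_i$ and $B_{2\la x,i\ra+1} = \{k : (\forall j \in [i,k])(x \notin A_j)\}$. The extra odd-indexed sets are chosen so that $B_{2\la x,i\ra+1}$ is cofinite iff $x \notin A_j$ for all $j \geq i$, and hence the $\ocoh$-solution $p = q \oplus r$ supplies, via $r$, a computable bound function $b$ with $\forall x\,(i \geq b(x) \Rightarrow x \notin A_i)$. That function $b$ is exactly what you need in order to compute your $N_n$; once it is available, the remainder of your construction and your verification that $|D \cap A_i| \in \{0,1,\omega\}$ go through correctly.
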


\begin{proof}
$\infone \leqW \ocoh$:  Let $\vec{A} = (A_i : i \in \omega)$ be an $\infone$-instance.  Let $\Phi$ be the functional given by $\Phi(\vec{A}) = \vec{B}$, where
\begin{align*}
B_{2i} &= A_i\\
B_{2\la x, i \ra + 1} &= \{k \in \omega : (\forall j \in [i,k])(x \notin A_j)\}.
\end{align*}
The sequence $\vec{B}$ is a valid $\ocoh$-instance, so let $p \in \ocoh(\vec{B})$.  Define a functional $\Psi(\la \vec{A}, p \ra)$ computing a set $D$ as follows.  Using $\Phi$, we may again compute $\Phi(\vec{A}) = \vec{B}$.  Decompose $p$ as $p = q \oplus r$.  The sequence $\vec{A}$ is assumed to be a valid $\infone$-instance, so each $x \in \omega$ is in only finitely many of the sets $A_i$.  Using $r$, we can compute a bound $b \colon \omega \imp \omega$ such that $\forall x (i \geq b(x) \imp x \notin A_i)$.  To do this, observe the following for given $x,i \in \omega$.
\begin{itemize}
\item If $(\exists j \geq i)(x \in A_j)$, then $B_{2\la x,i \ra + 1}$ is finite and $r(\la x,i \ra) = 0$.

\smallskip

\item If $(\forall j \geq i)(x \notin A_j)$, then $B_{2\la x,i \ra + 1}$ is co-finite and $r(\la x,i \ra) = 1$.
\end{itemize}
Thus we may compute $b$ by letting $b(x)$ be the least $i$ such that $r(\la x,i \ra) = 1$.

To compute $D$, suppose inductively that $x_0 < x_1 < \cdots < x_{n-1}$ have already been added to $D$.  Let $m = n + 1 + \max\{b(x_i) : i < n\}$.  The set $A^{q \rst m}$ is infinite because $q \in \ocoh(\vec{A})$ (which is because $q \oplus r \in \ocoh(\vec{B})$).  So choose $x_n$ to be the least member of $A^{q \rst m}\setminus \{x_i : i < n\}$, and add $x_n$ to $D$.

To see that $D \in \infone(\vec{A})$, consider an $i \in \omega$.  If $q(i) = 1$, then $|D \cap A_i| = \omega$ because in this case eventually every $x_n$ is chosen from $A_i$.  Suppose instead that $q(i) = 0$ and that $x_n \in D \cap A_i$ for some $n$.  When choosing $x_k$ for a $k > n$, we use an $m > b(x_n)$ to make the choice.  Furthermore, $b(x_n) > i$ because $x_n \in A_i$.  Thus $m > b(x_n) > i$, so $x_k$ is chosen from $\ol{A_i}$.  This means that if $q(i) = 0$, then at most one $x_n$ can be in $D \cap A_i$.  So $|D \cap A_i| \leq 1$.  Therefore $D \in \infone(\vec{A})$.  In fact, $D$ is also cohesive for $\vec{A}$.  Thus $\Phi$ and $\Psi$ witness that $\infone \leqW \ocoh$.

$\ocoh \leqW \infone$:  For this argument, we make explicit the bijective encoding of binary strings as natural numbers.  For $\sigma \in 2^{<\omega}$, let $\code{\sigma} \in \omega$ denote the code for $\sigma$.  Let $\vec{A} = (A_i : i \in \omega)$ be an $\ocoh$-instance.  Let $\Phi$ be the functional $\Phi(\vec{A}) = \vec{B}$, where $\vec{B} = (B_i : i \in \omega)$ is the sequence given by
\begin{align*}
B_{\code{\sigma}} = A^\sigma \setminus \{x : x \leq \code{\sigma}\}.
\end{align*}
Each $x$ is in only finitely many of the $B_i$, so $\vec{B}$ is a valid input to $\infone$.  Let $D \in \infone(\vec{B})$.  Define a functional $\Psi(\la \vec{A}, D \ra)$ computing a function $p$ as follows.  Using $\Phi$, we may again compute $\Phi(\vec{A}) = \vec{B}$.  The goal is to ensure that $B_{\code{p \rst n}}$ is infinite for every $n$ and therefore that $A^{p \rst n}$ is infinite for every $n$.  To determine $p(0)$, observe that, for the strings $0$ and $1$, $B_{\code{0}} =^* A^0 = \ol{A_0}$ and $B_{\code{1}} =^* A^1 = A_0$.  Thus $B_{\code{0}} \cup B_{\code{1}} =^* \omega$, and therefore either $|D \cap B_{\code{0}}| = \omega$ or $|D \cap B_{\code{1}}| = \omega$.  Search $D \cap B_{\code{0}}$ and $D \cap B_{\code{1}}$ until either discovering that $|D \cap B_{\code{0}}| \geq 2$ or discovering that $|D \cap B_{\code{1}}| \geq 2$.  If we first discover that $|D \cap B_{\code{0}}| \geq 2$, then in fact $|D \cap B_{\code{0}}| = \omega$ because $D \in \infone(\vec{B})$.  So output $p(0) = 0$.  Likewise, if we first discover that $|D \cap B_{\code{1}}| \geq 2$, then $|D \cap B_{\code{1}}| = \omega$, so output $p(0) = 1$.  Now compute the rest of $p$ in the same way.  Suppose inductively that we have computed $p \rst n$ and that $|D \cap B_{\code{p \rst n}}| = \omega$.  As
\begin{align*}
B_{\code{p \rst n}} =^* A^{p \rst n} = A^{(p \rst n)^\smf 0} \cup A^{(p \rst n)^\smf 1} =^* B_{\code{(p \rst n)^\smf 0}} \cup B_{\code{(p \rst n)^\smf 1}},
\end{align*}
it must be that either $|D \cap B_{\code{(p \rst n)^\smf 0}}| = \omega$ or $|D \cap B_{\code{(p \rst n)^\smf 1}}| = \omega$.  As before, search $D \cap B_{\code{(p \rst n)^\smf 0}}$ and $D \cap B_{\code{(p \rst n)^\smf 1}}$ until discovering that $|D \cap B_{\code{(p \rst n)^\smf 0}}| \geq 2$ or that $|D \cap B_{\code{(p \rst n)^\smf 1}}| \geq 2$.  If we first discover that $|D \cap B_{\code{(p \rst n)^\smf 0}}| \geq 2$, then $|D \cap B_{\code{(p \rst n)^\smf 0}}| = \omega$, so output $p(n) = 0$.  If we first discover that $|D \cap B_{\code{(p \rst n)^\smf 1}}| \geq 2$, then $|D \cap B_{\code{(p \rst n)^\smf 1}}| = \omega$, so output $p(n) = 1$.  In the end, $D \cap B_{\code{p \rst n}}$ is infinite for every $n$, so $B_{\code{p \rst n}}$ is infinite for every $n$, so $A^{p \rst n}$ is infinite for every $n$.  Thus $p \in \ocoh(\vec{A})$, so $\Phi$ and $\Psi$ witness that $\ocoh \leqW \infone$.
\end{proof}

\begin{Lemma}\label{lem-INFONEDequivOCOHD}
$\infoned \equivW \infonesd \equivW \ocohd$.
\end{Lemma}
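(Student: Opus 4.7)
The plan is to adapt the proof of Proposition~\ref{prop-InfOneEquivOCOH} to the $\mbf{\Delta}^0_2$ setting, with one new ingredient needed to enforce the stricter condition $D \subseteq Z$ of $\infoned$ (as opposed to $D \subseteq^* Z$ of $\infonesd$).  I would establish the cycle $\infonesd \leqW \infoned \leqW \ocohd \leqW \infonesd$.  The first link, $\infonesd \leqW \infoned$, is trivial via the identity functionals, since any $\infoned$-solution $D$ to $\la f, \vec A \ra$ satisfies $D \subseteq Z$ and hence is also an $\infonesd$-solution.

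For $\ocohd \leqW \infonesd$, I would reuse the encoding from the $\ocoh \leqW \infone$ direction of Proposition~\ref{prop-InfOneEquivOCOH}: given an $\ocohd$-instance $\la f, \vec A \ra$, set $B_{\code\sigma} = A^\sigma \setminus \{x : x \leq \code\sigma\}$.  Each $x \in \omega$ lies in only finitely many $B_i$, so $\la f, \vec B \ra$ is a valid $\infonesd$-instance.  From any solution $D \subseteq^* Z$ I would extract $p$ inductively exactly as before, using $B_{\code{p \rst n}} =^* B_{\code{(p \rst n)^\smf 0}} \cup B_{\code{(p \rst n)^\smf 1}}$ together with the $\infonesd$-dichotomy on $D$ to determine $p(n)$ by searching for whichever side first exhibits two elements.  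The verification that $Z \cap A^{p \rst n}$ is infinite for every $n$ then follows because $D \cap B_{\code{p \rst n}}$ is infinite and $D \subseteq^* Z$.

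The heart of the argument is the direction $\infoned \leqW \ocohd$, where the challenge is to produce $D \subseteq Z$ outright.  I would augment the $\vec B$ from the proof of $\infone \leqW \ocoh$ with a third family of sets designed so that any $\ocohd$-solution explicitly encodes $\chi_Z$.  Specifically, define $B_{3i} = A_i$, $B_{3\la x, i \ra + 1} = \{k : (\forall j \in [i, k])(x \notin A_j)\}$, and $B_{3x + 2} = \{k : f(x, k) = 1\}$, all computable from $\la f, \vec A \ra$.  Apply $\ocohd$ to $\la f, \vec B \ra$ to obtain a solution $p$, and define $q(i) = p(3i)$, $r(\la x, i \ra) = p(3\la x, i \ra + 1)$, and $s(x) = p(3x + 2)$.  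For $x \in Z$ the set $B_{3x+2}$ is cofinite in $\omega$, so keeping $Z \cap B^{p \rst N}$ infinite forces $s(x) = 1$; symmetrically, $x \notin Z$ forces $s(x) = 0$, so $s = \chi_Z$.  The component $r$ yields a bound function $b$ defined on $Z$ by $b(x) = \min\{i : r(\la x, i \ra) = 1\}$, and $q$ satisfies $Z \cap A^{q \rst n}$ infinite for every $n$, since the $r$- and $s$-constraints remove only finitely many elements of $Z$.  I would then build $D = \{x_0 < x_1 < \cdots\}$ inductively as in the proof of $\infone \leqW \ocoh$, except that at stage $n$ I pick $x_n$ to be the least $x \notin \{x_i : i < n\}$ with both $s(x) = 1$ and $x \in A^{q \rst m}$, where $m = n + 1 + \max\{b(x_i) : i < n\}$.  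Such $x_n$ exists because $Z \cap A^{q \rst m}$ is infinite and $s = \chi_Z$; the clause $s(x_n) = 1$ ensures $D \subseteq Z$, and the rest of the verification of the $\infone$-conditions parallels the original proof.

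The main obstacle I anticipate is precisely the jump from $D \subseteq^* Z$ to $D \subseteq Z$, which the additional coordinate $B_{3x+2}$ overcomes by effectively letting the $\ocohd$-solver compute $\chi_Z$ for us; beyond that, everything is a direct transfer of the arguments in Proposition~\ref{prop-InfOneEquivOCOH} to the $\mbf{\Delta}^0_2$ setting.
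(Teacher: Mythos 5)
Your proposal is correct and follows essentially the same approach as the paper: the same three-part encoding $B_{3i}$, $B_{3\la x,i\ra+1}$, $B_{3x+2}$ for $\infoned \leqW \ocohd$ (with the third coordinate recovering $\chi_Z$), the same $B_{\code{\sigma}} = A^\sigma \setminus \{x : x \leq \code{\sigma}\}$ encoding for $\ocohd \leqW \infonesd$, and the same trivial observation that any $\infoned$-solution is an $\infonesd$-solution.
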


\begin{proof}
We show that
\begin{align*}
\ocohd \leqW \infonesd \leqsW \infoned \leqW \ocohd.
\end{align*}
Notice that the middle $\leqsW$-reduction is trivial.

$\infoned \leqW \ocohd$:  Let $\la f, \vec{A} \ra$ be an $\infoned$-instance.  The proof is the same as that of the $\infone \leqW \ocoh$ direction of Proposition~\ref{prop-InfOneEquivOCOH}, except now we must also use $\ocohd$ to compute the set $Z$ approximated by $f$.  Let $\Phi$ be the functional given by $\Phi(\la f, \vec{A} \ra) = \la f, \vec{B} \ra$, where
\begin{align*}
B_{3i} &= A_i\\
B_{3\la x, i \ra + 1} &= \{k \in \omega : (\forall j \in [i,k])(x \notin A_j)\}\\
B_{3x + 2} &= \{s \in \omega : f(x,s)=1\}.
\end{align*}
The pair $\la f, \vec{B} \ra$ is a valid $\ocohd$-instance, so let $p \in \ocohd(\la f, \vec{B} \ra)$.  Define a functional $\Psi(\la \la f, \vec{A} \ra, p \ra)$ computing a set $D$ as follows.  Using $\Phi$, we may again compute $\Phi(\la f, \vec{A} \ra) = \la f, \vec{B} \ra$.  Decompose $p$ into functions $q,r,h \colon \omega \imp \omega$ by letting
\begin{align*}
q(n) = p(3n) &&
r(n) = p(3n+1) &&
h(n) = p(3n+2) &&
\end{align*}
for each $n \in \omega$.  As $\la f, \vec{A} \ra$ is a valid $\infoned$-instance, the function $f$ is a $\mbf{\Delta}^0_2$-approximation to an infinite set $Z$.  We have that $h$ is the characteristic function of $Z$.  To see this, observe the following for a given $x \in \omega$.
\begin{itemize}
\item If $\lim_s f(x,s) = 0$, then $B_{3x+2}$ is finite, so $h(x) = 0$.

\smallskip

\item If $\lim_s f(x,s) = 1$, then $B_{3x+2}$ is co-finite, so $h(x)=1$.
\end{itemize}
Now proceed as in the proof of Proposition~\ref{prop-InfOneEquivOCOH}.  By assumption, each $x \in Z$ is in at most finitely many of the sets $A_i$, so the function $r$ computes a bound $b \colon Z \imp \omega$ such that $(\forall x \in Z)(i \geq b(x) \imp x \notin A_i)$ as before.  To compute $D$, suppose inductively that the numbers $x_0 < x_1 < \cdots < x_{n-1}$ in $Z$ have already been added to $D$.  Let $m = n + 1 + \max\{b(x_i) : i < n\}$.  The set $Z \cap A^{q \rst m}$ is infinite, so choose $x_n$ to be the least member of $(Z \cap A^{q \rst m})\setminus \{x_i : i < n\}$, and add $x_n$ to $D$.  The result is that $D \in \infoned(\la f, \vec{A} \ra)$, so $\Phi$ and $\Psi$ witness that $\infoned \leqW \ocohd$.

$\ocohd \leqW \infonesd$:  The proof is the same as that of the $\ocoh \leqW \infone$ direction of Proposition~\ref{prop-InfOneEquivOCOH}.  Again, let $\code{\sigma} \in \omega$ denote the number coding the string $\sigma \in 2^{<\omega}$.  Let $\la f, \vec{A} \ra$ be an $\ocohd$-instance.  Let $\Phi$ be the functional $\Phi(\la f, \vec{A} \ra) = \la f, \vec{B} \ra$, where $\vec{B} = (B_i : i \in \omega)$ is the sequence given by $B_{\code{\sigma}} = A^\sigma \setminus \{x : x \leq \code{\sigma}\}$.  Then $\la f, \vec{B} \ra$ is a valid $\infonesd$-instance, so let $D \in \infonesd(\la f, \vec{B} \ra)$.  Let $\Psi(\la \la f, \vec{A} \ra, D \ra)$ be the functional computing a function $p$ by the same procedure as in the proof of Proposition~\ref{prop-InfOneEquivOCOH}.  As $D \in \infonesd(\la f, \vec{B} \ra)$, we have that $D \subseteq^* Z$, where $Z$ is the set approximated by $f$.  The function $p$ has the property that $D \cap B_{\code{p \rst n}}$ is infinite for every $n$, hence also $Z \cap B_{\code{p \rst n}}$ is infinite for every $n$ because $D \subseteq^* Z$.  Therefore $Z \cap A^{p \rst n}$ is infinite for every $n$, so $p \in \ocohd(\la f, \vec{A} \ra)$.  Thus $\Phi$ and $\Psi$ witness that $\ocohd \leqW \infonesd$.
\end{proof}

\begin{Lemma}\label{lem-3DNR2redOCOHD}
$\npdnr{3}_2 \leqsW \ocohd$.
\end{Lemma}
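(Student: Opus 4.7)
The plan is to construct uniformly in the given input $p$ an $\ocohd$-instance $\la f, \vec{A} \ra$ from which every $\ocohd$-solution computes a function that is $\tdnr_2$ relative to $p''$.  The combinatorial idea is to take as the underlying universe the set of coded pairs $(s, \sigma)$ with $\sigma \in 2^s$, and to set $Z = \{(s, p' \rst s) : s \in \omega\}$.  This $Z$ has a $p$-computable $\mbf{\Delta}^0_2$-approximation $f$ (namely $f((s, \sigma), t) = 1$ iff $(p')_t \rst s = \sigma$, where $(p')_t$ is the standard $t$-stage approximation of $p'$), and $Z$ is infinite by construction, providing one coordinate for each truthful initial segment of $p'$.

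Concretely, for each pair $(s, \sigma)$, let $q''_\sigma \in 2^s$ be the $\sigma$-induced approximation to $p''$: $q''_\sigma(n) = 1$ iff $\Phi_n(\sigma)(n)\da$ using queries $< s$.  For each $e \in \omega$, I would set
\begin{align*}
A_{2e} &= \{(s, \sigma) : \Phi_{e,s}(q''_\sigma)(e) \text{ does not halt with value } 1 \text{ using queries} < s\},\\
A_{2e+1} &= \{(s, \sigma) : \Phi_{e,s}(q''_\sigma)(e) \text{ does not halt with value } 0 \text{ using queries} < s\}.
\end{align*}
Each $A_i$ is $p$-computable.  The reduction functional outputs $\la f, \vec A \ra$; given any $\ocohd$-solution $q$, the decoding functional outputs the $g \in 2^\omega$ with $g(e) = q(2e)$.

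The verification proceeds by case analysis on $\Phi_e(p'')(e)$.  Restricted to $Z$, the approximation at coordinate $(s, p' \rst s)$ satisfies $q''_{p' \rst s}(n) = 1$ iff $\Phi_n(p' \rst s)(n)\da$ using queries $< s$, which for each fixed $n$ stabilizes to $p''(n)$ as $s$ grows beyond the relevant convergence stage.  Thus when $\Phi_e(p'')(e) \da = y \in \{0,1\}$ via some finite query set $Q$, for $s$ large enough we have $\Phi_{e,s}(q''_{p' \rst s})(e) = y$; this makes $Z \cap A_{2e}$ finite when $y=1$ and $Z \cap \ol{A_{2e}}$ finite when $y=0$, forcing $q(2e) = 1 - y$ in any $\ocohd$-solution and hence $g(e) \neq y$.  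When $\Phi_e(p'')(e) \da \geq 2$ or $\Phi_e(p'')(e)\ua$, the $\tdnr_2$-constraint on $g(e)$ is vacuous, so any extracted $q(2e) \in \{0,1\}$ is acceptable.  Existence of $\ocohd$-solutions is automatic from a K\"onig's-lemma argument on the tree of $\sigma$ with $Z \cap A^\sigma$ infinite, since $Z$ is infinite.

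The main obstacle is the divergent case $\Phi_e(p'')(e)\ua$.  A halting computation of $\Phi_e(q''_\sigma)(e)$ with queries $< |\sigma|$ yields a halting computation of $\Phi_e(p'')(e)$ whenever $q''_\sigma$ agrees with $p''$ on the queried bits, and since $q''_{p' \rst s}(n) \leq p''(n)$, spurious halts can occur on $Z$ only at pairs $(s, p' \rst s)$ where some queried bit of $p''$ is $1$ but has not yet been witnessed at stage $s$.  One must check that such spurious halts do not damage the construction: the key observation is that the $\tdnr_2$ condition is vacuous whenever $\Phi_e(p'')(e)\ua$, so even if the infiniteness constraints force $q(2e)$ and $q(2e+1)$ to specific values in this case, any extracted $g(e) \in \{0,1\}$ automatically satisfies the required DNR property.
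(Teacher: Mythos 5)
Your proposal is correct and uses essentially the same approach as the paper's proof: construct a $p$-computable $\mbf{\Delta}^0_2$-approximation to a set $Z$ whose elements encode stagewise approximations to $p''$, define $\vec{A}$ so that cohesiveness of $Z$ with respect to $A_e$ detects the value of $\Phi_e(p'')(e)$, and read the $\tdnr_2$ function off the $\ocohd$-solution. The paper lets $Z$ consist directly of strings that are $p'$-stage approximations to $p''$, with $A_e$ testing whether $\Phi_{e,|\sigma|}(\sigma)(e)\da=0$; you instead carry pairs $(s,\sigma)$ with $\sigma$ an initial segment of $p'$ and derive the $p''$-approximation $q''_\sigma$ inside the definition of $A_{2e}$ (the sets $A_{2e+1}$ play no role and could be dropped), but the verification is the same modulo this encoding choice.
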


\begin{proof}
Let $p$ be an input to $\npdnr{3}_2$.  Define a functional $\Phi(p) = \la f, \vec{A} \ra$ as follows.  The function $f$ is a $\mbf{\Delta}^0_2$-approximation to the set
\begin{align*}
Z = \Bigl\{\sigma \in 2^{<\omega} : \bigl(\forall e < |\sigma|\bigr)\bigl[(\Phi_{e,|\sigma|}(p')(e)\ua \imp \sigma(e) = 0) \andd (\Phi_{e,|\sigma|}(p')(e)\da \imp \sigma(e) = 1)\bigr]\Bigr\}
\end{align*}
of $p'$-approximations to $p''$.  To compute $f$, for each $s \in \omega$, let $\tau_s \in 2^s$ denote the $p$-approximation to $p'$ where
\begin{align*}
\bigl(\forall e < s\bigr)\bigl[(\Phi_{e,s}(p)(e)\ua \imp \tau_s(e) = 0) \andd (\Phi_{e,s}(p)(e)\da \imp \tau_s(e) = 1)\bigr].
\end{align*}
Now, for $\sigma \in 2^{<\omega}$ and $s \in \omega$, define $f(\sigma, s) = 1$ if
\begin{align*}
\bigl(\forall e < |\sigma|\bigr)\bigl[(\Phi_{e,\min\{|\sigma|,s\}}(\tau_s)(e)\ua \imp \sigma(e) = 0) \andd (\Phi_{e,\min\{|\sigma|,s\}}(\tau_s)(e)\da \imp \sigma(e) = 1)\bigr],
\end{align*}
and define $f(\sigma, s) = 0$ otherwise.  Define $\vec{A} = (A_i : i \in \omega)$ by $A_e = \{\sigma \in 2^{<\omega} : \Phi_{e,|\sigma|}(\sigma)(e)\da = 0\}$.

The pair $\la f, \vec{A} \ra$ is a valid $\ocohd$-instance, so let $g \in \ocohd(\la f, \vec{A} \ra)$.  Let $\Psi$ be the identity functional.  We show that $g$ is $\tdnr_2$ relative to $p''$.  Suppose that $\Phi_e(p'')(e)\da = 0$, and let $\alpha \subseteq p''$ be long enough so that $\Phi_{e,|\alpha|}(\alpha)(e)\da = 0$.  All but finitely many strings in $Z$ extend $\alpha$, so $Z \subseteq^* A_e$.  It must therefore be that $g(e) = 1$ because $Z \cap \ol{A_e}$ is finite.  Now suppose instead that $\Phi_e(p'')(e)\da = n$ for some $n > 0$.  Reasoning as before, we conclude that all but finitely many of the $\sigma \in Z$ satisfy $\Phi_{e,|\sigma|}(\sigma)(e)\da = n$.  Therefore $Z \cap A_e$ is finite, so $g(e) = 0$.  We have shown that
\begin{align*}
\forall e\bigl[(\Phi_e(p'')(e)\da = 0 \imp g(e) = 1) \andd (\Phi_e(p'')(e)\da > 0 \imp g(e) = 0)\bigr].
\end{align*}
Therefore $g \in \npdnr{3}_2(p)$, so $\Phi$ and $\Psi$ witness that $\npdnr{3}_2 \leqsW \ocohd$.
\end{proof}

\begin{Lemma}\label{lem-INFONESDredRSG}
$\infonesd \leqsW \rsg$.
\end{Lemma}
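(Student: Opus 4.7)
Given an $\infonesd$-instance $\la f, \vec{A} \ra$, the plan is to construct a graph $G = (V, E)$ with vertex set partitioned as $V = V_1 \cup V_2$, where $V_1 = \{v_n : n \in \omega\}$ consists of ``primary'' vertices (one per natural number, the candidates for the output set $D$), and $V_2$ is a countable auxiliary set built from three families: a vertex $a_i$ for each $i \in \omega$ with $N(a_i) = \{v_n : n \in A_i\}$; a vertex $b_{m, s}$ for each $m, s \in \omega$ with $N(b_{m, s}) = \{v_n : n \leq m \andd f(n, s) = 0\}$; and for every pair $\{u, v\} \subseteq V_2$, a pairing vertex $p_{u, v}$ with $N(p_{u, v}) = \{u, v\}$. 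The set $V_2$ is obtained by iterating closure under the pairing operation, realized as a computable subset of $\omega$ via a standard encoding. The decoding functional $\Psi$ will output $D = \{n : v_n \in H\}$, which depends on $H$ alone under the fixed encoding of $V_1$, so the resulting reduction is strong.

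Given any $\rsg$-solution $H$, I would verify three properties of $D$. First, $D$ is infinite: the pairing gadgets force $|H \cap V_2| \leq 1$, since for any pair $\{u, v\} \subseteq V_2$ the vertex $p_{u, v}$ has $|N(p_{u, v})| = 2$, which forbids $|H \cap \{u, v\}| = 2$ by the $\rsg$-condition; varying the pair yields $|H \cap V_2| \leq 1$, and since $H$ is infinite, $D = H \cap V_1$ is also infinite. Second, $|D \cap A_i| \in \{0, 1, \omega\}$ follows immediately from the $\rsg$-condition at $a_i$, because $N(a_i) \subseteq V_1$ gives $|H \cap N(a_i)| = |D \cap A_i|$. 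Third, the $\rsg$-condition at $b_{m, s}$ yields $|D \cap \{n \leq m : f(n, s) = 0\}| \leq 1$ for every $m, s$, because $|N(b_{m, s})| \leq m + 1$ is finite and $\rsg$ forces intersections with finite neighborhoods to have size at most $1$.

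The key inferential step is deducing $D \subseteq^* Z$ from the third property. Suppose for contradiction that distinct $n_1, n_2 \in D \setminus Z$ both satisfy $n_1, n_2 \leq m$ for some $m$. Since $f$ is a $\mbf{\Delta}^0_2$-approximation to $Z$ and $n_1, n_2 \notin Z$, for all sufficiently large $s$ we have $f(n_1, s) = f(n_2, s) = 0$, and hence $\{v_{n_1}, v_{n_2}\} \subseteq H \cap N(b_{m, s})$, contradicting the previous bound. Therefore $|D \cap \{n \leq m : n \notin Z\}| \leq 1$ for every $m$, so $|D \setminus Z| \leq 1$; in particular $D \subseteq^* Z$. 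Combined with the earlier two properties, this shows $D$ is a valid $\infonesd$-solution, completing the reduction.

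The main conceptual hurdle is ensuring the $\rsg$-solution $H$ is not dominated by the auxiliary scaffolding. The pairing gadgets handle this neatly by exploiting the collapse of the $\rsg$-condition $|H \cap N(w)| \in \{0, 1, \omega\}$ to $|H \cap N(w)| \leq 1$ whenever $N(w)$ is finite, allowing us to forbid any specified pair from both lying in $H$. The cost is a layered self-reference in the definition of $V_2$, but this presents no computational difficulty. Apart from this, the $a_i$-gadgets encode the sets $A_i$ in the usual way, and the $b_{m, s}$-gadgets function as finite ``$Z$-approximation snapshots'' that the $\rsg$-condition uses to rule out stray elements of $D$ outside $Z$.
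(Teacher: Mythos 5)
There is a genuine gap in the construction, and it is fundamental rather than cosmetic: your graph $G$ cannot simultaneously satisfy the declared neighborhoods. You declare $N(a_i) = \{v_n : n \in A_i\}$ and $N(p_{u,v}) = \{u, v\}$, but adjacency in a simple graph is symmetric, so $a_i \in N(p_{a_i, w})$ forces $p_{a_i, w} \in N(a_i)$, contradicting the first declaration. The honest reading of the construction is that every vertex of $V_2$ is adjacent to all the pairing vertices involving it, and since you close $V_2$ under pairing, \emph{every} vertex of $V_2$ acquires infinitely many pairing-vertex neighbors. In particular $|N(p_{u,v})| = \omega$ and $|N(b_{m,s})| = \omega$, so the key claim ``$|N(p_{u,v})| = 2$, hence $|H \cap \{u,v\}| \leq 1$'' is simply false; the $\rsg$-condition no longer collapses to $\leq 1$ at $p_{u,v}$. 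Concretely, if $u, v \in H \cap V_2$ then the $\rsg$-condition at $p_{u,v}$ only tells you $|H \cap N(p_{u,v})| = \omega$, and since $N(p_{u,v}) \subseteq V_2$ this merely says $H \cap V_2$ is infinite --- no contradiction, and $D = H \cap V_1$ could be empty.

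The natural repair attempt --- only pair the level-$0$ vertices $a_i, b_{m,s}$ and declare the pairing vertices to have exactly two neighbors --- fails for a complementary reason: the pairing vertices themselves are then unconstrained, and the set $H$ consisting of \emph{all} pairing vertices is a valid $\rsg$-solution (it is independent, each level-$0$ vertex has $\omega$ many $H$-neighbors, and every $v_n$ and every pairing vertex has $0$). Again $D = \emptyset$. What the pairing gadget is missing is any mechanism forcing auxiliary vertices back toward $V_1$. The paper avoids this entirely: its third block $W = \{w_i\}$ is adjacent to \emph{infinitely many} primary vertices ($x_n$ for $n > i$) plus finitely many $y_j$, and no other $W$-vertices; so if some $w_k$ has two $H$-neighbors then it has $\omega$ many, almost all of which must lie in $X$, forcing $H \cap X$ infinite. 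Replacing your pairing construction with a gadget of this ``auxiliary vertex adjacent to cofinitely many primary vertices'' type --- i.e., adopting the paper's $W$-block --- would close the gap, since your encoding of $\vec{A}$ via the $a_i$'s and of $f$ via the $b_{m,s}$'s, and the derivation of $D \subseteq^* Z$ from them, is otherwise sound once $|H \cap V_2| \leq 1$ is available.
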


\begin{proof}
Ahead of time, recursively partition $\omega$ into three sets $\omega = W \cup X \cup Y$, and write $W = \{w_i : i \in \omega\}$, $X = \{x_i : i \in \omega\}$, and $Y = \{y_i : i \in \omega\}$.  For example, we may take $w_i = 3i$, $x_i = 3i+1$, and $y_i = 3i+2$ for each $i$.  Let $\la f, \vec{A} \ra$ be an $\infonesd$-instance.  Define a functional $\Phi(\la f, \vec{A} \ra)$ computing the graph $G = (V,E)$ where $V = \omega = W \cup X \cup Y$ and 
\begin{align*}
E = &\{(w_i, y_j) \in W \times Y : j < i\}\\
&\cup \{(w_i, x_n) \in W \times X : i < n\}\\
&\cup \{(x_n, y_i) \in X \times Y : n \in A_i\}\\
&\cup \{(x_n, x_s) \in X \times X : (n < s) \andd (f(n,s) = 0)\}.
\end{align*}
The graph $G$ is a valid $\rsg$-instance, so let $H$ be an $\rsg$-solution to $G$.  Let $\Psi$ be the functional $\Psi(H) = D$, where $D = \{n \in \omega : x_n \in H\}$.  We show that $D \in \infonesd(\la f, \vec{A} \ra)$.

Let $Z = \lim f$.  We show that if $n \in Z$, then $x_n$ has only finitely many neighbors.  Clearly $x_n$ has only finitely many neighbors in $W$.  Among the vertices of $Y$, $(x_n, y_i) \in E$ if and only if $n \in A_i$.  The pair $\la f, \vec{A} \ra$ is assumed to be a valid $\infonesd$-instance, and $n \in Z$, which means that $n$ is in only finitely many of the $A_i$.  Thus $x_n$ has only finitely many neighbors in $Y$.  Among the vertices of $X$, we know that if $s$ is sufficiently large, then $(x_n, x_s) \notin E$.  This is because $\lim_s f(n,s) = 1$ (as $n \in Z$), but for $s > n$, $(x_n, x_s) \in E$ if and only if $f(n,s) = 0$.  Thus $x_n$ has only finitely many neighbors in $X$.  Thus if $n \in Z$, then $x_n$ has only finitely many neighbors.

Now we show that $H \cap X$ is infinite and therefore that $D$ is infinite.  To do this, we first show that $|H \cap W| \leq 1$.  Suppose for a contradiction that $i < j$ are such that $w_i, w_j \in H$.  $Z$ is infinite, so let $n > j$ be such that $n \in Z$.  Then $x_n$ has at least the two neighbors $w_i$ and $w_j$ in $H$.  However, $x_n$ cannot have infinitely many neighbors in $H$ because $x_n$ has only finitely many neighbors in $V$.  This is a contradiction, so $|H \cap W| \leq 1$.  Therefore $H \subseteq^* X \cup Y$.  If $|H \cap Y| \leq 2$, then $H \subseteq^* X$, so $H \cap X$ is infinite.  Suppose instead that $|H \cap Y| \geq 2$ and that $i < j$ are such that $y_i, y_j \in H$.  Let $k > j$.  Then $w_k$ has at least the two neighbors $y_i$ and $y_j$ in $H$, so $w_k$ must have infinitely many neighbors in $H$.  Vertex $w_k$ has only finitely many neighbors in $Y$ and no neighbors in $W$, so $w_k$ must have infinitely many neighbors in $H \cap X$.  Therefore $H \cap X$ is infinite in this case as well.

Now we show that $|H \cap \{x_n : n \notin Z\}| \leq 1$ and therefore that $D \subseteq^* Z$.  Suppose for a contradiction that $m < n$ are such that $x_m, x_n \in H$, but also $m, n \notin Z$.  Let $s_0 > n$ be large enough so that $f(m,s) = f(n,s) = 0$ for all $s > s_0$, and, as $Z$ is infinite, let $s > s_0$ be such that $s \in Z$.  Then $x_s$ is adjacent to both $x_m$ and $x_n$, so $x_s$ has at least two neighbors in $H$.  However, $x_s$ cannot have infinitely many neighbors in $H$ because $s \in Z$, which means that $x_s$ has only finitely many neighbors all together.  This is a contradiction, so $|H \cap \{x_n : n \notin Z\}| \leq 1$.

We have shown that $D$ is infinite and that $D \subseteq^* Z$.  To finish the proof, we show that, for every $i \in \omega$, either $|D \cap A_i| = \omega$ or $|D \cap A_i| \leq 1$.  Consider an $i \in \omega$, suppose that $|D \cap A_i| \geq 2$, and let $m < n$ be such that $m,n \in D \cap A_i$.  Then $x_m, x_n \in H$ and $y_i$ is adjacent to both $x_m$ and $x_n$.  Therefore $y_i$ has infinitely many neighbors in $H$, almost all of which are in $X$ because $|H \cap W| \leq 1$ and $y_i$ has no neighbors in $Y$.  That is, $N(y_i) \cap X \cap H$ is infinite.  This means that $D \cap A_i$ is infinite, as desired.  Therefore $D \in \infonesd(\la f, \vec{A} \ra)$, so $\Phi$ and $\Psi$ witness that $\infonesd \leqsW \rsg$.
\end{proof}

\begin{Theorem}\label{thm-RSGequivWKLjj}
The following multi-valued functions are pairwise Weihrauch equivalent:  $\wkl''$, $\rsg$, $\rsgr$, $\npdnr{3}_2$, $\ocohd$, $\infoned$, and $\infonesd$.  In particular,
\begin{align*}
\wkl'' \equivW \rsg \equivW \rsgr.
\end{align*}
\end{Theorem}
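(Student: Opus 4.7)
The plan is straightforward, since essentially all the work has already been done in the preceding lemmas; the theorem is a matter of bookkeeping.

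I would assemble a cycle of reductions by chaining Lemma~\ref{lem-RSGRred3DNR2} (which gives $\rsgr \leqsW \npdnr{3}_2$), Lemma~\ref{lem-3DNR2redOCOHD} ($\npdnr{3}_2 \leqsW \ocohd$), Lemma~\ref{lem-INFONEDequivOCOHD} ($\ocohd \equivW \infoned \equivW \infonesd$), Lemma~\ref{lem-INFONESDredRSG} ($\infonesd \leqsW \rsg$), and the trivial $\rsg \leqsW \rsgr$ into
\begin{align*}
\rsgr \leqsW \npdnr{3}_2 \leqsW \ocohd \equivW \infoned \equivW \infonesd \leqsW \rsg \leqsW \rsgr.
\end{align*}
Because each $\leqsW$ implies the corresponding $\leqW$, this cycle shows that the six problems $\rsg$, $\rsgr$, $\npdnr{3}_2$, $\ocohd$, $\infoned$, and $\infonesd$ are pairwise Weihrauch equivalent. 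Adjoining $\wkl'' \equivsW \npdnr{3}_2$ from Lemma~\ref{lem-WKLequivDNR2} then pulls $\wkl''$ into the same equivalence class, yielding in particular the highlighted $\wkl'' \equivW \rsg \equivW \rsgr$.

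The main obstacle, to the extent there is one, is simply checking that the directions of reduction line up to close the cycle. The genuine content was already absorbed in the earlier lemmas: Lemma~\ref{lem-RSGRred3DNR2} pins down the upper bound by executing the $\aca$-style cohesive-set argument for $\rsgr$ uniformly from a $\tdnr_2$ function relative to $G''$, and Lemma~\ref{lem-INFONESDredRSG} supplies the lower bound by coding an $\infonesd$-instance into a three-layer graph whose $\rsg$-solutions extract the required infinite set. The intermediate problems $\ocohd$, $\infoned$, and $\infonesd$ exist precisely to bridge the approximation-theoretic presentation of $\wkl''$ (via $\npdnr{3}_2$) and the graph-theoretic Rival--Sands principles, so the theorem is the natural point at which to collect the equivalences.
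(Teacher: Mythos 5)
Your proposal is correct and takes essentially the same approach as the paper: it assembles the identical cycle of reductions $\rsg \leqsW \rsgr \leqsW \npdnr{3}_2 \leqsW \ocohd \equivW \infoned \equivW \infonesd \leqsW \rsg$ from the same lemmas and then adjoins $\wkl'' \equivsW \npdnr{3}_2$ via Lemma~\ref{lem-WKLequivDNR2}.
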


\begin{proof}
We have that
\begin{align*}
\rsg &\leqsW \rsgr\\
&\leqsW \npdnr{3}_2 & &\text{by Lemma~\ref{lem-RSGRred3DNR2}}\\
&\leqsW \ocohd & &\text{by Lemma~\ref{lem-3DNR2redOCOHD}}\\
&\equivW \infoned \equivW \infonesd & & \text{by Lemma~\ref{lem-INFONEDequivOCOHD}}\\
&\leqsW \rsg & &\text{by Lemma~\ref{lem-INFONESDredRSG}}
\end{align*}
and that $\wkl'' \equivsW \npdnr{3}_2$ by Lemma~\ref{lem-WKLequivDNR2}.
\end{proof}

The Weihrauch equivalences of Theorem~\ref{thm-RSGequivWKLjj} can be improved to strong Weihrauch equivalences by noticing that the multi-valued functions involved are all cylinders.  It fact, it suffices to show that $\ocohd$, $\infoned$, and $\infonesd$ are cylinders, from which it follows that the Weihrauch equivalences of Lemma~\ref{lem-INFONEDequivOCOHD} can be improved to strong Weihrauch equivalences.

\begin{Lemma}\label{lem-CylinderHelp}
\begin{sloppypar}
The following multi-valued functions are cylinders:  $\ocoh$, $\ocohd$, $\infoned$, and $\infonesd$.
\end{sloppypar}
\end{Lemma}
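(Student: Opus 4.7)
The plan is to prove each cylinder property by a direct strong Weihrauch reduction $\id \times \mc{F} \leqsW \mc{F}$.

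First I handle $\ocoh$. Fix a recursive bijective encoding of $\omega^\omega$ into $2^\omega$ and let $\tilde{p} \in 2^\omega$ denote the binary code of $p \in \omega^\omega$. On input $\la p, \vec{A} \ra$, the functional $\Phi$ produces the sequence $\vec{B} = (B_i : i \in \omega)$ defined by $B_{2i} = A_i$, and $B_{2i+1}$ equal to $\omega$ if $\tilde{p}(i) = 1$ and to $\emptyset$ if $\tilde{p}(i) = 0$. For any $q \in \ocoh(\vec{B})$, the infiniteness of $B^{q \rst (2i+2)}$ forces $q(2i+1) = \tilde{p}(i)$, since the opposite choice would make that intersection empty. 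Hence $q$ uniformly computes $\tilde{p}$ and thus $p$. Setting $q'(i) = q(2i)$, the odd positions contribute $\omega$ to each partial intersection, so $B^{q \rst 2n} = A^{q' \rst n}$ is infinite for all $n$ and $q' \in \ocoh(\vec{A})$. Thus $\Psi(q) = \la p, q' \ra$ is computable from $q$ alone, giving $\id \times \ocoh \leqsW \ocoh$. For $\ocohd$ the same $\Phi$ and $\Psi$ work with the same approximation $f$, since $Z \cap \omega = Z$ is infinite and therefore all infiniteness statements, as well as the forcing of odd bits, transfer verbatim.

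For $\infoned$ and $\infonesd$, the plan is an analogous padding argument, modeled on the proof of Proposition~\ref{prop-wRSgCyl}. I would partition the ambient domain into two disjoint infinite pieces, embed $\la f, \vec{A} \ra$ into the first piece by lifting, and use the second piece to house an explicit coding of $p$. The modified approximation $f'$ would be arranged so that the $p$-coding elements lie in $Z'$, and $\vec{A}'$ would be augmented with finite auxiliary sets cross-linking the two pieces. These auxiliary sets would be engineered so that the ``infinite-or-at-most-one'' constraint forces any solution $D$ to the new instance to restrict, on the second piece, to precisely the $p$-coding elements prescribed, while on the first piece $D$ yields a solution to the original $\la f, \vec{A} \ra$.

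The main obstacle is this last step for $\infoned$ and $\infonesd$. Unlike the $\ocoh$ case, where trivial sets $\emptyset$ and $\omega$ directly force bits of the function-valued output, the ``infinite or at most one'' condition only upper-bounds the intersections of $D$ with given sets and cannot directly force $D$ to \emph{contain} specific elements. The auxiliary sets must therefore be arranged so that the only infinite subsets of $Z'$ meeting all constraints have the required structure encoding $p$, which is exactly the kind of delicate construction carried out for $\wrsg$ in Proposition~\ref{prop-wRSgCyl} and which I would adapt to the two-copy setting above.
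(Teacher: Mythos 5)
Your argument for $\ocoh$ and $\ocohd$ is correct and essentially the same as the paper's: pad the sequence with trivial sets $\omega$ or $\emptyset$ at the odd positions so that the output is forced to reveal bits of $p$ (the paper codes a function $g$ via the singleton relation $g(m)=n$ at index $2\la m,n\ra+1$ rather than through a $2^\omega$-encoding, but this is a cosmetic difference). A small caveat: ``recursive bijective encoding of $\omega^\omega$ into $2^\omega$'' is not quite right, since no such bijection exists; what you need (and implicitly use) is an injection with recursive inverse on its range, such as the unary coding $p \mapsto 1^{p(0)}01^{p(1)}0\cdots$, which is fine.

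However, for $\infoned$ and $\infonesd$ the proposal contains a genuine gap: you describe a ``two disjoint infinite pieces with auxiliary cross-linking sets'' plan, identify the main obstacle (the infinity-or-one constraint cannot directly force $D$ to contain prescribed elements), and then stop short of resolving it, saying you would ``adapt'' the construction from Proposition~\ref{prop-wRSgCyl}. You also misremember that construction: it does not use two pieces or cross-linking sets. The actual idea, which resolves your obstacle, is to embed the given instance onto initial segments of the function $g$ being coded. For $\infoned$, take $B_i = \{g\rst n : n \in A_i\}$ and let $h$ approximate $Z' = \{g\rst n : n \in \lim f\}$; then the output constraint $D \subseteq Z'$ already forces every element of $D$ to be an initial segment of $g$, so $D$ computes $g$ outright and $\{n : g\rst n \in D\}$ is a solution to the original instance. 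No auxiliary sets are needed. For $\infonesd$, where only $D \subseteq^* Z'$ is guaranteed, one adds the sets $B_{2\sigma+1} = \{\tau : \sigma \subseteq \tau\}$; then whenever $\sigma \subsetneq \tau$ both lie in $D$, the set $D \cap B_{2\sigma+1}$ has size $\geq 2$ hence is infinite, and since $D \subseteq^* \{g\rst n\}$ this forces $\sigma \subseteq g$, so searching $D$ for nested pairs recovers $g(n)$ for each $n$. Your sketch does not get to this point, so the lemma is not yet proved for $\infoned$ or $\infonesd$.
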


\begin{proof}
For $\ocoh$, convert a given $g \in \omega^\omega$ and $\ocoh$-instance $\vec{A}$ into the $\ocoh$-instance $\vec{B}$ where
\begin{align*}
B_{2i} &= A_i\\
B_{2\la m, n \ra + 1} &=
\begin{cases}
\omega & \text{if $g(m) = n$}\\
\emptyset & \text{if $g(m) \neq n$.}
\end{cases}
\end{align*}
Let $p$ be an $\ocoh$-solution to $\vec{B}$, and decompose $p$ as $p = q \oplus r$.  Then $q$ is an $\ocoh$-solution to $\vec{A}$, and $g$ can be uniformly computed from $r$ because for each $m$, $g(m)$ is the unique $n$ such that $r(\la m, n \ra) = 1$.  A similar argument shows that $\ocohd$ is a cylinder.

\begin{sloppypar}
For $\infoned$, convert a given $g \in \omega^\omega$ and $\infoned$-instance $\la f, \vec{A} \ra$ into the $\infoned$-instance $\la h, \vec{B} \ra$, where $h \colon \omega^{<\omega} \times \omega \imp 2$ given by
\end{sloppypar}
\begin{align*}
h(\sigma, s) =
\begin{cases}
f(|\sigma|, s) & \text{if $\sigma \subseteq g$}\\
0 & \text{if $\sigma \nsubseteq g$}
\end{cases}
\end{align*}
is a $\mbf{\Delta}^0_2$-approximation to $\{g \rst n : n \in \lim f\}$, and $B_i = \{g \rst n : n \in A_i\}$.  Let $D \in \infoned(\la h, \vec{B} \ra)$.  $D$ uniformly computes $g$ because $D$ is an infinite subset of $\{g \rst n : n \in \omega\}$.  $D$ therefore also uniformly computes the set $\{n : g \rst n \in D\}$, which is an $\infoned$-solution to $\la f, \vec{A} \ra$.

Showing that $\infonesd$ is a cylinder is slightly more complicated.  Convert a given $g \in \omega^\omega$ and $\infonesd$-instance $\la f, \vec{A} \ra$ into the $\infonesd$-instance $\la h, \vec{B} \ra$, where $h$ is the $\mbf{\Delta}^0_2$-approximation to $\{g \rst n : n \in \lim f\}$ as above, and $\vec{B}$ is given by
\begin{align*}
B_{2i} &= \{g \rst n : n \in A_i\}\\
B_{2\sigma + 1} &= \{\tau : \sigma \subseteq \tau\}.
\end{align*}
Each $\tau$ is in at most finitely many sets of the form $B_{2i}$ because $\la f, \vec{A} \ra$ is an $\infonesd$-instance, and each $\tau$ is in at most finitely many sets of the form $B_{2\sigma +1}$ because $\tau \in B_{2\sigma+1}$ if and only if $\sigma \subseteq \tau$.  Thus $\la h, \vec{B} \ra$ is indeed a $\infonesd$-instance.  Let $D \in \infonesd(\la h, \vec{B} \ra)$.  If $\sigma$ and $\tau$ are two elements of $D$ with $\sigma \subsetneq \tau$, then $\sigma \subseteq g$.  To see this, suppose that $\sigma, \tau \in D$ and $\sigma \subsetneq \tau$.  Then $|D \cap B_{2\sigma+1}| \geq 2$, so $|D \cap B_{2\sigma+1}| = \omega$.  However, $D \subseteq^* \{g \rst n : n \in \omega\}$, so if $\sigma \nsubseteq g$, then $D \cap B_{2\sigma+1}$ would be finite.  Thus it must be that $\sigma \subseteq g$.  $D$ therefore uniformly computes $g$ by the following procedure.  Given $n$, search for $\sigma, \tau \in D$ with $|\sigma| > n$ and $\sigma \subsetneq \tau$, and output $\sigma(n)$.  Since $D$ uniformly computes $g$, $D$ therefore also uniformly computes the set $\{n : g \rst n \in D\}$,  which is an $\infonesd$-solution to $\la f, \vec{A} \ra$ as in the $\infoned$ case.
\end{proof}

\begin{Corollary}\label{cor-SWequiv}
The following multi-valued functions are pairwise strongly Weihrauch equivalent:  $\wkl''$, $\rsg$, $\rsgr$, $\npdnr{3}_2$, $\ocohd$, $\infoned$, and $\infonesd$.  In particular,
\begin{align*}
\wkl'' \equivsW \rsg \equivsW \rsgr.
\end{align*}
\end{Corollary}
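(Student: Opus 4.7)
The plan is to follow the chain of reductions established in the proof of Theorem~\ref{thm-RSGequivWKLjj}, upgrading each link that is only a Weihrauch (rather than strong Weihrauch) reduction. An inspection of that chain reveals that every link is already strong except the two arising from Lemma~\ref{lem-INFONEDequivOCOHD}, namely $\infoned \leqW \ocohd$ and $\ocohd \leqW \infonesd$. So the remaining task is to upgrade these two reductions.

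To do this, I would first establish Lemma~\ref{lem-CylinderHelp}: that $\ocoh$, $\ocohd$, $\infoned$, and $\infonesd$ are all cylinders. For each such problem $\mc{F}$, it suffices to exhibit a recursive encoding of a pair $(g, x)$, where $g \in \omega^\omega$ and $x$ is an $\mc{F}$-instance, into a single $\mc{F}$-instance from which every solution uniformly recovers both $g$ and an $\mc{F}$-solution to $x$. For $\ocoh$ and $\ocohd$, interleave auxiliary sets of the form $B_{2\la m, n \ra + 1} = \omega$ if $g(m) = n$ and $B_{2\la m, n \ra + 1} = \emptyset$ otherwise, so that every $\ocoh$-solution $p$ to the enlarged sequence encodes $g$ via the bits $p(2\la m, n \ra + 1)$. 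For $\infoned$, replace each $n$ with the string $g \rst n$ by setting $B_i = \{g \rst n : n \in A_i\}$, forcing any infinite solution $D$ to contain infinitely many initial segments of $g$ and hence to uniformly compute $g$. For $\infonesd$, the same encoding needs reinforcement by auxiliary sets $B_{2\sigma + 1} = \{\tau : \sigma \subseteq \tau\}$ to ensure that any two nested members of a solution are both initial segments of $g$; here one must also verify that the added sets respect the finiteness-per-element condition on $\infonesd$-instances. The main technical point is this last verification, but it is routine.

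With Lemma~\ref{lem-CylinderHelp} in hand, I would apply Proposition~\ref{prop-cylinder} to each of the two Weihrauch reductions $\infoned \leqW \ocohd$ and $\ocohd \leqW \infonesd$ to upgrade them to strong Weihrauch reductions, since their targets are cylinders. Together with the trivial strong reduction $\infonesd \leqsW \infoned$ already observed in the proof of Lemma~\ref{lem-INFONEDequivOCOHD}, this yields the strong Weihrauch equivalences $\ocohd \equivsW \infoned \equivsW \infonesd$.

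Finally, I would assemble the strong Weihrauch cycle
\begin{align*}
\rsg \leqsW \rsgr \leqsW \npdnr{3}_2 \leqsW \ocohd \equivsW \infoned \equivsW \infonesd \leqsW \rsg,
\end{align*}
where the remaining strong reductions come from Lemmas~\ref{lem-RSGRred3DNR2}, \ref{lem-3DNR2redOCOHD}, and~\ref{lem-INFONESDredRSG}. Combined with the strong equivalence $\wkl'' \equivsW \npdnr{3}_2$ from Lemma~\ref{lem-WKLequivDNR2}, this establishes pairwise strong Weihrauch equivalence among all seven multi-valued functions. No step poses a genuine obstacle; the main substantive work is the careful construction of cylinder witnesses in Lemma~\ref{lem-CylinderHelp}, especially for $\infonesd$ where the finiteness-per-element constraint must be preserved by the encoding.
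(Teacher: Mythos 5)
Your proposal is correct and follows essentially the same route as the paper: identify that the only non-strong links in the chain of Theorem~\ref{thm-RSGequivWKLjj} come from Lemma~\ref{lem-INFONEDequivOCOHD}, prove that $\ocohd$, $\infoned$, and $\infonesd$ are cylinders via the same encodings (interleaved auxiliary sets for $\ocohd$, replacing $n$ by $g \rst n$ for $\infoned$, and the extra sets $B_{2\sigma+1}$ for $\infonesd$), and then apply Proposition~\ref{prop-cylinder} to upgrade those links.
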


\begin{proof}
In the proof of Theorem~\ref{thm-RSGequivWKLjj}, the Weihrauch equivalences of Lemma~\ref{lem-INFONEDequivOCOHD} can be improved to strong Weihrauch equivalences because $\ocohd$, $\infoned$, and $\infonesd$ are cylinders by Lemma~\ref{lem-CylinderHelp}.
\end{proof}

We end this section with a number of remarks and observations about its results.

First, it follows from Corollary~\ref{cor-SWequiv} that $\wkl''$, $\npdnr{3}_2$, $\rsg$, and $\rsgr$ are all cylinders because they are all strongly Weihrauch equivalent to $\ocohd$.  It is well-known, and easy to show directly, that $\wkl$ and its jumps are cylinders (see~\cites{BrattkaGherardi, BrattkaGherardiMarcone}).  A proof that each $\npdnr{n}_2$ is a cylinder is also implicit in the proof of Lemma~\ref{lem-RSGRred3DNR2}.  Direct proofs that $\rsg$ and $\rsgr$ are cylinders can be given using the method of Proposition~\ref{prop-wRSgCyl}.  On the other hand, the multi-valued function $\infone$ is not a cylinder, which can be seen by observing that every pair of $\infone$ instances has a common solution.

Second, it is possible to short-circuit the proof that $\npdnr{3}_2 \leqW \rsg$ by considering the multi-valued function $\mc{F}$ which is defined in the same way as $\infonesd$, except that the output is restricted to sets $D$ with $|D \setminus Z| \leq 1$ instead of with $D \subseteq^* Z$.  The proof of Lemma~\ref{lem-INFONESDredRSG} already shows that $\mc{F} \leqsW \rsg$, and one may show that $\npdnr{3}_2 \leqW \mc{F}$ by a proof similar to that of Lemma~\ref{lem-3DNR2redOCOHD}.  However, we find that $\infoned$ and $\infonesd$ are more natural problems than $\mc{F}$ and also that the equivalence with $\ocohd$ is worth establishing.  We hope that the equivalence among $\wkl''$, $\ocohd$, $\infoned$, and $\infonesd$ will be helpful in future research.  Similarly, using the techniques of this section, one may also show that $\ocoh \equivsW \wkl'$ and therefore that $\wkl' \equivW \infone \equivW \ocoh$.

Third, one may wish to consider the versions of $\rsg$ and $\rsgr$ obtained by restricting the inputs to graphs with $V = \omega$.  Denote these restrictions by $\rsg \rst_{V = \omega}$ and $\rsgr \rst_{V = \omega}$.  It is straightforward to show that $\rsg \rst_{V = \omega} \equivW \rsg$, that $\rsgr \rst_{V = \omega} \equivW \rsgr$, and therefore that all four multi-valued functions are pairwise Weihrauch equivalent.  Notice, however, that the strong Weihrauch reduction of Lemma~\ref{lem-INFONESDredRSG} builds a graph with $V = \omega$ and thus shows that $\infonesd \leqsW \rsg \rst_{V = \omega}$.  Therefore $\rsg \leqsW \rsg \rst_{V = \omega} \leqsW \rsgr \rst_{V = \omega} \leqsW \rsgr \equivsW \rsg$, hence all four multi-valued functions are in fact strongly Weihrauch equivalent.

Fourth, for a multi-valued function $\mc{F} \colon \wsubseteq \omega^\omega \rra \omega^\omega$, the \emph{parallelization} of $\mc{F}$ is the multi-valued function corresponding to the problem of solving countably many $\mc{F}$-instances simultaneously in parallel.
\begin{Definition}
Let $\mc{F} \colon \wsubseteq \omega^\omega \rra \omega^\omega$ be a multi-valued function.  The \emph{parallelization} $\wh{\mc{F}}$ of $\mc{F}$ is the following multi-valued function.
\begin{itemize}
\item Input/instance:  A sequence $(f_i : i \in \omega)$ with $f_i \in \dom(\mc{F})$ for each $i \in \omega$.

\smallskip

\item Output/solution:  A sequence $(g_i : i \in \omega)$ with $g_i \in \mc{F}(f_i)$ for each $i \in \omega$.
\end{itemize}
\end{Definition}

We have that $\wh{\rt^2_2} \equivW \wkl''$ by \cite{BrattkaRakotoniaina}*{Corollary~4.18}, so $\rsg$ and $\rsgr$ are Weihrauch equivalent to $\wh{\rt^2_2}$ by Theorem~\ref{thm-RSGequivWKLjj}.  Hence the relationship between the computational strength of $\rsg$ and $\rt^2_2$ can be summarized thusly.  The computational strength required to solve one $\rsg$-instance exactly corresponds to the strength required to solve countably many $\rt^2_2$-instances simultaneously in parallel.

Fifth, let $\rsg \rst_{\mathrm{LF}}$ and $\rsgr \rst_{\mathrm{LF}}$ denote the restrictions of $\rsg$ and $\rsgr$ to infinite locally finite graphs.  Then $\rsg \rst_{\mathrm{LF}} \equivsW \rsgr \rst_{\mathrm{LF}} \equivsW \lim$.  In fact, the proof that $\rsg \rst_{\mathrm{LF}}$ is equivalent to $\aca$ over $\rca$ essentially establishes a Weihrauch equivalence between $\rsg \rst_{\mathrm{LF}}$ and $\lim$.  This gives precise meaning to the introduction's claim that the reversal of $\rsg$ in Theorem~\ref{thm-ACA_RS_eq} does not use the full computational strength of $\rsg$.  The reversal considers only locally finite graphs, thereby establishing that $\rca + \rsg \rst_{\mathrm{LF}} \vdash \aca$.  However, $\rsg \rst_{\mathrm{LF}} \lstc \rsg$ because $\rsg \rst_{\mathrm{LF}} \equivsW \lim \lstc \wkl'' \equivsW \rsg$.

\begin{Proposition}\label{prop-RSgLF}
$\rsg \rst_{\mathrm{LF}} \equivsW \rsgr \rst_{\mathrm{LF}} \equivsW \lim$.
\end{Proposition}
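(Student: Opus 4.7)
The plan is to establish the two non-trivial strong Weihrauch reductions $\rsgr \rst_{\mathrm{LF}} \leqsW \lim$ and $\lim \leqsW \rsg \rst_{\mathrm{LF}}$; together with the reduction $\rsg \rst_{\mathrm{LF}} \leqsW \rsgr \rst_{\mathrm{LF}}$, which is witnessed by the identity functionals because every $\rsgr$-solution to a graph is also an $\rsg$-solution, these yield the claimed three-way strong Weihrauch equivalence.

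For $\rsgr \rst_{\mathrm{LF}} \leqsW \lim$, the idea is to use $\lim$ to upgrade a locally finite graph $G = (V, E)$ to an effectively highly recursive presentation and then appeal to Proposition~\ref{prop-RSgHighRec}, which provides a recursive procedure producing an $\rsgr$-solution from a highly recursive input. Concretely, I would define $\Phi(G)$ to be a single function of two arguments $(z, s)$ whose limit in $s$ simultaneously encodes $G$ itself (by constant approximation) and the neighborhood function $b \colon V \imp \Pf(V)$ with $b(x) = N(x)$, approximated at stage $s$ by $\{y \leq s : (x, y) \in E\}$; local finiteness guarantees convergence. The decoder $\Psi$ reads $G$ and $b$ off the output of $\lim$ and runs the procedure of Proposition~\ref{prop-RSgHighRec}, iteratively choosing a next vertex $x_{n+1}$ outside the finite distance-$\leq 2$ closure of the previously chosen vertices $x_0, \dots, x_n$. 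No access to $G$ beyond what has been encoded into the $\lim$ output is needed, so this is a strong reduction.

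For $\lim \leqsW \rsg \rst_{\mathrm{LF}}$, the starting point is the reversal already built into the proof of Theorem~\ref{thm-ACA_RS_eq}. Given $f$ (an input to $\lim$), I would recursively-in-$f$ produce an injection $g \colon \omega \imp \omega$ whose range codes $\lim f$ in a uniform way, and then form the locally finite reversal graph with edges $\{(v, s) \in [\omega]^2 : v < s \andd g(s) < g(v)\}$. The counting argument of Theorem~\ref{thm-ACA_RS_eq} shows that the range of $g$, and hence $\lim f$, is uniformly computable from $f \oplus H$ for any $\rsg$-solution $H$, giving an ordinary Weihrauch reduction $\lim \leqW \rsg \rst_{\mathrm{LF}}$. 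To upgrade this to a strong reduction, I would show that $\rsg \rst_{\mathrm{LF}}$ is a cylinder by mirroring the construction of Proposition~\ref{prop-wRSgCyl}: given an auxiliary $h \in \omega^\omega$ alongside a locally finite graph $G$, attach to $G$ a sequence of uniformly bounded-degree coding gadgets that force every $\rsg$-solution to the augmented graph to simultaneously compute $h$ and restrict to an $\rsg$-solution of $G$. Once cylinder-hood is in hand, Proposition~\ref{prop-cylinder} immediately promotes $\lim \leqW \rsg \rst_{\mathrm{LF}}$ to $\lim \leqsW \rsg \rst_{\mathrm{LF}}$.

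The main obstacle is the cylinder step for $\rsg \rst_{\mathrm{LF}}$. A naïve disjoint-union construction fails because an $\rsg$-solution could lie entirely inside the original graph and simply ignore the coding gadgets, thereby failing to compute $h$. The gadgets therefore need to be linked to $G$ by edges that force the inside/outside Ramsey condition to ``notice'' them from the vertices of $G$, while still preserving local finiteness at every vertex. The recipe is the one used to witness that $\wrsg$ is a cylinder in Proposition~\ref{prop-wRSgCyl}, and the essentially routine verification that local finiteness and the inside/outside condition survive the attachment is where the detailed work lies.
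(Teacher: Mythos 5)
Your overall plan --- establish $\rsgr \rst_{\mathrm{LF}} \leqW \lim$ and $\lim \leqW \rsg \rst_{\mathrm{LF}}$ and upgrade to $\leqsW$ via cylinder-ness --- is exactly the paper's strategy, and your treatment of the two Weihrauch reductions is sound (your encoding of $G$ itself into the $\lim$-output is a harmless variant of the paper's appeal to the cylinder-ness of $\lim$). The gap is in the cylinder step, where you have misread the mechanism of Proposition~\ref{prop-wRSgCyl}. That proposition does \emph{not} attach edge-linked coding gadgets to $G$; it exploits the freedom to choose the \emph{vertex set} of the new graph. Given $p \in \omega^\omega$ and a locally finite $G = (V, E)$ with $V$ enumerated in increasing order as $v_0 < v_1 < \cdots$, one forms $\wh{G}$ with vertex set $\{p \rst n : n \in \omega\}$ (viewed as codes in $\omega$) and edges $(p \rst m, p \rst n)$ exactly when $(v_m, v_n) \in E$. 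The map $v_n \mapsto p \rst n$ is a degree-preserving graph isomorphism, so $\wh{G}$ is still locally finite; and \emph{every} infinite subset of $\{p \rst n : n \in \omega\}$ is a set of codes of initial segments of $p$, so every $\rsg$-solution to $\wh{G}$ uniformly computes $p$, and from $p$ together with $\wh{H}$ one recovers the $\rsg$-solution $\{v_n : p \rst n \in \wh{H}\}$ of $G$. This sidesteps your ``solution ignores the gadgets'' problem entirely, because there are no gadgets to ignore: the information about $p$ is carried by the vertex names themselves.

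By contrast, the edge-linked gadget idea you sketch does not have a clear path to working under local finiteness. A gadget vertex $w$ that is adjacent to infinitely many vertices of $G$ immediately destroys local finiteness. A gadget vertex with only finitely many neighbors in $G$ imposes only a finite constraint, and an $\rsg$-solution living entirely inside $V(G)$ can be chosen to meet $N(w) \cap V(G)$ in at most one point, so such a solution need never touch the gadget. There is thus no mechanism forcing a solution of the augmented graph to contain enough gadget vertices to reconstruct all of $h$ --- which is exactly the failure you flag, but you present it as ``routine detailed work'' rather than as the dead end it is. Replace the gadget construction with the vertex-renaming argument above and the proof goes through; that this works for the $\rsg \rst_{\mathrm{LF}}$ and $\rsgr \rst_{\mathrm{LF}}$ variants is precisely what the paper means by invoking ``the method of Proposition~\ref{prop-wRSgCyl}.''
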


\begin{proof}
It suffices to show that $\rsgr \rst_{\mathrm{LF}} \leqW \lim$ and that $\lim \leqW \rsg \rst_{\mathrm{LF}}$ because all three problems are cylinders (which can be shown for $\rsg \rst_{\mathrm{LF}}$ and $\rsgr \rst_{\mathrm{LF}}$ by the method of Proposition~\ref{prop-wRSgCyl}).  

For $\rsgr \rst_{\mathrm{LF}} \leqW \lim$, let $G = (V, E)$ be an infinite locally finite graph.  Use $\lim$ to compute the function $b \colon V \imp \Pf(V)$ where $b(x) = N(x)$ for all $x \in V$.  Then compute an $\rsg$-solution to $G$ by the same procedure as in the proof of Proposition~\ref{prop-RSgHighRec}.

For $\lim \leqW \rsg \rst_{\mathrm{LF}}$, we use that $\lim$ is strongly Weihrauch equivalent to the Turing jump function $\mathsf{J} \colon \omega^\omega \imp \omega^\omega$ given by $\mathsf{J}(p) = p'$ (see, for example, \cite{BrattkaGherardiPaulySurvey}*{Theorem~6.7}).  Given $p$, uniformly compute an injection $f \colon \omega \imp \omega$ with $\ran(f) = p'$.  Then proceed exactly as in the proof of the reversal in Theorem~\ref{thm-ACA_RS_eq}.
\end{proof}
If we insist on the restrictions of $\rsg$ and $\rsgr$ to locally finite graphs with $V = \omega$, then the resulting problems are not cylinders (because every pair of instances has a common solution).  In this case, we obtain $\equivW$ in place of $\equivsW$ in the above proposition.

Sixth, we may conclude from Corollary~\ref{cor-SWequiv} that $\rsg$ and $\rsgr$ have \emph{universal instances}.  $\rt^2_2$, on the other hand, does not have universal instances~\cite{MiletiThesis}*{Corollary~5.4.8}.

\begin{Definition}
Let $\mc{F} \colon \wsubseteq \omega^\omega \rra \omega^\omega$ be a multi-valued function, thought of as a mathematical problem.  A \emph{universal instance} of $\mc{F}$ is a recursive $\mc{F}$-instance $f_*$ with the property that if $g_*$ is any $\mc{F}$-solution to $f_*$ and $f$ is any recursive $\mc{F}$-instance, then there is an $\mc{F}$-solution $g$ to $f$ with $g \leqT g_*$.
\end{Definition}

\begin{Corollary}
$\rsg$ and $\rsgr$ have universal instances.
\end{Corollary}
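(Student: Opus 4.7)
The plan is to transfer universal instances across the strong Weihrauch equivalences of Corollary~\ref{cor-SWequiv}, starting from a natural universal instance for $\npdnr{3}_2$.

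First, I would show that the constantly zero function $p_* \equiv 0$ is a universal instance of $\npdnr{3}_2$. The $\npdnr{3}_2$-solutions to $p_*$ are precisely the $\tdnr_2$ functions relative to $\emptyset''$. For any recursive $\npdnr{3}_2$-instance $p$, the jump $p''$ is uniformly $\emptyset''$-computable from a fixed recursive index for $p$, so a standard s-m-n translation (given $e$, produce $e'$ such that $\Phi_{e'}(\emptyset'')$ is the constant machine outputting $\Phi_e(p'')(e)$) converts any $\tdnr_2^{\emptyset''}$ function $f_*$ into a $\tdnr_2^{p''}$ function $f \leqT f_*$, which is a $\npdnr{3}_2$-solution to $p$.

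Second, I would establish the following general transfer principle: if $\mc{F} \equivsW \mc{G}$ and $\mc{F}$ has a universal instance, then so does $\mc{G}$. Indeed, let $f_*$ be a universal $\mc{F}$-instance, and fix functionals $\Phi, \Psi$ witnessing $\mc{F} \leqsW \mc{G}$ and $\Phi', \Psi'$ witnessing $\mc{G} \leqsW \mc{F}$. Set $g_* := \Phi(f_*)$, which is a recursive $\mc{G}$-instance since $f_*$ is recursive. Given any $\mc{G}$-solution $h_*$ to $g_*$, the strong reduction yields $\Psi(h_*) \in \mc{F}(f_*)$ (no access to $f_*$ is needed). For any recursive $\mc{G}$-instance $g$, $\Phi'(g)$ is a recursive $\mc{F}$-instance, so by universality of $f_*$ there is an $\mc{F}$-solution $k$ to $\Phi'(g)$ with $k \leqT \Psi(h_*) \leqT h_*$; then $\Psi'(k)$ is a $\mc{G}$-solution to $g$ with $\Psi'(k) \leqT h_*$, showing $g_*$ is universal for $\mc{G}$.

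Applying this transfer principle with $\mc{F} = \npdnr{3}_2$ and $\mc{G} \in \{\rsg, \rsgr\}$, using Corollary~\ref{cor-SWequiv}, yields universal instances for $\rsg$ and $\rsgr$. The main (and only) step requiring care is verifying universality of the zero function for $\npdnr{3}_2$; the transfer step is a direct unwinding of the definition of strong Weihrauch reducibility together with the fact that Turing functionals send recursive inputs to recursive outputs.
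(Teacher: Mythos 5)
Your proposal is correct and follows essentially the same route as the paper: the paper also pushes the constant-zero instance of $\npdnr{3}_2$ across the strong Weihrauch equivalence of Corollary~\ref{cor-SWequiv} and relies on the fact that $p'' \equivT 0''$ for recursive $p$, and it even notes the general transfer principle as a remark afterward. Your version is just modularized—first isolating universality of $0$ for $\npdnr{3}_2$ and then stating the transfer lemma—whereas the paper carries out both steps inline in a single argument.
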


\begin{proof}
We show that $\rsg$ has a universal instance.  The argument for $\rsgr$ is similar.

By Corollary~\ref{cor-SWequiv}, let $\Phi$ and $\Psi$ be functionals witnessing that $\npdnr{3}_2 \leqsW \rsg$, and let $\wh{\Phi}$ and $\wh{\Psi}$ be functionals witnessing that $\rsg \leqsW \npdnr{3}_2$.  Let $0$ denote the the function with constant value $0$, and let $G_* = \Phi(0)$.  The function $0$ is a valid $\npdnr{3}_2$-instance, so $G_*$ is a valid $\rsg$-instance.  We show that $G_*$ is universal.  First, $G_*$ is recursive because $0$ is recursive.  Now, let $H_*$ be any $\rsg$-solution to $G_*$, and let $G$ be any recursive $\rsg$-instance.  Let $p = \wh{\Phi}(G)$, which is recursive and therefore satisfies $p'' \equivT 0''$.  Let $f_* = \Psi(H_*)$, which is $\tdnr_2$ relative to $0''$ and therefore computes a function $f$ that is $\tdnr_2$ relative to $p''$.  This $f$ is a $\npdnr{3}_2$-solution to $p$, so $\wh{\Psi}(f)$ is an $\rsg$-solution to $G$.  We have that $\wh{\Psi}(f) \leqT f \leqT f_* \leqT H_*$, so $H_*$ computes an $\rsg$-solution to $G$.
\end{proof}
More generally, one may show that if $\mc{F}$ and $\mc{G}$ are Weihrauch equivalent problems and $\mc{F}$ has a universal instance, then so does $\mc{G}$.  The Weihrauch equivalence between $\rsg$ and $\wkl''$ also yields the following degree-theoretic corollary.

\begin{Corollary}
Let $\mbf{a}$ be any Turing degree.  A Turing degree $\mbf{b}$ computes solutions to every $\rsg$-instance $G \leqT \mbf{a}$ if and only if $\mbf{b}$ is $\pa$ relative to $\mbf{a}''$.  The statement also holds with $\rsgr$ in place of $\rsg$.
\end{Corollary}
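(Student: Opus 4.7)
The plan is to combine the classical characterization of relatively $\pa$ degrees in terms of $\tdnr_2$ functions with the strong Weihrauch equivalences $\rsg \equivsW \rsgr \equivsW \npdnr{3}_2$ supplied by Corollary~\ref{cor-SWequiv}. I would use two standard facts without further comment: first, a Turing degree $\mbf{b}$ is $\pa$ relative to a set $X$ if and only if $\mbf{b}$ computes a function that is $\tdnr_2$ relative to $X$; second, this relation is monotone in the base set, so that $X \leqT Y$ together with $\mbf{b}$ being $\pa$ relative to $Y$ entails that $\mbf{b}$ is $\pa$ relative to $X$ (because every $X$-recursive infinite binary tree is also $Y$-recursive). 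I would then fix Turing functionals $\Phi, \Psi$ witnessing $\npdnr{3}_2 \leqsW \rsg$ and $\wh{\Phi}, \wh{\Psi}$ witnessing $\rsg \leqsW \npdnr{3}_2$, as provided by Corollary~\ref{cor-SWequiv}.

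For the forward direction, I would take any function $p$ of Turing degree $\mbf{a}$, view it as a $\npdnr{3}_2$-instance, and form the $\rsg$-instance $\Phi(p)$, which satisfies $\Phi(p) \leqT \mbf{a}$. By hypothesis, $\mbf{b}$ computes an $\rsg$-solution $H$ to $\Phi(p)$, and then $\Psi(H)$ is a $\npdnr{3}_2$-solution to $p$, i.e., a function that is $\tdnr_2$ relative to $p'' \equivT \mbf{a}''$. Since $\Psi(H) \leqT \mbf{b}$, this shows that $\mbf{b}$ computes a $\tdnr_2$ function relative to $\mbf{a}''$ and is therefore $\pa$ relative to $\mbf{a}''$.

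For the reverse direction, let $G$ be any $\rsg$-instance with $G \leqT \mbf{a}$. I would set $p = \wh{\Phi}(G)$, observe that $p \leqT \mbf{a}$ and hence $p'' \leqT \mbf{a}''$, and deduce from the monotonicity of relative $\pa$ degrees that $\mbf{b}$ is $\pa$ relative to $p''$. Therefore $\mbf{b}$ computes a function $f$ that is $\tdnr_2$ relative to $p''$, that is, an $\npdnr{3}_2$-solution to $p$, and then $\wh{\Psi}(f) \leqT \mbf{b}$ is an $\rsg$-solution to $G$. The same argument applies verbatim with $\rsgr$ in place of $\rsg$, since Corollary~\ref{cor-SWequiv} also gives $\rsgr \equivsW \npdnr{3}_2$. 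There is no real obstacle here; essentially all the work is packaged in Corollary~\ref{cor-SWequiv}, and the only care required is to track that the Turing functionals involved do not raise the Turing degrees of instances, so that the forward direction stays within instances $\leqT \mbf{a}$ and that the $\pa$-versus-$\tdnr_2$ equivalence lines up correctly at both ends of the biconditional.
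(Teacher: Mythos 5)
Your proof is correct and takes exactly the approach the paper intends: the paper leaves this corollary unproved, but the preceding discussion and the universal-instance corollary make clear that the intended argument is precisely to run the strong Weihrauch equivalences $\rsg \equivsW \rsgr \equivsW \npdnr{3}_2$ from Corollary~\ref{cor-SWequiv} in both directions, invoking the Jockusch--Soare characterization of relative $\pa$ degrees via $\tdnr_2$ functions, just as you do. Your handling of the two minor degree-theoretic points --- that $p \equivT \mbf{a}$ gives $p'' \equivT \mbf{a}''$ and that being $\pa$ over a degree is monotone and degree-invariant in the base --- is correct, and your use of the \emph{strong} Weihrauch reductions (so that the backward functionals need not be fed the original instance) is exactly what keeps everything within degree $\mbf{b}$ without further fuss.
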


Lastly, Figure~\ref{fig-RSg} depicts the position of the Rival--Sands theorem and a selection of related principles in the Weihrauch degrees.

\begin{figure}[h]
\begin{tikzpicture}
\node (whrt22) at (-2,4) {$\wh{\rt^2_2}$};
\node (wkl2) at (0,4) {$\wkl''$};
\node (rsg) at (2,4) {$\rsg$};
\node (rsgr) at (4,4) {$\rsgr$};
\node (ocohd) at (6.5,4) {$\ocohd$};
\node (infoned) at (10,4) {$\infoned$};

\node (wkl1) at (0,2) {$\wkl'$};
\node (ocoh) at (2.5,2) {$\ocoh$};
\node (infone) at (5,2) {$\infone$};

\node (lim) at (0,0) {$\lim$};
\node (rsgLF) at (2.5,0) {$\rsg \rst_{\mathrm{LF}}$};
\node (rsgrLF) at (5,0) {$\rsgr \rst_{\mathrm{LF}}$};

\node(rt22) at (-3, -1) {$\rt^2_2$};
\node (wkl) at (0, -2) {$\wkl$};

\draw[<->] (whrt22) -- (wkl2);
\draw[<->] (wkl2) -- (rsg);
\draw[<->] (rsg) -- (rsgr);
\draw[<->] (rsgr) -- (ocohd);
\draw[<->] (ocohd) -- (infoned);

\draw[->] (wkl2) -- (wkl1);

\draw[<->] (wkl1) -- (ocoh);
\draw[<->] (ocoh) -- (infone);

\draw[->] (wkl1) -- (lim);
\draw[<->] (lim) -- (rsgLF);
\draw[<->] (rsgLF) -- (rsgrLF);

\draw[->] (lim) -- (wkl);
\draw[->] (whrt22) -- (rt22);
\end{tikzpicture}
\caption{The Rival--Sands theorem and selected related principles in the Weihrauch degrees.  An arrow indicates that the target principle Weihrauch reduces to the source principle.  No further arrows may be added, except those that may be inferred by following the arrows drawn.  The single arrows do not reverse, which may be seen by considering the Turing degrees of solutions to recursive instances of the relevant problems.  That $\rt^2_2$ is incomparable with each of $\wkl$, $\lim$, and $\wkl'$ in the Weihrauch degrees is explained in~\cite{BrattkaRakotoniaina}.}
\label{fig-RSg}
\end{figure}
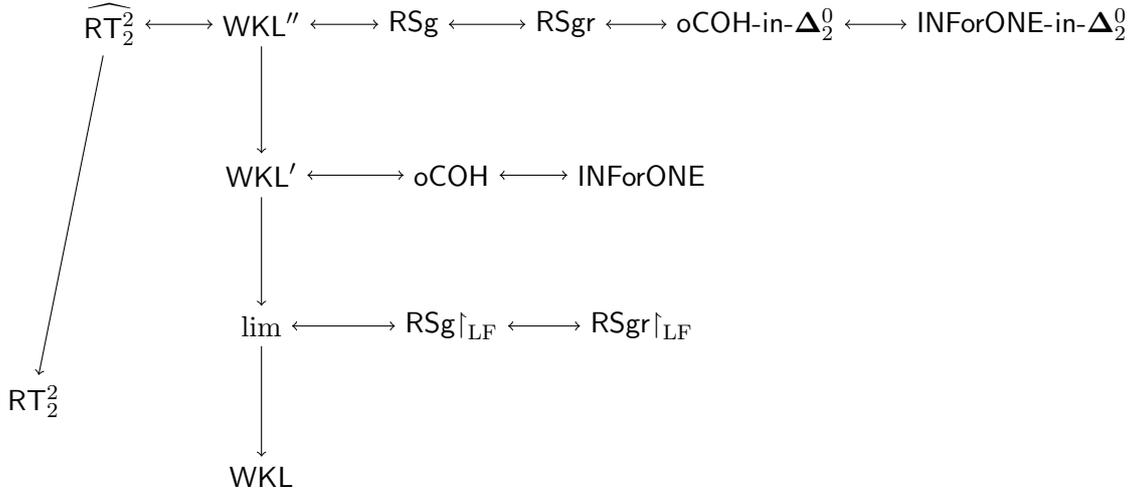

\section{The weak Rival--Sands theorem in the Weihrauch degrees and in the computable degrees}\label{sec-wRSgWeihrauch}

In this section, we compare the Weihrauch degree and the computable degree of the weak Rival--Sands theorem to those of $\rt^2_2$, its consequences, and other familiar benchmarks.  The general theme is that although $\wrsg$ and $\rt^2_2$ are equivalent over $\rca$, $\wrsg$ is much weaker than $\rt^2_2$ in the Weihrauch degrees and in the computable degrees.  This justifies our description of $\wrsg$ as a weaker formalization of $\rt^2_2$.

Multi-valued functions corresponding to the weak Rival--Sands theorem and its refined version are defined as follows.
\begin{Definition}{\ }
\begin{itemize}
\item $\wrsg$ is the following multi-valued function.
\begin{itemize}
\item Input/instance:  An infinite graph $G = (V,E)$.

\smallskip

\item Output/solution:  An infinite $H \subseteq V$ such that, for all $v \in H$, either $|H \cap N(v)| = \omega$ or $|H \cap N(v)| \leq 1$.
\end{itemize}

\medskip

\item $\wrsgr$ is the following multi-valued function.
\begin{itemize}
\item Input/instance:  An infinite graph $G = (V,E)$.

\smallskip

\item Output/solution:  An infinite $H \subseteq V$ such that for all $v \in H$, either $|H \cap N(v)| = \omega$ or $|H \cap N(v)| = 0$.
\end{itemize}
\end{itemize}
\end{Definition}

Clearly $\wrsg \leqsW \wrsgr$ because given a graph $G$, every $\wrsgr$-solution to $G$ is also a $\wrsg$-solution to $G$.  We do not know if this reduction reverses.  That is, we do not know if $\wrsgr \leqW \wrsg$.  We do however show that $\wrsgr \leqc \wrsg$ in Proposition~\ref{prop-WRSGRredWRSG} below.

One may also wish to consider the restrictions $\wrsg \rst_{V = \omega}$ and $\wrsgr \rst_{V = \omega}$ of $\wrsg$ and $\wrsgr$ to input graphs with $V = \omega$.  In the case of $\rsg$, the restriction makes no difference.  In the case of $\wrsg$, the difference between $\wrsg \rst_{V = \omega}$ and $\wrsg$ reflects the difference between $\leqsW$ and $\leqW$.  The problem $\wrsg\rst_{V = \omega}$ is not a cylinder, and $\wrsg$ is its cylindrification.  Thus $\wrsg\rst_{V = \omega} \equivW \wrsg$, but $\wrsg\rst_{V = \omega} \ltsW \wrsg$.  The same comments apply to $\wrsgr$.

\begin{Proposition}\label{prop-wRSgCyl}{\ }
\begin{enumerate}
\item\label{it-notCyl} $\wrsg\rst_{V = \omega}$ and $\wrsgr\rst_{V = \omega}$ are not cylinders.

\smallskip

\item\label{it-cylify} $\wrsg \equivsW \id \times \wrsg\rst_{V = \omega}$ and $\wrsgr \equivsW \id \times \wrsgr\rst_{V = \omega}$.
\end{enumerate}
\end{Proposition}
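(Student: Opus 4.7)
The plan is to establish part (2) first and then deduce part (1) from a common-solution argument. In both parts, the proofs for $\wrsg$ and $\wrsgr$ are parallel, so I will present the argument for $\wrsg$ and note that the same construction works for $\wrsgr$.

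For the reduction $\wrsg \leqsW \id \times \wrsg\rst_{V = \omega}$, I would simply relabel the input graph. Given $G = (V, E)$ with $V \subseteq \omega$ infinite, let $f \colon \omega \to V$ be the increasing enumeration of $V$, and set $G'' = (\omega, E'')$ with $E'' = \{(i,j) \in [\omega]^2 : (f(i), f(j)) \in E\}$, which belongs to $\dom(\wrsg\rst_{V = \omega})$. Feed the pair $\la G, G'' \ra$ to $\id \times \wrsg\rst_{V = \omega}$, so that any output $\la G, H'' \ra$ decodes to $f(H'') \in \wrsg(G)$. For the nontrivial reverse reduction $\id \times \wrsg\rst_{V = \omega} \leqsW \wrsg$, I would encode $p$ directly into the vertex set. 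Fix a standard computable injective pairing $\la \cdot, \cdot \ra$, and for input $\la p, G \ra$ with $G = (\omega, E)$, put
\begin{align*}
V' &= \{\la n, p \rst (n+1) \ra : n \in \omega\},\\
E' &= \{(\la i, p \rst (i+1) \ra, \la j, p \rst (j+1) \ra) : (i,j) \in E\}.
\end{align*}
The map $n \mapsto \la n, p \rst (n+1) \ra$ is a bijection $\omega \to V'$ inducing a graph isomorphism $G \cong (V', E')$, so $H' \subseteq V'$ lies in $\wrsg(V', E')$ if and only if $H := \{n : \la n, p \rst (n+1) \ra \in H'\}$ lies in $\wrsg(G)$. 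Since $V'$ contains exactly one vertex per first coordinate, any infinite $H'$ contains elements $\la n, \sigma \ra$ with arbitrarily large $n$, from which $p$ is read off uniformly by the recipe $p(k) = \sigma(k)$ for any $\la n, \sigma \ra \in H'$ with $n > k$; once $p$ is in hand, $H$ is decoded as $\{n : \la n, p \rst (n+1) \ra \in H'\}$. The identical construction handles $\wrsgr$, since the isomorphism preserves all relevant adjacency conditions.

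For part (1), the key observation is that any two $\wrsg\rst_{V = \omega}$-instances share a common $\wrsg$-solution. Given $G_1 = (\omega, E_1)$ and $G_2 = (\omega, E_2)$, apply Ramsey's theorem for pairs with four colors to the coloring $c(i,j) = 2 \chi_{E_1}(i,j) + \chi_{E_2}(i,j)$ to obtain an infinite $H \subseteq \omega$ homogeneous for $c$. Then $H$ is simultaneously either a clique or an independent set in each of $G_1$ and $G_2$, and infinite cliques and independent sets are $\wrsg$-solutions (and even $\wrsgr$-solutions) because their vertices have internal degree $\infty$ or $0$. Now suppose $\wrsg\rst_{V = \omega}$ were a cylinder via $\Phi, \Psi$; choose any $p_1 \neq p_2 \in \omega^\omega$ and any fixed $\wrsg\rst_{V = \omega}$-instance $G_0$, and let $G_i = \Phi(\la p_i, G_0 \ra)$ for $i = 1,2$. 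A common $\wrsg$-solution $H$ of $G_1$ and $G_2$ forces $\Psi(H) = \la p_1, \cdot \ra = \la p_2, \cdot \ra$, contradicting $p_1 \neq p_2$. The identical argument, noting that homogeneous sets are also $\wrsgr$-solutions, shows that $\wrsgr\rst_{V = \omega}$ is not a cylinder.

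The only real challenge is choosing the encoding in the reverse reduction of (2); the proposed encoding $V' = \{\la n, p \rst (n+1) \ra : n \in \omega\}$ is designed so that $V' \subsetneq \omega$ genuinely depends on $p$ while the bijection with $\omega$ preserves the entire graph structure, allowing solutions to transfer transparently. The crucial observation is that an infinite subset of $V'$ necessarily meets vertices carrying arbitrarily long prefixes of $p$, giving uniform recovery of $p$ from $H'$ alone without any access to $G'$.
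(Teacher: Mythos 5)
Your proof is correct and follows essentially the same approach as the paper: for part (2) you encode $p$ into the vertex set so that any infinite subset of vertices uniformly recovers $p$, and relabel $V$ by its increasing enumeration for the other direction; for part (1) you observe that every pair of $\wrsg\rst_{V = \omega}$-instances has a common solution, which rules out the cylinder property. The only cosmetic differences are that the paper uses the leaner vertex encoding $\{p \rst n : n \in \omega\}$ (relying on string length rather than an explicit index), and obtains the common solution via two successive applications of $\rt^2_2$ (homogenize for $G_0$, then for the induced subgraph of $G_1$) rather than a single application with four colors.
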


\begin{proof}
We prove both items for $\wrsg$.  The proofs for $\wrsgr$ are analogous.

For item~(\ref{it-notCyl}), it suffices to observe that every pair of $\wrsg\rst_{V = \omega}$-instances has a common solution.  It follows that $\id \nleqsW \wrsg\rst_{V = \omega}$, so $\wrsg\rst_{V = \omega}$ is not a cylinder.  Let $G_0 = (\omega, E_0)$ and $G_1 = (\omega, E_1)$ be two $\wrsg\rst_{V = \omega}$-instances.  Let $H_0$ be an infinite homogeneous set for $G_0$ (i.e., either an infinite clique or an infinite independent set).  Let $G_1 \rst H_0 = (H_0, E_1 \cap [H_0]^2)$ be the subgraph of $G_1$ induced by $H_0$.  Let $H$ be an infinite homogeneous set for $G_1 \rst H_0$.  Then $H$ is homogeneous for both $G_0$ and $G_1$, so it is a $\wrsg\rst_{V = \omega}$-solution to both $G_0$ and $G_1$.

For item~(\ref{it-cylify}), we first show that $\id \times \wrsg\rst_{V = \omega} \leqsW \wrsg$.  Let $p \in \omega^\omega$, and let $G = (\omega, E)$ be a $\wrsg\rst_{V = \omega}$-instance.  Let $\Phi$ be the functional given by $\Phi(\la p, G \ra) = \wh{G} = (V, \wh{E})$, where $V = \{p \rst n : n \in \omega\}$ and $\wh{E} = \{(p \rst m, p \rst n) : (m, n) \in E\}$.  Let $\wh{H}$ be a $\wrsg$-solution to $\wh{G}$.  Define a functional $\Psi(\wh{H})$ computing a pair $\la q, H \ra$ as follows.  To compute $q$, given $n$ search for a $\sigma \in \wh{H}$ with $|\sigma| > n$ and output $q(n) = \sigma(n)$.  To compute $H$, take $H = \{n : q \rst n \in \wh{H}\}$.  The set $\wh{H}$ consists of infinitely many initial segments of $p$, so in fact we computed $q = p$ and $H = \{n : p \rst n \in \wh{H}\}$.  Furthermore, $H$ is a $\wrsg\rst_{V = \omega}$-solution to $G$ because the function $n \mapsto p \rst n$ is an isomorphism between $G$ and $\wh{G}$.  Thus $\Phi$ and $\Psi$ witness that $\id \times \wrsg\rst_{V = \omega} \leqsW \wrsg$.

Now we show that $\wrsg \leqsW \id \times \wrsg\rst_{V = \omega}$.  Let $G = (V,E)$ be a $\wrsg$-instance.  Let $\Phi$ be the functional given by $\Phi(G) = \la p, \wh{G} \ra$, where $p \colon \omega \imp V$ enumerates $V$ in increasing order, and $\wh{G} = (\omega, \wh{E})$ is the graph with $\wh{E} = \{(m, n) : (p(m), p(n)) \in E\}$.  Then $\la p, \wh{G} \ra$ is a $(\id \times \wrsg\rst_{V = \omega})$-instance.  Let $\la p, \wh{H} \ra$ be a $(\id \times \wrsg\rst_{V = \omega})$-solution.  Define a functional $\Psi(\la p, \wh{H} \ra)$ computing the set $H = \{v : p^{-1}(v) \in \wh{H}\}$.  Then $H$ is a $\wrsg$-solution to $G$ because $p$ is an isomorphism between $\wh{G}$ and $G$.  Thus $\Phi$ and $\Psi$ witness that $\wrsg \leqsW \id \times \wrsg\rst_{V = \omega}$.
\end{proof}

Many of the arguments in the rest of this section are based on the observations made in the following lemma.

\begin{Lemma}\label{lem-wRSgObs}
Let $G = (V, E)$ be an infinite graph.
\begin{enumerate}
\item\label{it-CompInd} If $K \subseteq V$ is an infinite set such that $|K \cap N(x)| < \omega$ for every $x \in K$, then $G \oplus K$ computes an infinite independent set $C \subseteq K$.

\medskip

\item\label{it-FiniteNbd} Let $F = \{x \in V : |N(x)| < \omega\}$.  
\begin{enumerate}
\item\label{it-FiniteNbdFin} If $F$ is finite, then $V \setminus F \leqT G$ is a $\wrsgr$-solution to $G$.

\smallskip

\item\label{it-FiniteNbdInf} If $F$ is infinite, then $G$ has an infinite independent set $C \leqT G'$.
\end{enumerate}

\medskip

\item\label{it-NoRecSoln} Assume that no $H \leqT G$ is a $\wrsgr$-solution to $G$.
\begin{enumerate}
\item\label{it-NoRecSolnInf} Then $G$ has an infinite independent set.

\smallskip

\item\label{it-NoRecSolnFin} Let $D$ be a finite independent set, and let $\sigma \in 2^{<\omega}$ be a characteristic string of $D$:  $|\sigma| > \max(D)$ and $(\forall n < |\sigma|)(\sigma(n) = 1 \biimp n \in D)$.  Then $\sigma$ extends to the characteristic function of a $\wrsgr$-solution to $G$.
\end{enumerate}
\end{enumerate}
\end{Lemma}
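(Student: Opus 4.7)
The lemma is a sequence of observations of increasing depth, and my plan is to prove them in order, with each relying on the previous.

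For item (1), I would build $C = \{x_0 < x_1 < \cdots\}$ greedily inside $K$: take $x_0 = \min K$, and given $x_0, \ldots, x_n$, search in $K$ for the least $y > x_n$ with $y \notin \bigcup_{i \leq n} N(x_i)$. Such a $y$ exists because the forbidden set is the union of finitely many finite sets $K \cap N(x_i)$, while $K$ is infinite. The whole procedure is uniform in $G \oplus K$, so $C \leqT G \oplus K$. For item (2)(a), if $F$ is finite then $V \setminus F$ is a cofinite subset of $V$ and hence $\leqT G$, and for $v \in V \setminus F$ one has $(V \setminus F) \cap N(v) = N(v) \setminus F$, which is infinite by $|N(v)| = \omega$ and $|F| < \omega$. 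For item (2)(b), using $G'$ one can enumerate $F$ and pass to an infinite $G'$-recursive subset $F_0 \subseteq F$; applying item (1) relative to $G'$ with $K = F_0$ yields the desired $C \leqT G'$.

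For item (3)(a), the hypothesis rules out $F$ being finite, for otherwise item (2)(a) would produce a $\wrsgr$-solution computable from $G$. Hence $F$ is infinite and item (2)(b) applies.

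The real work is in item (3)(b). Set $F_* = \{0, \ldots, |\sigma|-1\} \setminus D$ and $V' = V \setminus (D \cup F_* \cup \bigcup_{x \in D} N(x))$, and split on the size of $V'$. If $V'$ is infinite, I would apply item (3)(a) to $G \rst V'$, after noting that $G \rst V'$ inherits the hypothesis: any $\wrsgr$-solution $H' \leqT G$ to $G \rst V'$ would combine with $D$ to give $D \cup H'$, which is a $\wrsgr$-solution to $G$ recursive in $G$ (for $v \in D$ one checks $(D \cup H') \cap N(v) = \emptyset$ using $V' \cap N(v) = \emptyset$; for $v \in H'$ one uses that $H' \subseteq V'$ is disjoint from $\bigcup_{x \in D} N(x)$, so $D \cap N(v) = \emptyset$, and then the $\wrsgr$-condition in $G \rst V'$ transfers to $G$). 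The resulting infinite independent $C \subseteq V'$ produces the $\wrsgr$-solution $H = D \cup C$, whose characteristic function extends $\sigma$ since $C \cap F_* = C \cap D = \emptyset$.

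The main obstacle is the case when $V'$ is finite, i.e.\ when $\bigcup_{x \in D} N(x)$ is cofinite. Here $D$ must contain a vertex of infinite degree, and every infinite independent set in $G$ is essentially confined to the cofinite set $\bigcup_{x \in D} N(x)$, so an independent-extension strategy breaks down: after cohesive thinning of any candidate $I$ against the finite family $\{N(v) : v \in D_\infty\}$, the thinned set must meet $N(v^*)$ for some $v^* \in D_\infty$ cofinitely (because $V \setminus B_\infty$ is finite), forcing every $y$ in the thinned set to satisfy $v^* \in D \cap N(y)$, which makes $(D \cup I) \cap N(y)$ finite and nonempty and kills the $\wrsgr$-condition. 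My plan for this case is to abandon independence for the part of $H$ outside $D$ and build $H = D \cup H^*$, where $H^*$ is cohesively extracted from the cofinite set $\bigcup_{x \in D_\infty} N(x) \setminus F_* \setminus \bigcup_{x \in D \cap F} N(x)$, and to verify via pigeonhole on the map $v \mapsto \{x \in D : v \sim x\}$ that a subset $H^*$ can be chosen so that each of its vertices has infinitely many neighbors in $H^*$ itself, compensating for the unavoidable contribution of some $D_\infty^+ \subseteq D$ to $H \cap N(y)$. Making this argument go through --- in particular, showing that the hypothesis forces enough ``density'' of the induced subgraph on $\bigcup_{x \in D_\infty} N(x) \setminus (F_* \cup \bigcup_{x \in D \cap F} N(x))$ so that such $H^*$ exists --- is the step I expect to require the most care, and it is plausible that the cleanest route is to show that the finite-$V'$ case cannot occur under the hypothesis at all, by constructing a $\wrsgr$-solution of $G$ computable from $G$ directly in that situation.
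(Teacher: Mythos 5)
Your proofs of items~(\ref{it-CompInd}), (\ref{it-FiniteNbd}), and (\ref{it-NoRecSolnInf}) are correct and essentially match the paper's. Your argument for the ``$V'$ infinite'' case of item~(\ref{it-NoRecSolnFin}) is also correct; the transfer of the hypothesis to $G\rst V'$ via $D\cup H'$ is checked carefully enough and the resulting independent extension works.

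The genuine gap is the ``$V'$ finite'' case of item~(\ref{it-NoRecSolnFin}), which you yourself flag as incomplete. The sketch involving cohesive thinning and a pigeonhole on $v\mapsto\{x\in D : v\sim x\}$ is not a proof, and, more importantly, your proposed escape route --- showing that the finite-$V'$ case cannot occur under the hypothesis --- is false. Take any recursive graph $G_0$ with $F_{G_0}$ infinite and no $G_0$-recursive $\wrsgr$-solution, and let $G$ be $G_0$ with one extra vertex $0$ adjacent to every vertex of $G_0$. One checks directly that $G$ still has no $G$-recursive $\wrsgr$-solution (a solution containing $0$ would yield one for $G_0$ by deleting $0$, and a solution avoiding $0$ is already one for $G_0$), yet for $D=\{0\}$ and $\sigma=1$ the set $V'$ is empty. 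So the case is real and must be handled.

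The paper sidesteps the $V'$ split entirely by extracting more from the hypothesis. Under the assumption that no $H\leqT G$ is a $\wrsgr$-solution, one first shows, with $I = V\setminus F$, that $I$ must be infinite (else $F$ is cofinite, hence $F\leqT G$, and item~(\ref{it-CompInd}) with $K=F$ gives a $G$-recursive independent set); and second, that every $x\in I$ must have $|I\cap N(x)|=\omega$ (else $K=N(x)\setminus I$ is infinite, $K\leqT G$ since $I\cap N(x)$ is finite, $K\subseteq F$, and item~(\ref{it-CompInd}) again gives a $G$-recursive independent set). With both structural facts in hand, the solution extending $\sigma$ is just
\begin{align*}
H = D\cup\{x\in I : x>n\},
\end{align*}
where $n$ exceeds $|\sigma|$ and $\max\bigcup_{v\in D\cap F}N(v)$: every $v\in D\cap F$ then has $H\cap N(v)=\emptyset$, and every $v\in D\cap I$ or $v\in\{x\in I:x>n\}$ has $|H\cap N(v)|=\omega$ by the second structural fact. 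Note this $H$ need not be independent, and need not be $G$-recursive; neither is required. This is the observation missing from your proposal. Replacing your $V'$ case split with the $I$-based analysis both closes the gap and eliminates the need for cohesiveness machinery in this lemma.
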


\begin{proof}
(\ref{it-CompInd}):  Suppose that $K$ is infinite and that $|K \cap N(x)| < \omega$ for every $x \in K$.  To compute an infinite independent set $C = \{x_0, x_1, \dots\} \subseteq K$ from $G \oplus K$, let $x_0$ be the first element of $K$, and let $x_{n+1}$ be the first element of $K$ that is $> x_n$ and not adjacent to any of $\{x_0, \dots, x_n\}$.

(\ref{it-FiniteNbd}):  Let $F = \{x \in V : |N(x)| < \omega\}$.  If $F$ is finite, then $I = V \setminus F$ is infinite, $I \leqT G$, and $|I \cap N(x)| = \omega$ for every $x \in I$.  Thus $I \leqT G$ is a $\wrsgr$-solution to $G$.  Suppose instead that $F$ is infinite.  Then there is an infinite $F_0 \subseteq F$ with $F_0 \leqT G'$ because $F$ is r.e.\ relative to $G'$.  $F_0$ satisfies $|F_0 \cap N(x)| < \omega$ for every $x \in F_0$, so there is an infinite independent set $C \leqT G \oplus F_0 \leqT G'$ by~(\ref{it-CompInd}) with $K = F_0$.

(\ref{it-NoRecSoln}):  Assume that no $H \leqT G$ is a $\wrsgr$-solution to $G$.  For (\ref{it-NoRecSolnInf}), if $G$ has no infinite independent set, then there would be a $\wrsgr$-solution $H \leqT G$ by~(\ref{it-FiniteNbd}).  For~(\ref{it-NoRecSolnFin}), let $\sigma \in 2^{<\omega}$ be a characteristic string of a finite independent set $D$.  Again, let $F = \{x \in V : |N(x)| < \omega\}$ and let $I = V \setminus F$.  If $I$ is finite, then $F$ is infinite, $F \leqT G$, and, by definition, $|F \cap N(x)| < \omega$ for every $x \in F$.  Thus by~(\ref{it-CompInd}), there is an infinite independent $C \leqT G \oplus F \equivT G$.  This contradicts that no $H \leqT G$ is a $\wrsgr$-solution to $G$.  (In this case, one may alternatively show that $\sigma$ extends to a $\wrsgr$-solution to $G$.)

Now suppose that $I$ is infinite.  Further suppose that there is an $x \in I$ with $|I \cap N(x)| < \omega$.  That is, $x$ has infinitely many neighbors, but only finitely many neighbors of $x$ have infinitely many neighbors.  In this case, let $K = N(x) \setminus I$.  Then $K$ is infinite and $|K \cap N(y)| < \omega$ for every $y \in K$.  Furthermore, $K \leqT G$ because $|I \cap N(x)| < \omega$.  Thus by~(\ref{it-CompInd}), there is an infinite independent $C \leqT G \oplus K \leqT G$.  This again contradicts that no $H \leqT G$ is a $\wrsgr$-solution to $G$.

Finally, suppose that $I$ is infinite and that $|I \cap N(x)| = \omega$ for every $x \in I$.  Let $n$ be greater than $|\sigma|$ and the maximum element of $\bigcup_{v \in D \cap F}N(v)$.  Let $H = D \cup \{x \in I : x > n\}$.  It is clear that $\sigma \subseteq H$.  To see that $H$ is a $\wrsgr$-solution to $G$, consider a $v \in H$.  Either $v \in D \cap F$ or $v \in I$.  If $v \in D \cap F$, then $|D \cap N(v)| = 0$ because $D$ is independent, and $|\{x \in I : x > n\} \cap N(v)| = 0$ by the choice of $n$.  Hence $|H \cap N(v)| = 0$.  If $v \in I$, then $|I \cap N(v)| = \omega$ by assumption, and therefore also $|\{x \in I : x > n\} \cap N(v)| = \omega$.  So $|H \cap N(v)| = \omega$.  Thus $H$ is a $\wrsgr$-solution to $G$.
\end{proof}

First, we show that $\wrsgr \leqc \wrsg$.

\begin{Proposition}\label{prop-WRSGRredWRSG}
$\wrsgr \leqc \wrsg$.  Hence $\wrsg \equivc \wrsgr$.
\end{Proposition}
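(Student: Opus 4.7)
The easy direction $\wrsg \leqsW \wrsgr$ is realized by identity functionals on both inputs and outputs: for any infinite graph $G$, every $\wrsgr$-solution $H$ automatically satisfies the $\wrsg$ conclusion, so $H \in \wrsg(G)$ whenever $H \in \wrsgr(G)$. The content of the proposition is the reverse, $\wrsgr \leqc \wrsg$, and my plan is to exploit the non-uniformity of $\leqc$ via a case distinction on the input graph $G \in \dom(\wrsgr)$.

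Case 1: some $H_0 \leqT G$ is a $\wrsgr$-solution to $G$. In this case I take $\wh{G} := G$ (which is certainly in $\dom(\wrsg)$) and, for any $\wh{H} \in \wrsg(G)$, output the fixed $H_0$. This yields $H_0 \leqT G \leqT G \oplus \wh{H}$ with $H_0 \in \wrsgr(G)$.

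Case 2: no $H \leqT G$ lies in $\wrsgr(G)$. By Lemma~\ref{lem-wRSgObs}~(\ref{it-NoRecSoln}) and an inspection of its proof (the first two sub-cases there would produce a $\wrsgr$-solution computable from $G$), the set $F = \{v \in V : |N(v)| < \omega\}$ is infinite, $I = V \setminus F$ is infinite, and $|I \cap N(v)| = \omega$ for every $v \in I$. Again take $\wh{G} := G$; given $\wh{H} \in \wrsg(G)$, I will compute from $G \oplus \wh{H}$ an infinite independent set in $G$, which is automatically a $\wrsgr$-solution. The construction is the greedy independent-set algorithm on $\wh{H}$: enumerate $\wh{H}$ in increasing order as $v_0 < v_1 < \cdots$ and place $v_n$ into $C$ iff no $v_m \in C$ with $m < n$ is adjacent to $v_n$. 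This $C$ is $(G \oplus \wh{H})$-computable and independent in $G$, so $C$ is the sought solution whenever it is infinite.

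The main obstacle is ruling out a finite $C$. If $C$ were finite, then past $\max(C)$ every element of $\wh{H}$ would have a neighbor in $C$, so by pigeonhole some $u \in C$ would satisfy $|\wh{H} \cap N(u)| = \omega$, forcing $u \in I$; such a $u$ is identified $(G \oplus \wh{H})$-effectively by waiting for two distinct $\wh{H}$-neighbors to appear. One then passes to $K := \wh{H} \cap N(u)$, which is infinite and $\Delta^0_1(G \oplus \wh{H})$, and iterates the greedy extraction inside $K$; the self-density assumption $|I \cap N(v)| = \omega$ for every $v \in I$, combined with Lemma~\ref{lem-wRSgObs}~(\ref{it-CompInd}) applied to co-finitely-many-neighbor subsets of $\wh{H}$, is what prevents this iterative extraction from stalling forever and thereby forces an infinite independent subset to be produced. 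The delicate point throughout is that the sets $\wh{H}_0$, $\wh{H}_1$, $\wh{H}_\infty$ are not pairwise decidable from $G \oplus \wh{H}$ (only $\wh{H}_\infty$ is r.e.), so each step must be implemented using only the r.e. side of this information.
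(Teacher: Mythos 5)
Your Case~1 is fine, and the overall strategy (a non-uniform case split followed by extracting a $\wrsgr$-solution from $G \oplus \wh{H}$) is the right shape. But the construction in Case~2 has a genuine gap: you commit to computing an infinite \emph{independent} set from $G \oplus \wh{H}$, and this is simply not always possible. Consider a $\wrsg$-solution $\wh{H}$ that is, say, an infinite clique (a clique is a perfectly valid $\wrsg$-solution, and there is nothing stopping it from being non-computable in $G$, so you really are in Case~2). Your greedy extraction on $\wh{H}$ stalls at a singleton $C = \{v_0\}$, you pass to $K = \wh{H} \cap N(v_0)$, which is again a clique, and the iteration never terminates and never emits an infinite independent set. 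The appeal to Lemma~\ref{lem-wRSgObs}(\ref{it-CompInd}) cannot rescue this: its hypothesis requires a set $K$ in which every vertex has only finitely many $K$-neighbors, and no such infinite $K \subseteq \wh{H}$ exists when $\wh{H}$ is highly connected. The cited self-density of $I = V \setminus F$ works \emph{against} you here, not for you --- the denser the graph, the more stubbornly greedy extraction stalls.

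The paper's proof avoids this by case-splitting on the structure of the set $I = \{x \in H : |H \cap N(x)| = \omega\}$ (a subset of the given $\wrsg$-solution $H$, not of $V$), which is r.e.\ in $G \oplus H$. When $I$ is finite, or when some $v \in I$ has only finitely many $I$-neighbors, one does extract an independent set via Lemma~\ref{lem-wRSgObs}(\ref{it-CompInd}) applied to $H \setminus I$ or $(H \cap N(v)) \setminus I$ respectively. But in the remaining case --- $I$ infinite with $|I \cap N(v)| = \omega$ for every $v \in I$, which is exactly where your construction breaks --- the paper instead computes a set $\wh{H} \subseteq I$ in which every vertex has \emph{infinitely many} $\wh{H}$-neighbors. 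Such a set is not independent, yet it is a valid $\wrsgr$-solution. The essential idea you are missing is that a $\wrsgr$-solution need not be independent: when the $\wrsg$-solution you are handed looks clique-like, the right move is to lean into that and output a clique-like set, not to fight it by hunting for independence.
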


\begin{proof}
Let $G = (V,E)$ be a $\wrsgr$-instance.  Then $G$ is also a $\wrsg$-instance, so let $H$ be a $\wrsg$-solution to $G$.  We show that there is a $\wrsgr$-solution $\wh{H}$ to $G$ with $\wh{H} \leqT G \oplus H$.

Let $I = \{x \in H : |H \cap N(x)| = \omega\}$.  Notice that also $I = \{x \in H : |H \cap N(x)| \geq 2\}$ because $H$ is a $\wrsg$-solution to $G$.  Therefore $I$ is r.e.\ relative to $G \oplus H$.  Now consider three cases.

Case $1$:  The set $I$ is finite.  Let $K = H \setminus I$.  Then $K$ is infinite, $K \equivT H$, and $|K \cap N(x)| < \omega$ for every $x \in K$.  Thus by Lemma~\ref{lem-wRSgObs} item~(\ref{it-CompInd}), there is an infinite independent $\wh{H} \leqT G \oplus K \equivT G \oplus H$, which is a $\wrsgr$-solution to $G$.

Case $2$:  There is a $v \in I$ with $|I \cap N(v)| < \omega$.  Let $K = (H \cap N(v)) \setminus I$.  Then $K$ is infinite and $K \leqT G \oplus H$ because $H \cap N(v)$ is infinite, $H \cap N(v) \leqT G \oplus H$, and $I \cap N(v)$ is finite.  Furthermore, $|K \cap N(x)| < \omega$ for every $x \in K$.  Thus by Lemma~\ref{lem-wRSgObs} item~(\ref{it-CompInd}), there is an infinite independent $\wh{H} \leqT G \oplus K \leqT G \oplus H$, which is a $\wrsgr$-solution to $G$.

Case $3$:  $I$ is infinite and $|I \cap N(v)| = \omega$ for every $v \in I$.  In this case we compute a set $\wh{H} \leqT G \oplus H$ with $\wh{H} \subseteq I$ and $|\wh{H} \cap N(x)| = \omega$ for each $x \in \wh{H}$.  This $\wh{H}$ is thus a $\wrsgr$-solution to $G$.  To compute $\wh{H} = \{x_0, x_1, \dots\}$, let $x_0$ be the first element of $I$.  To find $x_{n+1}$, decompose $n$ as $n = \la m, s \ra$, search for a $y \in I \cap N(x_m)$ with $y > x_n$, and set $x_{n+1} = y$.  Such a $y$ exists because $x_m \in I$ and every element of $I$ has infinitely many neighbors in $I$.  The search for $y$ can be done effectively relative to $G \oplus H$ because $I$ is r.e.\ relative to $G \oplus H$.  Finally, $|\wh{H} \cap N(x)| = \omega$ for each $x \in \wh{H}$ because $x_{n+1}$ is adjacent to $x_m$ whenever $n$ is of the form $\la m, s \ra$.
\end{proof}

\begin{Question}
Does $\wrsgr \leqW \wrsg$ hold?
\end{Question}

We may situate $\wrsg$ in the computable degrees by combining Lemma~\ref{lem-wRSgObs} and the proof of Theorem~\ref{thm-RT_WRSG_eq} with established results concerning $\rt^2_2$ and its consequences.  Recall that the \emph{chain/anti-chain principle} ($\cac$) of~\cite{HirschfeldtShore} states that every infinite partial order contains either an infinite chain or an infinite anti-chain.

\begin{Proposition}\label{prop-wRSgCompLoc}
In the computable degrees, $\wrsg$ is
\begin{itemize}
\item strictly below $\rt^2_2$ and $\lim$;

\smallskip

\item strictly above $\ads$ and $\srt^2_2$;

\smallskip

\item incomparable with $\cac$.
\end{itemize}
\end{Proposition}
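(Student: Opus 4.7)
The plan is to treat the four bullet points as separate claims, using Lemma~\ref{lem-wRSgObs} for the upper bounds, the graph constructions from the proof of Theorem~\ref{thm-RT_WRSG_eq} for the lower bounds, and a combination of the existence of low $\wrsg$-solutions with standard separations in the literature for the strictness and incomparability. The argument splits naturally into a routine part (positive reductions) and a more delicate part (separations). For the upper bounds, $\wrsg \leqc \rt^2_2$ is immediate: an infinite homogeneous set for a graph $G$ is automatically a $\wrsg$-solution, since $|H \cap N(x)| = \omega$ for every $x \in H$ when $H$ is a clique, and $|H \cap N(x)| = 0$ when $H$ is independent. For $\wrsg \leqc \lim$, I apply Lemma~\ref{lem-wRSgObs}(\ref{it-FiniteNbd}): when $F = \{x \in V : |N(x)| < \omega\}$ is finite, $V \setminus F$ is a $\wrsgr$-solution recursive in $G$; when $F$ is infinite, $G'$ extracts an infinite $F_0 \subseteq F$, and Lemma~\ref{lem-wRSgObs}(\ref{it-CompInd}) then produces an infinite independent $\wrsgr$-solution computable from $G \oplus F_0$.

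For the lower bounds $\ads \leqc \wrsg$ and $\srt^2_2 \leqc \wrsg$, I reuse the graph constructions from the proof of Theorem~\ref{thm-RT_WRSG_eq}: given a linear order $L$, let $G_L$ have edge set $\{(x,y) : x < y \andd x <_L y\}$; given a stable $2$-coloring $c$, let $G_c$ have edge set $\{(x,y) : c(x,y) = 1\}$. A $\wrsg$-solution $H$ then falls into one of the cases handled in that proof, and in each case an ascending/descending sequence (respectively a homogeneous set) is computable from the instance together with $H$. The only computational subtlety is that identifying the correct case is $\Sigma^0_2$ in the oracle, which I sidestep by running the candidate constructions in parallel and outputting whichever produces an infinite set, which is necessarily a valid solution.

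The main obstacle is the separations. The key intermediate claim is that every recursive graph admits a \emph{low} $\wrsg$-solution: when $F$ is finite this follows from $V \setminus F$ being recursive, and when $F$ is infinite one uses that $F$ is $\Sigma^0_2$ together with a standard priority construction to produce a low infinite $F_1 \subseteq F$, and then a low infinite independent set via Lemma~\ref{lem-wRSgObs}(\ref{it-CompInd}). This yields both $\rt^2_2 \not\leqc \wrsg$ (using Jockusch's recursive $2$-coloring with no low homogeneous set) and $\lim \not\leqc \wrsg$ (because a low set does not compute $\emptyset'$). The remaining separations $\wrsg \not\leqc \ads$, $\wrsg \not\leqc \srt^2_2$, $\wrsg \not\leqc \cac$, and $\cac \not\leqc \wrsg$ should combine these degree-theoretic bounds on $\wrsg$ with established separations between $\ads$, $\srt^2_2$, $\cac$, and nearby principles from~\cite{HirschfeldtShore}; in particular, the two $\cac$ directions will likely be the most delicate step, probably requiring a Seetapun-style cone-avoidance construction tailored to a carefully chosen recursive graph or poset.
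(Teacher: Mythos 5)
Your outline matches the paper's decomposition, and the positive directions ($\wrsg \leqc \rt^2_2$ trivially, $\wrsg \leqc \lim$ via Lemma~\ref{lem-wRSgObs}(\ref{it-FiniteNbd}), and $\ads, \srt^2_2 \leqc \wrsg$ by reusing the constructions of Theorem~\ref{thm-RT_WRSG_eq}) are fine. The parallel-search gadget is unnecessary: computable reducibility is non-uniform, so once the instance and the $\wrsg$-solution $H$ are fixed, you simply run the construction appropriate to whichever case actually holds — you never need to decide which case it is.

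The genuine gap is in the separations. You propose to prove that every recursive graph has a \emph{low} $\wrsg$-solution by extracting, via a ``standard priority construction,'' a low infinite subset of the $\Sigma^0_2$ set $F = \{x : |N(x)| < \omega\}$. This is not a standard fact and I do not believe it holds in general; $F$ can be an arbitrary $\Sigma^0_2$ set, and there is no theorem guaranteeing a low infinite subset of an arbitrary infinite $\Sigma^0_2$ set. The paper uses only the much weaker bound $H \leqT G'$ from Lemma~\ref{lem-wRSgObs}(\ref{it-FiniteNbd}), which is clearly correct. This already yields $\rt^2_2 \nleqc \wrsg$ (Jockusch: a recursive coloring with no $\Delta^0_2$ homogeneous set) and $\cac \nleqc \wrsg$ (Herrmann: a recursive partial order with no $\Delta^0_2$ chain or antichain). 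For $\lim \nleqc \wrsg$ the paper does not use lowness at all but Seetapun--Slaman cone avoidance: every recursive graph has a homogeneous set, hence a $\wrsg$-solution, not computing $\emptyset'$. You also misjudge the remaining directions: the $\cac$ items are bare citations, not bespoke cone-avoidance constructions; $\wrsg \nleqc \cac$ is the $\omega$-model separation of $\cac$ from $\rt^2_2$ in Hirschfeldt--Shore; and $\wrsg \nleqc \srt^2_2$ is \emph{not} in Hirschfeldt--Shore but requires the much more recent Monin--Patey $\omega$-model separation of $\srt^2_2$ from $\rt^2_2$, which your appeal to ``established separations from Hirschfeldt--Shore'' overlooks.
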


\begin{proof}
Trivially $\wrsg \leqsW \rt^2_2$, hence $\wrsg \leqc \rt^2_2$.  That $\rt^2_2 \nleqc \wrsg$ is because every $\wrsg$-instance $G$ has a solution $H \leqT G'$ by Lemma~\ref{lem-wRSgObs} item~(\ref{it-FiniteNbd}), whereas by~\cite{JockuschRamsey}*{Theorem~3.1} there are recursive $\rt^2_2$-instances with no solution recursive in $0'$.

As mentioned in the proof of Proposition~\ref{prop-RSgLF}, $\lim$ is strongly Weihrauch equivalent, hence computably equivalent, to the Turing jump function $\mathsf{J}$.  Every $\wrsg$-instance $G$ has a solution $H \leqT G'$ by Lemma~\ref{lem-wRSgObs} item~(\ref{it-FiniteNbd}), so $\wrsg \leqc \lim$.  That $\lim \nleqc \wrsg$ follows from the cone-avoidance result for $\rt^2_2$:  by~\cite{SeetapunSlaman}*{Theorem~2.1}, every recursive infinite graph has a homogeneous set, hence $\wrsg$-solution, that does not compute $0'$.

For $\ads \leqc \wrsg$ and $\srt^2_2 \leqc \wrsg$, see the proof of the $\rca \vdash \wrsg \imp \rt^2_2$ direction of Theorem~\ref{thm-RT_WRSG_eq}.  The argument showing that $\rca \vdash \wrsg \imp \ads$ describes a computable reduction from $\ads$ to $\wrsg$.  The argument showing that $\rca \vdash \wrsg \imp \Dd^2_2$ also describes a computable reduction from the multi-valued function corresponding to $\Dd^2_2$ (which we did not explicitly define) to $\wrsg$.  Now recall Proposition~\ref{prop-rt22decomp} item~(\ref{it-SRT22D22Equiv}), which states that $\rca \vdash \srt^2_2 \biimp \Dd^2_2$.  The proof of this fact (see~\cite{CholakJockuschSlaman}*{Lemma~7.10}) describes computable reductions in both directions, so we have that $\srt^2_2 \equivc \Dd^2_2$ and therefore that $\srt^2_2 \leqc \wrsg$.  For the non-reductions, by the results of~\cite{HirschfeldtShore}*{Section~2}, there are $\omega$-models of $\ads$ that are not models of $\rt^2_2$ and therefore not models of $\wrsg$.  Hence $\wrsg \nleqc \ads$.  By impressive recent work of Monin and Patey~\cite{MoninPatey}, there are also $\omega$-models of $\srt^2_2$ that are not models of $\rt^2_2$ and therefore not models of $\wrsg$.  Hence $\wrsg \nleqc \srt^2_2$.

That $\cac \nleqc \wrsg$ is because again every $\wrsg$-instance $G$ has a solution $H \leqT G'$, whereas by~\cite{Herrmann}*{Theorem~3.1} there are recursive $\cac$-instances with no solution recursive in $0'$.  That $\wrsg \nleqc \cac$ follows from fact that there are $\omega$-models of $\cac$ that are not models of $\rt^2_2$ and therefore not models of $\wrsg$, as shown in~\cite{HirschfeldtShore}*{Section~3}. 
\end{proof}

We remark that Proposition~\ref{prop-wRSgCompLoc} implies that $\coh \lstc \wrsg$ as well because $\coh \leqc \ads$ (by Proposition~\ref{prop-CADSCOH}, for example).

We return to the Weihrauch degrees and first show that $\sadc \nleqW \wrsgr$.  As $\sadc$ is below both $\ads$ and $\srt^2_2$ in the Weihrauch degrees (see~\cite{AstorDzhafarovSolomonSuggs}, for example), this implies that the computable reductions $\ads \lstc \wrsgr$ and $\srt^2_2 \lstc \wrsgr$ cannot be improved to Weihrauch reductions.  We also show that $\pdnr \nleqW \wrsgr$.

\begin{Theorem}\label{thm-SADCvWRSGR}
$\sadc \nleqW \wrsgr$.
\end{Theorem}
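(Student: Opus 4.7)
The plan is to mimic the diagonalization template from the proof of $\cadsst \nleqW \cads$: assume functionals $\Phi$ and $\Psi$ witness $\sadc \leqW \wrsgr$, and then produce a $\sadc$-instance $L_1$ together with a $\wrsgr$-solution $H_1$ to $\Phi(L_1)$ for which $\Psi(\la L_1, H_1 \ra)$ simultaneously contains elements of both the $\omega$-part and the $\omega^*$-part of $L_1$. Since any ascending chain in an $\omega + \omega^*$ order is contained in the $\omega$-part and any descending chain is contained in the $\omega^*$-part, no $\sadc$-solution to $L_1$ can contain elements of both parts, giving the desired contradiction.

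First I would fix a seed $\sadc$-instance $L_0$ on $\omega$ of type $\omega + \omega^*$ (say with evens as the $\omega$-part and odds as the $\omega^*$-part), compute $G_0 = \Phi(L_0)$, and fix a $\wrsgr$-solution $H_0$ of $G_0$. By symmetry I may assume $\Psi(\la L_0, H_0 \ra)$ is an ascending chain and hence an infinite subset of the evens; I would pick two elements $\ell_1 <_{L_0} \ell_2$ from it (both in the $\omega$-part of $L_0$) and use continuity of $\Phi$ and $\Psi$ to find finite initial segments $\sigma \subseteq L_0$ and $\tau \subseteq H_0$, viewed as characteristic strings, so that $\{\ell_1, \ell_2\} \subseteq \Psi(\la \sigma, \tau \ra)$ and $\Phi(\sigma)$ has already decided the adjacency of every pair of vertices in the support of $\tau$.

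Next I would construct $L_1 \supseteq \sigma$, still of type $\omega + \omega^*$, but now placing $\ell_1$ in the $\omega$-part and $\ell_2$ in the $\omega^*$-part of $L_1$. This is consistent with $\sigma$: the only constraint $\sigma$ imposes between $\ell_1$ and $\ell_2$ is $\ell_1 <_{L_1} \ell_2$, which is automatic once $\ell_1$ is in the $\omega$-part and $\ell_2$ is in the $\omega^*$-part, and the elements not mentioned in $\sigma$ are distributed so that only finitely many lie $<_{L_1}$-below $\ell_1$ and only finitely many lie $<_{L_1}$-above $\ell_2$. Setting $G_1 = \Phi(L_1)$, a $\wrsgr$-solution $H_1$ of $G_1$ whose characteristic function extends $\tau$ would yield $\Psi(\la L_1, H_1 \ra) \supseteq \{\ell_1, \ell_2\}$ and hence the contradiction.

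The main obstacle is producing such an $H_1$. My plan is to arrange from the outset that $H_0$ is an infinite independent set in $G_0$, using Lemma~\ref{lem-wRSgObs} item~(\ref{it-FiniteNbd}(\ref{it-FiniteNbdInf})) when the set of finite-degree vertices of $G_0$ is infinite and treating the opposite case (where $G_0$ must contain an infinite clique) by a dual construction. Then the support of $\tau$ is independent in $G_0$ and, by the adjacency-determination property of $\sigma$, remains independent in $G_1$. To secure an $H_1$ extending $\tau$, I would interleave the construction of $L_1$ past $\sigma$ with a diagonalization against all Turing functionals, ensuring that no $H \leqT G_1$ is a $\wrsgr$-solution to $G_1$; Lemma~\ref{lem-wRSgObs} item~(\ref{it-NoRecSoln}(\ref{it-NoRecSolnFin})) would then deliver the required $H_1$. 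Carrying out this diagonalization while preserving the $\omega + \omega^*$ order type of $L_1$ and the prescribed placement of $\ell_1$ and $\ell_2$ is the delicate technical step of the argument.
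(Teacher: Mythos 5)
Your overall diagonalization template (perturb the instance to split the output across the two halves of the order) is genuinely different from the paper's approach, but as written it has several real gaps that the paper's route is specifically designed to avoid.

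First, the seed instance $L_0$ with evens as the $\omega$-part and odds as the $\omega^*$-part admits obvious recursive $\sadc$-solutions. Nothing then forces $G_0 = \Phi(L_0)$ to lack a recursive $\wrsgr$-solution, so the hypothesis of Lemma~\ref{lem-wRSgObs}~(\ref{it-NoRecSoln}) is not available: you have no guarantee that $G_0$ has an infinite independent set at all, nor that finite independent sets extend to $\wrsgr$-solutions. Your appeal to item~(\ref{it-FiniteNbdInf}) only applies when the set $F$ of finite-degree vertices of $G_0$ is infinite, and the ``dual construction'' you gesture at for the opposite case has no support in the lemma: when $F$ is finite, the canonical $\wrsgr$-solution $V\setminus F$ is recursive in $G_0$ but need be neither a clique nor an independent set, and nothing like item~(\ref{it-NoRecSolnFin}) is available for cliques. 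The paper resolves this at the outset by choosing $L$ to be a Tennenbaum/Denisov recursive order of type $\omega+\omega^*$ with no infinite recursive ascending or descending sequence; this single choice forces $G=\Phi(L)$ to have no recursive $\wrsgr$-solution (else $\Psi$ would produce a recursive $\sadc$-solution to $L$), which is exactly what activates Lemma~\ref{lem-wRSgObs}~(\ref{it-NoRecSoln}).

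Second, and more fundamentally, your plan requires constructing a \emph{new} order $L_1 \supseteq \sigma$ that is still of type $\omega+\omega^*$, places $\ell_1$ and $\ell_2$ into prescribed halves, \emph{and} is pathological enough that $G_1 = \Phi(L_1)$ has no recursive $\wrsgr$-solution — you acknowledge this diagonalization as the ``delicate technical step'' but do not carry it out, and it is not a small omission: it amounts to redoing a Tennenbaum-style construction under extra finitary constraints, interleaved against $\Phi$. The paper never changes $L$ at all. Instead, after fixing $x\in\Psi(\la L, C\ra)$ in (say) the $\omega$-part, it considers the set $R$ of all $y$ such that some finite independent $D\subseteq V$ has $x,y\in\Psi(\la L, D\ra)$. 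This $R$ is r.e., contains $\Psi(\la L, C\ra)$, hence is infinite; since no infinite r.e.\ set can be contained entirely in the $\omega$-part of the pathological $L$ (it would thin to a recursive ascending sequence), $R$ meets the $\omega^*$-part. Extending the witnessing $D$ to a full $\wrsgr$-solution $H$ via Lemma~\ref{lem-wRSgObs}~(\ref{it-NoRecSolnFin}) then gives $x,y\in\Psi(\la L, H\ra)$ straddling the two halves, which is the contradiction. This r.e.-set argument replaces your instance perturbation entirely and is where the proof's real leverage lies; without it, or without fully carrying out your instance diagonalization, the argument does not close.
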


\begin{proof}
Suppose for a contradiction that $\sadc \leqW \wrsgr$ is witnessed by Turing functionals $\Phi$ and $\Psi$.  By a well-known result independently of Tennenbaum and Denisov (see~\cite{RosensteinBook}*{Theorem~16.54}, for example), there is a recursive linear order $L = (\omega, <_L)$ with $L \cong \omega + \omega^*$ that has no infinite recursive ascending or descending sequence.  If $\ell \in L$ has finitely many $<_L$-predecessors, then say that $\ell$ is in the $\omega$-part of $L$; and if $\ell$ has finitely many $<_L$-successors, then say that $\ell$ is in the $\omega^*$-part of $L$.  Notice that no infinite r.e.\ set is contained entirely in the $\omega$-part of $L$, as such a set could be thinned to an infinite recursive ascending sequence.  Similarly, no infinite r.e.\ set is contained entirely in the $\omega^*$-part of $L$.

The linear order $L$ is a recursive $\sadc$-instance, so $G = \Phi(L)$ is a recursive $\wrsgr$-instance.  Write $G = (V, E)$.  $G$ cannot have a recursive $\wrsgr$-solution because if there were a recursive solution $H$ to $G$, then $\Psi(\la L, H \ra)$ would be a recursive $\sadc$-solution to $L$, which would be an infinite recursive set either entirely contained in the $\omega$-part of $L$ or entirely contained in the $\omega^*$-part of $L$.  Therefore $G$ has an infinite independent set $C$ by Lemma~\ref{lem-wRSgObs} item~(\ref{it-NoRecSolnInf}).  This $C$ is a $\wrsgr$-solution to $G$, so $\Psi(\la L, C \ra)$ is a $\sadc$-solution to $L$.  In particular, $\Psi(\la L, C \ra)$ is infinite.  Fix any $x \in \Psi(\la L, C \ra)$, and assume for the sake of argument that $x$ is in the $\omega$-part of $L$ (the $\omega^*$-part case is symmetric).  Let $R$ be the r.e.\ set
\begin{align*}
R = \bigl\{y : \text{there is a finite independent set $D \subseteq V$ with $x, y \in \Psi(\la L, D \ra)$}\bigr\}.
\end{align*}
Notice that if $y \in \Psi(\la L, C \ra)$, then any sufficiently long initial segment $D$ of $C$ witnesses that $y \in R$.  Thus $\Psi(\la L, C \ra) \subseteq R$.  In particular, $R$ is infinite.  However, $R$ is r.e., so it cannot be entirely contained in the $\omega$-part of $L$.  Therefore there must be a $y \in R$ that is in the $\omega^*$-part of $L$.  Let $D$ be a finite independent set witnessing that $y \in R$.  By Lemma~\ref{lem-wRSgObs} item~(\ref{it-NoRecSolnFin}), the characteristic string of $D$ extends to the characteristic function of a $\wrsgr$-solution $H$ to $G$.  However, $x, y \in \Psi(\la L, H \ra)$, $x$ is in the $\omega$-part of $L$, and $y$ is in the $\omega^*$-part of $L$.  Thus $\Psi(\la L, H \ra)$ can be neither an infinite ascending chain nor an infinite descending chain.  Thus $\Phi$ and $\Psi$ do not witness that $\sadc \leqW \wrsgr$, so $\sadc \nleqW \wrsgr$.
\end{proof}

\begin{Theorem}\label{thm-DNRvWRSGR}
$\pdnr \nleqW \wrsgr$.
\end{Theorem}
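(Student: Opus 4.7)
The plan is to mimic the structure of the proof of Theorem~\ref{thm-SADCvWRSGR}, replacing the use of the Tennenbaum--Denisov linear order with the constant $0$ function and replacing "no infinite recursive ascending or descending sequence" with "no recursive $\tdnr$ function relative to $0$." I will suppose for a contradiction that $\Phi, \Psi$ witness $\pdnr \leqW \wrsgr$ and apply the reduction to the input $p = 0 \in \omega^\omega$. This yields a recursive $\wrsgr$-instance $G = \Phi(0) = (V,E)$. The standard diagonal argument shows that no recursive function can be $\tdnr$, so $G$ cannot possess a recursive $\wrsgr$-solution $H$: such an $H$ would make $\Psi(\la 0, H\ra)$ a recursive $\tdnr$ function. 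Thus Lemma~\ref{lem-wRSgObs} item~(\ref{it-NoRecSoln}) applies.

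Next, I will use the extension property from Lemma~\ref{lem-wRSgObs}~(\ref{it-NoRecSolnFin}) to recursively compute a $\tdnr$ function $g$, giving the desired contradiction. Define
\[
g(e) = \Psi(\la 0, \sigma_e \ra)(e),
\]
where $\sigma_e \in 2^{<\omega}$ is the first (in a fixed effective enumeration) characteristic string of a finite independent set $D \subseteq V$ such that $\Psi(\la 0, \sigma_e \ra)(e)\da$. The search for $\sigma_e$ is effective in $G$, hence effective. To see that $\sigma_e$ always exists, observe that by Lemma~\ref{lem-wRSgObs}~(\ref{it-NoRecSolnInf}), $G$ has an infinite independent set $I$; this $I$ is itself a $\wrsgr$-solution to $G$, so $\Psi(\la 0, I \ra)$ is total, and any sufficiently long initial segment $\sigma$ of the characteristic function of $I$ witnesses $\Psi(\la 0, \sigma \ra)(e)\da$ while being a valid characteristic string of a finite independent set.

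It remains to check that $g$ is $\tdnr$. Fix $e$ with $\Phi_e(0)(e)\da$. By Lemma~\ref{lem-wRSgObs}~(\ref{it-NoRecSolnFin}), $\sigma_e$ extends to the characteristic function of some $\wrsgr$-solution $H_e$ to $G$. Since $\Psi(\la 0, \sigma_e \ra)(e)$ uses only values of the oracle within $|\sigma_e|$, we have $\Psi(\la 0, H_e \ra)(e) = g(e)$. The reduction forces $\Psi(\la 0, H_e \ra)$ to be $\tdnr$ relative to $0$, so $g(e) \neq \Phi_e(0)(e)$, as needed. Thus $g$ is a recursive $\tdnr$ function, which is impossible; so $\Phi, \Psi$ do not witness $\pdnr \leqW \wrsgr$.

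The main obstacle is ensuring both that $\sigma_e$ always exists and that it genuinely extends to a valid $\wrsgr$-solution producing the same value of $\Psi$ at $e$; both points are handled by Lemma~\ref{lem-wRSgObs}~(\ref{it-NoRecSoln}), and the proof hinges entirely on being able to enter case~(\ref{it-NoRecSoln}), which in turn relies on the diagonal non-existence of recursive $\tdnr$ functions. This same structure will show, with minimal modification, that $\pdnr \nleqW \wrsg$ as well via Proposition~\ref{prop-WRSGRredWRSG}, though that is not required here.
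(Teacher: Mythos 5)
Your proof is correct and follows essentially the same approach as the paper's: apply the reduction to a recursive input, note that $G = \Phi(p)$ has no recursive $\wrsgr$-solution, use Lemma~\ref{lem-wRSgObs}(\ref{it-NoRecSoln}) to both guarantee totality of the recursive function $g$ (via an infinite independent set) and to extend the finite independent set witnessing $g(e)$ to an actual $\wrsgr$-solution, yielding a recursive $\tdnr$ function and hence a contradiction. The only (immaterial) differences are that you fix $p = 0$ rather than an arbitrary recursive $p$, and you package the contradiction by first verifying $g$ is $\tdnr$ everywhere it should be and then invoking the impossibility of a recursive DNR function, where the paper instead finds a specific $e$ with $g(e) = \Phi_e(p)(e)$; one small side remark at the end is off, as $\pdnr \nleqW \wrsg$ follows directly from $\wrsg \leqsW \wrsgr$ rather than from Proposition~\ref{prop-WRSGRredWRSG}, which goes the wrong way.
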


\begin{proof}
The proof is similar to the proof of Theorem~\ref{thm-SADCvWRSGR}.  Suppose for a contradiction that $\pdnr \leqW \wrsgr$ is witnessed by Turing functionals $\Phi$ and $\Psi$.  Let $p \colon \omega \imp \omega$ be any recursive function.  Then $p$ is a recursive $\pdnr$-instance, so $G = \Phi(p)$ is a recursive $\wrsgr$-instance.  Write $G = (V, E)$.  $G$ cannot have a recursive $\wrsgr$-solution because if there were a recursive solution $H$ to $G$, then $\Psi(\la p, H \ra)$ would be a contradictory recursive $\pdnr$-solution to $p$.  Thus $G$ has an infinite independent set $C$ by Lemma~\ref{lem-wRSgObs} item~(\ref{it-NoRecSolnInf}).  This $C$ is a $\wrsgr$-solution to $G$, so $\Psi(\la p, C \ra)$ is $\tdnr$ relative to $p$.

Compute a function $g \colon \omega \imp \omega$ as follows.  On input $e$, $g(e)$ searches for a finite independent set $D \subseteq V$ such that $\Psi(\la p, D \ra)(e)\da$ and outputs the value of $\Psi(\la p, D \ra)(e)$ for the first such $D$ found.  The function $g$ is total because $\Psi(\la p, C \ra)$ is total:  for any $e$, any sufficiently long initial segment $D$ of $C$ is a finite independent set for which $\Psi(\la p, D \ra)(e)\da$.  The function $g$ is recursive, so it is not $\tdnr$ relative to $p$.  So there is an $e$ such that $g(e) = \Phi_e(p)(e)$.  By the definition of $g$, there is a finite independent set $D$ such that $\Psi(\la p, D \ra)(e) = g(e) = \Phi_e(p)(e)$.  By Lemma~\ref{lem-wRSgObs} item~(\ref{it-NoRecSolnFin}), the characteristic string of $D$ extends to the characteristic function of a $\wrsgr$-solution $H$ to $G$.  Then $\Psi(\la p, H \ra)(e) = \Phi_e(p)(e)$, so $\Psi(\la p, H \ra)$ is not a $\pdnr$-solution to $p$.  Thus $\Phi$ and $\Psi$ do not witness that $\pdnr \leqW \wrsgr$, so $\pdnr \nleqW \wrsgr$.
 \end{proof}

On the positive side, we show that $\coh \leqsW \wrsg$ and that $\rt^1_{<\infty} \leqsW \wrsg$.

\begin{Theorem}\label{thm-COHredWRSG}
$\coh \leqsW \wrsg$.
\end{Theorem}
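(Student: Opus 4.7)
My plan is to prove $\coh \leqsW \wrsg$ by invoking the strong Weihrauch equivalence $\coh \equivsW \cads\rst_{\dom(L)=\omega}$ from Proposition~\ref{prop-CADSCOH}, so that it suffices to establish $\cads\rst_{\dom(L)=\omega} \leqsW \wrsg$. Given a linear order $L = (\omega, <_L)$, I would uniformly construct a graph $G = (\omega, E)$ whose adjacency encodes agreement between the natural order and $<_L$: namely, for $x < y$ put $(x,y) \in E$ iff $x <_L y$. For each $v \in H$, the neighborhood $H \cap N(v)$ decomposes as $A(v) \cup B(v)$, where $A(v) = H \cap \{u < v : u <_L v\}$ is automatically finite (it is contained in $\{u < v\}$) and $B(v) = H \cap \{u > v : v <_L u\}$ may not be; hence the $\wrsg$-condition $|H \cap N(v)| \leq 1$ or $= \omega$ collapses to $|B(v)| \leq 1$ or $|B(v)| = \omega$, which directly controls the ``$<$-later $<_L$-later'' part of $v$'s $<_L$-successors in $H$. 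When $|B(v)| \leq 1$, the vertex $v$ is nearly right-finite in $(H, <_L)$, and this is the key step toward stability.

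To obtain full stability of $(H, <_L)$---so that each $v \in H$ has only finitely many $<_L$-successors in $H$ or only finitely many $<_L$-predecessors in $H$---the ``$<$-later $<_L$-earlier'' elements must also be constrained. I expect the naive single-copy graph to be insufficient: for densely ordered $L$, a densely distributed $H$ can have $|H \cap N(v)| = \omega$ at every $v \in H$ while failing to be a stable suborder of $L$. I therefore plan to supplement the construction with an auxiliary layer of vertices whose degrees encode the dual order-comparison, modelled after the multi-part construction of Lemma~\ref{lem-INFONESDredRSG} but tailored to the inside-only condition of $\wrsg$. From any $\wrsg$-solution to the enriched graph, the stable suborder---and thereby, via the equivalence with $\cads\rst_{\dom(L)=\omega}$, the cohesive set---would be extracted uniformly by a greedy procedure that reads off successive elements of $H$ compatible with an ascending $\omega$-tail or descending $\omega^*$-tail in $(H, <_L)$.

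The main obstacle is to design the auxiliary structure so that (i) finite-degree obstacle vertices rule out dense, non-stable $\wrsg$-solutions, (ii) a stable suborder is uniformly extractable from any surviving $\wrsg$-solution, and (iii) the whole reduction is strongly Weihrauch rather than merely Weihrauch. Arranging these three conditions simultaneously, while respecting the inside-only character of $\wrsg$---which forbids imposing adjacency conditions on vertices outside $H$---is where I expect the bulk of the technical difficulty to lie.
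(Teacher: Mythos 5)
Your setup is correct and matches the paper: reduce via $\cads$ (or $\cads\rst_{\dom(L)=\omega}$) and use the graph where, for $x<y$, $(x,y)\in E$ iff $x<_L y$. And your worry is legitimate as far as it goes---a $\wrsg$-solution $H$ to this graph need not itself be a stable suborder of $L$ (take $L$ dense and $H=\omega$, so that $|H\cap N(v)|=\omega$ for every $v$). But you draw the wrong conclusion: the fix is not to enrich the graph with auxiliary vertices, it is to enrich the \emph{decoding} functional. The paper keeps exactly your graph and uniformly post-processes $H$ into a stable $C\subseteq H$.

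The key idea you are missing is internal to $H$. Let $R=\{x\in H:|H\cap N(x)|\geq 2\}$, which is precisely the set of $x\in H$ with $|H\cap N(x)|=\omega$. The claim is that once $|R|\geq 2$, every $x\in R$ has infinitely many $<_L$-successors in $R$: pick $z\in R\setminus\{x\}$ and set $w=\max_{<_L}\{x,z\}$; since $H\cap N(w)$ is infinite, any sufficiently $<$-large $y\in H\cap N(w)$ satisfies $y>x,z$ and $y>_L w\geq_L x,z$, so $x,z\in H\cap N(y)$ and hence $y\in R$ with $y>_L x$. This cofinality of $R$ in itself is what lets one greedily extract an ascending sequence \emph{inside} $R$. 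The decoding procedure enumerates $H=\{h_0<h_1<\cdots\}$, outputs $h_n$ by default, and watches the finite sets $H_s=\{h_0,\dots,h_s\}$; as soon as it sees two distinct $u,v\in H_s$ with $|H_s\cap N(u)|,|H_s\cap N(v)|\geq 2$ it knows $|R|\geq 2$, switches to building an ascending sequence in $R$, and the output has type $\omega+k$. If this never triggers, a short contradiction argument shows at most one element of $H$ has infinitely many $<_L$-successors in $H$, so $H$ itself has type $\omega^*$ or $1+\omega^*$. In every case the output is a stable suborder, i.e.\ a $\cads$-solution. So the ``main obstacle'' you identify---designing an auxiliary structure satisfying your conditions (i)--(iii)---does not arise; the inside-only character of $\wrsg$ is already enough, because the property of $R$ just described is itself an ``inside-only'' fact about $H$.
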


\begin{proof}
It suffices to show that $\cads \leqW \wrsg$ because $\cads \equivW \coh$ by Proposition~\ref{prop-CADSCOH} and because $\wrsg$ is a cylinder by Proposition~\ref{prop-wRSgCyl}.

Let $L = (L, <_L)$ be a $\cads$-instance.  Define a functional $\Phi(L)$ computing the graph $G = (V, E)$ where $V = L$ and
\begin{align*}
E = \{(m, n) : (m,n \in V) \andd (m < n) \andd (m <_L n)\}.
\end{align*}
The graph $G$ is a valid $\wrsg$-instance, so let $H$ be a $\wrsg$-solution to $G$.  We define a functional $\Psi(\la L, H \ra)$ computing a set $C \subseteq L$ which will be a suborder of $L$ either of type $\omega^*$, of type $1 + \omega^*$, or of type $\omega + k$ for some finite linear order $k$.

Using $\Phi$, we may compute $\Phi(L) = G$.  Using $G$ and $H$, we may enumerate the set $R = \{x \in H : |H \cap N(x)| \geq 2\}$.  We claim that if $|R| \geq 2$, then every $x \in R$ has infinitely many $<_L$-successors in $R$.  To see this, suppose that $|R| \geq 2$, let $x \in R$, and let $z \in R$ be different from $x$.  Then $|H \cap N(x)| \geq 2$ and $|H \cap N(z)| \geq 2$.  Therefore $|H \cap N(x)| = \omega$ and $|H \cap N(z)| = \omega$ because $H$ is a $\wrsg$-solution to $G$.  Let $w$ denote the $<_L$-maximum of $x$ and $z$.  Then any sufficiently large $y \in H \cap N(w)$ satisfies $y > x$, $y > z$, $y >_L x$, and $y >_L z$.  Thus any such $y$ is in $R$ because $y \in H$ and $x, z \in H \cap N(y)$.  Therefore there are infinitely many $y \in R$ with $y >_L x$.

To compute $C = \{x_0, x_1, \dots\}$, first enumerate $H$ in $<$-increasing order as $h_0 < h_1 < h_2 < \cdots$.  For each $s$, let $H_s = \{h_0, \dots, h_s\}$.  Take $x_n = h_n$ until possibly reaching an $s_0$ for which there are distinct $u, v \in H_{s_0}$ with $|H_{s_0} \cap N(u)| \geq 2$ and $|H_{s_0} \cap N(v)| \geq 2$.  If such an $s_0$ is reached, then $H_{s_0}$ witnesses that $u,v \in R$.  Thus $R$ is infinite by the claim, so we may switch to computing an ascending sequence in $R$.  Search for a $y \in R$ with $y > x_{s_0 - 1}$ and set $x_{s_0} = y$.  Having determined $x_s$ for some $s \geq s_0$, search for a $y \in R$ with $y > x_s$ and $y >_L x_s$, which exists by the claim, and set $x_{s+1} = y$.

We now show that $C$ is a suborder of $L$ either of type $\omega^*$, of type $1 + \omega^*$, or of type $\omega + k$ for some finite linear order $k$.  First suppose that there is an $s_0$ for which there are distinct $u, v \in H_{s_0}$ with $|H_{s_0} \cap N(u)| \geq 2$ and $|H_{s_0} \cap N(v)| \geq 2$.  Then $\{x_n : n \geq s_0\}$ is an ascending sequence in $L$, so $C$ is a suborder of $L$ of type $\omega + k$ for some finite linear order $k$.  If there is no such $s_0$, then $C = H$, which in this case is a suborder of $L$ either of type $\omega^*$ or of type $1 + \omega^*$.  To see this, suppose for a contradiction that there are $a < b$ such that both $h_a$ and $h_b$ have infinitely many $<_L$-successors in $H$.  Then there are infinitely many $n$ with $h_n >_L h_a, h_b$.  In particular, there are $n > m > b$ with $h_m >_L h_a, h_b$ and $h_n >_L h_a, h_b$.  But then $h_a, h_b \in H_n$; $h_a, h_b \in N(h_m)$; and $h_a, h_b \in N(h_n)$.  So for $s_0 = n$ there are $u = h_a$ and $v = h_b$ with $|H_{s_0} \cap N(u)| \geq 2$ and $|H_{s_0} \cap N(v)| \geq 2$, contradicting that there is no such $s_0$.
\end{proof}

The proof that $\rt^1_{<\infty} \leqsW \wrsg$ is similar to Hirst's proof that $\rca + \rt^2_2 \vdash \rt^1_{<\infty}$ from~\cite{HirstThesis}.

\begin{Proposition}\label{prop-PHPredWRSG}
$\rt^1_{<\infty} \leqsW \wrsg$.
\end{Proposition}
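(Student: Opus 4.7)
The plan is to imitate Hirst's strategy for proving $\rca + \rt^2_2 \vdash \rt^1_{<\infty}$~\cite{HirstThesis}: encode the coloring as a graph whose cliques are the color classes, and then extract a monochromatic set from a $\wrsg$-solution by exploiting the fact that three mutually adjacent vertices inside the solution force infinitely many. Concretely, given $c \colon \omega \imp \omega$ with finite range, I would define $\Phi(c)$ to be the graph $G = (V, E)$ with $V = \{\la n, c(n) \ra : n \in \omega\}$ and an edge between $\la m, c(m) \ra$ and $\la n, c(n) \ra$ whenever $m \neq n$ and $c(m) = c(n)$. Encoding the color directly into each vertex label is the key device, since it lets the output functional $\Psi$ recover color information from a $\wrsg$-solution without querying $c$, as required for a strong reduction. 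The graph $G$ is then a disjoint union of cliques indexed by the colors appearing in the range of $c$, and it is uniformly computable from $c$.

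Let $H$ be a $\wrsg$-solution to $G$. The key observation is that for each color $i$, either $H$ contains at most two vertices with second coordinate $i$, or it contains infinitely many such vertices. Indeed, if three distinct $v_0, v_1, v_2 \in H$ all have second coordinate $i$, then $v_1, v_2 \in H \cap N(v_0)$, so $|H \cap N(v_0)| \geq 2$, whence $|H \cap N(v_0)| = \omega$ by the $\wrsg$ property; and $N(v_0)$ consists precisely of the other vertices with second coordinate $i$. Since $H$ is infinite and the range of $c$ is finite, pigeonhole guarantees that some color $i$ has at least three representatives in $H$.

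Finally, I would define $\Psi(H)$ as the functional that enumerates $H$ until it spots a color $i$ appearing three times among the second coordinates, and thereafter outputs $\{n : \la n, i \ra \in H\}$. By the observation above, this commitment occurs in finitely many stages, so $\Psi$ is well-defined and uniformly computable from $H$ alone. The resulting set is infinite and monochromatic for $c$ by construction, hence a valid $\rt^1_{<\infty}$-solution; and since $\Psi$ consults only $H$, the reduction is strong. The only subtlety to pin down carefully is the guarantee that commitment occurs in finite time, which reduces to the pigeonhole principle applied to the infinite set $H$ and the finite range of $c$.
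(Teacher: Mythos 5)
Your proof is correct and follows essentially the same strategy as the paper's: encode $c$ as a disjoint union of cliques (one per color class), then use the $\wrsg$ dichotomy together with pigeonhole over the finite range to find a color appearing at least three times in $H$, which forces that color class to meet $H$ infinitely. The only difference is a small technical refinement: the paper builds $G$ on vertex set $\omega$, shows $\rt^1_{<\infty} \leqW \wrsg$, and invokes the fact that $\wrsg$ is a cylinder (Proposition~\ref{prop-wRSgCyl}) to upgrade to $\leqsW$, whereas you encode each vertex's color into its label so that $\Psi$ can operate on $H$ alone, giving the strong reduction directly without the cylinder lemma.
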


\begin{proof}
It suffices to show that $\rt^1_{<\infty} \leqW \wrsg$ because $\wrsg$ is a cylinder by Proposition~\ref{prop-wRSgCyl}.  Let $c$ be an $\rt^1_{<\infty}$-instance.  Define a functional $\Phi(c)$ computing the graph $G = (\omega, E)$ where $E = \{(m,n) : c(m) = c(n)\}$.  The graph $G$ is a valid $\wrsg$-instance, so let $H$ be a $\wrsg$-solution to $G$.  Let $\Psi(\la c, H \ra)$ be a functional that computes $G = \Phi(c)$, searches for an $x \in H$ with $|H \cap N(x)| \geq 2$, and outputs the set $H \cap N(x)$ for the first such $x$ found.  There must be such an $x$ because $G$ is a disjoint union of finitely many complete graphs (depending on the size of $\ran(c)$), and thus $H$ must have infinite intersection with one of these components.  The set $H \cap N(x)$ is infinite because $H$ is a $\wrsg$-solution to $G$, and it is monochromatic because $c(y) = c(x)$ for all $y \in H \cap N(x)$.
\end{proof}

We are ready to summarize the position of $\wrsg$ and $\wrsgr$ in the Weihrauch degrees.  Notice that the uniform computational content of $\wrsg$ and $\wrsgr$ is considerably less than that of $\rt^2_2$:  $\rt^2_2$ is above both $\pdnr$ and $\sadc$ in the Weihrauch degrees, but $\wrsgr$ is above neither of these problems.

\begin{Theorem}\label{thm-wRSgWeiLoc}
In the Weihrauch degrees, $\wrsg$ and $\wrsgr$ are
\begin{itemize}
\item strictly below $\rt^2_2$;

\smallskip

\item strictly above $\coh$ and $\rt^1_{<\infty}$;

\smallskip

\item incomparable with $\lim$, $\srt^2_2$, $\sadc$, and $\pdnr$.
\end{itemize}
\end{Theorem}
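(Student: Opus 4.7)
The theorem is a compilation of reductions and non-reductions for $\wrsg$ and $\wrsgr$, and the plan is to verify each clause by assembling what has already been established in the paper with standard facts about the surrounding principles. Throughout, the trivial observation $\wrsg \leqsW \wrsgr$ lets me transfer statements between the two principles when convenient.

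For the upper bound clause, $\wrsg \leqsW \wrsgr \leqsW \rt^2_2$ is immediate because every infinite homogeneous set for a graph is a $\wrsgr$-solution. Strictness comes from the fact that both $\pdnr \leqW \rt^2_2$ (by the standard Jockusch argument that $\rt^2_2$ computes DNR functions) and $\sadc \leqW \srt^2_2 \leqW \rt^2_2$ (via the natural reduction taking $L$ of type $\omega+\omega^*$ to the stable coloring $c(x,y)=1 \Leftrightarrow x<_L y$), so Theorems~\ref{thm-SADCvWRSGR} and~\ref{thm-DNRvWRSGR} each independently yield $\rt^2_2 \nleqW \wrsgr$ and hence $\rt^2_2 \nleqW \wrsg$. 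For the lower bound clause, Theorem~\ref{thm-COHredWRSG} gives $\coh \leqsW \wrsg$ and Proposition~\ref{prop-PHPredWRSG} gives $\rt^1_{<\infty} \leqsW \wrsg$, and both transfer to $\wrsgr$ via $\wrsg \leqsW \wrsgr$.

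For the incomparabilities, the ``downward'' non-reductions are essentially already done in the paper: $\sadc \nleqW \wrsgr$ is Theorem~\ref{thm-SADCvWRSGR}; $\pdnr \nleqW \wrsgr$ is Theorem~\ref{thm-DNRvWRSGR}; $\srt^2_2 \nleqW \wrsgr$ follows from $\sadc \leqW \srt^2_2$ plus Theorem~\ref{thm-SADCvWRSGR}; and $\lim \nleqW \wrsg$ is immediate from Proposition~\ref{prop-wRSgCompLoc}, which established $\lim \nleqc \wrsg$ (and $\leqW$ implies $\leqc$). All of these transfer to the other principle using $\wrsg \leqsW \wrsgr$. For the strictness of the lower bounds, $\coh \lstW \wrsg$ follows from $\rt^1_{<\infty} \leqsW \wrsg$ together with $\rt^1_{<\infty} \nleqW \coh$: a purported reduction would have to uniformly identify the ``infinite side'' of the cohesive set, which is a $\Sigma^0_2$ task and admits a direct diagonalization. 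Dually, $\rt^1_{<\infty} \lstW \wrsg$ follows from $\coh \leqsW \wrsg$ together with $\coh \nleqW \rt^1_{<\infty}$: every computable $\rt^1_{<\infty}$-instance has a computable solution, so a Weihrauch reduction would produce a computable cohesive set for every computable sequence, contradicting the Jockusch--Stephan-style construction of recursive sequences with no recursive cohesive set. For the remaining ``upward'' non-reductions $\wrsg \nleqW X$ with $X \in \{\lim, \srt^2_2, \sadc, \pdnr\}$, the plan is the same: invoke $\coh \leqsW \wrsg$ and use the standard Weihrauch non-reducibilities $\coh \nleqW \lim$ (cohesive-set construction genuinely requires a $\Pi^0_2$ oracle in the instance, not just a $\Delta^0_2$ oracle), $\coh \nleqW \srt^2_2$ (Cholak--Jockusch--Slaman-style), $\coh \nleqW \sadc$, and $\coh \nleqW \pdnr$.

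The main obstacle will be $\coh \nleqW \sadc$ and $\coh \nleqW \pdnr$, since both $\sadc$ and $\pdnr$ admit recursive instances without recursive solutions and so the clean ``computable instance forces computable solution'' template used for $\rt^1_{<\infty}$ is unavailable. For $\coh \nleqW \pdnr$ the plan is to use a cone-avoidance argument in the style of Jockusch--Soare: $\pdnr$-solutions can be kept of $\pa$ degree relative to a fixed oracle, while for any fixed oracle one can construct a recursive $\coh$-instance whose solutions all escape that cone. For $\coh \nleqW \sadc$ the plan is to exploit the fact that $\sadc$-solutions admit low cone avoidance while cohesive sets for suitably chosen recursive sequences do not. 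In each case the diagonalization parallels the structure of the proofs of Theorems~\ref{thm-SADCvWRSGR} and~\ref{thm-DNRvWRSGR}, so no substantially new ideas are needed.
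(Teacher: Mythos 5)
There is a genuine gap in your argument for the ``upward'' non-reductions, centred on the claim that $\coh \nleqW \lim$. This claim is false: in fact $\coh \leqsW \lim$. Given a $\coh$-instance $\vec{A}$, compute $\vec{A}'$ via $\lim$; then, relative to $\vec{A} \oplus \vec{A}'$, one can uniformly perform the standard finite-injury (depth-first-search) construction of a cohesive set. At stage $s$ one maintains a finite $C_s$ and a string $\sigma_s \in 2^s$, taken to be the lexicographically least $\tau \in 2^s$ such that $A^\tau$ contains an element $> \max C_{s-1}$ --- a condition that is $\Sigma^0_1(\vec{A})$ and hence decidable from $\vec{A}'$ --- and one appends the least such element to $C_s$. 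For each $n$, the strings $\sigma_s \rst n$ stabilize (since the finitely many $\tau \in 2^n$ with $A^\tau$ finite are eventually ruled out), and the limiting string $p$ satisfies $C \subseteq^* A^{p \rst n}$ for all $n$, so $C$ is $\vec{A}$-cohesive. Your heuristic that cohesive-set construction ``requires a $\Pi^0_2$ oracle'' confuses deciding which side is infinite (which is indeed $\Pi^0_2$) with building a cohesive set (which does not require that decision up front, by injury). So the route $\coh \leqsW \wrsg$ plus $\coh \nleqW \lim$ cannot establish $\wrsg \nleqW \lim$, and consequently your derivation of $\wrsg \nleqW \pdnr$ via $\coh \nleqW \pdnr$ is also at risk --- even if $\coh \nleqW \pdnr$ is true, it is not the obstruction the paper uses, and you acknowledge it would require a new construction.

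The paper's proof sidesteps all of this with a simple observation you should adopt: $\sadc \leqsW \srt^2_2$ and $\pdnr \leqsW \lim$. With these two easy upward bounds, the four non-reductions $\wrsg \nleqW X$ for $X \in \{\lim, \srt^2_2, \sadc, \pdnr\}$ collapse to exactly two citable facts: $\coh \nleqW \srt^2_2$ (which handles $\srt^2_2$ and, via $\sadc \leqsW \srt^2_2$, also $\sadc$) and $\rt^1_{<\infty} \nleqW \lim$ from Brattka--Rakotoniaina (which handles $\lim$ and, via $\pdnr \leqsW \lim$, also $\pdnr$). The point is that $\rt^1_{<\infty}$, not $\coh$, is the correct principle to push below $\wrsg$ and then compare with $\lim$, precisely because $\coh$ is itself below $\lim$. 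Likewise the downward non-reductions $\lim, \srt^2_2 \nleqW \wrsgr$ follow from $\sadc, \pdnr \nleqW \wrsgr$ (Theorems~\ref{thm-SADCvWRSGR} and~\ref{thm-DNRvWRSGR}) together with $\sadc \leqsW \srt^2_2$ and $\pdnr \leqsW \lim$, so no fresh work is needed there either. Your handling of the upper-bound clause, the lower-bound clause, and the citations of Theorems~\ref{thm-SADCvWRSGR}, \ref{thm-DNRvWRSGR}, \ref{thm-COHredWRSG} and Proposition~\ref{prop-PHPredWRSG} are fine and match the paper's.
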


\begin{proof}
Trivially $\wrsgr \leqsW \rt^2_2$.  That $\rt^2_2 \nleqW \wrsgr$ follows from the stronger non-reduction $\rt^2_2 \nleqc \wrsgr$ of Proposition~\ref{prop-wRSgCompLoc}.

We have that $\coh \leqsW \wrsg$ and that $\rt^1_{<\infty} \leqsW \wrsg$ by Theorem~\ref{thm-COHredWRSG} and Proposition~\ref{prop-PHPredWRSG}.  These reductions are strict (indeed, the corresponding computable reductions are strict) because there are $\omega$-models of $\coh$ that are not models of $\rt^2_2$, hence not models of $\wrsg$, by the results of~\cite{HirschfeldtShore}*{Section~2}, for example; and because every recursive $\rt^1_{<\infty}$-instance has a recursive solution.

We now show the incomparabilities.  Straightforward arguments show that $\sadc \leqsW \srt^2_2$ and that $\pdnr \leqsW \lim$, so it suffices to show that $\wrsgr$ is above neither $\sadc$ nor $\pdnr$ and that $\wrsg$ is below neither $\srt^2_2$ nor $\lim$.  Theorems~\ref{thm-SADCvWRSGR} and~\ref{thm-DNRvWRSGR} give $\sadc \nleqW \wrsgr$ and $\pdnr \nleqW \wrsgr$.  We have that $\wrsg \nleqW \srt^2_2$ because $\coh \leqsW \wrsg$ as mentioned above, but $\coh \nleqW \srt^2_2$ by~\cite{DzhafarovStrongRed}*{Corollary~4.5}.  Finally, $\wrsg \nleqW \lim$ because $\rt^1_{<\infty} \leqsW \wrsg$ as mentioned above, but $\rt^1_{<\infty} \nleqW \lim$ by~\cite{BrattkaRakotoniaina}*{Corollary~4.20}.
\end{proof}

From Proposition~\ref{prop-PHPredWRSG} and Theorem~\ref{thm-wRSgWeiLoc}, one may deduce that $\wrsg$ and $\wrsgr$ are Weihrauch incomparable with a number of other principles, such as $\ads$, $\cac$, and its stable version $\scac$.  Figure~\ref{fig-wRSg} depicts the position of $\wrsg$ and $\wrsgr$ relative to a number of principles below $\rt^2_2$ in the Weihrauch degrees.

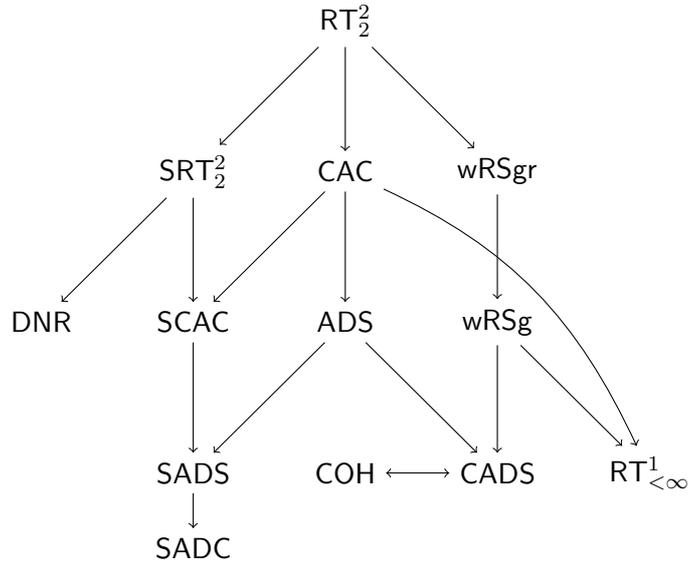
\begin{figure}[b]
\begin{tikzpicture}
\node (coh) at (-2,-4)  {$\coh$};

\node (DNR) at (-6,-2) {$\pdnr$};

\node (wrsgr) at (0,0)  {$\wrsgr$};
\node (wrsg) at (0,-2)  {$\wrsg$};

\node (RT22) at (-2,2) {$\rt^2_2$};

\node (CAC) at (-2,0) {$\cac$};
\node (ADS) at (-2,-2) {$\ads$};
\node (cads) at (0,-4) {$\cads$};

\node (SRT22) at (-4,0) {$\srt^2_2$};
\node (SCAC) at (-4,-2) {$\scac$};
\node (SADS) at (-4,-4) {$\sads$};
\node (SADC) at (-4,-5) {$\sadc$};

\node (RT1) at (2,-4) {$\rt^1_{<\infty}$};

\draw[->] (SRT22) --  (DNR);

\draw[->] (RT22) -- (wrsgr);
\draw[->] (RT22) --  (SRT22);
\draw[->] (RT22) --  (CAC);
\draw[->] (SRT22) --  (SCAC);
\draw[->] (CAC) --  (SCAC);
\draw[->] (CAC) --  (ADS);
\draw[->] (SCAC) --   (SADS);
\draw[->] (ADS) --   (cads);
\draw[->] (ADS) --   (SADS);
\draw[->] (SADS) --   (SADC);

\draw[->] (wrsgr) --(wrsg);
\draw[->] (wrsg) --   (cads);
\draw[<->] (coh) --   (cads);

\draw[->] (wrsg) -- (RT1);
\draw[->] (CAC) to [out=-25,in=115] (RT1);
\end{tikzpicture}
\caption{Weihrauch reductions and non-reductions in the neighborhood of $\rt^2_2$.  An arrow indicates that the target principle Weihrauch reduces to the source principle.  No further arrows may be added, except those that may be inferred by following the arrows drawn.  No arrows reverse, except the double arrow indicating that $\coh \equivW \cads$ and possibly the arrow indicating that $\wrsg \leqW \wrsgr$.  The reductions and non-reductions (often in the form of $\omega$-model separations) not proved here may be found in~\cites{AstorDzhafarovSolomonSuggs, BrattkaRakotoniaina, DzhafarovStrongRed, HJKHLS, HirschfeldtJockusch, HirschfeldtShore, LermanSolomonTowsner, PateyImmunity}.}
\label{fig-wRSg}
\end{figure}

As $\rca + \wrsg \vdash \rt^2_2$ but $\rt^2_2 \nleqW \wrsg$, it is natural to ask what must be added to $\wrsg$ to obtain $\rt^2_2$ in the Weihrauch degrees.  In particular, we ask how many applications of $\wrsg$ are necessary to obtain $\rt^2_2$.  This question can be formalized by considering \emph{compositional products}.

\begin{Theorem}[see~\cites{BrattkaGherardiMarcone, BrattkaPaulyAlg}]
Let $\mc{F}$ and $\mc{G}$ be multi-valued functions.  The set
\begin{align*}
\{\mc{F}_0 \circ \mc{G}_0 : (\mc{F}_0 \leqW \mc{F}) \andd (\mc{G}_0 \leqW \mc{G}) \andd (\text{$\mc{F}_0$ and $\mc{G}_0$ are composable})\}
\end{align*}
has a $\leqW$-maximum element up to Weihrauch degree.  In a slight abuse of terminology and notation, this maximum element is called the \emph{compositional product} of $\mc{F}$ and $\mc{G}$ and is denoted $\mc{F} \ast \mc{G}$.
\end{Theorem}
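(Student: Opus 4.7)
The plan is to exhibit an explicit multi-valued function representing $\mc{F} \ast \mc{G}$, then verify that it has the two required properties: (i) it arises as a composition of functions Weihrauch below $\mc{F}$ and $\mc{G}$ respectively, and (ii) every such composition Weihrauch reduces to it.

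I would define $\mc{F} \ast \mc{G}$ as follows. An instance is a pair $\la e, g \ra \in \omega \times \dom(\mc{G})$ such that the Turing functional $\Phi_e$ satisfies $\Phi_e(\la g, h \ra) \in \dom(\mc{F})$ for every $h \in \mc{G}(g)$. A solution is a pair $\la h, f \ra$ where $h \in \mc{G}(g)$ and $f \in \mc{F}(\Phi_e(\la g, h \ra))$. Intuitively, $\la e, g \ra$ encodes ``solve $\mc{G}$ on $g$, then apply $\Phi_e$ to obtain an $\mc{F}$-instance, then solve that.''

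For step (i), I would write $\mc{F} \ast \mc{G}$ as a composition of two multi-valued functions. Let $\mc{G}_0$ take $\la e, g \ra$ to $\la e, g, h \ra$ with $h \in \mc{G}(g)$; this is obtained from $\mc{G}$ by pre- and post-processing with the identity-style bookkeeping of $e$, so $\mc{G}_0 \leqW \mc{G}$. Let $\mc{F}_0$ take $\la e, g, h \ra$ to $\la h, f \ra$ with $f \in \mc{F}(\Phi_e(\la g, h \ra))$; this is obtained from $\mc{F}$ by first computing the $\mc{F}$-instance $\Phi_e(\la g, h \ra)$ and then pairing the solution with $h$, so $\mc{F}_0 \leqW \mc{F}$. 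Composability is immediate, and $\mc{F}_0 \circ \mc{G}_0 = \mc{F} \ast \mc{G}$.

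For step (ii), suppose $\mc{F}_0 \leqW \mc{F}$ via functionals $\Phi_1, \Psi_1$ and $\mc{G}_0 \leqW \mc{G}$ via functionals $\Phi_2, \Psi_2$, with $\ran(\mc{G}_0) \subseteq \dom(\mc{F}_0)$. Given an input $x$ to $\mc{F}_0 \circ \mc{G}_0$, I would form the $\mc{F} \ast \mc{G}$-instance $\la e, \Phi_2(x) \ra$, where $\Phi_e$ is a recursive index for the functional that, given $\la \Phi_2(x), h \ra$, computes $h_0 = \Psi_2(\la x, h \ra) \in \mc{G}_0(x)$, then $y = \Phi_1(h_0) \in \dom(\mc{F}_0)$, and outputs $y$; uniformity lets us obtain $e$ recursively from $x$ by the $s$-$m$-$n$ theorem. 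From a solution $\la h, f \ra$ to $\la e, \Phi_2(x) \ra$ one recovers $h_0 = \Psi_2(\la x, h \ra)$, then $f_0 = \Psi_1(\la h_0, f \ra) \in \mc{F}_0(h_0)$, so that $f_0 \in (\mc{F}_0 \circ \mc{G}_0)(x)$.

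The main obstacle is keeping the uniformities straight: in step (ii) one must carry along $x$ through every decoding step, and the index $e$ must be produced effectively from $x$ using the $s$-$m$-$n$ theorem so that $\la e, \Phi_2(x) \ra$ is uniformly computable from $x$. Once those bookkeeping issues are managed, the verification that every candidate composition Weihrauch reduces to $\mc{F} \ast \mc{G}$ is routine, and the maximality up to Weihrauch degree follows.
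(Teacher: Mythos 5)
Your overall plan is the right one and matches the standard argument in the cited sources: exhibit an explicit representative of the compositional product and then verify it is a $\leqW$-maximum of the set. Step (i) is fine. The gap is in your definition of the representative, and it shows up in step (ii). You take instances to be pairs $\la e, g \ra$ with $e \in \omega$ a single natural number, and you then need $\Phi_e$ to compute $\Phi_1(\Psi_2(\la x, h \ra))$ from the oracle $\la \Phi_2(x), h \ra$. This cannot work: $\Phi_e$ is a fixed Turing functional whose only oracle is $\la \Phi_2(x), h \ra$, which does not determine $x$ (think of $\Phi_2$ collapsing many instances to the same $\mc{G}$-instance, with $\Psi_2(\la x, h \ra)$ still depending on $x$). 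The appeal to the $s$-$m$-$n$ theorem does not repair this, because $s$-$m$-$n$ hard-codes \emph{numeric} parameters into an index; it cannot pack an arbitrary $x \in \omega^\omega$ into a natural number $e$.

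The fix is to let the first coordinate carry Baire-space side-information: take instances to be pairs $\la p, g \ra$ with $p \in \omega^\omega$ and $g \in \dom(\mc{G})$, interpret the inner computation via a fixed universal functional $U$ as $U(\la p, g, h \ra)$ (so that $p$ can encode both an index and auxiliary data such as the original $\mc{H}$-instance), require $U(\la p, g, h \ra) \in \dom(\mc{F})$ for every $h \in \mc{G}(g)$, and define solutions to be $\la h, f \ra$ with $h \in \mc{G}(g)$ and $f \in \mc{F}(U(\la p, g, h \ra))$. With that change, your step (i) goes through verbatim (with $p$ in place of $e$), and in step (ii) you set $p = \la e_0, x \ra$ for a fixed index $e_0$ computing $h \mapsto \Phi_1(\Psi_2(\la x, h \ra))$; the rest of your decoding argument is then correct, since the decoding functional in a Weihrauch reduction is allowed access to the original instance $x$. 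Note also that this is exactly what the recipe stated immediately after the theorem in the paper requires: the intermediate functional $\Theta$ is applied to $\la p, q \ra$, i.e.\ to the original instance together with the $\mc{G}$-solution, not merely to the $\mc{G}$-instance and its solution.
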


To show that $\mc{H} \leqW \mc{F} \ast \mc{G}$ for multi-valued functions $\mc{F}, \mc{G}, \mc{H} \colon \wsubseteq \omega^\omega \rra \omega^\omega$, it suffices to exhibit Turing functionals $\Phi$, $\Theta$, $\Psi$ such that for all $p \in \dom(\mc{H})$,
\begin{itemize}
\item $\Phi(p) \in \dom(\mc{G})$;

\smallskip

\item $\Theta(\la p, q \ra) \in \dom(\mc{F})$ for all $q \in \mc{G}(\Phi(p))$;

\smallskip

\item $\Psi(\la \la p, q \ra, r \ra) \in \mc{H}(p)$ for all $q \in \mc{G}(\Phi(p))$ and all $r \in \mc{F}(\Theta(\la p, q \ra))$.
\end{itemize}

We show that an application of two parallel instances of the \emph{limited principle of omniscience} ($\lpo$) suffices to overcome the non-uniformities in the proof that $\srt^2_2 \leqc \wrsg$, yielding that $\srt^2_2 \leqW (\lpo \times \lpo) \ast \wrsg$.  In the case of $\wrsgr$, one application of $\lpo$ suffices:  $\srt^2_2 \leqW \lpo \ast \wrsgr$.  As $\rt^2_2 \leqW \srt^2_2 \ast \coh$, we conclude that $\rt^2_2 \leqW (\lpo \times \lpo) \ast \wrsg \ast \coh$.  It follows that $\rt^2_2 \leqW \wrsg \ast \wrsg \ast \wrsg$ because below we observe that $(\lpo \times \lpo) \leqW \wrsg$, and $\coh \leqW \wrsg$ by Theorem~\ref{thm-COHredWRSG}.  Thus three applications of $\wrsg$ suffice to obtain $\rt^2_2$.  We do not know if two applications suffice.

A function corresponding to $\lpo$ is defined as follows.
\begin{Definition}
$\lpo$ is the following function.
\begin{itemize}
\item Input/instance:  A function $p \in \omega^\omega$.

\smallskip

\item Output/solution:  Output $0$ if there is an $n$ such that $p(n) = 0$.  Output $1$ if $p(n) \neq 0$ for every $n$.
\end{itemize}
\end{Definition}

\begin{Theorem}\label{thm-2LPOwRsg}{\ }
\begin{enumerate}
\item\label{it-LPOwRSgr} $\srt^2_2 \leqW \lpo \ast \wrsgr$.

\smallskip

\item\label{it-2LPOwRSg} $\srt^2_2 \leqW (\lpo \times \lpo) \ast \wrsg$.
\end{enumerate}
\end{Theorem}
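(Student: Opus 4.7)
The plan for both parts is to follow the template of the proof of Theorem~\ref{thm-RT_WRSG_eq} showing $\rca + \wrsg \vdash \srt^2_2$, replacing the classical finite/infinite case analysis with $\lpo$-queries. Given a stable coloring $c$, I first form the graph $G = (\omega, E)$ with $E = \{(x,y) : x < y \andd c(x,y) = 1\}$, apply $\wrsgr$ (respectively $\wrsg$) to obtain an infinite $H$, and then use one or two $\lpo$-queries on $\Sigma^0_1(c \oplus H)$ questions to drive the construction of a $c$-homogeneous set. Throughout, I will exploit that $R := \{x \in H : |H \cap N(x)| \geq 2\}$ is r.e.\ in $c \oplus H$ and, by the Rival--Sands property, coincides with $\{x \in H : |H \cap N(x)| = \omega\}$; moreover, stability of $c$ forces $\lim_s c(x,s) = 1$ for every $x \in R$, and hence $|R \cap N(x)| = \omega$ whenever $R$ itself is infinite.

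For part~(\ref{it-LPOwRSgr}), the single $\lpo$-query decides the $\Sigma^0_1(c \oplus H)$ question ``does $H$ contain an edge?'' If the answer is no, then $H$ is an independent set and therefore a color-$0$ homogeneous set for $c$. If the answer is yes, I extract a witnessing edge $(x,y) \in E$ with $x, y \in H$, and the $\wrsgr$-conclusion applied to $y$ (which has $x$ as an $H$-neighbor) gives $|H \cap N(y)| = \omega$. The crucial refinement of $\wrsgr$ over $\wrsg$ is that every $z \in H \cap N(y)$ also has $y \in H \cap N(z)$ and so satisfies $|H \cap N(z)| = \omega$. I then greedily construct a color-$1$ clique $y = w_0 < w_1 < w_2 < \cdots$ in $H$ by choosing $w_{n+1}$ as the first element of $H \cap \bigcap_{i \leq n} N(w_i)$ past $w_n$; stability of $c$ makes the intersection cofinite in $H$ past any threshold, so the search never stalls.

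For part~(\ref{it-2LPOwRSg}), $\wrsg$ also permits $|H \cap N(x)| = 1$, so adjacency of two $H$-vertices no longer forces $\omega$-degree. Calling Case~A and Case~B the situations in which $R$ is finite and infinite respectively, my plan is to use the first $\lpo$ for ``$R \neq \emptyset$?'' as before and the second $\lpo$ for a $\Sigma^0_1(c \oplus H)$ question along the lines of ``does the first enumerated element $r_0 \in R$ have an $R$-neighbor $r_1 > r_0$ with $(r_0, r_1) \in E$?''. A negative answer to the second query certifies Case~A (in Case~B, stability always supplies such an $r_1$), and then I build a color-$0$ homogeneous set greedily on $H$ avoiding the finitely many $R$-elements enumerated so far. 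The main obstacle is the ``yes-yes'' case, consistent both with Case~B and with Case~A containing an adjacent pair in $R$: the underlying dichotomy ``$R$ finite vs.\ infinite'' is genuinely $\Sigma^0_2(c \oplus H)$ and cannot be captured by any Boolean combination of two $\Sigma^0_1$ queries. The delicate point will be to design the second query and the post-$\lpo$ computation so that a greedy color-$1$ clique attempt from $R$ either continues forever (Case~B, via stability) or stalls only after enough of $R$ has been effectively located to pivot uniformly to a greedy color-$0$ construction on $H$ that avoids $R$.
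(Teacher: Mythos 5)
Your part~(\ref{it-LPOwRSgr}) matches the paper's argument and is correct.

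For part~(\ref{it-2LPOwRSg}) you take a genuinely different route from the paper --- the paper preprocesses $H$ into a set $\wh{H}$ by eliminating ``bars'' (pairs in which each is the other's unique $H$-neighbor) and then asks whether $\wh{H}$ contains an edge and whether it contains a triangle, whereas you query the set $R = \{x \in H : |H \cap N(x)| \geq 2\}$ directly --- but there is a genuine gap: you explicitly say you have not resolved the ``yes--yes'' case, believing it ambiguous because ``the underlying dichotomy `$R$ finite vs.\ infinite' is genuinely $\Sigma^0_2(c\oplus H)$.'' For an arbitrary graph that is true, but $G$ was built from a \emph{stable} coloring $c$, and stability collapses the dichotomy. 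If $r, r' \in R$ are distinct, then $|H \cap N(r)| = |H \cap N(r')| = \omega$ by the $\wrsg$ property, so by stability $\lim_s c(r,s) = \lim_s c(r',s) = 1$; hence every sufficiently large $h \in H$ has $r, r' \in H \cap N(h)$ and so lies in $R$. Thus $|R| \geq 2$ already forces $R$ to be cofinite in $H$, in particular infinite. Your second $\lpo$-query is therefore decisive: a ``yes'' answer gives $|R| \geq 2$, hence $R$ infinite, and the greedy color-$1$ clique search inside $R$ seeded at $\{r_0, r_1\}$ never stalls, since by stability each finite clique in $R$ extends by almost every element of $R$; a ``no'' answer together with $R \neq \emptyset$ gives $|R| = 1$, so you may wait for the unique $r_0$ to be enumerated and then greedily extract a color-$0$ independent set from $H \setminus \{r_0\}$, whose induced graph has maximum degree one. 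This cofiniteness observation is exactly what the paper uses in its ``triangle'' case; your queries are the right ones, but your argument as written does not supply the reason the yes--yes construction terminates, and that is the missing step.
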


\begin{proof}
For~(\ref{it-LPOwRSgr}), let $c \colon [\omega]^2 \imp \{0,1\}$ be an $\srt^2_2$-instance.  Using $c$, compute the graph $G = (\omega, E)$ with $E = \{(n, s) : (n < s) \andd (c(n,s) = 1)\}$.  Let $H$ be a $\wrsgr$-solution to $G$.  We use an application of $\lpo$ to determine whether or not $H$ contains two adjacent vertices.  Using $G$ and $H$, uniformly compute a function $p \colon \omega \imp \{0,1\}$ by setting $p(n) = 0$ if any two of the least $n$ elements of $H$ are adjacent, and by setting $p(n) = 1$ otherwise.  Let $b = \lpo(p)$.  If $b = 1$, then $H$ is an independent set and hence an $\srt^2_2$-solution to $c$.  Thus output $H$.  If $b = 0$, then $H$ contains a pair of adjacent vertices.  Notice that if $u \in H$ has a neighbor in $H$, then $H \cap N(u)$ is infinite because $H$ is a $\wrsgr$-solution to $G$.  Furthermore, such a $u$ is adjacent to almost every vertex in $G$ because $c$ is stable.  Compute an infinite clique $K = \{x_0, x_1, \dots\}$ uniformly from $G$ and $H$ as follows.  First, search for any $x_0 \in H$ with $|H \cap N(x_0)| \geq 1$.  Having determined a finite clique $\{x_0, \dots, x_n\} \subseteq H$, search for the first vertex $x_{n+1} \in H$ that is adjacent to each $x_i$ for $i \leq n$.  Such an $x_{n+1}$ exists because each $x_i$ for $i \leq n$ is adjacent to almost every vertex of $H$.  The resulting $K$ is an infinite clique and hence an $\srt^2_2$-solution to $c$.

For~(\ref{it-2LPOwRSg}), again let $c \colon [\omega]^2 \imp \{0,1\}$ be an $\srt^2_2$-instance, and again compute the graph $G = (\omega, E)$ with $E = \{(n, s) : (n < s) \andd (c(n,s) = 1)\}$.  Let $H$ be a $\wrsg$-solution to $G$.  Refine $H$ to eliminate \emph{bars}, i.e., pairs of vertices in $H$ where each is the only vertex of $H$ adjacent to the other.  To do this, compute an infinite $\wh{H} \subseteq H$ by skipping the first neighbor of each vertex already added to $\wh{H}$.  Enumerate $H$ in increasing order as $h_0 < h_1 < h_2 < \cdots$.  Let $\wh{H}_0 = \{h_0\}$.  Given $\wh{H}_n$, consider $h_{n+1}$.  If there is a $u \in \wh{H}_n$ such that $h_{n+1}$ is the least element of $H \cap N(u)$, then skip $h_{n+1}$ by putting $\wh{H}_{n+1} = \wh{H}_n$.  Otherwise, put $\wh{H}_{n+1} = \wh{H}_n \cup \{h_{n+1}\}$.  Let $\wh{H} = \bigcup_{n \in \omega}\wh{H}_n$, which can be computed uniformly from $G$ and $H$ because at stage $n$ we determine whether or not $h_n$ is in $\wh{H}$.  If $x, y \in H$ are adjacent to each other but to no other vertices of $H$, then only $\min\{x, y\}$ is in $\wh{H}$.  If $x \in \wh{H}$ has infinitely many neighbors in $H$, then it is adjacent to almost every vertex in $G$ because $c$ is stable, and therefore $x$ also has infinitely many neighbors in $\wh{H}$.

Call a clique of size three a \emph{triangle}.  The set $\wh{H}$ is either an independent set, contains edges but no triangles, or contains triangles.  Using $G$ and $\wh{H}$, uniformly compute two $\lpo$-instances $p, q \colon \omega \imp \{0,1\}$ to determine if $\wh{H}$ contains edges or triangles.  Set $p(n) = 0$ if any two of the least $n$ elements of $\wh{H}$ are adjacent, and set $p(n) = 1$ otherwise.  Set $q(n) = 0$ if any three of the least $n$ elements of $\wh{H}$ form a triangle, and set $q(n) = 1$ otherwise.  Let $(a,b) = (\lpo \times \lpo)(p, q)$.  If $(a,b) = (1,1)$, then $\wh{H}$ contains no edges; if $(a,b) = (0,1)$, then $\wh{H}$ contains edges but not triangles; and if $(a,b) = (0,0)$, then $\wh{H}$ contains triangles.  Output $(1,0)$ is not possible because if $\wh{H}$ contains triangles, then it also contains edges.

If $\wh{H}$ contains no edges, then it is an independent set and hence an $\srt^2_2$-solution to $c$.  Thus output $\wh{H}$.

Suppose that $\wh{H}$ contains edges but not triangles, and suppose that $x, y \in \wh{H}$ are adjacent.  If neither $x$ nor $y$ has any other neighbors in $H$, then only one of them would be in $\wh{H}$.  Therefore either $x$ or $y$ has at least two, and therefore infinitely many, neighbors in $H$.  So either $x$ or $y$ has infinitely many neighbors in $\wh{H}$.  Thus there is a $z \in \wh{H}$ with $\wh{H} \cap N(z)$ infinite.  We can therefore compute an infinite independent set, hence an $\srt^2_2$-solution to $c$, uniformly from $G$ and $\wh{H}$ by searching for a $z \in \wh{H}$ with $|\wh{H} \cap N(z)| \geq 2$ and outputting $\wh{H} \cap N(z)$.  We have just seen that such a $z$ exists.  If $|\wh{H} \cap N(z)| \geq 2$, then $|H \cap N(z)| \geq 2$, so $H \cap N(z)$ is infinite, so $\wh{H} \cap N(z)$ is infinite.  Finally, $\wh{H} \cap N(z)$ is independent because $\wh{H}$ contains no triangles.

If $\wh{H}$ contains a triangle, then $H$ contains a triangle, so there are distinct $x, y \in H$ with $|H \cap N(x)| \geq 2$ and $|H \cap N(y)| \geq 2$.  Then $H \cap N(x)$ and $H \cap N(y)$ are both infinite because $H$ is a $\wrsg$-solution to $G$.  The coloring $c$ is stable, which means that $x$ and $y$ are adjacent to almost every vertex of $G$.  Thus almost every vertex of $H$ is adjacent to both $x$ and $y$, and therefore is adjacent to almost every other vertex of $H$.  Compute an infinite clique $K$ as in~(\ref{it-LPOwRSgr}), except this time start by searching for any distinct $x_0, x_1 \in H$ with $|H \cap N(x_0)| \geq 2$ and $|H \cap N(x_1)| \geq 2$.  The resulting clique $K$ is an $\srt^2_2$-solution to $c$.
\end{proof}

\begin{Corollary}
$\rt^2_2 \leqW (\lpo \times \lpo) \ast \wrsg \ast \coh$.  Therefore $\rt^2_2 \leqW \wrsg \ast \wrsg \ast \wrsg$.
\end{Corollary}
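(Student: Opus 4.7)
The plan is to assemble the first inequality from the standard Weihrauch decomposition of $\rt^2_2$ and the reduction from Theorem~\ref{thm-2LPOwRsg}, and then to derive the second inequality from the first using monotonicity of the compositional product.

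For the first claim, I would begin by recording the (folklore) Weihrauch form of Proposition~\ref{prop-rt22decomp}: namely $\rt^2_2 \leqW \srt^2_2 \ast \coh$. Given a $\rt^2_2$-instance $c$, the sequence $\vec{A} = (A_i : i \in \omega)$ with $A_i = \{j > i : c(i,j) = 0\}$ is uniformly computable from $c$. Given a $\coh$-solution $C$ to $\vec{A}$, enumerate $C$ in increasing order as $c_0 < c_1 < \cdots$ and define $\wt{c}(i,j) = c(c_i, c_j)$; this $\wt{c}$ is stable and is uniformly computable from $(c, C)$. From an $\srt^2_2$-solution $H$ to $\wt{c}$, the set $\{c_i : i \in H\}$ is an $\rt^2_2$-solution to $c$. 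By the definition of $\ast$, this chain of computable transformations witnesses $\rt^2_2 \leqW \srt^2_2 \ast \coh$. Substituting Theorem~\ref{thm-2LPOwRsg}(\ref{it-2LPOwRSg}), namely $\srt^2_2 \leqW (\lpo \times \lpo) \ast \wrsg$, and using associativity and monotonicity of $\ast$ yields
\begin{align*}
\rt^2_2 \leqW \srt^2_2 \ast \coh \leqW \bl (\lpo \times \lpo) \ast \wrsg \br \ast \coh \equivW (\lpo \times \lpo) \ast \wrsg \ast \coh.
\end{align*}

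For the second claim, it suffices by monotonicity of $\ast$ to verify that $\lpo \times \lpo \leqW \wrsg$ and that $\coh \leqW \wrsg$. The latter is Theorem~\ref{thm-COHredWRSG}. For the former, I would route through $\rt^1_4$: given an input $(p,q)$ to $\lpo \times \lpo$, define a coloring $c \colon \omega \imp \{0,1,2,3\}$ by $c(n) = 2 a(n) + b(n)$, where $a(n) = 1$ iff $(\exists k \leq n)(p(k) = 0)$ and $b(n) = 1$ iff $(\exists k \leq n)(q(k) = 0)$. The coloring $c$ is non-decreasing and eventually constant with final value $2(1 - \lpo(p)) + (1 - \lpo(q))$, so any infinite monochromatic set for $c$ has this common value as its color, from which $(\lpo(p), \lpo(q))$ can be recovered from any single element. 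Thus $\lpo \times \lpo \leqW \rt^1_4 \leqsW \rt^1_{<\infty} \leqsW \wrsg$, where the last reduction is Proposition~\ref{prop-PHPredWRSG}. Combining,
\begin{align*}
\rt^2_2 \leqW (\lpo \times \lpo) \ast \wrsg \ast \coh \leqW \wrsg \ast \wrsg \ast \wrsg.
\end{align*}

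The argument is essentially a bookkeeping exercise, so there is no single hard step. The most delicate point is making sure the Weihrauch decomposition $\rt^2_2 \leqW \srt^2_2 \ast \coh$ really is a compositional reduction and not just an iterated one, which amounts to checking that the $\srt^2_2$-instance $\wt{c}$ can be built uniformly from $c$ together with the $\coh$-solution $C$, and that the homogeneous set for $\wt{c}$ can be translated back to a homogeneous set for $c$ uniformly from $c$, $C$, and $H$. All of this is standard.
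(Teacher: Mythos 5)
Your proof is correct and follows the same route as the paper's: decompose $\rt^2_2 \leqW \srt^2_2 \ast \coh$, substitute $\srt^2_2 \leqW (\lpo \times \lpo) \ast \wrsg$ from Theorem~\ref{thm-2LPOwRsg}, and then bound $\lpo \times \lpo$ and $\coh$ by $\wrsg$ using $\rt^1_{<\infty} \leqW \wrsg$ and Theorem~\ref{thm-COHredWRSG}. You supply somewhat more detail than the paper does (explicitly verifying the compositional form of $\rt^2_2 \leqW \srt^2_2 \ast \coh$, and showing $\lpo \times \lpo \leqW \rt^1_4$ directly via a single four-valued coloring rather than factoring through $\lpo \leqW \rt^1_2$ and $\rt^1_2 \times \rt^1_2 \leqW \rt^1_4$), but the argument is substantively the same.
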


\begin{proof}
We have that $\rt^2_2 \leqW \srt^2_2 \ast \coh$, and $\srt^2_2 \leqW (\lpo \times \lpo) \ast \wrsg$ by Theorem~\ref{thm-2LPOwRsg}.  Therefore $\rt^2_2 \leqW (\lpo \times \lpo) \ast \wrsg \ast \coh$.  That $\rt^2_2 \leqW \wrsg \ast \wrsg \ast \wrsg$ follows because $\lpo \times \lpo \leqW \wrsg$ and $\coh \leqW \wrsg$.  Theorem~\ref{thm-COHredWRSG} gives us $\coh \leqW \wrsg$.  It is straightforward to show that $\lpo \leqW \rt^1_2$ and that $\rt^1_2 \times \rt^1_2 \leqW \rt^1_4$ (see also~\cite{DoraisDzhafarovHirstMiletiShafer}*{Proposition~2.1}).  Therefore
\begin{align*}
\lpo \times \lpo \leqW \rt^1_2 \times \rt^1_2 \leqW \rt^1_4 \leqW \rt^1_{<\infty} \leqW \wrsg,
\end{align*}
where the last reduction is by Proposition~\ref{prop-PHPredWRSG}.
\end{proof}

Hence three applications of $\wrsg$ (or of $\wrsgr$) suffice to obtain $\rt^2_2$.  We do not know if two applications suffice.

\begin{Question}
Does $\rt^2_2 \leqW \wrsg \ast \wrsg$ hold?  Does $\rt^2_2 \leqW \wrsgr \ast \wrsgr$ hold?
\end{Question}

\section*{Acknowledgments}
We thank Jeffry Hirst, Steffen Lempp, Alberto Marcone, Ludovic Patey, and Keita Yokoyama for helpful discussions.  We thank our anonymous reviewer for many helpful comments.  We thank the \emph{Workshop on Ramsey Theory and Computability} at the University of Notre Dame Rome Global Gateway; Dagstuhl Seminar 18361:  \emph{Measuring the Complexity of Computational Content:  From Combinatorial Problems to Analysis}; and BIRS/CMO workshop 19w5111: \emph{Reverse Mathematics of Combinatorial Principles} for their generous support.  This project was partially supported by a grant from the John Templeton Foundation (\emph{A new dawn of intuitionism: mathematical and philosophical advances} ID 60842).  The opinions expressed in this work are those of the authors and do not necessarily reflect the views of the John Templeton Foundation.  Additionally, Fiori-Carones was partially supported by the Italian PRIN 2017 grant \emph{Mathematical Logic: models, sets, computability}.

\bibliography{FioriCaronesShaferSoldaInsideOutsideRamseyRecursionTheory}

\vfill

\end{document}